\newtheorem{theorem}{\bf Theorem}[section]
\newtheorem{lemma}[theorem]{\bf Lemma}
\newtheorem{proposition}[theorem]{\bf Proposition}
\newtheorem{remark}[theorem]{\bf Remark}
\newtheorem{definition}[theorem]{\bf Definition}
\numberwithin{equation}{section}
\newcommand{\dt}{\partial_t}
\newcommand{\ds}{\partial_s}
\newcommand{\opnorm}{\@ifstar\@opnorms\@opnorm}
\newcommand{\@opnorms}[1]{%
  \left|\mkern-1.5mu\left|\mkern-1.5mu\left|
   #1
  \right|\mkern-1.5mu\right|\mkern-1.5mu\right|
}
\newcommand{\@opnorm}[2][]{%
  \mathopen{#1|\mkern-1.5mu#1|\mkern-1.5mu#1|}
  #2
  \mathclose{#1|\mkern-1.5mu#1|\mkern-1.5mu#1|}
}
\begin{document}
\vspace*{0ex}
\begin{center}
{\Large\bf
Well-posedness of the initial boundary value problem \\[0.5ex]
for the motion of an inextensible hanging string
}
\end{center}

\begin{center}
Tatsuo Iguchi and Masahiro Takayama\footnote{Corresponding author}
\end{center}

\begin{abstract}
We consider the motion of an inextensible hanging string of finite length under the action of the gravity. 
The motion is governed by nonlinear and nonlocal hyperbolic equations, which is degenerate at the free end of the string. 
We show that the initial boundary value problem to the equations of motion is well-posed locally in time in weighted Sobolev spaces 
at the quasilinear regularity threshold under a stability condition. 
This paper is a continuation of our preceding articles \cite{IguchiTakayama2024, IguchiTakayama2023-2}. 
\end{abstract}

\section{Introduction}
We consider the initial boundary value problem to the motion of an inextensible hanging string of finite length under the action of the gravity, 
where one end of the string is fixed and another one is free. 
We also consider the problem in the case without any external forces. 
After an appropriate scaling, the equations of motion have the form 
\begin{equation}\label{Eq}
\begin{cases}
 \ddot{\bm{x}}-(\tau\bm{x}')' = \bm{g} &\mbox{in}\quad (0,1)\times(0,T), \\
 |\bm{x}'|=1 &\mbox{in}\quad (0,1)\times(0,T),
\end{cases}
\end{equation}
under the boundary conditions 
\begin{equation}\label{BC}
\begin{cases}
 \bm{x}=\bm{0} &\mbox{on}\quad \{s=1\}\times(0,T), \\
 \tau=0 &\mbox{on}\quad \{s=0\}\times(0,T),
\end{cases}
\end{equation}
where $\bm{x}(s,t)=(x_1(s,t),x_2(s,t),x_3(s,t))$ is the position vector of the string at time $t$ with the arc length parametrization $s\in[0,1]$ 
measured from the free end of the string, $\tau(s,t)$ is the tension of the string, 
and $\bm{g}$ is the acceleration of gravity vector, which is assumed to be a constant unit vector or the zero vector; see Figure \ref{intro:hanging string}.
\begin{figure}[ht]
\setlength{\unitlength}{1pt}
\begin{picture}(0,0)
\put(320,-50){$(x_1,x_2)$}
\put(235,-15){$x_3$}
\put(232,-47){$s=1$}
\put(225,-175){$s=0$}
\put(220,-150){$s$}
\put(160,-102){$\bm{g}$}
\put(230,-126){$\bm{x}=\bm{x}(s,t)$}
\put(240,-155){$-\tau(s,t)\bm{x}'(s,t)$}
\put(223,-100){$\tau(s,t)\bm{x}'(s,t)$}
\end{picture}
\begin{center}
\includegraphics[width=0.45\linewidth]{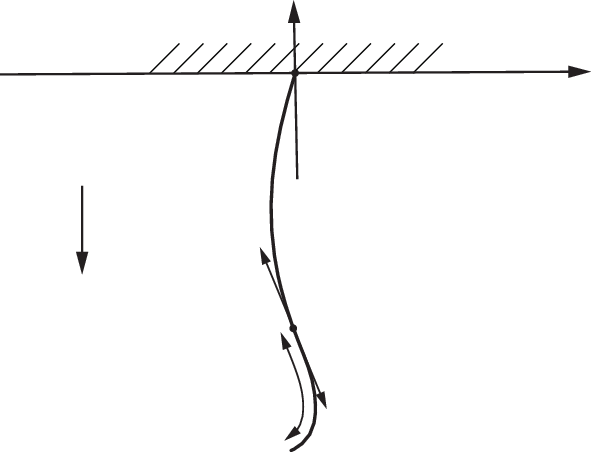}
\end{center}
\caption{Hanging String}
\label{intro:hanging string}
\end{figure}
Here, $\dot{\bm{x}}$ and $\bm{x}'$ denote the derivatives of $\bm{x}$ with respect to $t$ and $s$, respectively. 
For details on these equations, we refer to Reeken \cite{Reeken1979-1}, Yong \cite{Yong2006}, and Iguchi and Takayama \cite{IguchiTakayama2024}. 
We consider the initial boundary value problem of the above equations under the initial conditions 
\begin{equation}\label{IC}
(\bm{x},\dot{\bm{x}})|_{t=0}=(\bm{x}_0^\mathrm{in},\bm{x}_1^\mathrm{in}) \quad\mbox{in}\quad (0,1).
\end{equation}

\medskip
Our objective in this paper is to establish the well-posedness locally in time of the initial boundary value problem \eqref{Eq}--\eqref{IC} 
under the stability condition 
\begin{equation}\label{SC}
\frac{\tau(s,t)}{s}\geq c_0>0
\end{equation}
for $(s,t)\in(0,1)\times(0,T)$ in the class $\bm{x}\in \mathscr{X}_T^m=\bigcap_{j=0}^m C^j([0,T];X^{m-j})$ with $m\geq4$, 
where $X^m$ is a weighted Sobolev space of order $m$ on the interval $(0,1)$; see Section \ref{sect:pre} for the definition of the space $X^m$. 
Here, we note that in this problem $\tau$ is an unknown quantity as well as $\bm{x}$. 
As is well-known, once $\bm{x}'(\cdot,t)$ and $\dot{\bm{x}}'(\cdot,t)$ are given, $\tau(\cdot,t)$ is uniquely determined as a unique solution 
of the two-point boundary value problem 
\begin{equation}\label{BVP}
\begin{cases}
 -\tau''(\cdot,t)+|\bm{x}''(\cdot,t)|^2\tau(\cdot,t) = |\dot{\bm{x}}'(\cdot,t)|^2 &\mbox{in}\quad (0,1), \\
 \tau(0,t)=0, \quad \tau'(1,t)=-\bm{g}\cdot\bm{x}'(1,t).
\end{cases}
\end{equation}
Therefore, $\tau$ can be regarded as a function of $\bm{x}'$ and $\dot{\bm{x}}'$, 
so that the first equation in \eqref{Eq} can be regarded as nonlinear wave equations for $\bm{x}$, which are degenerate at the free end where $\tau=0$. 
However, this dependence is nonlocal in space and causes a nonlocal character of the equations.

\medskip
Our main result in this paper is given by the following theorem.

\begin{theorem}\label{th:main}
For any integer $m\geq4$ and any positive constants $M_0$ and $c_0$, 
there exists a small positive time $T$ such that if the initial data satisfy 
\begin{equation}\label{CondID}
\begin{cases}
 \|\bm{x}_0^\mathrm{in}\|_{X^m}+\|\bm{x}_1^\mathrm{in}\|_{X^{m-1}} \leq M_0, \\
 |\bm{x}_0^{\mathrm{in}\prime}(s)|=1, \quad \bm{x}_0^{\mathrm{in}\prime}(s)\cdot\bm{x}_1^{\mathrm{in}\prime}(s)=0, \quad
  \frac{\tau_0^\mathrm{in}(s)}{s} \geq 2c_0 \quad\mbox{for}\quad 0<s<1, 
\end{cases}
\end{equation}
where $\tau_0^\mathrm{in}(s)=\tau(s,0)$ is the initial tension, and the compatibility conditions up to order $m-1$ in the sense of Definition \ref{def:CC}, 
then the initial boundary value problem \eqref{Eq}--\eqref{IC} has a unique solution $(\bm{x},\tau)$ 
satisfying $\bm{x}\in\mathscr{X}_T^m$ and the stability condition \eqref{SC}. 
Moreover, we have $\tau' \in \mathscr{X}_T^{m-1,*}$ in the case $m\geq5$ and $\tau' \in \mathscr{X}_T^{3,*,\epsilon}$ for any $\epsilon>0$ in the case $m=4$. 
\end{theorem}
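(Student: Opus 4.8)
The plan is to eliminate the tension, recast \eqref{Eq}--\eqref{IC} as a quasilinear degenerate hyperbolic problem for $\bm{x}$ alone, and construct the solution by a Picard iteration whose high-order estimates rest on the linear theory of \cite{IguchiTakayama2024, IguchiTakayama2023-2}. First I would introduce the nonlocal solution operator $\mathcal{T}$ by letting $\tau=\mathcal{T}[\bm{x}',\dot{\bm{x}}']$ denote the unique solution of the two-point boundary value problem \eqref{BVP}, and establish its mapping properties by weighted elliptic estimates for the degenerate operator $-\partial_s^2+|\bm{x}''|^2$ with the Dirichlet condition at $s=0$: that $\mathcal{T}$ is bounded and locally Lipschitz on suitable weighted Sobolev balls, that $\tau'$ gains one derivative in the starred scale---with an $\epsilon$-loss at the threshold $m=4$ due to a borderline embedding---and, most importantly, that $\tau/s$ is controlled in $L^\infty$ in terms of the data, so that the lower bound $\tau_0^{\mathrm{in}}/s\geq 2c_0$ persists as $\tau/s\geq c_0$ on a short time interval. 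The compatibility conditions of Definition \ref{def:CC} are used here to build the initial values $\partial_t^j\bm{x}|_{t=0}$, $j\leq m$, in the spaces $X^{m-j}$ consistently with \eqref{BC}, as required for $\mathscr{X}_T^m$ regularity.

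Next I would run the iteration $\bm{x}^{(n)}\mapsto\tau^{(n)}=\mathcal{T}[\bm{x}^{(n)\prime},\dot{\bm{x}}^{(n)\prime}]\mapsto\bm{x}^{(n+1)}$, where $\bm{x}^{(n+1)}$ solves the linear degenerate wave equation $\ddot{\bm{x}}^{(n+1)}-(\tau^{(n)}\bm{x}^{(n+1)\prime})'=\bm{g}$ under \eqref{BC} and \eqref{IC}. The linear well-posedness and $\mathscr{X}_T^m$ energy estimates for such equations---available precisely because $\tau^{(n)}/s\geq c_0$---are taken from the preceding articles; combined with the bounds on $\mathcal{T}$, they close an a priori estimate and yield uniform boundedness of $(\bm{x}^{(n)})_n$ in $\mathscr{X}_T^m$ for a time $T=T(M_0,c_0)$ small. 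Contraction in a lower-order norm (such as $\mathscr{X}_T^{m-1}$, or $\mathscr{X}_T^0$ with an interpolation argument), using the Lipschitz property of $\mathcal{T}$ and the linear estimate for the difference of consecutive iterates, together with the uniform high-order bound, then produces a limit $\bm{x}\in\mathscr{X}_T^m$ solving $\ddot{\bm{x}}-(\tau\bm{x}')'=\bm{g}$ with $\tau=\mathcal{T}[\bm{x}',\dot{\bm{x}}']$ and $\tau/s\geq c_0$.

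It then remains to recover the inextensibility $|\bm{x}'|=1$. Setting $w=|\bm{x}'|^2-1$ and using the first equation of \eqref{BVP} to substitute for $\tau''$, a direct computation shows that $w$ solves the linear degenerate hyperbolic equation $\ddot{w}-\tau w''-2\tau'w'-2(|\bm{x}''|^2\tau-|\dot{\bm{x}}'|^2)w=0$, together with a homogeneous Robin-type condition at $s=1$ that is forced by $\bm{x}(1,t)=\bm{0}$ and the Neumann condition in \eqref{BVP}. The hypotheses $|\bm{x}_0^{\mathrm{in}\prime}|=1$ and $\bm{x}_0^{\mathrm{in}\prime}\cdot\bm{x}_1^{\mathrm{in}\prime}=0$ give $w|_{t=0}=\dot{w}|_{t=0}=0$, and the compatibility conditions supply the boundary regularity needed to run an energy estimate; uniqueness for this linear degenerate problem then forces $w\equiv0$, so $(\bm{x},\tau)$ solves \eqref{Eq}--\eqref{IC} and satisfies \eqref{SC}. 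Uniqueness in the stated class follows from the same difference estimate used for the contraction, and the regularity $\tau'\in\mathscr{X}_T^{m-1,*}$ for $m\geq5$, respectively $\tau'\in\mathscr{X}_T^{3,*,\epsilon}$ for $m=4$, is recovered by bootstrapping the weighted elliptic estimates for \eqref{BVP} once $\bm{x}\in\mathscr{X}_T^m$ is known.

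I expect the main obstacle to be the tension between the degeneracy at $s=0$---which dictates the weighted spaces $X^m$ and the meaning of the boundary condition $\tau=0$ there---and the nonlocality of $\mathcal{T}$: one must show that solving \eqref{BVP} neither costs the weight needed to keep $\tau/s$ bounded below, nor more than the single derivative available in the starred scale, and all of this at the quasilinear threshold $m=4$, where there is no spare regularity and the elliptic gain degrades to an $\epsilon$-loss, so the energy and product estimates must be genuinely sharp. A second delicate point is the constraint propagation, since the auxiliary equation for $w$ is itself degenerate and its well-posedness must be reconciled with the compatibility conditions at both ends of the string.
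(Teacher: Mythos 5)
There is a genuine gap, and it sits exactly at the step you dispose of in one clause: that the linear estimates for $\ddot{\bm{x}}^{(n+1)}-(\tau^{(n)}\bm{x}^{(n+1)\prime})'=\bm{g}$ ``close an a priori estimate and yield uniform boundedness of $(\bm{x}^{(n)})_n$ in $\mathscr{X}_T^m$.'' They do not, because the map $\bm{x}^{(n)}\mapsto\tau^{(n)}=\mathcal{T}[\bm{x}^{(n)\prime},\dot{\bm{x}}^{(n)\prime}]$ costs derivatives in the time direction. To run the $\mathscr{X}_T^m$ energy estimate for the frozen-coefficient wave equation (the estimate of \cite{IguchiTakayama2023-2} used in the paper) you need the coefficient at the level $\opnorm{\tau^{(n)\prime}}_{m-1,*}$, i.e.\ $\dt^{j}\tau^{(n)\prime}\in X^{m-1-j}$ up to $j=m-2$. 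But $\dt^{j}\tau^{(n)}$ solves the two-point problem \eqref{BVPini} whose source $h_j$ contains $\dot{\bm{x}}^{(n)\prime}\cdot\dt^{j+1}\bm{x}^{(n)\prime}$; for $j=m-2$ this requires control of $\dt^{m-1}\bm{x}^{(n)}$ in (at least) $X^2$, whereas membership in $\mathscr{X}_T^m$ only gives $\dt^{m-1}\bm{x}^{(n)}\in X^1$. So the iterate $\bm{x}^{(n+1)}$ cannot be put back into the same class, and the full linearized map in fact loses two derivatives, which is precisely why the paper states that the standard Picard iteration is not applicable directly. There is no spare regularity at the quasilinear threshold $m=4$ to absorb this by interpolation, and a Nash--Moser device is neither proposed nor compatible with the sharp statement of the theorem.

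The paper's route around this is structural, and your proposal contains no substitute for it. First, it quasilinearizes by taking $\bm{y}=\ddot{\bm{x}}$, $\nu=\ddot{\tau}$ as additional unknowns and solving the coupled system \eqref{QLy}--\eqref{QLxtau} by a fixed point (Proposition \ref{prop:QLWP}), in which the problematic top-order couplings $2\dot{\bm{x}}'\cdot\dot{\bm{y}}'-2(\bm{x}''\cdot\bm{y}'')\tau$ are kept inside the linear solver rather than treated as frozen data; this, however, only yields Theorem \ref{th:main} for $m\geq6$. Second, for $m=4,5$ it approximates the initial data by smoother data satisfying \emph{higher-order} compatibility conditions (Propositions \ref{prop:AppID1} and \ref{prop:AppID2}), which is itself a substantial piece of work here because the compatibility conditions involve the nonlocal functionals $\Theta_j(\bm{u}_0,\bm{u}_1)=\tau_j(1)$, so the usual polynomial boundary corrections destroy lower-order conditions and one must solve a nonlinear system for the correction coefficients via the implicit function theorem; it then passes to the limit using the $n$-independent a priori bound of Proposition \ref{prop:APE} and compactness, with a separate energy-functional argument to recover strong continuity in time at the critical index $m=4$. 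Your constraint-propagation step for $w=|\bm{x}'|^2-1$ is essentially sound (the paper invokes the equivalence theorem of \cite{IguchiTakayama2024} for exactly this), but without the quasilinearization and the data-approximation machinery the existence part of your argument does not go through at the stated regularity.
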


\begin{remark}\label{re:MainTh}
\begin{enumerate}
\item[\rm(1)]
The initial tension $\tau_0^\mathrm{in}$ is uniquely determined from the initial data $(\bm{x}_0^\mathrm{in},\bm{x}_1^\mathrm{in})$ as a unique solution 
of the two-point boundary value problem \eqref{BVP} in the case $t=0$. 
Moreover, as was shown in Iguchi and Takayama \cite[Lemma 3.4]{IguchiTakayama2024}, we have 
\[
\frac{\tau_0^\mathrm{in}(s)}{s} \geq \left( -\bm{g}\cdot\bm{x}_0^{\mathrm{in}\prime}(1)
 + \|\sigma^\frac12\bm{x}_1^{\mathrm{in}\prime}\|_{L^2}^2 \exp\bigl( -\|\sigma^\frac12\bm{x}_0^{\mathrm{in}\prime\prime}\|_{L^2}^2 \bigr) \right)
 \exp\bigl( -\|\sigma^\frac12\bm{x}_0^{\mathrm{in}\prime\prime}\|_{L^2}^2 \bigr),
\]
if the right-hand side is non-negative. 
Therefore, if the string is in fact hanging initially, that is to say, if $-\bm{g}\cdot\bm{x}_0^{\mathrm{in}\prime}(1)>0$, 
then the stability condition is initially satisfied. 

\item[\rm(2)]
In the case $\bm{g}=\bm{0}$, that is, in the case without any external forces, if $\bm{x}_1^\mathrm{in}=\bm{0}$, 
then the initial boundary value problem \eqref{Eq}--\eqref{IC} has a unique global solution $(\bm{x}(s,t),\tau(s,t))=(\bm{x}_0^\mathrm{in}(s),0)$, 
which does not satisfy the stability condition \eqref{SC}; see \cite[Theorem 2.4]{IguchiTakayama2024}. 
On the other hand, if $\bm{x}_1^\mathrm{in}\ne\bm{0}$, then the stability condition is initially and automatically satisfied. 

\item[\rm(3)]
In the case $\bm{g}=\bm{0}$, the existence of a unique solution $(\bm{x},\tau)$ to the initial boundary value problem \eqref{Eq}--\eqref{IC} 
has already been established by Preston \cite{Preston2011} in the class $\bm{x}\in L^\infty(0,T;X^4) \cap W^{1,\infty}(0,T;X^3)$. 
Our result asserts not only the existence of a unique solution but also the strong continuity of the solution in time, that is, 
$\bm{x}\in C^0([0,T];X^4) \cap C^1([0,T];X^3)$. 

\item[\rm(4)]
As was explained in \cite[Remark 2.2]{IguchiTakayama2024}, the requirement $m\geq4$ corresponds to the quasilinear regularity in the sense that 
$m=4$ is the minimal integer regularity index $m$ that ensures the embedding $C^0([0,T];X^m)\cap C^1([0,T];X^{m-1}) \hookrightarrow C^1([0,1]\times[0,T])$. 
Therefore, $m=4$ is a critical regularity index in the classical sense. 

\item[\rm(5)]
As for the compatibility conditions and the definition of the spaces $\mathscr{X}_T^{m-1,*}$ and $\mathscr{X}_T^{3,*,\epsilon}$, 
we refer to Sections \ref{sect:CC} and \ref{sect:pre}, respectively. 
\end{enumerate}
\end{remark}

As was shown in \cite[Theorem 2.5]{IguchiTakayama2024}, in the class of solutions stated in Theorem \ref{th:main} 
the initial boundary value problem \eqref{Eq}--\eqref{IC} is equivalent to 
\begin{equation}\label{HP2}
\begin{cases}
 \ddot{\bm{x}}-(\tau\bm{x}')' = \bm{g} &\mbox{in}\quad (0,1)\times(0,T), \\
 \bm{x}=\bm{0} &\mbox{on}\quad \{s=1\}\times(0,T),
\end{cases}
\end{equation}
\begin{equation}\label{BVP2}
\begin{cases}
 -\tau''+|\bm{x}''|^2\tau = |\dot{\bm{x}}'|^2 &\mbox{in}\quad (0,1)\times(0,T), \\
 \tau=0 &\mbox{on}\quad \{s=0\}\times(0,T), \\
 \tau'=-\bm{g}\cdot\bm{x}' &\mbox{on}\quad \{s=1\}\times(0,T),
\end{cases}
\end{equation}
\begin{equation}\label{IC2}
(\bm{x},\dot{\bm{x}})|_{t=0}=(\bm{x}_0^\mathrm{in},\bm{x}_1^\mathrm{in}) \quad\mbox{in}\quad (0,1)
\end{equation}
under the restrictions $|\bm{x}_0^{\mathrm{in}\prime}(s)|\equiv1$ and $\bm{x}_0^{\mathrm{in}\prime}(s)\cdot\bm{x}_1^{\mathrm{in}\prime}(s)\equiv0$ 
on the initial data and the stability condition \eqref{SC}. 
In other words, we can remove the constraint $|\bm{x}'|=1$ from the equations. 
Therefore, it is sufficient to show the well-posedness of this transformed problem \eqref{HP2}--\eqref{IC2}.

\medskip
Well-posedness of a linearized problem to the nonlinear one \eqref{HP2}--\eqref{IC2} was established by Iguchi and Takayama \cite{IguchiTakayama2023-2} 
in the class $\mathscr{X}_T^m$ of the weighted Sobolev space. 
However, the map $(\bm{x},\tau) \mapsto (\bm{y},\nu)$ reveals a loss of twice derivatives, 
where $(\bm{y},\nu)$ denote variations in the linearization to the nonlinear problem around $(\bm{x},\tau)$, 
so that the standard Picard iteration cannot be applicable directly to show the existence of solution to the nonlinear problem. 
To overcome this difficulty, we implement a quasilinearization procedure and reduce the nonlinear problem into a quasilinear one equivalently 
introducing new unknowns $(\bm{y},\nu)$ by 
\begin{equation}\label{defNU}
\bm{y}=\ddot{\bm{x}}, \quad \nu=\ddot{\tau}.
\end{equation}
Then, we can solve the quasilinear problem for unknowns $(\bm{x},\bm{y},\tau,\nu)$ by applying the existence theory established in \cite{IguchiTakayama2023-2} and 
by a standard fixed point argument. 
This procedure works well under the additional regularity assumption $m\geq6$.

\medskip
To show the well-posedness in the case $m=4,5$, we approximate the initial data by regular ones, which must satisfy higher order compatibility conditions, 
and construct a sequence of solutions for the approximate initial data. 
Thanks to the a priori estimate of the solution obtained in Iguchi and Takayama \cite{IguchiTakayama2024}, we obtain a uniform bound of the solutions. 
Then, by the standard compactness argument, we can extract a subsequence of solutions which converges the desired solution. 
However, we face a difficulty in the approximation process of the initial data due to the nonlocal character of the problem, 
so that we need to study the compatibility conditions in detail.

\medskip
It is well-known that in the theory of the initial boundary value problem for hyperbolic systems with an unknown $u$, if the boundary is non-characteristic, 
then the initial data $u_0^\mathrm{in}\in H^m$ satisfying compatibility conditions up to order $m-1$ can be approximated by 
a sequence of more regular data $\{u_0^{\mathrm{in}(n)}\}_{n=1}^\infty \subset H^{m+p}$ with a positive integer $p$, 
which converges to $u_0^\mathrm{in}$ in $H^m$ and satisfy the compatibility conditions up to order $m+p-1$, 
where $H^m$ is the standard $L^2$ Sobolev space of order $m$. 
See, for example, Rauch and Massay \cite{RauchMassey1974}. 
To construct such approximate data, we first approximate the initial data by $\{v_0^{\mathrm{in}(n)}\}_{n=1}^\infty \subset H^{m+p}$, 
which converges to $u_0^\mathrm{in}$ in $H^m$. 
Generally, $v_0^{\mathrm{in}(n)}$ does not satisfy the compatibility conditions. 
Therefore, we need to modify the data by adding an appropriate function $w_0^{\mathrm{in}(n)}$ so that $\{w_0^{\mathrm{in}(n)}\}_{n=1}^\infty$ 
converges to $0$ in $H^m$ and that $u_0^{\mathrm{in}(n)}=v_0^{\mathrm{in}(n)}+w_0^{\mathrm{in}(n)}$ satisfies the compatibility conditions. 
To this end, we usually use the following structure of the system: 
$u_j^\mathrm{in}=(\dt^ju)|_{t=0}$ $(j=1,2,3,\ldots)$ are determined from the initial data $u_0^\mathrm{in}$ through the hyperbolic system 
and have the form 
\begin{align*}
u_j^\mathrm{in}
&= F_j(u_0^\mathrm{in}, \ds u_0^\mathrm{in}, \ldots, \ds^j u_0^\mathrm{in}) \\
&= A_j(u_0^\mathrm{in})\ds^ju_0^\mathrm{in} + G_j(u_0^\mathrm{in}, \ds u_0^\mathrm{in}, \ldots, \ds^{j-1}u_0^\mathrm{in}),
\end{align*}
where $A_j=A_j(u_0^\mathrm{in})$ is an invertible matrix. 
In this situation, by adding $\frac{(s-1)^j}{j!}a_j$ with an appropriate constant vector $a_j$ to the initial data $u_0^\mathrm{in}$ 
we can modify $u_j^\mathrm{in}|_{s=1}$ for whatever we want keeping the values of $u_k^\mathrm{in}|_{s=1}$ unchanged for $k=0,1,\ldots,j-1$. 
For our problem \eqref{HP2}--\eqref{BVP2}, roughly speaking, $u_j^\mathrm{in}=(\dt^ju)|_{t=0}$ $(j=1,2,3,\ldots)$ have the form 
\begin{align*}
u_j^\mathrm{in}|_{s=1}
&= F_j(u_0^\mathrm{in}|_{s=1}, \ldots, (\ds^j u_0^\mathrm{in})|_{s=1}, \theta_0(u_0^\mathrm{in}), \ldots, \theta_j(u_0^\mathrm{in})) \\
&= A_j(u_0^\mathrm{in}|_{s=1},\theta_0(u_0^\mathrm{in}))(\ds^ju_0^\mathrm{in})|_{s=1} \\
&\quad\;
 + G_j(u_0^\mathrm{in}|_{s=1}, \ldots, (\ds^{j-1}u_0^\mathrm{in})|_{s=1},\theta_0(u_0^\mathrm{in}), \ldots, \theta_j(u_0^\mathrm{in})),
\end{align*}
where $\theta_0,\ldots,\theta_j$ are functionals defined in appropriate weighted Sobolev spaces, 
so that $u_j^\mathrm{in}|_{s=1}$ is not linear with respect to $(\ds^ju_0^\mathrm{in})|_{s=1}$ due to these nonlocal terms 
$\theta_0(u_0^\mathrm{in}), \ldots, \theta_j(u_0^\mathrm{in})$. 
This nonlocal character is caused by the tension of the string. 
Moreover, if we add $\frac{(s-1)^j}{j!}a_j$ to the initial data $u_0^\mathrm{in}$ to modify the value of $(\ds^ju_0^\mathrm{in})|_{s=1}$, 
then all the values of $u_k^\mathrm{in}|_{s=1}$ for $k=0,1,\ldots,j-1$ would change due to the nonlocal terms. 
Therefore, the standard technique for approximation of the initial data cannot be applicable directly to our problem.

\medskip
Our strategy to overcome this difficulty is to approximate the initial data in the form 
\[
v_0^{\mathrm{in}}(s) + \sum_{j=0}^{m-2}\delta^{-j}\psi_j^\delta(s)a_j + \delta^{-(m-2)}\psi_{m-1}^\varepsilon(s)a_{m-1},
\]
where $v_0^{\mathrm{in}}$ is a smooth approximation of the initial data $u_0^{\mathrm{in}}$, $\psi_j^\delta$ is defined for $\delta>0$ by 
$\psi_j^\delta(s)=\frac{(s-1)^j}{j!}\psi\left(\frac{s-1}{\delta}\right)$ with a cut-off function $\psi\in C_0^\infty(\mathbb{R})$ satisfying 
$\psi(s)=1$ for $|s|\leq1$, $\delta$ and $\varepsilon$ are small positive parameters; 
this function $\psi_j^\delta$ is naturally extended for $\delta\leq0$ as $\psi_j^\delta(s)\equiv0$. 
Then, we can regard the compatibility conditions as equations for $a_0,a_1,\ldots,a_{m-1}$, 
which are nonlinear contrary to the standard initial boundary value problems for nonlinear hyperbolic systems. 
However, the nonlinearity becomes as weak as we want by choosing the parameters $\delta$ and $\varepsilon$ sufficiently small. 
Therefore, we can apply the implicit function theorem to solve the equations for $a_0,a_1,\ldots,a_{m-1}$ 
if $v_0^{\mathrm{in}}$ is sufficiently close to $u_0^{\mathrm{in}}$ in an appropriate norm. 
To carry out this strategy, 
we use the continuity of the map $\mathbb{R}\ni\delta\mapsto\psi_j^\delta\in H^j$ as well as the discontinuity of the map 
$\mathbb{R}\ni\delta\mapsto\psi_j^\delta\in H^{j+1}$ at $\delta=0$. 
We also need to study the nonlocal terms $\theta_0(u_0^\mathrm{in}), \ldots, \theta_j(u_0^\mathrm{in})$ in detail.

\medskip
The contents of this paper are as follows. 
In Section \ref{sect:pre}, which is a preliminary section, we introduce weighted Sobolev spaces $X^m$ and $Y^m$ for non-negative integer $m$ 
and present basic properties of these spaces and related calculus inequalities. 
We also present estimates of solutions to a two-point boundary value problem related to \eqref{BVP2}. 
In Section \ref{sect:CC} we state precisely the compatibility conditions for the initial boundary value problem \eqref{Eq}--\eqref{IC} 
and for the problem \eqref{HP2}--\eqref{IC2}. 
Then, we state Propositions \ref{prop:AppID1} and \ref{prop:AppID2}, which ensure that the initial data for the problem \eqref{Eq}--\eqref{IC} 
and for the problem \eqref{HP2}--\eqref{IC2} can be approximated by smooth initial data satisfying higher order compatibility conditions. 
Since our proofs of these propositions are technical and long, we postpone them until Sections \ref{sect:AppID1} and \ref{sect:AppID2}. 
In Section \ref{sect:QLS} we derive a quasilinear system of equations for unknowns $(\bm{x},\bm{y},\tau,\nu)$, 
where $\bm{y}$ and $\nu$ are defined by \eqref{defNU}, and state Proposition \ref{prop:QLWP} which ensures the well-posedness locally in time 
of the initial boundary value problem for the quasilinear system in the class $\bm{x},\dot{\bm{x}},\bm{y} \in\mathscr{X}_T^m$ and 
$\tau',\dot{\tau}',\nu'\in\mathscr{X}_T^{m-1,*}$ with $m\geq4$. 
The proposition is proved by applying our previous result \cite[Theorem 2.8]{IguchiTakayama2023-2} on the well-posedness of the initial boundary value problem 
to a linearized system for the nonlinear problem \eqref{Eq}--\eqref{IC} and by a standard fixed point argument. 
In Section \ref{sect:Proof} we prove our main result in this paper, that is, Theorem \ref{th:main}. 
The proof is divided into three cases: the case $m\geq6$, the case $m=5$, and the critical case $m=4$. 
The case $m\geq6$ is proved by showing that the solution for the quasilinear system obtained in Section \ref{sect:QLS} with appropriately prepared initial data 
is in fact a solution of the reduced nonlinear problem \eqref{HP2}--\eqref{IC2} and satisfies the desired regularity. 
In order to show the theorem in the case $m=4$ and $5$, we first approximate the initial data by regular ones satisfying higher order compatibility conditions 
by using Proposition \ref{prop:AppID2} and then construct corresponding approximate solutions by using the result in the first case. 
Thanks to the a priori estimate obtained in \cite[Theorem 2.1]{IguchiTakayama2024} with a slight improvement
and a standard compactness argument, 
we can show that the approximate solutions converge to the desired solution. 
A difficulty in showing the strong continuity in time of the solution appears in the critical case $m=4$. 
To overcome this difficulty, we make use of the energy estimate. 
In Section \ref{sect:AppID1} we give a proof of Proposition \ref{prop:AppID1} on an approximation of the initial data for the problem \eqref{HP2}--\eqref{IC2}. 
In Section \ref{sect:AppID2} we give a proof of Proposition \ref{prop:AppID2} on an approximation of the initial data for the problem 
\eqref{Eq}--\eqref{IC} by modifying the calculations in Section \ref{sect:AppID1} in order that the approximate initial data also satisfy the constraints 
$|\bm{x}_0^{\mathrm{in}\prime}(s)|=1$ and $\bm{x}_0^{\mathrm{in}\prime}(s)\cdot\bm{x}_1^{\mathrm{in}\prime}(s)=1$ for $0<s<1$. 
Finally, in Section \ref{sect:APE} we give a remark on the improvement of the a priori estimate of the solution. 

\medskip
\noindent
{\bf Notation}. \ 
For $1\leq p\leq\infty$, we denote by $L^p$ the Lebesgue space on the open interval $(0,1)$. 
For non-negative integer $m$, we denote by $H^m$ the $L^2$ Sobolev space of order $m$ on $(0,1)$. 
The norm of a Banach space $B$ is denoted by $\|\cdot\|_B$. 
The inner product in $L^2$ is denoted by $(\cdot,\cdot)_{L^2}$. 
We put $\dt=\frac{\partial}{\partial t}$ and $\ds=\frac{\partial}{\partial s}$. 
The norm of a weighted $L^p$ space with a weight $s^\alpha$ is denoted by $\|s^\alpha u\|_{L^p}$, so that 
$\|s^\alpha u\|_{L^p}^p=\int_0^1s^{\alpha p}|u(s)|^p \mathrm{d}s$ for $1\leq p<\infty$. 
It is sometimes denoted by $\|\sigma^\alpha u\|_{L^p}$, too. 
This would cause no confusion. 
For a map $F$ defined in an open set $\Omega$ in a Banach space $B_1$ and with a value in a Banach space $B_2$, 
we denote by $DF(u)$ the Fr\'echet derivative of $F$ at $u\in\Omega$. 
The application of the linear bounded operator $Df(u)$ to $\hat{u}\in B_1$ is denoted by $DF(u)[\hat{u}]$. 
$[P,Q]=PQ-QP$ denotes the commutator. 
We denote by $C(a_1, a_2, \ldots)$ a positive constant depending on $a_1, a_2, \ldots$. 
$f\lesssim g$ means that there exists a non-essential positive constant $C$ such that $f\leq Cg$ holds. 
$f\simeq g$ means that $f\lesssim g$ and $g\lesssim f$ hold. 
$a_1 \vee a_2 = \max\{a_1,a_2\}$.

\medskip
\noindent
{\bf Acknowledgement} \\
T. I. is partially supported by JSPS KAKENHI Grant Number JP23K22404.

\section{Preliminaries}\label{sect:pre}
In this preliminary section, we recall the definition of the weighted Sobolev spaces $X^m$ and $Y^m$ and present basic properties of these spaces 
and related calculus inequalities. 
They were proved in Takayama \cite{Takayama2018} and Iguchi and Takayama \cite{IguchiTakayama2024, IguchiTakayama2023-2}.

\subsection{Function spaces and known properties}
For a non-negative integer $m$, following Reeken \cite{Reeken1979-1, Reeken1979-2}, Takayama \cite{Takayama2018}, and Iguchi and Takayama \cite{IguchiTakayama2024}, 
we define a weighted Sobolev space $X^m$ as a set of all function $u=u(s)\in L^2$ equipped with a norm $\|\cdot\|_{X^m}$ defined by 
\[
\|u\|_{X^m}^2 =
\begin{cases}
 \displaystyle
 \|u\|_{H^k}^2 + \sum_{j=1}^k\|s^j\ds^{k+j}u\|_{L^2}^2 &\mbox{for}\quad m=2k, \\
 \displaystyle
 \|u\|_{H^k}^2 + \sum_{j=1}^{k+1}\|s^{j-\frac12}\ds^{k+j}u\|_{L^2}^2 &\mbox{for}\quad m=2k+1.
\end{cases}
\]

\noindent
Preston \cite{Preston2011} used a weighted Sobolev space $N_m$ equipped with a norm $\|\cdot\|_{N_m}$ defined by 
$\|u\|_{N_m}^2 = \sum_{j=0}^m\|s^\frac{j}{2}\ds^ju\|_{L^2}^2$. 
Apparently the weights of these norms $\|\cdot\|_{X^m}$ and $\|\cdot\|_{N_m}$ are different, they are actually equivalent so that we have $X^m=N_m$. 
We will use another weighted Sobolev space $Y^m$ equipped with a norm $\|\cdot\|_{Y^m}$, which is defined so that 
$\|u\|_{X^{m+1}}^2=\|u\|_{L^2}^2+\|u'\|_{Y^m}^2$ holds. 
For more details, we refer to \cite{IguchiTakayama2024}.

For a function $u=u(s,t)$ depending also on time $t$ and for integers $m$ and $l$ satisfying $0\leq l\leq m$, 
we introduce norms $\opnorm{\cdot}_{m,l}$ and $\opnorm{\cdot}_{m,l}^\dag$ by 
$\opnorm{u(t)}_{m,l}^2 = \sum_{j=0}^l \|\dt^j u(t)\|_{X^{m-j}}^2$ and $\bigl( \opnorm{u(t)}_{m,l}^\dag \bigr)^2 = \sum_{j=0}^l \|\dt^j u(t)\|_{Y^{m-j}}^2$, 
respectively, and put $\opnorm{\cdot}_m=\opnorm{\cdot}_{m,m}$, $\opnorm{\cdot}_{m,*}=\opnorm{\cdot}_{m,m-1}$, and 
$\opnorm{\cdot}_m^\dag=\opnorm{\cdot}_{m,m}^\dag$, $\opnorm{\cdot}_{m,*}^\dag=\opnorm{\cdot}_{m,m-1}^\dag$. 
Corresponding to these norms, we define the spaces 
$\mathscr{X}_T^{m,l} = \bigcap_{j=0}^l C^j([0,T],X^{m-j})$, $\mathscr{X}_T^m = \mathscr{X}_T^{m,m}$, $\mathscr{X}_T^{m,*} = \mathscr{X}_T^{m,m-1}$, and similarly, 
$\mathscr{Y}_T^{m,l} = \bigcap_{j=0}^l C^j([0,T],Y^{m-j})$, $\mathscr{Y}_T^m = \mathscr{Y}_T^{m,m}$, $\mathscr{Y}_T^{m,*} = \mathscr{Y}_T^{m,m-1}$. 
We will solve the initial boundary value problem \eqref{Eq}--\eqref{IC} in the class $(\bm{x},\tau')\in\mathscr{X}_T^m\times\mathscr{X}_T^{m-1,*}$. 
However, in the critical case $m=4$, we need to use a weaker norm than $\opnorm{\cdot}_{3,*}$ to evaluate $\tau'$. 
For $\epsilon>0$, we introduce norms $\|\cdot\|_{X_\epsilon^k}$ for $k=1,2,3$ as 
\[
\|u\|_{X_\epsilon^k}^2 =
\begin{cases}
 \|s^\epsilon u\|_{L^\infty}^2 + \|s^{\frac12+\epsilon}u'\|_{L^2}^2 &\mbox{for}\quad k=1, \\
 \|u\|_{L^\infty}^2 + \|s^\epsilon u'\|_{L^2}^2 + \|s^{1+\epsilon} u''\|_{L^2}^2 &\mbox{for}\quad k=2, \\
 \|u\|_{L^\infty}^2 + \|s^\epsilon u'\|_{L^\infty}^2 + \|s^{\frac12+\epsilon}u''\|_{L^2}^2 + \|s^{\frac32+\epsilon}u'''\|_{L^2}^2
  &\mbox{for}\quad k=3,
\end{cases}
\]
and put $\opnorm{u(t)}_{3,*,\epsilon}^2 = \|u(t)\|_{X_\epsilon^3}^2 + \|\dt u(t)\|_{X_\epsilon^2}^2 + \|\dt^2 u(t)\|_{X_\epsilon^1}^2$ and 
$\mathscr{X}_T^{3,*,\epsilon}=\bigcap_{j=0}^2C^j([0,T];X_\epsilon^{3-j})$. 
As we will see in Lemma \ref{lem:NormEquiv}, for each $\epsilon\in(0,\frac12)$ the norm $\|\cdot\|_{X_\epsilon^k}$ satisfies the equivalence 
\begin{equation}\label{NormEquiv}
\|u\|_{X_\epsilon^k}^2 \simeq
\begin{cases}
 \|u\|_{L^2}^2 + \|s^{\frac12+\epsilon}u'\|_{L^2}^2 &\mbox{for}\quad k=1, \\
 \|u\|_{L^2}^2 + \|s^\epsilon u'\|_{L^2}^2 + \|s^{1+\epsilon} u''\|_{L^2}^2 &\mbox{for}\quad k=2, \\
 \|u\|_{H^1}^2 + \|s^{\frac12+\epsilon}u''\|_{L^2}^2 + \|s^{\frac32+\epsilon}u'''\|_{L^2}^2
  &\mbox{for}\quad k=3.
\end{cases}
\end{equation}

The weighted Sobolev space $X^m$ is characterize as follows: 
Let $D$ be the unit disc in $\mathbb{R}^2$ and $H^m(D)$ the $L^2$ Sobolev space of order $m$ on $D$. 
For a function $u$ defined in the open interval $(0,1)$, we define $u^\sharp(x_1,x_2)=u(x_1^2+x_2^2)$ which is a function on $D$.

\begin{lemma}[{\cite[Proposition 3.2]{Takayama2018}}]\label{lem:NormEq}
Let $m$ be a non-negative integer. 
The map $X^m\ni u \mapsto u^\sharp \in H^m(D)$ is bijective and it holds that $\|u\|_{X^m} \simeq \|u^\sharp\|_{H^m(D)}$ 
for any $u\in X^m$. 
\end{lemma}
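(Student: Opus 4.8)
\textbf{Proof plan for Lemma \ref{lem:NormEq}.}

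The statement to be proved is the characterization of $X^m$ via the substitution $u \mapsto u^\sharp$, where $u^\sharp(x_1,x_2)=u(x_1^2+x_2^2)=u(|x|^2)$ on the unit disc $D$. The plan is to compute $\|u^\sharp\|_{H^m(D)}$ in polar coordinates and match the resulting weighted one-dimensional integrals against the definition of $\|u\|_{X^m}$. First I would pass to polar coordinates $(r,\theta)$ and set $\rho=r^2=s$, so that $u^\sharp(x)=u(\rho)$ is radial. Because $u^\sharp$ depends only on $r$, the angular derivatives contribute nothing, and the Laplacian acting on a radial function is $\Delta u^\sharp = u^{\sharp\prime\prime}(r) + r^{-1}u^{\sharp\prime}(r)$. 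Iterating, one sees that $\ds_r^k u^\sharp$ is a linear combination, with coefficients that are integer powers of $r$, of the quantities $\rho^j \ds_\rho^{k-\lceil k/2\rceil + \text{shift}} u$; more precisely the change of variables $\rho=r^2$ gives $\ds_r = 2r\,\ds_\rho$, so $\ds_r^k$ applied to a function of $\rho$ produces terms $r^{k-2i}\ds_\rho^{k-i}u$ for $0\le i\le \lfloor k/2\rfloor$ (this is the key combinatorial identity, provable by induction on $k$). The surface measure is $\mathrm{d}x = r\,\mathrm{d}r\,\mathrm{d}\theta$, and $r\,\mathrm{d}r = \tfrac12\mathrm{d}\rho$, so each term $\int_D |r^{k-2i}\ds_\rho^{k-i}u|^2\,\mathrm{d}x$ becomes, up to a constant, $\int_0^1 r^{2(k-2i)} |\ds_\rho^{k-i}u(\rho)|^2\, r\,\mathrm{d}r = c\int_0^1 \rho^{k-2i}|\ds_\rho^{k-i}u(\rho)|^2\,\mathrm{d}\rho$.

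Next I would organize these contributions by the total derivative order $\ell=k-i$ in $\rho$, ranging over $\lceil k/2\rceil \le \ell \le k$, so the exponent of the weight is $k-2i = 2\ell - k$. Writing $\ell = \lceil m/2\rceil + p$ as in the definition of $X^m$ (with $m=k$ the Sobolev order on $D$), one checks that the set of pairs (weight exponent, derivative order) arising from $\sum_{|\alpha|\le m}\|\ds^\alpha u^\sharp\|_{L^2(D)}^2$ coincides, after collecting terms of equal derivative order and taking the highest weight for each, with exactly the terms $\|u\|_{H^k}^2 + \sum_{j\ge 1}\|s^{j(-1/2)}\ds^{k+j}u\|_{L^2}^2$ appearing in $\|u\|_{X^m}^2$ for the two parities $m=2k$ and $m=2k+1$. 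Lower-order weighted terms with the same derivative order are dominated by the one with the smallest weight exponent (largest weight near $s=0$ is the binding constraint since $s\le 1$), and conversely each $X^m$-term appears among the $H^m(D)$-terms; this gives the two-sided equivalence $\|u\|_{X^m}\simeq\|u^\sharp\|_{H^m(D)}$. Bijectivity of the map is then immediate: surjectivity holds because a radial $H^m(D)$ function pulls back to an $X^m$ function by the same computation read backwards (and radial functions are dense enough / the equivalence shows the pullback lies in $X^m$), while injectivity is clear since $u^\sharp\equiv 0$ forces $u\equiv 0$ a.e.

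The main obstacle I anticipate is the bookkeeping in the induction establishing the precise form of $\ds_r^k u^\sharp$ — specifically, verifying that no cancellation destroys the leading weighted term and that the half-integer weights in the odd case $m=2k+1$ emerge correctly from the extra derivative. One has to be careful that $\ds_r^{2k+1}$ on a radial function genuinely produces a term with weight $r^{1}$ times $\ds_\rho^{k+1}u$ (giving, after $r\,\mathrm{d}r=\tfrac12\mathrm{d}\rho$, the weight $\rho^{1/2}$ in $\|s^{1/2}\ds^{k+1}u\|$ style), and more generally that the highest-order radial derivative of order $2k+j$ contributes the term $\|s^{j-1/2}\ds^{k+j}u\|_{L^2}$. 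Since Lemma \ref{lem:NormEq} is quoted from \cite[Proposition 3.2]{Takayama2018}, I would, in the interest of brevity, carry out the induction carefully for small $k$, state the general pattern, and refer to \cite{Takayama2018} for the remaining details rather than reproducing the full combinatorial argument here.
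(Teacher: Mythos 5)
This lemma is not proved in the paper at all: it is quoted verbatim from \cite[Proposition 3.2]{Takayama2018}, so there is no internal proof to compare against, and your plan has to stand on its own. Its overall strategy (change of variables $s=r^2$, match weighted one-dimensional integrals against the definition of $\|\cdot\|_{X^m}$) is the right heuristic, and your combinatorial identity $\ds_r^k\,u(r^2)=\sum_i c_{k,i}\,r^{k-2i}(\ds_s^{k-i}u)(r^2)$ is correct. But there is a genuine gap at the very first step: the $H^m(D)$ norm is defined through \emph{Cartesian} derivatives $\partial^\alpha u^\sharp$, $|\alpha|\le m$, and for a radial function these are not exhausted by the pure radial derivatives $\ds_r^k u^\sharp$. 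Saying ``the angular derivatives contribute nothing'' does not justify replacing $\|u^\sharp\|_{H^m(D)}^2$ by $\sum_{k\le m}\|\ds_r^k u^\sharp\|_{L^2(r\,\mathrm{d}r\,\mathrm{d}\theta)}^2$: mixed derivatives such as $\partial_{x_1}\partial_{x_2}u^\sharp$ involve $r^{-1}\ds_r$-type combinations, and the equivalence of the full $H^m$ norm with the radial-derivative norm for radial functions is itself a nontrivial (Hardy-inequality) statement that you would have to prove. The clean way to run your computation is to avoid polar coordinates altogether and differentiate $u^\sharp(x)=u(|x|^2)$ directly in Cartesian variables, obtaining $\partial^\alpha u^\sharp=\sum_{j}P_{\alpha,j}(x)\,(\ds_s^j u)(|x|^2)$ with $P_{\alpha,j}$ a homogeneous polynomial of degree $2j-|\alpha|$; together with $\int_D f(|x|^2)\,\mathrm{d}x=\pi\int_0^1 f(s)\,\mathrm{d}s$ this yields the bound $\|u^\sharp\|_{H^m(D)}\lesssim\|u\|_{X^m}$ for both parities of $m$, which is the direction your bookkeeping actually delivers.

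The second, and harder, half of the equivalence, $\|u\|_{X^m}\lesssim\|u^\sharp\|_{H^m(D)}$, is not established by your argument: observing that ``each $X^m$-term appears among the $H^m(D)$-terms'' is not enough, because each $\partial^\alpha u^\sharp$ (or $\ds_r^k u^\sharp$) is a \emph{sum} of such terms, and one must extract the top-weight term $s^{j}\ds_s^{k+j}u$ (resp. $s^{j-\frac12}\ds_s^{k+j}u$) from these sums without cancellation, typically by an induction on the derivative order combined with weighted Hardy-type inequalities for the intermediate terms. You flag this yourself as the ``main obstacle'' and propose to verify small cases and refer to \cite{Takayama2018} for the rest; that is an honest disposition, and consistent with how the present paper treats the lemma, but it means your text is a proof plan rather than a proof. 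Finally, note that the map $u\mapsto u^\sharp$ can only be bijective onto the subspace of radial functions of $H^m(D)$ (every $u^\sharp$ is radial); your reading of surjectivity in terms of radial functions is the correct interpretation of the statement, but the pull-back argument again relies on the missing reverse inequality to conclude that the preimage lies in $X^m$.
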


\begin{lemma}[{\cite[Lemma 4.5]{IguchiTakayama2024}}]\label{lem:embedding2}
For a non-negative integer $m$, we have $\|su'\|_{X^m} \lesssim \|u\|_{X^{m+1}}$, $\|u'\|_{X^m} \leq \|u\|_{X^{m+2}}$, 
and $\|\ds^mu\|_{L^\infty} \lesssim \|u\|_{X^{2m+2}}$. 
\end{lemma}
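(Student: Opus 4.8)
\textbf{Proof proposal for Lemma \ref{lem:embedding2}.}

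The plan is to reduce everything to the characterization of $X^m$ via the two-dimensional pullback $u \mapsto u^\sharp$ given in Lemma \ref{lem:NormEq}, where $u^\sharp(x_1,x_2)=u(x_1^2+x_2^2)$ and $\|u\|_{X^m}\simeq\|u^\sharp\|_{H^m(D)}$ with $D$ the unit disc. The key observation is that the radial substitution $r^2=x_1^2+x_2^2=s$ converts a single $\ds$ acting on $u$ into the operator $\tfrac{1}{2r}\partial_r$ on $u^\sharp$, and that the weight $s$ corresponds to $r^2$; so the weighted one-dimensional derivatives appearing in $\|\cdot\|_{X^m}$ are exactly the pieces one needs to control planar derivatives of $u^\sharp$ in polar coordinates, where the Jacobian $r\,\mathrm{d}r\,\mathrm{d}\theta$ supplies the correct measure.

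First I would prove the third inequality, $\|\ds^m u\|_{L^\infty}\lesssim\|u\|_{X^{2m+2}}$. Since $u^\sharp$ is a radial function and $H^{m+1}(D)\hookrightarrow C^0(\overline D)$ in two dimensions (and in fact $H^{m+1}(D)\hookrightarrow C^m(\overline D)$ with control of derivatives up to order $m$ at the origin — wait, more carefully: $H^{2m+2}(D)\hookrightarrow C^{2m+1}(\overline D) \supset C^{2m}$), differentiating $u^\sharp$ in the planar variables $2m$ times and evaluating along a radius gives, via the chain rule, a combination of $s^{k}\ds^{j}u$ with $j\le 2m$ and appropriate $k$; inverting this triangular relation one recovers $\ds^m u = \ds^m u$ pointwise from planar derivatives of $u^\sharp$ of order $\le 2m$, whence the sup bound follows from $\|u^\sharp\|_{C^{2m}(\overline D)}\lesssim\|u^\sharp\|_{H^{2m+2}(D)}\simeq\|u\|_{X^{2m+2}}$. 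The bound $\|u'\|_{X^m}\le\|u\|_{X^{m+2}}$ should follow by noting that on the planar side, passing from $u$ to $u'$ corresponds roughly to applying $\tfrac{1}{2r}\partial_r$, and the operator $\tfrac{1}{r}\partial_r$ maps $H^{m+2}(D)_{\mathrm{rad}}$ into $H^m(D)_{\mathrm{rad}}$ because it is, up to smooth terms, the planar Laplacian's radial part divided by a further $\partial_r$ — more directly, one checks on the explicit norms that every term $s^{j}\ds^{k+j}(u')=s^{j}\ds^{k+j+1}u$ appearing in $\|u'\|_{X^m}^2$ already appears (with the same or a smaller power of $s$) in the sum defining $\|u\|_{X^{m+2}}^2$, treating the even and odd cases $m=2k$, $m=2k+1$ separately and using $s\le 1$ on $(0,1)$ to absorb surplus weights.

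The middle inequality $\|su'\|_{X^m}\lesssim\|u\|_{X^{m+1}}$ I would prove the same way: expand $\|su'\|_{X^m}^2$ using the Leibniz rule $\ds^l(su')=s\,\ds^{l+1}u+l\,\ds^{l}u$, so each term is a combination of $s^{j+1}\ds^{k+j+1}u$ and $s^{j}\ds^{k+j}u$, and then match these against the terms of $\|u\|_{X^{m+1}}^2$, again using $0<s<1$ to drop excess powers of $s$ and the elementary one-dimensional interpolation/Hardy-type bounds already recorded in the earlier references (Takayama \cite{Takayama2018}, Iguchi–Takayama \cite{IguchiTakayama2024}) to handle the lower-order pieces $\|\ds^l u\|_{L^2}$ by $\|u\|_{H^{k}}$ or by the weighted terms. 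The main obstacle, and the only place requiring genuine care rather than bookkeeping, is the bottom end of the even/odd weighted sums — the terms with the lowest weight power $s^{1/2}$ or $s^{1}$ — where one cannot simply discard the weight; there one must invoke the weighted Hardy inequality (equivalently, the $H^m(D)$-characterization near the origin) to trade a derivative for a weight, and keep track of the half-integer shifts when $m$ changes parity. Once the parity cases are organized, each inequality reduces to a finite comparison of finitely many weighted $L^2$ norms, which is routine.
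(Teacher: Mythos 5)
This lemma is quoted in the paper from \cite[Lemma 4.5]{IguchiTakayama2024} and is not proved here, so there is no in-paper argument to compare against; I therefore judge your proposal on its own terms. For the first two inequalities your plan is correct and, in fact, simpler than you suggest: writing out the definition of $\|\cdot\|_{X^m}$ in the two parity cases and using the Leibniz rule $\ds^l(su')=s\,\ds^{l+1}u+l\,\ds^l u$, every resulting term $s^a\ds^l u$ has a weight exponent $a$ \emph{at least as large} as the exponent of the corresponding term in $\|u\|_{X^{m+1}}$ (resp.\ $\|u\|_{X^{m+2}}$), so on $(0,1)$ one simply discards the surplus weight. No weighted Hardy inequality is needed anywhere, including at the ``bottom end'': e.g.\ for $m=2k$ the lowest weighted term produces $s^2\ds^{k+2}u$ and $s\,\ds^{k+1}u$, which are dominated by the $X^{2k+1}$-terms $s^{3/2}\ds^{k+2}u$ and $s^{1/2}\ds^{k+1}u$. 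For the second inequality the matching is exact (same weights, a subset of terms), which is why the statement holds with constant $1$.

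The genuine gap is in your treatment of the sup-norm bound. First, the embedding you invoke is misstated: in two dimensions $H^{2m+2}(D)$ embeds into $C^{2m}(\overline D)$ but not into $C^{2m+1}(\overline D)$ (the borderline case fails); you only use $C^{2m}$, but the claim as written is false. More seriously, ``inverting the triangular relation'' to recover $\ds^m u$ pointwise from planar derivatives of $u^\sharp$ is not routine near the origin: since $\ds=\tfrac{1}{2r}\partial_r$, the inversion introduces negative powers of $r$, and boundedness of $(\tfrac{1}{2r}\partial_r)^m u^\sharp$ as $r\to0$ requires a cancellation argument (evenness of the radial trace plus Taylor expansion with remainder), which you do not supply. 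This step can be repaired, but it is unnecessary: the norm $\|u\|_{X^{2m+2}}$ contains the full unweighted $H^{m+1}$ norm, so the one-dimensional Sobolev embedding already gives $\|\ds^m u\|_{L^\infty}\lesssim\|\ds^m u\|_{H^1(0,1)}\leq\|u\|_{H^{m+1}}\leq\|u\|_{X^{2m+2}}$, with no passage to the disc at all.
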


\begin{lemma}[{\cite[Lemma 4.7]{IguchiTakayama2024}}]\label{lem:algebraX}
It holds that  $\|uv\|_{X^m} \lesssim \|u\|_{X^{m\vee2}}\|v\|_{X^m}$ for $m=0,1,2,\ldots$. 
\end{lemma}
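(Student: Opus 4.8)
The plan is to reduce the estimate to a standard product (Moser-type) inequality on the two-dimensional disc $D$ via the characterization of $X^m$ in Lemma \ref{lem:NormEq}. The key elementary observation is that the correspondence $u\mapsto u^\sharp$ is multiplicative: since $u^\sharp(x_1,x_2)=u(x_1^2+x_2^2)$, one has $(uv)^\sharp=u^\sharp v^\sharp$ pointwise on $D$. Hence, by Lemma \ref{lem:NormEq},
\[
\|uv\|_{X^m}\simeq\|(uv)^\sharp\|_{H^m(D)}=\|u^\sharp v^\sharp\|_{H^m(D)},\qquad \|u\|_{X^k}\simeq\|u^\sharp\|_{H^k(D)}\quad(k\in\{m,\,m\vee2\}),
\]
and since this map is bijective onto the relevant $H^k(D)$, it suffices to establish $\|fg\|_{H^m(D)}\lesssim\|f\|_{H^{m\vee2}(D)}\|g\|_{H^m(D)}$ for arbitrary $f,g$; here $D$ is a smooth bounded planar domain, so all the usual Sobolev embeddings, extension operators, and Gagliardo–Nirenberg inequalities are available.

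For $m\geq2$ this is exactly the assertion that $H^m(D)$ is a Banach algebra, valid because $m\geq2>n/2$ with $n=2$. Writing, for $|\alpha|\leq m$, $\partial^\alpha(fg)=\sum_{\beta\leq\alpha}\binom{\alpha}{\beta}\partial^\beta f\,\partial^{\alpha-\beta}g$ and estimating each term by Hölder with exponents $1/p+1/q=1/2$ followed by Gagliardo–Nirenberg on $D$ gives $\|\partial^\beta f\,\partial^{\alpha-\beta}g\|_{L^2(D)}\lesssim\|f\|_{H^m(D)}\|g\|_{H^m(D)}$. Since $m\vee2=m$ in this range, the claim follows.

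The borderline indices $m=0$ and $m=1$ have to be treated by hand, as $L^2(D)$ and $H^1(D)$ are not algebras. For $m=0$ we simply write $\|uv\|_{X^0}=\|uv\|_{L^2}\leq\|u\|_{L^\infty}\|v\|_{L^2}\lesssim\|u\|_{X^2}\|v\|_{X^0}$, invoking the embedding $\|u\|_{L^\infty}\lesssim\|u\|_{X^2}$ from Lemma \ref{lem:embedding2} (case $m=0$). For $m=1$, from $\nabla(fg)=(\nabla f)g+f\nabla g$ and Hölder,
\[
\|fg\|_{H^1(D)}\lesssim\|f\|_{L^\infty(D)}\|g\|_{H^1(D)}+\|\nabla f\|_{L^4(D)}\|g\|_{L^4(D)}\lesssim\|f\|_{H^2(D)}\|g\|_{H^1(D)},
\]
where we used the planar embeddings $H^2(D)\hookrightarrow L^\infty(D)$, $H^2(D)\hookrightarrow W^{1,4}(D)$ and $H^1(D)\hookrightarrow L^4(D)$ (the last two following from $W^{1,2}(D)\hookrightarrow L^q(D)$ for all $q<\infty$ in two dimensions); transporting back through Lemma \ref{lem:NormEq} then yields $\|uv\|_{X^1}\lesssim\|u\|_{X^2}\|v\|_{X^1}$.

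I do not expect any genuine difficulty here: the whole content is the multiplicativity of $u\mapsto u^\sharp$ combined with Lemma \ref{lem:NormEq}, after which everything reduces to a classical algebra estimate on a smooth bounded domain. The only point requiring mild care is the low-regularity bookkeeping in the cases $m=0,1$ — choosing the right Lebesgue exponents and citing the correct two-dimensional Sobolev embeddings — precisely because there $H^m(D)$ fails to be an algebra and one must use the two extra derivatives carried by $X^{m\vee2}=X^2$.
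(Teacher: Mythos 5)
Your argument is correct: the multiplicativity $(uv)^\sharp=u^\sharp v^\sharp$, Lemma \ref{lem:NormEq}, the algebra property of $H^m(D)$ for $m\geq2$ in two dimensions, and the hand-made treatment of $m=0,1$ (using the two extra derivatives carried by $X^{m\vee2}=X^2$ via $H^2(D)\hookrightarrow L^\infty(D)\cap W^{1,4}(D)$ and $H^1(D)\hookrightarrow L^4(D)$) together give the stated bound. Note that this paper does not prove the lemma itself — it is quoted from \cite[Lemma 4.7]{IguchiTakayama2024} — so there is no in-paper proof to compare against; your transfer-to-the-disc strategy is, however, precisely the mechanism the paper invokes to deduce the tame estimates of Lemmas \ref{lem:tame1} and \ref{lem:tame2} from Lemma \ref{lem:NormEq}, so it is fully in the spirit of the paper's toolkit.
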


\begin{lemma}[{\cite[Lemma 9.1]{IguchiTakayama2024}}]\label{lem:EstAu}
If $\tau|_{s=0}$, then we have 
\[
\|(\tau u')'\|_{X^m} \lesssim
\begin{cases}
 \min\{ \|\tau'\|_{L^\infty} \|u\|_{X^2}, \|\tau'\|_{X^1} \|u\|_{X^3} \} &\mbox{for}\quad m=0, \\
 \min\{ \|\tau'\|_{X^{m \vee 2}} \|u\|_{X^{m+2}}, \|\tau'\|_{X^m} \|u\|_{X^{m+2 \vee 4}} \} &\mbox{for}\quad m=0,1,2,\ldots.
\end{cases}
\]
\end{lemma}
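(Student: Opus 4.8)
The plan is to expand $(\tau u')'$, split off the part carrying the degeneracy at $s=0$, and transfer the estimate to the unit disc $D$ via Lemma~\ref{lem:NormEq}, where that degenerate second-order operator becomes the ordinary Laplacian. Since $\tau|_{s=0}=0$, the function $\tilde\tau:=\tau/s$ is well defined on $(0,1)$, equals $\int_0^1\tau'(\theta s)\,\mathrm{d}\theta$, and satisfies $s\tilde\tau'=\tau'-\tilde\tau$; a direct computation then gives the identity
\[
(\tau u')' = \tilde\tau\,(u'+su'') + (\tau'-\tilde\tau)\,u'.
\]
So it is enough to estimate $u'+su''$ and $u'$ against norms of $u$, and $\tilde\tau$ and $\tau'-\tilde\tau$ against norms of $\tau'$, and then multiply.

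For the degenerate factor, writing $s=|x|^2$ and using that $u^\sharp$ is radial, the planar Laplacian satisfies $\Delta(u^\sharp)=4\,(u'+su'')^\sharp$ on $D$ (classically for smooth $u$, hence for all $u\in X^{k+2}$ by density); hence by Lemma~\ref{lem:NormEq}, for every $k\ge0$,
\[
\|u'+su''\|_{X^k}\simeq\|\Delta(u^\sharp)\|_{H^k(D)}\lesssim\|u^\sharp\|_{H^{k+2}(D)}\simeq\|u\|_{X^{k+2}},
\]
while $\|u'\|_{X^k}\le\|u\|_{X^{k+2}}$ by Lemma~\ref{lem:embedding2}. For $\tilde\tau$, from $\tilde\tau^\sharp(x)=\int_0^1(\tau')^\sharp(\sqrt\theta\,x)\,\mathrm{d}\theta$ and the elementary scaling bound $\|F(\sqrt\theta\,\cdot)\|_{H^k(D)}\lesssim\theta^{-1/2}\|F\|_{H^k(D)}$ for $\theta\in(0,1]$, Minkowski's integral inequality together with Lemma~\ref{lem:NormEq} gives $\|\tilde\tau\|_{X^k}\lesssim\|\tau'\|_{X^k}$ (and $\|\tilde\tau\|_{L^\infty}\le\|\tau'\|_{L^\infty}$ directly from $\tilde\tau(s)=\tfrac1s\int_0^s\tau'$), so also $\|\tau'-\tilde\tau\|_{X^k}\lesssim\|\tau'\|_{X^k}$ and $\|\tau'-\tilde\tau\|_{L^\infty}\lesssim\|\tau'\|_{L^\infty}$.

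Assembling: for $m\ge1$ apply Lemma~\ref{lem:algebraX} to each product in the identity above; with the two previous estimates and $\|u'\|_{X^m}\le\|u\|_{X^{m+2}}$ this yields $\|(\tau u')'\|_{X^m}\lesssim\|\tau'\|_{X^{m\vee2}}\|u\|_{X^{m+2}}$, and, placing the higher index on $u$ in the symmetric form of Lemma~\ref{lem:algebraX}, $\|(\tau u')'\|_{X^m}\lesssim\|\tau'\|_{X^m}\|u\|_{X^{(m+2)\vee4}}$; the minimum of the two is the second line, and the case $m=0$ there is identical. For the first line ($m=0$) use instead $\|fg\|_{L^2}\le\|f\|_{L^\infty}\|g\|_{L^2}$ with Step~2 (at $k=0$), giving the bound with $\|\tau'\|_{L^\infty}\|u\|_{X^2}$, and the two-dimensional product estimate $\|fg\|_{L^2}\lesssim\|f\|_{X^1}\|g\|_{X^1}$, a consequence of $X^1\simeq H^1(D)\hookrightarrow L^4(D)$, which together with $\|u'\|_{X^1}\le\|u\|_{X^3}$ gives the bound with $\|\tau'\|_{X^1}\|u\|_{X^3}$.

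The whole difficulty lies in the degeneracy at $s=0$: naively $\tau u''$ would cost two extra derivatives of $u$, but $\tau$ cancels exactly one power of $s$, and the two structural facts that make this quantitative are the identity $\Delta(u^\sharp)=4(u'+su'')^\sharp$, which recasts the degenerate operator as the Laplacian on $D$ so that Lemma~\ref{lem:NormEq} applies, and the Hardy-type estimate $\|\tau/s\|_{X^k}\lesssim\|\tau'\|_{X^k}$. Once these are in hand, the rest is bookkeeping with Lemmas~\ref{lem:NormEq}--\ref{lem:algebraX} and standard two-dimensional Sobolev embeddings.
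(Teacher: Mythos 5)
Your proposal is correct. Note that this paper does not prove the lemma at all — it is imported verbatim from \cite[Lemma 9.1]{IguchiTakayama2024} — so there is no in-paper argument to compare with; judged on its own, your proof is sound and self-contained. The two structural ingredients check out: the identity $(\tau u')'=\tilde\tau(u'+su'')+(\tau'-\tilde\tau)u'$ with $\tilde\tau=\tau/s=\mathscr{M}\tau'$, the radial computation $\Delta(u^\sharp)=4(u'+su'')^\sharp$ giving $\|u'+su''\|_{X^k}\lesssim\|u\|_{X^{k+2}}$ via Lemma \ref{lem:NormEq} (this is genuinely sharper than what iterating Lemma \ref{lem:embedding2} would give, and it is exactly where the degeneracy is absorbed), and the Hardy-type bound $\|\tilde\tau\|_{X^k}\lesssim\|\tau'\|_{X^k}$, which you reprove by the scaling/Minkowski argument on $D$ but which is already available as Lemma \ref{lem:AvOp}, so you could simply cite it. The assembly is also fine: Lemma \ref{lem:algebraX} used in both symmetric forms yields the two members of the minimum in the second line, with $(m\vee2)+2=(m+2)\vee4$ as required, and for the $m=0$ line the $L^\infty\times L^2$ pairing together with the $H^1(D)\hookrightarrow L^4(D)$ product estimate give the two stated bounds. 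The only points deserving a word of care are the density argument legitimizing $\Delta(u^\sharp)=4(u'+su'')^\sharp$ for general $u\in X^{k+2}$ (smooth approximation in $X^{k+2}$ plus distributional convergence of $u'$, $u''$ on $(0,1)$, which you indicate), and the reading of Lemma \ref{lem:NormEq} as a norm equivalence onto radial elements of $H^m(D)$, which is all you actually use; neither is a gap.
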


\begin{lemma}[{\cite[Lemma 3.9]{IguchiTakayama2023-2}}]\label{lem:EstAu2}
Let $m$ and $j$ be integers such that $1\leq j\leq m$. 
If $\tau|_{s=0}$, then we have 
\[
\opnorm{ (\tau u')'(t) }_{m,j} \lesssim
\begin{cases}
 \min\{ \opnorm{\tau'(t)}_1 \opnorm{u(t)}_{4,1}, \opnorm{\tau'(t)}_{2,1} \opnorm{u(t)}_{3,1}  \} &\mbox{for}\quad m=1, \\
 \opnorm{\tau'(t)}_{m,j} \opnorm{u(t)}_{m+2,j} &\mbox{for}\quad m=2,3,\ldots.
\end{cases}
\]
\end{lemma}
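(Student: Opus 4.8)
The plan is to differentiate $(\tau u')'$ in $t$ by the Leibniz rule and reduce everything to the static inequality of Lemma~\ref{lem:EstAu}, applied to time‑differentiated quantities. Since the boundary relation $\tau(0,t)=0$ holds for all $t$, one has $(\dt^i\tau)|_{s=0}=0$ for every $i\ge0$, so Lemma~\ref{lem:EstAu} will be applicable with $\dt^i\tau$ in place of $\tau$. Because $\dt$ and $\ds$ commute, for $0\le k\le j$ I will write
\begin{equation*}
\dt^k\bigl((\tau u')'\bigr)=\sum_{i=0}^{k}\binom{k}{i}\bigl((\dt^i\tau)\,(\dt^{k-i}u)'\bigr)',
\end{equation*}
and, since $\opnorm{(\tau u')'(t)}_{m,j}^2=\sum_{k=0}^{j}\bigl\|\dt^k\bigl((\tau u')'\bigr)(t)\bigr\|_{X^{m-k}}^2$, it will suffice to bound each term $\bigl\|\bigl((\dt^i\tau)(\dt^{k-i}u)'\bigr)'\bigr\|_{X^{m-k}}$ with $i+(k-i)=k\le j$.

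For a fixed such term I set $a=i$, $b=k-i$, $\mu=m-k$, so that $a+b+\mu=m$ and $a,b\le k\le j$. Lemma~\ref{lem:EstAu}, applied with $\dt^a\tau$ and $\dt^b u$, bounds $\bigl\|\bigl((\dt^a\tau)(\dt^b u)'\bigr)'\bigr\|_{X^\mu}$ by the minimum of products $\|\dt^a\tau'\|_{X^p}\|\dt^b u\|_{X^q}$ with the exponents listed there, an $L^\infty$‑norm being admissible in place of $\|\cdot\|_{X^p}$ when $\mu=0$. Because $\opnorm{\tau'(t)}_{m,j}$ dominates $\|\dt^a\tau'\|_{X^{m-a}}$ and $\opnorm{u(t)}_{m+2,j}$ dominates $\|\dt^b u\|_{X^{m+2-b}}$ for $a,b\le j$, and because $X^{m+1}\hookrightarrow X^m$ together with $\|v\|_{L^\infty}\lesssim\|v\|_{X^2}$ from Lemma~\ref{lem:embedding2}, it is enough to exhibit, in each configuration $(\mu,a,b)$, a branch of the minimum with $p\le m-a$ and $q\le m+2-b$ (reading $p\le m-a$ as $2\le m-a$ for the $L^\infty$ branch). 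Inspecting the exponents one sees that the branch with $u$ in $X^{\mu+2}$ serves whenever $a\le m-2$, the branch with $u$ in $X^{(\mu+2)\vee4}$ whenever $b\le m-2$, and since $a+b=k\le j\le m$ at least one of these holds unless $a=m$ or $b=m$ — which is possible only when $\mu=0$, i.e. $k=j=m$. In that last situation the two remaining branches, $\|\dt^m\tau'\|_{X^0}\|u\|_{X^4}$ and $\|\tau'\|_{L^\infty}\|\dt^m u\|_{X^2}$, close the estimate via $X^{m+2}\hookrightarrow X^4$ and $X^m\hookrightarrow L^\infty$; here the hypothesis $m\ge2$ is used. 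The case $m=1$, where the asserted bound is a minimum of two products, I will treat identically after the splitting $\dt\bigl((\tau u')'\bigr)=\bigl((\dt\tau)u'\bigr)'+\bigl(\tau(\dt u)'\bigr)'$, choosing for $\opnorm{\tau'(t)}_1\opnorm{u(t)}_{4,1}$ the branches $\|\tau'\|_{X^1}\|u\|_{X^4}$, $\|\dt\tau'\|_{X^0}\|u\|_{X^4}$, $\|\tau'\|_{X^1}\|\dt u\|_{X^3}$ and for $\opnorm{\tau'(t)}_{2,1}\opnorm{u(t)}_{3,1}$ the branches $\|\tau'\|_{X^2}\|u\|_{X^3}$, $\|\dt\tau'\|_{X^1}\|u\|_{X^3}$, $\|\tau'\|_{L^\infty}\|\dt u\|_{X^2}$.

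The main obstacle is entirely bookkeeping: organizing the Leibniz expansion and, for each of the finitely many index patterns, picking the branch of the minimum in Lemma~\ref{lem:EstAu} whose weighted‑Sobolev exponents are compatible with the norms $\opnorm{\cdot}_{m,j}$ and $\opnorm{\cdot}_{m+2,j}$. The only genuinely delicate patterns are the endpoint terms where all time derivatives land on a single factor, where one is forced to the complementary branch of the minimum and to the embedding $X^m\hookrightarrow L^\infty$ — which is precisely why the restriction $m\ge2$, and the two‑term minimum in the case $m=1$, enters the statement.
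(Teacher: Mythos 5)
This lemma is not proved in the present paper at all --- it is quoted verbatim from \cite[Lemma 3.9]{IguchiTakayama2023-2} --- so there is no in-paper proof to compare with. Your strategy (Leibniz expansion in $t$, then the static estimate of Lemma \ref{lem:EstAu} applied to $(\dt^a\tau)(\dt^b u)$, which is legitimate because $\dt^a\tau|_{s=0}=0$ for all $a$) is the natural one, and your treatment of the case $m=1$ is correct branch by branch.

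There is, however, a concrete error in the case analysis for $m\ge2$. Your claim that, since $a+b=k\le j\le m$, one of the conditions $a\le m-2$ or $b\le m-2$ holds ``unless $a=m$ or $b=m$'' is false when $m=2$: the pattern $m=j=k=2$, $(a,b)=(1,1)$, $\mu=0$ violates both conditions although $a,b\ne m$, and it is not covered by your endpoint discussion, which only treats $(a,b)=(m,0)$ and $(0,m)$. For this term neither available product from the second line of Lemma \ref{lem:EstAu}, namely $\|\dt\tau'\|_{X^2}\|\dt u\|_{X^2}$ or $\|\dt\tau'\|_{X^0}\|\dt u\|_{X^4}$, is dominated by $\opnorm{\tau'(t)}_{2,2}\,\opnorm{u(t)}_{4,2}$, since those norms only control $\|\dt\tau'\|_{X^1}$ and $\|\dt u\|_{X^3}$. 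The gap is easy to close: because $\mu=0$, the first line of Lemma \ref{lem:EstAu} gives $\|((\dt\tau)(\dt u)')'\|_{X^0}\lesssim\|\dt\tau'\|_{X^1}\|\dt u\|_{X^3}\le\opnorm{\tau'(t)}_{2,2}\,\opnorm{u(t)}_{4,2}$, which is exactly the branch you already use in your $m=1$ discussion --- you simply need to add this $(1,1)$ pattern to the exceptional list. Conversely, for $m\ge3$ the patterns $a=m$ or $b=m$ are not actually exceptional (the complementary condition $b\le m-2$, resp.\ $a\le m-2$, then holds), so your special handling of them is redundant though harmless. With that one correction the proof goes through.
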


\begin{lemma}[{\cite[Lemma 4.10]{IguchiTakayama2024}}]\label{lem:CalIneq1}
It holds that 
\[
\|u'v'\|_{Y^m} \lesssim
\begin{cases}
 \|u\|_{X^{m+2}} \|v\|_{X^{m+2}} &\mbox{for}\quad m=0,1, \\
 \|u\|_{X^{m+1\vee4}} \|v\|_{X^{m+1}} &\mbox{for}\quad m=0,1,2,\ldots.
\end{cases}
\]
\end{lemma}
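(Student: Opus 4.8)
The plan is to reduce the inequality to a short list of weighted $L^2$ bounds for the $s$-derivatives of the product $u'v'$, and then to dispose of each of them by the Leibniz rule combined with the weighted calculus inequalities of this section and the transfer to the disc of Lemma~\ref{lem:NormEq}.

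First I would make the norm $\|\cdot\|_{Y^m}$ explicit. Since it is defined through $\|u\|_{X^{m+1}}^2=\|u\|_{L^2}^2+\|u'\|_{Y^m}^2$, reading off the formula for $\|\cdot\|_{X^{m+1}}$ gives, for any function $f$,
\[
\|f\|_{Y^m}^2\;\simeq\;\|f\|_{H^{\lceil m/2\rceil-1}}^2+\sum_{\ell=\lceil m/2\rceil}^{m}\|s^{\beta_\ell}\ds^\ell f\|_{L^2}^2,\qquad
\beta_\ell=\ell-\lceil m/2\rceil+e_m,
\]
where $e_m=1$ for odd $m$ and $e_m=\tfrac12$ for even $m$, the $H$-term being vacuous when $m=0$. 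The point is that these exponents $\beta_\ell$ are the maximal admissible ones, which is exactly what will make the top-order term absorbable by $\|v\|_{X^{m+1}}$ rather than $\|v\|_{X^{m+2}}$. Hence it suffices to bound $\|u'v'\|_{H^{\lceil m/2\rceil-1}}$ and, for each $\ell$ with $\lceil m/2\rceil\le\ell\le m$, the quantity $\|s^{\beta_\ell}\ds^\ell(u'v')\|_{L^2}$.

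For each such $\ell$ I would expand by the Leibniz rule, $\ds^\ell(u'v')=\sum_{i=0}^{\ell}\binom{\ell}{i}\ds^{1+i}u\,\ds^{1+\ell-i}v$, so that every term has the form $\|s^{\beta_\ell}\ds^a u\,\ds^b v\|_{L^2}$ with $a+b=\ell+2$ and $a,b\ge1$. Taking $a\le b$, the strategy is to split the weight, $s^{\beta_\ell}=s^{\gamma}s^{\beta_\ell-\gamma}$ with $0\le\gamma\le\beta_\ell$, apply H\"older's inequality, keep the higher-order factor $s^{\beta_\ell-\gamma}\ds^b v$ in $L^2$, and put the lower-order factor $s^{\gamma}\ds^a u$ in $L^\infty$. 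After harmlessly enlarging its weight exponent (since $s\le1$), the $L^2$ factor is one of the defining terms of $\|v\|_{X^{q_2}}$, and, using $\|\ds v\|_{X^k}\le\|v\|_{X^{k+2}}$ from Lemma~\ref{lem:embedding2}, one checks that the choice of maximal $\beta_\ell$ together with a suitable $\gamma$ keeps $q_2\le m+1$, with equality only in the extreme term $a=1$, $\ell=m$. For the $L^\infty$ factor one uses the weighted Sobolev and Hardy-type inequalities of this section — the embeddings of Lemma~\ref{lem:embedding2} and their weighted refinements obtained by transplanting the two-dimensional embeddings $H^k(D)\hookrightarrow C^{k-2}(\overline D)$ and $H^1(D)\hookrightarrow L^p(D)$ $(p<\infty)$ through the map $u\mapsto u^\sharp$ of Lemma~\ref{lem:NormEq}. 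Choosing $\gamma$ so as to keep the effective order of $\ds^a u$ minimal, the cost collapses to $\|u\|_{X^4}$ exactly when $a=1$ — a bare first derivative in $L^\infty$ costs $\|u'\|_{L^\infty}\lesssim\|u\|_{X^4}$, which is the source of the $\vee\,4$ — and stays at or below $\|u\|_{X^{m+1}}$ otherwise. The $H^{\lceil m/2\rceil-1}$-part is treated in the same spirit via the classical Moser estimate, $\|u'\|_{L^\infty}\lesssim\|u\|_{X^4}$, and $\|v\|_{H^{k+1}}\le\|v\|_{X^{2(k+1)}}$. When $m=0,1$ the weight $s^{\beta_\ell}$ is instead shared evenly between the two factors, each of which is then placed in $L^4$ by $H^1(D)\hookrightarrow L^4(D)$, which produces the symmetric bound $\|u\|_{X^{m+2}}\|v\|_{X^{m+2}}$.

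Finally, the estimate in each line of the statement is obtained by assembling the corresponding finitely many term-by-term bounds — the even split for $m=0,1$ and the asymmetric split otherwise — both of which respect the symmetry $u'v'=v'u'$, so the roles of $u$ and $v$ may be interchanged at will. I expect the genuine obstacle to be the bookkeeping near the degenerate endpoint $s=0$: in each Leibniz term one must apportion the weight $s^{\beta_\ell}$ between the two factors so that neither is forced above the advertised regularity $X^{m+1}$ and $X^{m+1\vee4}$, and for this the crude embeddings of Lemma~\ref{lem:embedding2} do not suffice by themselves — one needs the sharp weighted Sobolev and Hardy-type inequalities of Section~\ref{sect:pre} — together with a case-by-case check that the exponents balance in the handful of intermediate terms.
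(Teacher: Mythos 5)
First, a remark on the comparison itself: this paper does not prove Lemma \ref{lem:CalIneq1}; it is quoted verbatim from \cite[Lemma 4.10]{IguchiTakayama2024}, so there is no in-paper proof to measure you against. Your plan is the natural direct one (explicit form of the $Y^m$ norm, Leibniz expansion, weight-splitting, weighted Sobolev/Hardy bounds), and your bookkeeping of the norm is correct: indeed $\|f\|_{Y^m}^2\simeq\|f\|_{H^{\lceil m/2\rceil-1}}^2+\sum_\ell\|s^{\beta_\ell}\ds^\ell f\|_{L^2}^2$ with, uniformly in the parity, $\beta_\ell=\ell-\frac m2+\frac12$; you also correctly identify that the extreme term $u'\,\ds^{m+1}v$ carries exactly the top weight of $X^{m+1}$ and that $\|u'\|_{L^\infty}\lesssim\|u\|_{X^4}$ is the source of the ``$\vee\,4$''.

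There are, however, two concrete soft spots. (1) The mechanism you invoke for the borderline terms --- transplanting $H^1(D)\hookrightarrow L^4(D)$ (or $H^k(D)\hookrightarrow C^{k-2}$) through $u\mapsto u^\sharp$ --- cannot deliver the weights you need, because under this transfer each $s$-derivative costs an extra factor $s^{1/2}$: for instance $\|\nabla u^\sharp\|_{L^4(D)}\simeq\|s^{1/2}u'\|_{L^4(0,1)}$, so the ``even split'' for $m=0$ only gives $\|s\,u'v'\|_{L^2}\lesssim\|u\|_{X^2}\|v\|_{X^2}$, which misses the required $\|s^{1/2}u'v'\|_{L^2}$ by a full factor $s^{1/2}$. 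What actually closes these cases is the one-dimensional weighted inequality $\|s^{\beta+\frac12}w\|_{L^\infty}\lesssim\|s^\beta w\|_{L^2}+\|s^{\beta+1}w'\|_{L^2}$ (proved inside Lemma \ref{lem:NormEquiv} here), e.g.\ $\|s^{1/2}u'\|_{L^\infty}\lesssim\|u\|_{X^2}$ and, by interpolation with $\|u'\|_{L^2}$, $\|s^{1/4}u'\|_{L^4}\lesssim\|u\|_{X^2}$; you gesture at ``sharp weighted inequalities'' but attribute them to the disc transfer, which is exactly where they cannot come from. (2) The decisive case analysis is asserted rather than done, and the asymmetry of the target bound is glossed over by the ``interchange $u$ and $v$'' remark: since only $u$ carries the $\vee\,4$, the Leibniz terms in which the top derivative falls on $u$ and $b=1$ must be handled separately. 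For $m\ge3$ they are harmless ($\|v'\|_{L^\infty}\lesssim\|v\|_{X^4}\le\|v\|_{X^{m+1}}$), but for $m=2$ one must use the extra room of $X^4$ on the $u$-side, e.g.\ $\|s^{3/2}u'''v'\|_{L^2}\le\|s\,u'''\|_{L^2}\|s^{1/2}v'\|_{L^\infty}\lesssim\|u\|_{X^4}\|v\|_{X^2}$, since $\|v'\|_{L^\infty}$ is not controlled by $\|v\|_{X^3}$. With the 1D weighted estimates in place and this case split carried out, your scheme does go through, but as written these are genuine gaps rather than mere bookkeeping.
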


\begin{lemma}[{\cite[Lemma 3.13]{IguchiTakayama2023-2}}]\label{lem:CalIneqY1}
For a positive integer $m$, we have 
\[
\opnorm{(u'v')(t)}_m^\dag \lesssim 
\begin{cases}
 \opnorm{u(t)}_{3,1} \opnorm{v(t)}_{3,1} &\mbox{for}\quad m=1, \\
 \opnorm{u(t)}_{m+1\vee4,m} \opnorm{v(t)}_{m+1,m} &\mbox{for}\quad m\geq1.
\end{cases}
\]
\end{lemma}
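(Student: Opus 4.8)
The plan is to reduce the estimate entirely to the space‐only product inequality of Lemma \ref{lem:CalIneq1} by differentiating in time and using the Leibniz rule. Unwinding the definitions, $\bigl(\opnorm{(u'v')(t)}_m^\dag\bigr)^2 = \sum_{j=0}^m \|\dt^j(u'v')(t)\|_{Y^{m-j}}^2$ and $\dt^j(u'v') = \sum_{k=0}^j \binom{j}{k}(\dt^k u)'(\dt^{j-k}v)'$, so it suffices to bound each summand $\|(\dt^k u)'(\dt^{j-k}v)'\|_{Y^{m-j}}$ for $0\le k\le j\le m$ by the right–hand side, and then sum the finitely many terms with their Leibniz coefficients. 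Throughout I will use the elementary facts that the norms $\|\cdot\|_{X^q}$ are monotone in $q$ and that $\|\dt^i w(t)\|_{X^a}\le\opnorm{w(t)}_{p,m}$ whenever $0\le i\le m$ and $0\le a\le p-i$ (the relevant summand of $\opnorm{w(t)}_{p,m}^2$).

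For the case $m=1$ I would simply write out the two contributions $\|u'v'\|_{Y^1}$ and $\|(\dt u)'v'\|_{Y^0}+\|u'(\dt v)'\|_{Y^0}$, apply the first branch of Lemma \ref{lem:CalIneq1} (with $m=1$ and $m=0$ respectively) together with $X^3\hookrightarrow X^2$, and observe that every factor produced is one of $\|u(t)\|_{X^3},\|\dt u(t)\|_{X^2},\|v(t)\|_{X^3},\|\dt v(t)\|_{X^2}$, whose squares sum to $\opnorm{u(t)}_{3,1}^2$ or $\opnorm{v(t)}_{3,1}^2$; this yields the sharp first bound. For general $m\ge1$ and the second bound, the key observation is that Lemma \ref{lem:CalIneq1}, applied at target order $m-j$, provides two inequalities according to whether the extra ``$\vee 4$'' of regularity is placed on $u$ or on $v$, plus a symmetric variant valid when $m-j\le1$. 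For the Leibniz summand carrying $k$ time derivatives on $u$ and $j-k$ on $v$, I would pick the allocation making the resulting $X$‑norms fit under $\opnorm{u(t)}_{m+1\vee4,m}$ and $\opnorm{v(t)}_{m+1,m}$: a short check shows that placing the ``$\vee4$'' on $u$ succeeds whenever $j\le m-3$ or $k\le(m-3)\vee0$, that placing it on $v$ succeeds whenever $j\le m-3$ or $j-k\le(m-3)\vee0$, and that the union of these two covers all triples $(m,j,k)$ except the handful with $m\in\{2,3,4\}$, all of which satisfy $m-j\le1$ and are disposed of by the symmetric first branch of Lemma \ref{lem:CalIneq1}.

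The main obstacle is exactly this bookkeeping in the low‑regularity corner cases: the inequality is tight there, the target estimate is deliberately asymmetric (the ``$\vee4$'' must end up on $u$, not $v$), and two competing ``budgets'' — the order of $X$‑norm still available on $\dt^i w$ after $i\le m$ time derivatives, and the two derivatives gained in the passage from $Y^{m-j}$ to $X^{m-j+2}$ in Lemma \ref{lem:CalIneq1} — have to be balanced term by term. Once one verifies that for every admissible $(m,j,k)$ at least one of the three available forms of Lemma \ref{lem:CalIneq1} applies with the right‑hand sides controlled by $\opnorm{u(t)}_{m+1\vee4,m}$ and $\opnorm{v(t)}_{m+1,m}$, the remaining summation over $j$ and $k$ is routine and the proof is complete.
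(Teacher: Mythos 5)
This lemma is imported verbatim from \cite[Lemma 3.13]{IguchiTakayama2023-2}; the present paper gives no proof of it, so there is no in-paper argument to compare against. Your reduction — expand $\bigl(\opnorm{(u'v')(t)}_m^\dag\bigr)^2=\sum_{j=0}^m\|\dt^j(u'v')(t)\|_{Y^{m-j}}^2$, apply the Leibniz rule in $t$, and bound each summand $\|(\dt^k u)'(\dt^{j-k}v)'\|_{Y^{m-j}}$ by one of the three available forms of Lemma \ref{lem:CalIneq1} (the $\vee4$ placed on the $u$-factor, placed on the $v$-factor after swapping the roles, or the symmetric branch when $m-j\leq1$), using monotonicity of the $X^q$-norms to fit the factors under $\opnorm{u(t)}_{m+1\vee4,m}$ and $\opnorm{v(t)}_{m+1,m}$ — is a sound and essentially self-contained way to prove the statement, and it is presumably the same strategy as in the cited source. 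I checked the coverage: for $m\geq5$ one of the two asymmetric placements always works; the only triples $(m,j,k)$ where both fail are $m=4$, $j=4$, $k=2$; $m=3$, $j\geq2$, $1\leq k\leq j-1$; $m=2$, $j\geq1$, $k\geq1$; and $m=1$, $j=k=1$; in every such case $m-j\leq1$ and $k\geq1$, and the symmetric branch $\|\dt^k u\|_{X^{m-j+2}}\|\dt^{j-k}v\|_{X^{m-j+2}}$ fits both budgets, so the estimate follows.

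One caveat on your bookkeeping, which does not affect the validity of the method: the stated criterion for placing the $\vee4$ on $v$ ("succeeds whenever $j\leq m-3$ or $j-k\leq(m-3)\vee0$") is wrong for small $m$. For instance, for $m=2$, $k=j\geq1$ it would require $\|\dt^j v\|_{X^4}$, which exceeds the available $\|\dt^j v\|_{X^{3-j}}$ inside $\opnorm{v(t)}_{3,2}$, and similarly for $m=1$, $j=k=1$; consequently the exceptional set is not confined to $m\in\{2,3,4\}$ but also contains $m=1$, $j=k=1$. Since all of these stray cases are again of the type $m-j\leq1$ with $k\geq1$, they are absorbed by the symmetric first branch exactly as in your other corner cases, so the proof closes once the case check is redone carefully.
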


\begin{lemma}[{\cite[Lemma 4.11]{IguchiTakayama2024}}]\label{lem:CalIneq2}
If $\tau|_{s=0}=0$, then we have 
\[
\|\tau u''v''\|_{Y^m} \lesssim
\begin{cases}
 \|\tau'\|_{L^2} \|u\|_{X^4} \|v\|_{X^3} &\mbox{for}\quad m=0, \\
 \|\tau'\|_{L^\infty} \min\{ \|u\|_{X^4} \|v\|_{X^2}, \|u\|_{X^3} \|v\|_{X^3} \} &\mbox{for}\quad m=0, \\
 \min\{ \|\tau'\|_{L^2} \|u\|_{X^4},\|\tau'\|_{L^\infty} \|u\|_{X^3}\} \|v\|_{X^4} &\mbox{for}\quad m=1, \\
 \|\tau'\|_{L^\infty \cap X^{m-1}} \|u\|_{X^{m+2}} \|v\|_{X^{m+2}} &\mbox{for}\quad m\geq2.
\end{cases}
\]
\end{lemma}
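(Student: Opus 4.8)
\medskip
\noindent
The plan is to unpack the left-hand norm into a finite family of plain weighted $L^2$ norms, expand by the Leibniz rule, and then run a H\"older argument in which the single power of $s$ made available by the boundary condition $\tau|_{s=0}=0$ is distributed optimally among the three factors $\tau$, $u''$, $v''$. Recalling that $Y^m$ is defined through $\|u\|_{X^{m+1}}^2=\|u\|_{L^2}^2+\|u'\|_{Y^m}^2$, a direct computation from the definition of $X^{m+1}$ shows that $\|w\|_{Y^0}^2=\|s^{1/2}w\|_{L^2}^2$ and, for $m\geq1$, $\|w\|_{Y^m}^2\simeq\|w\|_{X^{m-1}}^2+\|s^{(m+1)/2}\ds^m w\|_{L^2}^2$; in particular $Y^m$ controls only derivatives of $w$ of order $\leq m$, each carrying a power of $s$ that grows like half the order for the top indices. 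Applying this with $w=\tau u''v''$ and expanding $\ds^\ell(\tau u''v'')=\sum_{a+b+c=\ell}c_{abc}\,\ds^a\tau\cdot\ds^{b+2}u\cdot\ds^{c+2}v$, the proof reduces to estimating, for $0\leq\ell\leq m$, the weighted trilinear quantities $\|s^{\alpha_\ell}\,\ds^a\tau\cdot\ds^{b+2}u\cdot\ds^{c+2}v\|_{L^2}$ with $a+b+c=\ell$ and $\alpha_\ell$ the corresponding weight exponent in the $Y^m$ norm.

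For each such term I would split the weight as $s^{\alpha_\ell}=s^\beta s^\gamma s^\delta$ and apply H\"older, keeping the two factors of highest differential order in suitably weighted $L^2$ spaces and the remaining factor in a weighted $L^\infty$ space. Two structural inputs drive the estimate. First, the boundary condition $\tau|_{s=0}=0$: when $a=0$ one writes $\tau=s\,(s^{-1}\tau)$, and the fundamental theorem of calculus together with a Hardy inequality gives $\|s^{-1}\tau\|_{L^\infty}\lesssim\|\tau'\|_{L^\infty}$ and $\|s^{-1/2}\tau\|_{L^\infty}\lesssim\|\tau'\|_{L^2}$; when $a\geq1$ one simply has $\ds^a\tau=\ds^{a-1}\tau'$, which is controlled by the appropriate $X$-norm of $\tau'$ since $a-1\leq m-1$. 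In either case $\tau$ contributes one free power of $s$, or half a power at the cost of only an $L^2$ norm of $\tau'$. Second, the weighted Sobolev embeddings for $u$ and $v$: besides $\|u\|_{L^\infty}\lesssim\|u\|_{X^2}$, $\|u'\|_{L^\infty}\lesssim\|u\|_{X^4}$ and $\|\ds^2u\|_{L^\infty}\lesssim\|u\|_{X^6}$ from Lemma \ref{lem:embedding2}, one uses the weighted refinements such as $\|s^{1/2}\ds^2u\|_{L^\infty}\lesssim\|u\|_{X^4}$, $\|s\,\ds^2u\|_{L^\infty}\lesssim\|u\|_{X^3}$ and their higher-order analogues, which follow from one-dimensional Agmon-type inequalities applied on $(0,1)$ to the weighted derivatives together with the characterization $\|u\|_{X^m}\simeq\|u^\sharp\|_{H^m(D)}$ of Lemma \ref{lem:NormEq}. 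Matching the weight budget $s^{\alpha_\ell}$ against the weights carried by the defining norms of $\|u\|_{X^{m+2}}$ and $\|v\|_{X^{m+2}}$ (and, when $a\geq1$, by $\|\tau'\|_{X^{m-1}}$) then closes each term, the free power of $s$ from $\tau$ being precisely what compensates the otherwise missing half power of $s$.

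The range $m\geq2$ follows from a bookkeeping check over all Leibniz terms: in the top-order term $\tau\,\ds^{m+2}u\,v''$ one writes $s^{(m+1)/2}\tau=(s^{-1}\tau)\,s^{(m+2)/2}\,s^{1/2}$, so that $\ds^{m+2}u$ receives exactly its $X^{m+2}$-weight and $v''$ absorbs the remaining $s^{1/2}$ in $L^\infty$; the mixed terms and those with derivatives on $\tau$ are handled symmetrically in $u$ and $v$, and $X^{m+2}$ turns out to be exactly enough. The genuine crux is the low-regularity range $m=0,1$, where there is essentially no slack and the $\min$-structure on the right-hand side must be produced by hand. For $m=0$ one has $\|\tau u''v''\|_{Y^0}=\|s^{1/2}\tau u''v''\|_{L^2}\lesssim\|\tau'\|_{L^\infty}\|s^{3/2}u''v''\|_{L^2}$ (via $s^{1/2}\tau=s^{3/2}(s^{-1}\tau)$), and then $\|s^{3/2}u''v''\|_{L^2}\leq\min\{\|s^{1/2}u''\|_{L^\infty}\|s\,v''\|_{L^2},\ \|s\,u''\|_{L^\infty}\|s^{1/2}v''\|_{L^2}\}$ gives the two options $\|u\|_{X^4}\|v\|_{X^2}$ and $\|u\|_{X^3}\|v\|_{X^3}$, while writing instead $s^{1/2}\tau=s\,(s^{-1/2}\tau)$ and splitting $s$ as $(s^{1/2}u'')(s^{1/2}v'')$ yields the bound with $\|\tau'\|_{L^2}$; the case $m=1$, where $\|\tau u''v''\|_{Y^1}^2=\|\tau u''v''\|_{L^2}^2+\|s\,\ds(\tau u''v'')\|_{L^2}^2$, is treated in the same spirit, the new factor $\tau\,\ds^3u\,v''$ being controlled through $\|s\,\ds^3u\|_{L^2}\lesssim\|u\|_{X^4}$ together with $\|s^{3/2}\ds^3u\|_{L^2}\lesssim\|u\|_{X^3}$. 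The main obstacle is precisely this sharp weight-and-regularity accounting: there is no single slick estimate, and in the low cases every Leibniz term must be split individually so that the one free power of $s$ coming from $\tau|_{s=0}=0$ is used to the fullest and no norm stronger than those on the right-hand side is ever invoked.
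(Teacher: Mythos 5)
This lemma is not proved in the present paper at all: it is imported verbatim from \cite[Lemma 4.11]{IguchiTakayama2024}, so there is no in-paper argument to compare against. Your proposal is the natural proof of that cited estimate and, as far as you carry it out, it is correct: your identification $\|w\|_{Y^m}^2=\|w\|_{X^{m-1}}^2+\|s^{(m+1)/2}\ds^mw\|_{L^2}^2$ checks against the definitions, the two Hardy-type bounds $\|s^{-1}\tau\|_{L^\infty}\leq\|\tau'\|_{L^\infty}$ and $\|s^{-1/2}\tau\|_{L^\infty}\leq\|\tau'\|_{L^2}$ are exactly what the hypothesis $\tau|_{s=0}=0$ gives, the weighted sup-norm bounds $\|s^{1/2}u''\|_{L^\infty}\lesssim\|u\|_{X^4}$ and $\|su''\|_{L^\infty}\lesssim\|u\|_{X^3}$ follow from the inequality established in the proof of Lemma \ref{lem:NormEquiv}, and your explicit splittings for $m=0$ and $m=1$ reproduce all four lines of the statement, including the $\min$ structure. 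The only portion left as a sketch is the Leibniz bookkeeping for $m\geq2$ (in particular the terms with $1\leq a\leq m$ derivatives on $\tau$, which must be charged to $\|\tau'\|_{X^{m-1}}$ or $\|\tau'\|_{L^\infty}$), but the mechanism you describe — top weight $s^{(m+1)/2}$ plus the free power from $\tau$, distributed so that the highest-order factor lands in its $X^{m+2}$-weighted $L^2$ slot — does close every such term, so this is routine rather than a gap.
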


\begin{lemma}[{\cite[Lemma 3.15]{IguchiTakayama2023-2}}]\label{lem:CalIneqY2}
Let $m$ be a positive integer. 
If $\tau|_{s=0}=0$, then we have 
\[
\opnorm{\tau u''v''}_m ^\dag \lesssim
\begin{cases}
 (\|\tau'\|_{L^\infty} + \|\dt\tau'\|_{L^2}) \opnorm{u}_{4,1} \opnorm{v}_{3,1} &\mbox{for}\quad m=1, \\
 \|(\tau',\dt\tau')\|_{L^2} \opnorm{u}_{4,1} \opnorm{v}_{4,1} &\mbox{for}\quad m=1, \\
 (\|\dt^{m-2}\tau'\|_{L^\infty} + \opnorm{\tau'}_{m-1} + \|\dt^m\tau'\|_{L^2}) \opnorm{u}_{m+2,m} \opnorm{v}_{m+2,m} &\mbox{for}\quad m\geq2.
\end{cases}
\]
\end{lemma}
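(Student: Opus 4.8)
The plan is to reduce the claimed inequality to the static calculus estimate of Lemma~\ref{lem:CalIneq2} by expanding the time derivatives with the Leibniz rule. Recall $\bigl(\opnorm{w(t)}_m^\dag\bigr)^2=\sum_{j=0}^m\|\dt^jw(t)\|_{Y^{m-j}}^2$, that $\dt$ commutes with $\ds$, and that $\tau(0,t)\equiv0$ forces $\dt^{j_1}\tau(0,t)\equiv0$ for every $j_1\ge0$. Writing
\[
\dt^j(\tau u''v'')=\sum_{j_1+j_2+j_3=j}c_{j_1j_2j_3}\,(\dt^{j_1}\tau)(\dt^{j_2}u)''(\dt^{j_3}v)''
\]
with suitable combinatorial constants $c_{j_1j_2j_3}$, every summand has the form $\tilde\tau\,\tilde u''\,\tilde v''$ with $\tilde\tau(0,t)\equiv0$, so applying Lemma~\ref{lem:CalIneq2} with the value $m-j$ in place of the index $m$ there bounds its $Y^{m-j}$-norm by a product of a norm of $\dt^{j_1}\tau'$, a norm of $\dt^{j_2}u$, and a norm of $\dt^{j_3}v$. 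The whole proof then amounts to checking, for every multi-index $(j_1,j_2,j_3)$ with $j_1+j_2+j_3=j\le m$, that \emph{some} branch of Lemma~\ref{lem:CalIneq2} for the value $m-j$ (and, in its symmetric branches, some assignment of $\tilde u,\tilde v$ to the two slots, using that $\tau u''v''$ is symmetric in $u,v$) produces three factors dominated respectively by $\|\dt^{m-2}\tau'\|_{L^\infty}+\opnorm{\tau'}_{m-1}+\|\dt^m\tau'\|_{L^2}$, by $\opnorm{u}_{m+2,m}$, and by $\opnorm{v}_{m+2,m}$.

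For $m\ge2$ I would organize this according to the value of $j_1$. The $u$- and $v$-factors are the easy part: $\opnorm{u}_{m+2,m}$ dominates $\|\dt^{j_2}u\|_{X^{m+2-j_2}}$ and $\opnorm{v}_{m+2,m}$ dominates $\|\dt^{j_3}v\|_{X^{m+2-j_3}}$, and since $m+2-j\ge2$ for $j\le m$, the $\min\{\cdots\}$ in the $m=0$ and $m=1$ branches of Lemma~\ref{lem:CalIneq2} together with the freedom to interchange $\tilde u$ and $\tilde v$ always allows the factor carrying more derivatives to be put in the lower-index slot (for instance a $Y^0$-term with $j_2$ large is handled by the form $\|\tilde\tau'\|_{L^\infty}\|\tilde v\|_{X^4}\|\tilde u\|_{X^2}$, or by the balanced form $\|\tilde\tau'\|_{L^\infty}\|\tilde u\|_{X^3}\|\tilde v\|_{X^3}$ when $j_2,j_3$ are both large). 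For the $\tau'$-factor: when $j_1\le m-1$ the norm $\opnorm{\tau'}_{m-1}$ controls $\|\dt^{j_1}\tau'\|_{X^{m-1-j_1}}$, hence also $\|\dt^{j_1}\tau'\|_{X^{(m-j)-1}}$, the $X$-norm demanded by the branch $m-j\ge2$, and the embedding $X^\ell\hookrightarrow L^\infty$ for $\ell\ge2$ (a case of Lemma~\ref{lem:embedding2}) then also supplies the $L^\infty$-norm required by that branch, \emph{except} when only $X^1$-control of $\dt^{m-2}\tau'$ is available---chiefly for the term $(\dt^{m-2}\tau)\,u''v''$ estimated in $Y^2$, since $X^1\not\hookrightarrow L^\infty$---and this is exactly what the extra term $\|\dt^{m-2}\tau'\|_{L^\infty}$ is for; finally, $j_1=m$ forces $j_2=j_3=0$ and $m-j=0$, where the first $m=0$ branch gives $\|\dt^m\tau'\|_{L^2}\|u\|_{X^4}\|v\|_{X^3}$ with $\|u\|_{X^4}\le\opnorm{u}_{m+2,m}$ and $\|v\|_{X^3}\le\opnorm{v}_{m+2,m}$ because $m\ge2$.

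The cases $m=1$ must be treated by hand, because then $\opnorm{u}_{m+2,m}=\opnorm{u}_{3,1}$ no longer reaches $X^4$ and the $m=1$ branch of Lemma~\ref{lem:CalIneq2} is not symmetric in $u,v$. For $\opnorm{\tau u''v''}_1^\dag$ one estimates $\|\tau u''v''\|_{Y^1}$ and the three terms of $\|\dt(\tau u''v'')\|_{Y^0}$ separately: the first is, after possibly swapping $u$ and $v$, $\lesssim\|\tau'\|_{L^\infty}\|u\|_{X^4}\|v\|_{X^3}\lesssim\|\tau'\|_{L^\infty}\opnorm{u}_{4,1}\opnorm{v}_{3,1}$ by the $m=1$ branch, and the other three are, by the $m=0$ branches (again exploiting the $\min$ and the $u\leftrightarrow v$ symmetry), $\|\dt\tau'\|_{L^2}\|u\|_{X^4}\|v\|_{X^3}$, $\|\tau'\|_{L^\infty}\|\dt u\|_{X^3}\|v\|_{X^3}$, and $\|\tau'\|_{L^\infty}\|u\|_{X^4}\|\dt v\|_{X^2}$, all $\lesssim(\|\tau'\|_{L^\infty}+\|\dt\tau'\|_{L^2})\opnorm{u}_{4,1}\opnorm{v}_{3,1}$; the second $m=1$ inequality, with $\|(\tau',\dt\tau')\|_{L^2}$ and the symmetric right-hand side $\opnorm{u}_{4,1}\opnorm{v}_{4,1}$, follows the same pattern but systematically chooses the $\|\tau'\|_{L^2}$-rather than the $\|\tau'\|_{L^\infty}$-form of the $m=0$ and $m=1$ branches and uses $\|v\|_{X^4}\le\opnorm{v}_{4,1}$. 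The main obstacle I anticipate is precisely this bookkeeping: showing that for \emph{every} multi-index arising in the Leibniz expansion there is a simultaneously admissible choice of branch of Lemma~\ref{lem:CalIneq2} and of which of $u,v$ occupies which slot, so that all three resulting regularity requirements fit inside the advertised norms; no individual step is difficult, but the exhaustive case analysis is what makes the proof long, the most delicate points being the borderline index $j_1=m-2$ (where the $\|\dt^{m-2}\tau'\|_{L^\infty}$ term is genuinely needed) and the asymmetry in the case $m=1$.
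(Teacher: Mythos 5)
Your proposal is correct: the Leibniz expansion of $\dt^j(\tau u''v'')$, the observation that each term $(\dt^{j_1}\tau)(\dt^{j_2}u)''(\dt^{j_3}v)''$ again has a coefficient vanishing at $s=0$, and the branch-by-branch application of Lemma~\ref{lem:CalIneq2} (using the $u\leftrightarrow v$ symmetry and the $\min$ options) does close in every case, with the two genuinely delicate terms being exactly the ones you single out, namely $j_1=m-2$ in the $Y^2$-estimate (source of the $\|\dt^{m-2}\tau'\|_{L^\infty}$ term) and $j_1=m$ in the $Y^0$-estimate (source of $\|\dt^m\tau'\|_{L^2}$), plus the asymmetric hand treatment of $m=1$. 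Note that this paper only quotes the lemma from \cite{IguchiTakayama2023-2} without reproducing its proof, so there is nothing here to compare against line by line, but your argument is the natural reduction to Lemma~\ref{lem:CalIneq2} and is essentially the expected proof.
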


\begin{lemma}[{\cite[Lemma 3.16]{IguchiTakayama2023-2}}]\label{lem:CalIneqY3}
If $\tau|_{s=0}=0$, then we have 
\[
\|s^\frac12\tau u''v''\|_{L^1} 
\lesssim \min\{ \|\tau'\|_{L^\infty}\|u\|_{X^2}\|v\|_{X^3}, \|\tau'\|_{L^2}\|u\|_{X^2}\|v\|_{X^4}, \|\tau'\|_{L^2}\|u\|_{X^3}\|v\|_{X^3} \}.
\]
\end{lemma}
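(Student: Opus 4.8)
The plan is to establish the three bounds separately, each as a one-line consequence of a pointwise estimate on $\tau$ followed by the Cauchy--Schwarz inequality with an appropriate splitting of the weight; no density argument is needed, since each application of Cauchy--Schwarz simultaneously exhibits the integrand as a product of two $L^2$ functions (in particular, for $u\in X^2$ the distributional derivative $u''$ is a genuine function on $(0,1)$ because $su''\in L^2$ and $s>0$ there, so all the integrals below make sense and are finite).

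First I would record the two pointwise bounds that the hypothesis $\tau|_{s=0}=0$ provides: since $\tau(s)=\int_0^s\tau'(r)\,\mathrm{d}r$, one has $|\tau(s)|\le s\,\|\tau'\|_{L^\infty}$, and also $|\tau(s)|\le s^{1/2}\|\tau'\|_{L^2}$ by Cauchy--Schwarz on the interval $(0,s)$. Writing $I=\int_0^1 s^{1/2}|\tau||u''||v''|\,\mathrm{d}s$, the task then reduces to estimating $\int_0^1 s^a|u''||v''|\,\mathrm{d}s$ after inserting one of these bounds, which I would do by splitting $s^a$ so that Cauchy--Schwarz pairs weighted second derivatives occurring explicitly in the $X^k$-norms.

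For the first bound I would use $|\tau(s)|\le s\|\tau'\|_{L^\infty}$ and split $s^{3/2}=s\cdot s^{1/2}$, so that $I\le\|\tau'\|_{L^\infty}\,\|su''\|_{L^2}\,\|s^{1/2}v''\|_{L^2}\lesssim\|\tau'\|_{L^\infty}\,\|u\|_{X^2}\,\|v\|_{X^3}$, using $\|su''\|_{L^2}\le\|u\|_{X^2}$ and $\|s^{1/2}v''\|_{L^2}\le\|v\|_{X^3}$ from the definitions of the norms. For the second bound I would instead use $|\tau(s)|\le s^{1/2}\|\tau'\|_{L^2}$ and keep the remaining weight $s$ attached to $u''$, so that $I\le\|\tau'\|_{L^2}\,\|su''\|_{L^2}\,\|v''\|_{L^2}\lesssim\|\tau'\|_{L^2}\,\|u\|_{X^2}\,\|v\|_{X^4}$, using $\|v''\|_{L^2}\le\|v\|_{H^2}\le\|v\|_{X^4}$. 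For the third bound I would again use $|\tau(s)|\le s^{1/2}\|\tau'\|_{L^2}$ but split the remaining weight symmetrically, $s=s^{1/2}\cdot s^{1/2}$, so that $I\le\|\tau'\|_{L^2}\,\|s^{1/2}u''\|_{L^2}\,\|s^{1/2}v''\|_{L^2}\lesssim\|\tau'\|_{L^2}\,\|u\|_{X^3}\,\|v\|_{X^3}$. Taking the minimum of the three right-hand sides yields the assertion.

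Every step is elementary, so I do not expect a genuine obstacle. The only point that needs care is the bookkeeping of the weight exponents: each splitting must land on exactly one of the weighted second-derivative norms built into $\|\cdot\|_{X^k}$ — namely $\|su''\|_{L^2}$ for $X^2$, $\|s^{1/2}u''\|_{L^2}$ for $X^3$, and $\|u''\|_{L^2}\le\|u\|_{H^2}$ for $X^4$ — so that no unweighted or over-weighted second derivative of $u$ or $v$ is ever called for, and so that in each of the three cases exactly the stated power of $\|\tau'\|_{L^\infty}$ or $\|\tau'\|_{L^2}$ and exactly the stated $X^k$-norms of $u$ and $v$ are consumed.
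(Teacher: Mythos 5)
Your proof is correct: the pointwise bounds $|\tau(s)|\le s\,\|\tau'\|_{L^\infty}$ and $|\tau(s)|\le s^{1/2}\|\tau'\|_{L^2}$ (from $\tau|_{s=0}=0$) combined with Cauchy--Schwarz under the three weight splittings land exactly on $\|su''\|_{L^2}\le\|u\|_{X^2}$, $\|s^{1/2}v''\|_{L^2}\le\|v\|_{X^3}$, and $\|v''\|_{L^2}\le\|v\|_{X^4}$, which gives precisely the stated minimum. The present paper imports this lemma from \cite{IguchiTakayama2023-2} without reproving it, and your argument is the natural one-line weight-splitting proof one would expect there, so there is nothing to add.
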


\subsection{Further properties of the weighted Sobolev spaces}
By the definition of the norms $\|\cdot\|_{X^m}$ and $\|\cdot\|_{Y^m}$, we see easily the following lemma.

\begin{lemma}\label{lem:NormRelation}
For a non-negative integer $m$, it holds that $\|u\|_{Y^m} \leq \|u\|_{X^m} \leq \|u\|_{Y^{m+1}}$ and $\|su'\|_{X^m} \lesssim \|u\|_{Y^{m+1}}$. 
\end{lemma}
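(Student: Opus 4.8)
The plan is to make the defining relation $\|u\|_{X^{m+1}}^2=\|u\|_{L^2}^2+\|u'\|_{Y^m}^2$ explicit and then argue entirely by comparing the weights $s^a$ term by term, using repeatedly that $s^a\leq s^b$ on $(0,1)$ whenever $a\geq b\geq0$. First I would record the explicit form of the $Y^m$ norm: applying the defining relation to a primitive $U$ of a given function $u$ (so $U'=u$, the integration constant being irrelevant since $\|U\|_{X^{m+1}}^2-\|U\|_{L^2}^2$ does not depend on it) and reindexing the sums in the definition of $\|\cdot\|_{X^{m+1}}$, one obtains
\[
\|u\|_{Y^{2k-1}}^2=\|u\|_{H^{k-1}}^2+\sum_{j=1}^{k}\|s^{j}\ds^{k-1+j}u\|_{L^2}^2,\qquad
\|u\|_{Y^{2k}}^2=\|u\|_{H^{k-1}}^2+\sum_{j=1}^{k+1}\|s^{j-\frac12}\ds^{k-1+j}u\|_{L^2}^2,
\]
with the convention $\|u\|_{H^{-1}}:=0$ in the degenerate case $m=0$. (Alternatively one may simply quote the explicit norm from \cite{IguchiTakayama2024}.)

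Given these formulas, the two sandwich inequalities follow by inspection. For $\|u\|_{Y^m}\leq\|u\|_{X^m}$ in the odd case $m=2k-1$, each summand $\|s^{j}\ds^{k-1+j}u\|_{L^2}$ is dominated by the corresponding summand $\|s^{j-\frac12}\ds^{k-1+j}u\|_{L^2}$ of $\|u\|_{X^{2k-1}}$; in the even case $m=2k$ one splits off the $j=1$ term and absorbs $\|u\|_{H^{k-1}}^2+\|s^{\frac12}\ds^{k}u\|_{L^2}^2\leq\|u\|_{H^{k}}^2$, while for $j\geq2$ one compares $\|s^{j-\frac12}\ds^{k-1+j}u\|_{L^2}$ with the summand $\|s^{j-1}\ds^{k-1+j}u\|_{L^2}$ of $\|u\|_{X^{2k}}$. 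For $\|u\|_{X^m}\leq\|u\|_{Y^{m+1}}$ one reads off from the two displayed formulas that $\|u\|_{Y^{m+1}}^2$ equals $\|u\|_{X^m}^2$ plus exactly the one extra nonnegative term obtained when the summation index is allowed to run up to its new maximum.

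For the last inequality $\|su'\|_{X^m}\lesssim\|u\|_{Y^{m+1}}$, the cruder bound $\|su'\|_{X^m}\lesssim\|u\|_{X^{m+1}}$ of Lemma \ref{lem:embedding2} does not suffice, since $\|u\|_{X^{m+1}}\geq\|u\|_{Y^{m+1}}$ goes the wrong way; one needs to exploit that the extra factor $s$ compensates one derivative. I would substitute the Leibniz identity $\ds^{n}(su')=s\,\ds^{n+1}u+n\,\ds^{n}u$ into each summand of $\|su'\|_{X^m}^2$, so that a generic term $\|s^{a}\ds^{n}(su')\|_{L^2}$ is bounded by $\|s^{a+1}\ds^{n+1}u\|_{L^2}+n\|s^{a}\ds^{n}u\|_{L^2}$; a shift $j\mapsto j+1$ then matches both pieces against summands of the explicit $\|u\|_{Y^{m+1}}$ norm, the extra power of $s$ in the first piece being harmless on $(0,1)$. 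The low-order contributions coming from $\|su'\|_{H^{k}}$ (resp. $\|su'\|_{H^{k-1}}$) are controlled by $\|u\|_{H^{k}}$ (resp. $\|u\|_{H^{k-1}}$) except for the single term $\|s\,\ds^{k+1}u\|_{L^2}$ (resp. $\|s\,\ds^{k}u\|_{L^2}$), which is precisely the $j=1$ summand of $\|u\|_{Y^{m+1}}$.

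I do not expect any genuine analytic obstacle here — this is why the statement is asserted to follow "easily" from the definitions — so the only thing requiring care is the bookkeeping of the even/odd index shifts and the treatment of the degenerate case $m=0$.
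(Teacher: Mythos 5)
Your proposal is correct and coincides with the (unwritten) argument the paper intends: the paper states this lemma without proof, remarking only that it follows ``by the definition of the norms,'' and your explicit formulas for $\|\cdot\|_{Y^m}$ obtained from $\|U\|_{X^{m+1}}^2=\|U\|_{L^2}^2+\|U'\|_{Y^m}^2$, together with the term-by-term comparison of weights $s^a\leq s^b$ on $(0,1)$ and the Leibniz identity $\ds^{n}(su')=s\,\ds^{n+1}u+n\,\ds^{n}u$ for the last estimate, is exactly that easy verification (your even/odd bookkeeping and the degenerate case $m=0$ all check out).
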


In view of Lemma \ref{lem:NormEq} together with the standard tame estimates in the Sobolev space $H^m(D)$, we have the following tame estimates 
in the weighted Sobolev space $X^m$.

\begin{lemma}\label{lem:tame1}
It holds that $\|uv\|_{X^m} \lesssim \|u\|_{L^\infty}\|v\|_{X^m}+\|v\|_{L^\infty}\|u\|_{X^m}$ for $m=0,1,2,\ldots$. 
\end{lemma}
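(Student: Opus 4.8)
The plan is to reduce the claimed tame estimate $\|uv\|_{X^m} \lesssim \|u\|_{L^\infty}\|v\|_{X^m}+\|v\|_{L^\infty}\|u\|_{X^m}$ to the corresponding classical fact on the two-dimensional disc $D$. By Lemma \ref{lem:NormEq}, the map $u\mapsto u^\sharp$, $u^\sharp(x_1,x_2)=u(x_1^2+x_2^2)$, is a bijection $X^m\to H^m(D)$ with $\|u\|_{X^m}\simeq\|u^\sharp\|_{H^m(D)}$. The key algebraic observation is that this map respects pointwise products: $(uv)^\sharp = u^\sharp v^\sharp$, since both sides evaluated at $(x_1,x_2)$ equal $u(x_1^2+x_2^2)v(x_1^2+x_2^2)$. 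Moreover it trivially respects the sup norm, $\|u\|_{L^\infty(0,1)}=\|u^\sharp\|_{L^\infty(D)}$ (the image of $(0,1)$ under $s=x_1^2+x_2^2$ is all of the punctured disc, hence the essential suprema agree). So the whole statement transports to: $\|u^\sharp v^\sharp\|_{H^m(D)} \lesssim \|u^\sharp\|_{L^\infty(D)}\|v^\sharp\|_{H^m(D)} + \|v^\sharp\|_{L^\infty(D)}\|u^\sharp\|_{H^m(D)}$, which is the standard Moser/tame product estimate in $H^m$ of a bounded planar domain.

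Concretely, I would carry out the following steps. First, fix $u,v\in X^m$ and form $u^\sharp,v^\sharp\in H^m(D)$ via Lemma \ref{lem:NormEq}; record the norm equivalences in both directions. Second, verify the identity $(uv)^\sharp=u^\sharp v^\sharp$ and the identity $\|u\|_{L^\infty}=\|u^\sharp\|_{L^\infty(D)}$ (both immediate from the definition of $\cdot^\sharp$). Third, invoke the classical tame product inequality in $H^m(D)$ — this is the Kato--Ponce / Moser-type estimate valid on any bounded Lipschitz domain in $\mathbb{R}^n$, in particular the unit disc, for all non-negative integers $m$; if one prefers a self-contained route, it follows from the Gagliardo--Nirenberg interpolation inequalities on $D$ applied to each term $\partial^\alpha(u^\sharp v^\sharp)$ expanded by the Leibniz rule. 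Fourth, chain the three facts: $\|uv\|_{X^m}\simeq\|(uv)^\sharp\|_{H^m(D)} = \|u^\sharp v^\sharp\|_{H^m(D)} \lesssim \|u^\sharp\|_{L^\infty(D)}\|v^\sharp\|_{H^m(D)}+\|v^\sharp\|_{L^\infty(D)}\|u^\sharp\|_{H^m(D)} \simeq \|u\|_{L^\infty}\|v\|_{X^m}+\|v\|_{L^\infty}\|u\|_{X^m}$, absorbing all equivalence constants into the implicit constant (which depends only on $m$).

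I do not expect a serious obstacle here; the statement is essentially a dictionary translation of a well-known inequality, and the paper has already set up the dictionary (Lemma \ref{lem:NormEq}) precisely so that such facts come for free — indeed the excerpt says ``In view of Lemma \ref{lem:NormEq} together with the standard tame estimates in the Sobolev space $H^m(D)$, we have the following tame estimates.'' The only mild point of care is checking that the product structure is genuinely preserved by $\cdot^\sharp$ and that no weight mismatch arises at the origin $s=0$ (corresponding to the center of $D$); but since $\cdot^\sharp$ is an honest pullback by the smooth map $(x_1,x_2)\mapsto x_1^2+x_2^2$, products and sup-norms are preserved exactly, with no boundary or interior subtlety. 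If a fully elementary proof is wanted instead, the alternative — expanding $\ds^k(uv)=\sum_{i}\binom{k}{i}\ds^iu\,\ds^{k-i}v$ and handling the weighted $L^2$ norms $\|s^j\ds^{k+j}(uv)\|_{L^2}$ term by term using the one-dimensional Gagliardo--Nirenberg inequalities with weights from Lemma \ref{lem:algebraX} and Lemma \ref{lem:embedding2} — also works but is longer and less transparent than the transport argument, so I would present the disc-transfer proof as the main one.
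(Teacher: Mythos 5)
Your proposal is correct and is essentially the paper's own argument: the paper proves Lemma \ref{lem:tame1} exactly by transferring to the disc via Lemma \ref{lem:NormEq} and invoking the standard tame (Moser-type) product estimate in $H^m(D)$, which is what you spell out, including the multiplicativity $(uv)^\sharp=u^\sharp v^\sharp$ and the preservation of the $L^\infty$ norm. No gaps; your write-up just makes explicit the details the paper leaves to the reader.
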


\begin{lemma}\label{lem:tame2}
Let $m$ be a non-negative integer, $\Omega$ an open set in $\mathbb{R}^N$, $K$ a compact set in $\Omega$, and $F\in C^m(\Omega)$. 
There exists a positive constant $C=C(m,N,K)$ such that if $u\in X^m$ takes its value in $K$, then we have 
$\|F(u)\|_{X^m} \leq C\|F\|_{C^m(K)}(1+\|u\|_m)$. 
\end{lemma}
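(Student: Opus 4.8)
The plan is to transfer the estimate to the unit disc $D$ via the characterization of the weighted Sobolev space in Lemma \ref{lem:NormEq} and then to invoke the classical Moser-type composition estimate in $H^m(D)$. The crucial point is that the pull-back $u\mapsto u^\sharp$ intertwines composition with $F$: since $(F(u))^\sharp(x_1,x_2)=F(u)(x_1^2+x_2^2)=F\bigl(u(x_1^2+x_2^2)\bigr)=F\bigl(u^\sharp(x_1,x_2)\bigr)$, we have $(F(u))^\sharp=F(u^\sharp)$. Hence by Lemma \ref{lem:NormEq} it suffices to establish the corresponding estimate $\|F(v)\|_{H^m(D)}\leq C\|F\|_{C^m(K)}(1+\|v\|_{H^m(D)})$ for $v=u^\sharp\in H^m(D)$ taking its values in the compact set $K$, and then to translate back through $\|F(u)\|_{X^m}\simeq\|(F(u))^\sharp\|_{H^m(D)}=\|F(v)\|_{H^m(D)}$ and $\|v\|_{H^m(D)}\simeq\|u\|_{X^m}$.

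First I would record that, since $v$ takes its values in $K$, one has $\|v\|_{L^\infty(D)}\leq\sup_{y\in K}|y|=:R_K<\infty$; this is where the dependence of the constant on $K$ enters. For $m=0$ the bound is immediate from $\|F(v)\|_{L^2(D)}\leq|D|^{1/2}\|F\|_{C^0(K)}$, and for $m=1$ from $\|\nabla(F(v))\|_{L^2(D)}=\|F'(v)\nabla v\|_{L^2(D)}\leq\|F\|_{C^1(K)}\|\nabla v\|_{L^2(D)}$. For $m\geq2$ I would expand a derivative $\partial^\alpha F(v)$ with $|\alpha|=m$ by repeated use of the chain and Leibniz rules into a finite sum of terms $F^{(k)}(v)\prod_{i=1}^k\partial^{\beta_i}v$ with $1\leq k\leq m$, $|\beta_i|\geq1$, $\sum_{i=1}^k|\beta_i|=m$. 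Bounding $\|F^{(k)}(v)\|_{L^\infty(D)}\leq\|F\|_{C^m(K)}$ and applying the Gagliardo--Nirenberg interpolation inequality $\|\partial^{\beta_i}v\|_{L^{2m/|\beta_i|}(D)}\lesssim\|v\|_{L^\infty(D)}^{1-|\beta_i|/m}\|v\|_{H^m(D)}^{|\beta_i|/m}$, which is valid on the disc since it is a bounded extension domain, together with H\"older's inequality with exponents $p_i=2m/|\beta_i|$ (so that $\sum_i p_i^{-1}=1/2$), each such product is controlled by $\|F\|_{C^m(K)}\|v\|_{L^\infty(D)}^{k-1}\|v\|_{H^m(D)}\lesssim\|F\|_{C^m(K)}(1+R_K)^{m-1}\|v\|_{H^m(D)}$, while the term with $\alpha=0$ contributes at most $|D|^{1/2}\|F\|_{C^0(K)}$. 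Summing over $\alpha$ with $|\alpha|\leq m$ and using Lemma \ref{lem:NormEq} once more yields the assertion with $C=C(m,N,K)$.

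This argument is entirely standard once the reduction to $D$ is made, so I do not anticipate a genuine obstacle. The only points requiring a little care are the treatment of the non-vanishing constant term $F(v)$, handled by the crude $L^2$ bound above rather than by a Moser estimate for $F(v)-F(0)$, and keeping explicit track of the fact that all interpolation constants on the fixed domain $D$ depend only on $m$ and $N$, with the remaining dependence entering solely through $R_K=\sup_{y\in K}|y|$; the very same scheme (reduction to $D$ plus the standard product estimate in $H^m(D)$) underlies Lemma \ref{lem:tame1}.
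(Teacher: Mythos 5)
Your proposal is correct and follows essentially the same route as the paper, which proves Lemmas \ref{lem:tame1} and \ref{lem:tame2} simply by combining the identification $u\mapsto u^\sharp$ of Lemma \ref{lem:NormEq} with the standard tame (Moser-type) estimates in $H^m(D)$; your observation $(F(u))^\sharp=F(u^\sharp)$ and the Gagliardo--Nirenberg/H\"older argument on $D$ are exactly the omitted standard details, with the constant's dependence on $K$ entering only through $\sup_{y\in K}|y|$ as you note.
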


\begin{lemma}\label{lem:NormEquiv}
For each $\epsilon\in(0,\frac12)$, the equivalence \eqref{NormEquiv} holds. 
\end{lemma}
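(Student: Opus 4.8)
The plan is to reduce the three equivalences in \eqref{NormEquiv} to a single weighted Hardy inequality combined with one-dimensional Sobolev-type bounds obtained from the fundamental theorem of calculus. It suffices to argue for $u\in C^\infty([0,1])$ and then invoke density; alternatively, finiteness of the right-hand side of \eqref{NormEquiv} already forces $u\in H^1_{\mathrm{loc}}((0,1])$, which legitimizes every integration by parts used below.

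First I would record the weighted Hardy inequality on $(0,1)$: for each $\gamma>-\tfrac12$ there is $C_\gamma>0$ with
\[
\|s^\gamma v\|_{L^2}\le C_\gamma\bigl(\|s^{\gamma+1}v'\|_{L^2}+|v(1)|\bigr).
\]
This follows by integrating $\bigl(\tfrac{s^{2\gamma+1}}{2\gamma+1}|v|^2\bigr)'$ over $(0,1)$ --- the boundary contribution at $s=0$ vanishes precisely because $\gamma>-\tfrac12$ --- then applying the Cauchy--Schwarz inequality and absorbing. The endpoint terms $|v(1)|$ that arise are harmless: on $[\tfrac12,1]$ every weight $s^\alpha$ occurring here is comparable to $1$, so $|v(1)|^2\lesssim\|v\|_{L^2(1/2,1)}^2+\|v'\|_{L^2(1/2,1)}^2$ together with such comparisons reduces $|v(1)|$ to the right-hand sides of \eqref{NormEquiv}.

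Next I would treat the three cases. For $k=1$, writing $s^{2\epsilon}|u(s)|^2=|u(1)|^2-\int_s^1\bigl(2\epsilon\,t^{2\epsilon-1}|u|^2+2t^{2\epsilon}\,u\cdot u'\bigr)\,\mathrm dt$ for $0<s\le1$, I estimate the endpoint term as above, the first integral by Hardy with $\gamma=\epsilon-\tfrac12$ (admissible exactly because $\epsilon>0$, and $\gamma+1=\tfrac12+\epsilon$ matches the weight on $u'$), and the last integral by Cauchy--Schwarz after splitting $t^{2\epsilon}=t^{\epsilon-1/2}\,t^{1/2+\epsilon}$ and using the same Hardy bound; this gives $\|s^\epsilon u\|_{L^\infty}^2\lesssim\|u\|_{L^2}^2+\|s^{1/2+\epsilon}u'\|_{L^2}^2$. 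The reverse bound $\|u\|_{L^2}^2\lesssim\|s^\epsilon u\|_{L^\infty}^2$ is immediate since $\int_0^1 s^{-2\epsilon}\,\mathrm ds<\infty$ (here $\epsilon<\tfrac12$), and $\|s^{1/2+\epsilon}u'\|_{L^2}$ is common to both sides. For $k=2$ the identity $|u(s)|^2=|u(1)|^2-\int_s^1 2\,u\cdot u'\,\mathrm dt$ reduces matters, via Cauchy--Schwarz splitting $1=s^{-\epsilon}\,s^{\epsilon}$, to $\|s^{-\epsilon}u\|_{L^2}\lesssim\|u\|_{L^2}+\|s^\epsilon u'\|_{L^2}$, which is Hardy with $\gamma=-\epsilon$ once one notes $s^{1-\epsilon}\le s^{\epsilon}$ on $(0,1)$ because $\epsilon\le\tfrac12$; the reverse direction is trivial and the two weighted $L^2$ terms are common. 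For $k=3$ I would bound $\|u\|_{L^\infty}\lesssim\|u\|_{H^1}$ by the ordinary Sobolev embedding and $\|s^\epsilon u'\|_{L^\infty}^2\lesssim\|u'\|_{L^2}^2+\|s^{1/2+\epsilon}u''\|_{L^2}^2$ by applying the already-proved $k=1$ estimate to $u'$ in place of $u$; conversely $\|u'\|_{L^2}^2=\int_0^1 s^{-2\epsilon}|s^\epsilon u'|^2\,\mathrm ds\lesssim\|s^\epsilon u'\|_{L^\infty}^2$ and $\|u\|_{L^2}\le\|u\|_{L^\infty}$, while the two top-order weighted $L^2$ terms are again identical on both sides.

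Collecting these two-sided estimates and absorbing constants yields \eqref{NormEquiv}. The computations are entirely routine; the only delicate point is matching weight exponents so that the Hardy inequality is applicable, and this is exactly where the assumption $\epsilon\in(0,\tfrac12)$ is used --- $\epsilon>0$ for the case $k=1$, and $\epsilon\le\tfrac12$ for $k=2$ and for the finiteness of $\int_0^1 s^{-2\epsilon}\,\mathrm ds$ --- together with keeping track of the boundary terms at $s=1$. I do not anticipate any essential obstacle.
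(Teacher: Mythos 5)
Your argument is correct and follows essentially the same route as the paper: a pointwise bound from the fundamental theorem of calculus combined with a weighted Hardy-type inequality proved by integration by parts (the paper's estimate of $\|s^{-\frac12+\epsilon}u\|_{L^2}$ is exactly your Hardy inequality with $\gamma=-\frac12+\epsilon$), with $\epsilon\in(0,\tfrac12)$ entering in the same places. The only differences are cosmetic: you integrate from $s=1$ rather than from $s=0$, obtain the reverse bound for $k=1$ by the one-line H\"older estimate using $\int_0^1 s^{-2\epsilon}\,\mathrm{d}s<\infty$ instead of another integration by parts, and write out the cases $k=2,3$ that the paper omits as ``the same way.''
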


\begin{proof}
For $\alpha>0$, we see that 
\begin{align*}
s^\alpha|u(s)|^2 &= \int_0^s(\sigma^\alpha|u(\sigma)|^2)'\mathrm{d}\sigma \\
&\lesssim \int_0^1 (\sigma^{\alpha-1}|u(\sigma)|^2 + \sigma^\alpha|u(\sigma)||u'(\sigma)|)\mathrm{d}\sigma \\
&\lesssim \|\sigma^\frac{\alpha-1}{2}u\|_{L^2}^2 + \|\sigma^\frac{\alpha+1}{2}u'\|_{L^2}^2.
\end{align*}
Therefore, putting $\alpha=2\beta+1$ with $\beta>-\frac12$ we have 
\[
\|s^{\beta+\frac12}u\|_{L^\infty} \lesssim \|s^\beta u\|_{L^2} + \|s^{\beta+1}u'\|_{L^2}.
\]

\noindent
Plugging $\beta=-\frac12+\epsilon$ in the above inequality, we obtain 
$\|s^\epsilon u\|_{L^\infty} \lesssim \|s^{-\frac12+\epsilon} u\|_{L^2} + \|s^{\frac12+\epsilon}u'\|_{L^2}$. 
Here, by integration by parts we see that 
\begin{align*}
\|s^{-\frac12+\epsilon} u\|_{L^2}^2
&= \int_0^1 \left(\frac{1}{2\epsilon}s^{2\epsilon}\right)'|u(s)|^2\mathrm{d}s \\
&\leq \frac{1}{2\epsilon}|u(1)|^2 + \frac{1}{\epsilon}\int_0^1s^{2\epsilon}|u(s)||u'(s)|\mathrm{d}s \\
&\lesssim |u(1)|^2+\|s^{-\frac12+\epsilon} u\|_{L^2}\|s^{\frac12+\epsilon} u'\|_{L^2}.
\end{align*}
These estimates together with the Sobolev embedding $|u(1)|\lesssim\|su\|_{H^1}$ imply 
$\|s^\epsilon u\|_{L^\infty} \lesssim \|u\|_{L^2}+\|s^{\frac12+\epsilon} u'\|_{L^2}$. 
Conversely, by integration by parts we see that 
\begin{align*}
\|u\|_{L^2}^2
&=|u(1)|^2 - 2\int_0^1su(s)u'(s)\mathrm{d}s \\
&\leq \|s^\epsilon u\|_{L^\infty}^2 + 2\|u\|_{L^2}\|su'\|_{L^2},
\end{align*}
which implies $\|u\|_{L^2} \lesssim \|s^\epsilon u\|_{L^\infty}+\|s^{\frac12+\epsilon}u'\|_{L^2}$. 
Therefore, we obtain the equivalence \eqref{NormEquiv} in the case $k=1$. 
The other cases can be proved in the same way so we omit it. 
\end{proof}

By Lemma \ref{lem:algebraX}, the weighted Sobolev space $X^m$ is an algebra if $m\geq2$. 
A similar algebraic property of the weighted Sobolev space $Y^m$ can follow from Lemma \ref{lem:CalIneq1}. 
To show this, we first prepare the following lemma.

\begin{lemma}\label{lem:NormEqui}
For $x(s)=-\int_s^1u(\sigma)\mathrm{d}\sigma$, we have $\|x\|_{L^2}\leq2\|su\|_{L^2}$. 
Particularly, for any non-negative integer $m$, we have $\|x\|_{X^{m+1}}\simeq\|u\|_{Y^m}$. 
\end{lemma}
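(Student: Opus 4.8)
\textbf{Plan for the proof of Lemma \ref{lem:NormEqui}.}
The first claim is an elementary Hardy-type inequality: with $x(s)=-\int_s^1 u(\sigma)\,\mathrm{d}\sigma$, one has $x(1)=0$, so we may write $\|x\|_{L^2}^2 = \int_0^1 |x(s)|^2\,\mathrm{d}s$ and integrate by parts using $(s)' = 1$ to transfer a derivative onto $x$, producing $x'(s)=u(s)$. This gives $\|x\|_{L^2}^2 = -2\int_0^1 s\,x(s)u(s)\,\mathrm{d}s \leq 2\|x\|_{L^2}\|su\|_{L^2}$ after Cauchy--Schwarz and the vanishing of the boundary term at $s=1$ (and the factor $s$ kills it at $s=0$), hence $\|x\|_{L^2}\leq 2\|su\|_{L^2}$. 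This is the same integration-by-parts trick already used twice in the proof of Lemma \ref{lem:NormEquiv}, so nothing new is needed here.

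For the equivalence $\|x\|_{X^{m+1}}\simeq\|u\|_{Y^m}$, the key identity is $x' = u$ together with the defining relation of the $Y^m$ norm recorded just before Lemma \ref{lem:NormRelation}, namely $\|x\|_{X^{m+1}}^2 = \|x\|_{L^2}^2 + \|x'\|_{Y^m}^2 = \|x\|_{L^2}^2 + \|u\|_{Y^m}^2$. Thus the inequality $\|u\|_{Y^m}\leq\|x\|_{X^{m+1}}$ is immediate. For the reverse direction, it remains only to bound $\|x\|_{L^2}$ by $\|u\|_{Y^m}$; using the first part of the lemma, $\|x\|_{L^2}\leq 2\|su\|_{L^2}$, and by Lemma \ref{lem:NormRelation} (applied with a shifted index, or directly from the definition of the weighted norms) $\|su\|_{L^2} \lesssim \|su\|_{X^0}$ is controlled, and more to the point $\|su\|_{L^2} \leq \|u\|_{L^2}$ on $(0,1)$; in any case $\|su\|_{L^2}\lesssim\|u\|_{Y^m}$ since $\|u\|_{L^2}\leq\|u\|_{Y^m}$ for every non-negative integer $m$ (again by the $X^m/Y^m$ comparison in Lemma \ref{lem:NormRelation}, or straight from the definitions, $Y^0 \supseteq$ controls $L^2$). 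Combining, $\|x\|_{X^{m+1}}^2 = \|x\|_{L^2}^2 + \|u\|_{Y^m}^2 \lesssim \|u\|_{Y^m}^2$, which closes the equivalence.

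The only mild subtlety — and the place to be slightly careful — is matching the weight exponents in the definition of $Y^m$: since $\|x\|_{X^{m+1}}^2 = \|x\|_{L^2}^2 + \|x'\|_{Y^m}^2$ is precisely how $Y^m$ was defined in the excerpt, one must make sure that the identity $x'=u$ is used before invoking any norm expansion, and that the boundary condition $x(1)=0$ (which is what makes $x$ recoverable from $u$ at all) is what legitimizes treating $x$ as "the primitive of $u$ vanishing at $s=1$." No genuine obstacle arises; the lemma is essentially a bookkeeping statement that the map $u\mapsto x$ is, up to equivalence of norms, the inverse of $\ds$ on the relevant weighted spaces, and it will be used later (e.g.\ in the boundary-value-problem estimates and in handling the nonlocal tension terms) to pass between $\bm{x}$ and $\bm{x}'$ without losing the weights. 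I would therefore present it in three short lines: the Hardy bound, the norm identity $\|x\|_{X^{m+1}}^2=\|x\|_{L^2}^2+\|u\|_{Y^m}^2$, and the chain $\|x\|_{L^2}\leq 2\|su\|_{L^2}\leq 2\|u\|_{L^2}\leq 2\|u\|_{Y^m}$.
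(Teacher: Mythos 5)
Your first part and the overall structure are exactly the paper's argument: the integration by parts $\|x\|_{L^2}^2=-2\int_0^1 s\,x(s)x'(s)\,\mathrm{d}s\leq2\|x\|_{L^2}\|su\|_{L^2}$ using $x(1)=0$, followed by the identity $\|x\|_{X^{m+1}}^2=\|x\|_{L^2}^2+\|u\|_{Y^m}^2$. However, the final link in your chain, $\|u\|_{L^2}\leq\|u\|_{Y^m}$ ``for every non-negative integer $m$,'' is false at $m=0$, and the lemma is claimed for all $m\geq0$. By the defining relation $\|u\|_{X^1}^2=\|u\|_{L^2}^2+\|u'\|_{Y^0}^2$ one reads off $\|v\|_{Y^0}=\|s^{1/2}v\|_{L^2}$, so $Y^0$ is strictly \emph{weaker} than $L^2$ (e.g. $u(s)=s^{-3/4}$ lies in $Y^0$ but not in $L^2$); Lemma \ref{lem:NormRelation} gives the comparison in the opposite direction to the one you invoke, namely $\|u\|_{Y^0}\leq\|u\|_{X^0}=\|u\|_{L^2}$. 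So for $m=0$ your bound $\|x\|_{L^2}\leq2\|u\|_{L^2}\leq2\|u\|_{Y^0}$ does not hold as written, and the equivalence would be unproved precisely in the case that is used most (e.g. in Lemma \ref{lem:algebraY} with $m=0$).

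The repair is one line and is what the paper does: since $0<s<1$, $\|su\|_{L^2}\leq\|s^{1/2}u\|_{L^2}=\|u\|_{Y^0}$, and $\|u\|_{Y^0}\leq\|u\|_{Y^m}$ for all $m\geq0$ by the monotone chain $\|\cdot\|_{Y^0}\leq\|\cdot\|_{X^0}\leq\|\cdot\|_{Y^1}\leq\cdots$ from Lemma \ref{lem:NormRelation}. Thus $\|x\|_{L^2}\leq2\|su\|_{L^2}\leq2\|u\|_{Y^0}\leq2\|u\|_{Y^m}$, which together with the norm identity closes the equivalence for every $m\geq0$. In short: keep your Hardy step and the identity, but route the $L^2$ bound on $x$ through $\|u\|_{Y^0}$ rather than through $\|u\|_{L^2}$.
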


\begin{proof}
Since $x(1)=0$ and $x'(s)=u(s)$, by integration by parts, we see that 
\begin{align*}
\int_0^1|x(s)|^2\mathrm{d}s
&= -2\int_0^1sx(s)x'(s)\mathrm{d}s \\
&\leq 2\|x\|_{L^2}\|su\|_{L^2},
\end{align*}
which yields $\|x\|_{L^2}\leq2\|su\|_{L^2}$. 
Particularly, we have $\|x\|_{L^2}\leq2\|u\|_{Y^0}$. 
This together with the identity $\|x\|_{X^{m+1}}^2=\|x\|_{L^2}^2+\|u\|_{Y^m}^2$ implies the equivalence $\|x\|_{X^{m+1}}\simeq\|u\|_{Y^m}$. 
\end{proof}

\begin{lemma}\label{lem:algebraY}
Let $m$ be a non-negative integer. 
Then, we have 
\[
\|uv\|_{Y^m} \lesssim
\begin{cases}
 \|u\|_{Y^{m+1}}\|v\|_{Y^{m+1}} &\mbox{for}\quad m=0,1, \\
 \|u\|_{Y^{m\vee3}}\|v\|_{Y^m} &\mbox{for}\quad m=0,1,2,\ldots.
\end{cases}
\]
\end{lemma}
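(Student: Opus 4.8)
The plan is to reduce the bound on $\|uv\|_{Y^m}$ to the already-established calculus inequality for $\|u'v'\|_{Y^m}$ of Lemma \ref{lem:CalIneq1} by exploiting the characterization of $Y^m$ through primitives in Lemma \ref{lem:NormEqui}. Concretely, given $u,v$ I would introduce the primitives $U(s)=-\int_s^1 u(\sigma)\mathrm{d}\sigma$ and $V(s)=-\int_s^1 v(\sigma)\mathrm{d}\sigma$, so that $U'=u$, $V'=v$, $U(1)=V(1)=0$, and by Lemma \ref{lem:NormEqui} we have $\|U\|_{X^{m+1}}\simeq\|u\|_{Y^m}$ and $\|V\|_{X^{m+1}}\simeq\|v\|_{Y^m}$. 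The product $uv=U'V'$, so I want to estimate $\|U'V'\|_{Y^m}$ in terms of $\|U\|_{X^{\cdot}}$ and $\|V\|_{X^{\cdot}}$, which is exactly what Lemma \ref{lem:CalIneq1} provides with $U,V$ in the roles of its $u,v$.

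Carrying this out: for $m=0,1$, Lemma \ref{lem:CalIneq1} gives $\|U'V'\|_{Y^m}\lesssim\|U\|_{X^{m+2}}\|V\|_{X^{m+2}}$, and then Lemma \ref{lem:NormEqui} (applied with $m$ replaced by $m+1$) identifies $\|U\|_{X^{m+2}}\simeq\|u\|_{Y^{m+1}}$ and similarly for $V$, yielding $\|uv\|_{Y^m}\lesssim\|u\|_{Y^{m+1}}\|v\|_{Y^{m+1}}$ as claimed. For general $m=0,1,2,\ldots$, the second branch of Lemma \ref{lem:CalIneq1} gives $\|U'V'\|_{Y^m}\lesssim\|U\|_{X^{m+1\vee4}}\|V\|_{X^{m+1}}$; here $\|V\|_{X^{m+1}}\simeq\|v\|_{Y^m}$ directly, and for the first factor one needs $\|U\|_{X^{m+1\vee4}}\lesssim\|u\|_{Y^{m\vee3}}$. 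When $m\geq3$ this is again a direct application of Lemma \ref{lem:NormEqui} since $m+1\vee4=m+1$; when $m=0,1,2$ we have $m+1\vee4=4$, and $\|U\|_{X^4}\simeq\|u\|_{Y^3}=\|u\|_{Y^{m\vee3}}$, again by Lemma \ref{lem:NormEqui}. Thus both branches follow. (One should also note that $U,V$ indeed lie in $L^2$ so that the norm equivalences apply; this follows from the first assertion of Lemma \ref{lem:NormEqui}, $\|U\|_{L^2}\leq 2\|su\|_{L^2}\leq 2\|u\|_{Y^0}$, which is finite since $u\in Y^m\subset Y^0$.)

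The main point requiring a little care — rather than a genuine obstacle — is the bookkeeping of the weight/order shifts: Lemma \ref{lem:CalIneq1} is stated for $u'v'$ in terms of $X$-norms of $u,v$, while the target is $uv$ in terms of $Y$-norms, and the two are connected by the single identity $\|x\|_{X^{k+1}}^2=\|x\|_{L^2}^2+\|x'\|_{Y^k}^2$ together with Lemma \ref{lem:NormEqui}. Once one sets $U,V$ to be the primitives of $u,v$, every index lines up: the $X^{m+2}$ and $X^{m+1\vee4}$ appearing on the right of Lemma \ref{lem:CalIneq1} become $Y^{m+1}$ and $Y^{m\vee3}$ respectively, exactly the orders in the statement. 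No new estimate is needed beyond Lemmas \ref{lem:CalIneq1} and \ref{lem:NormEqui}, so the proof is short.
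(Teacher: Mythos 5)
Your proposal is correct and coincides with the paper's own proof: the paper likewise sets $x(s)=-\int_s^1u(\sigma)\,\mathrm{d}\sigma$, $y(s)=-\int_s^1v(\sigma)\,\mathrm{d}\sigma$, applies Lemma \ref{lem:CalIneq1} to $x'y'=uv$, and converts the $X$-norms back via Lemma \ref{lem:NormEqui}. Your index bookkeeping ($m+1\vee4=(m\vee3)+1$) and the remark that $U,V\in L^2$ are exactly the details implicit in the paper's argument.
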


\begin{proof}
Put $x(s)=-\int_s^1u(\sigma)\mathrm{d}\sigma$ and $y(s)=-\int_s^1v(\sigma)\mathrm{d}\sigma$. 
Then, by Lemma \ref{lem:CalIneq1} we see that 
\begin{align*}
\|uv\|_{Y^m}
&= \|x'y'\|_{Y^m} \\
&\lesssim 
 \begin{cases}
  \|x\|_{X^{m+2}}\|y\|_{X^{m+2}} &\mbox{for}\quad m=0,1, \\
  \|x\|_{X^{m+1\vee4}}\|y\|_{X^{m+1}} &\mbox{for}\quad m=0,1,2,\ldots,
 \end{cases}
\end{align*}
which together with Lemma \ref{lem:NormEqui} implies the desired estimate. 
\end{proof}

By Lemmas \ref{lem:CalIneq2} and \ref{lem:NormEqui}, we can also obtain the following lemma. 
Since the proof is almost the same as that of Lemma \ref{lem:algebraY}, we omit it.

\begin{lemma}\label{lem:CalIneq2a}
Let $m$ be a non-negative integer. 
If $\tau|_{s=0}=0$, then, we have 
\[
\|\tau u'v'\|_{Y^m} \lesssim
 \begin{cases}
  \|\tau'\|_{L^2}\|u\|_{Y^3}\|v\|_{Y^2} &\mbox{for}\quad m=0, \\
  \|\tau'\|_{L^\infty}\|u\|_{Y^2}\|v\|_{Y^2} &\mbox{for}\quad m=0, \\
  \|\tau'\|_{L^\infty}\|u\|_{Y^3}\|v\|_{Y^2} &\mbox{for}\quad m=1, \\
  \|\tau'\|_{L^\infty\cap X^{m-1}}\|u\|_{Y^{m+1}}\|v\|_{Y^{m+1}} &\mbox{for}\quad m\geq2.
 \end{cases}
\]
\end{lemma}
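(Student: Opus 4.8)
\textbf{Proof proposal for Lemma \ref{lem:CalIneq2a}.}

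The plan is to mimic exactly the proof of Lemma \ref{lem:algebraY}, transferring the weighted estimate of Lemma \ref{lem:CalIneq2} from the $X$-scale to the $Y$-scale by means of the antiderivative trick. First I would set $x(s)=-\int_s^1 u(\sigma)\,\mathrm{d}\sigma$ and $y(s)=-\int_s^1 v(\sigma)\,\mathrm{d}\sigma$, so that $x'=u$, $y'=v$, and $x(1)=y(1)=0$. Then $\tau u'v' = \tau x''y''$, and since $\tau|_{s=0}=0$ we may apply Lemma \ref{lem:CalIneq2} directly to get, for each admissible $m$,
\[
\|\tau u'v'\|_{Y^m} = \|\tau x''y''\|_{Y^m} \lesssim
\begin{cases}
 \|\tau'\|_{L^2}\|x\|_{X^4}\|y\|_{X^3} &\mbox{for}\quad m=0, \\
 \|\tau'\|_{L^\infty}\|x\|_{X^3}\|y\|_{X^3} &\mbox{for}\quad m=0, \\
 \|\tau'\|_{L^\infty}\|x\|_{X^4}\|y\|_{X^4} &\mbox{for}\quad m=1, \\
 \|\tau'\|_{L^\infty\cap X^{m-1}}\|x\|_{X^{m+2}}\|y\|_{X^{m+2}} &\mbox{for}\quad m\geq2,
\end{cases}
\]
where for $m=0$ I would pick whichever of the two bounds in Lemma \ref{lem:CalIneq2} has its weight on the right factor balanced the way I want, and for $m=1$ I would use the $\|\tau'\|_{L^\infty}$-line with $\|u\|_{X^3}$ replaced by the symmetric estimate (note Lemma \ref{lem:CalIneq2} for $m=1$ reads $\min\{\|\tau'\|_{L^2}\|u\|_{X^4},\|\tau'\|_{L^\infty}\|u\|_{X^3}\}\|v\|_{X^4}$, and by symmetry of $\tau u''v''$ in $u,v$ one also has the corresponding bound with the roles swapped; combining gives the clean $\|\tau'\|_{L^\infty}\|x\|_{X^4}\|y\|_{X^4}$ after going back to $Y$-norms, or one keeps the asymmetric form and translates it).

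Next I would invoke Lemma \ref{lem:NormEqui}, which gives $\|x\|_{X^{k+1}}\simeq\|u\|_{Y^k}$ and $\|y\|_{X^{k+1}}\simeq\|v\|_{Y^k}$ for every non-negative integer $k$. Substituting $k+1=4$ (i.e. $k=3$) and $k+1=3$ (i.e. $k=2$) in the $m=0$ lines yields $\|\tau'\|_{L^2}\|u\|_{Y^3}\|v\|_{Y^2}$ and $\|\tau'\|_{L^\infty}\|u\|_{Y^2}\|v\|_{Y^2}$; substituting $k+1=4$ in the $m=1$ line yields $\|\tau'\|_{L^\infty}\|u\|_{Y^3}\|v\|_{Y^2}$ (again using the asymmetric form of Lemma \ref{lem:CalIneq2} for $m=1$, which puts $X^3$ on one factor and $X^4$ on the other, and translating to $Y^2$ and $Y^3$ respectively — here one should be careful which factor gets which exponent, but since the statement to be proved is symmetric in $u$ and $v$ up to the indices displayed, the natural reading is $\|u\|_{Y^3}\|v\|_{Y^2}$); and substituting $k+1=m+2$ (i.e. $k=m+1$) in the $m\geq2$ line yields $\|\tau'\|_{L^\infty\cap X^{m-1}}\|u\|_{Y^{m+1}}\|v\|_{Y^{m+1}}$. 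This is precisely the claimed inequality in all four cases, so the lemma follows.

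The only point requiring a little care — and the one I would flag as the main (minor) obstacle — is the bookkeeping of exponents and the asymmetry in the $m=1$ case of Lemma \ref{lem:CalIneq2}: one must verify that the $\min$-over-$\{L^2,L^\infty\}$ structure there, once the $L^2$-alternative is dropped, produces exactly the $Y^3$-$Y^2$ split claimed here, and that the $m=0$ case genuinely needs two separate lines (one with $\|\tau'\|_{L^2}$ and the loss-free indices $Y^3,Y^2$, one with $\|\tau'\|_{L^\infty}$ and the balanced indices $Y^2,Y^2$) rather than a single bound. Since this is entirely parallel to the argument already written out for Lemma \ref{lem:algebraY} from Lemma \ref{lem:CalIneq1}, the paper's sentence ``Since the proof is almost the same as that of Lemma \ref{lem:algebraY}, we omit it'' is justified, and no genuinely new idea is needed beyond the antiderivative substitution $u=x'$, $v=y'$ combined with the norm equivalence of Lemma \ref{lem:NormEqui}.
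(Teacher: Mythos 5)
Your proposal is correct and is exactly the argument the paper intends (and omits): write $u=x'$, $v=y'$ with $x,y$ the antiderivatives vanishing at $s=1$, apply Lemma \ref{lem:CalIneq2} to $\tau x''y''$, and convert via the equivalence $\|x\|_{X^{k+1}}\simeq\|u\|_{Y^k}$ of Lemma \ref{lem:NormEqui}, in complete parallel with the proof of Lemma \ref{lem:algebraY}. The only care needed is the one you already flag: in the $m=1$ case you must keep the asymmetric bound $\|\tau'\|_{L^\infty}\|x\|_{X^3}\|y\|_{X^4}$ and use the symmetry of $\tau u'v'$ to place $Y^3$ on $u$ and $Y^2$ on $v$ (your intermediate symmetric bound $\|x\|_{X^4}\|y\|_{X^4}$ would only give the weaker $Y^3$–$Y^3$ estimate), which your final translation handles correctly.
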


We will use an averaging operator $\mathscr{M}$ introduced in \cite{IguchiTakayama2024}, which is defined by 
\begin{equation}\label{AvOp}
(\mathscr{M}u)(s) = \frac1s\int_0^su(\sigma)\mathrm{d}\sigma.
\end{equation}

\begin{lemma}[{\cite[Corollary 4.13]{IguchiTakayama2024}}]\label{lem:AvOp}
Let $j$ be a non-negative integer, $1\leq p\leq \infty$, and $\beta<j+1-\frac1p$. 
Then, we have $\|s^\beta\ds^j(\mathscr{M}u)\|_{L^p} \leq (j+1-\beta-\frac1p)^{-1}\|s^\beta\ds^j u\|_{L^p}$. 
Particularly, $\|\mathscr{M}u\|_{X^m} \leq 2\|u\|_{X^m}$ holds for $m=0,1,2,\ldots$. 
\end{lemma}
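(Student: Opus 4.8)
The plan is to prove the weighted $L^p$ bound for $s^\beta\ds^j(\mathscr{M}u)$ first and then deduce the $X^m$ bound. The starting point is the explicit formula for derivatives of $\mathscr{M}u$. From $(\mathscr{M}u)(s) = \frac1s\int_0^s u(\sigma)\,\mathrm{d}\sigma$, I would first compute, by repeated differentiation (or by a clean change of variables $\sigma = s\lambda$ so that $(\mathscr{M}u)(s) = \int_0^1 u(s\lambda)\,\mathrm{d}\lambda$), the identity
\[
\ds^j(\mathscr{M}u)(s) = \int_0^1 \lambda^j (\ds^j u)(s\lambda)\,\mathrm{d}\lambda = \frac{1}{s^{j+1}}\int_0^s \sigma^j (\ds^j u)(\sigma)\,\mathrm{d}\sigma.
\]
The change-of-variables form makes the $L^p$ estimate transparent: $\mathscr{M}$ commutes with $\ds^j$ up to the weight $\lambda^j$ inside the averaging.

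Next I would apply Minkowski's integral inequality in the $\sigma = s\lambda$ representation. Writing $v = \ds^j u$, we have $s^\beta \ds^j(\mathscr{M}u)(s) = \int_0^1 \lambda^j s^\beta v(s\lambda)\,\mathrm{d}\lambda$, and
\[
\|s^\beta \ds^j(\mathscr{M}u)\|_{L^p} \le \int_0^1 \lambda^j \|s^\beta v(s\lambda)\|_{L^p_s}\,\mathrm{d}\lambda.
\]
A substitution $t = s\lambda$ gives $\|s^\beta v(s\lambda)\|_{L^p_s} = \lambda^{-\beta - \frac1p}\|t^\beta v\|_{L^p(0,\lambda)} \le \lambda^{-\beta-\frac1p}\|s^\beta v\|_{L^p}$ for $p<\infty$ (and the analogous statement $\lambda^{-\beta}\|s^\beta v\|_{L^\infty}$ for $p=\infty$, which is the $\frac1p=0$ case). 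Hence the bound is
\[
\|s^\beta \ds^j(\mathscr{M}u)\|_{L^p} \le \|s^\beta v\|_{L^p}\int_0^1 \lambda^{j-\beta-\frac1p}\,\mathrm{d}\lambda = \bigl(j+1-\beta-\tfrac1p\bigr)^{-1}\|s^\beta \ds^j u\|_{L^p},
\]
where the integral converges precisely because $\beta < j+1-\frac1p$. This is exactly the claimed inequality.

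Finally, for the $X^m$ bound I would just feed the weights appearing in the definition of $\|\cdot\|_{X^m}$ into the inequality just proved. For $m=2k$ the relevant terms are $\|\ds^i(\mathscr{M}u)\|_{L^2}$ for $0\le i\le k$ (weight $\beta=0$, $p=2$, so the constant is $(i+\tfrac12)^{-1}\le 2$) and $\|s^i\ds^{k+i}(\mathscr{M}u)\|_{L^2}$ for $1\le i\le k$ (weight $\beta=i$, $j=k+i$, $p=2$, so the constant is $(k+\tfrac12)^{-1}\le 2$); the odd case $m=2k+1$ with half-integer weights $\beta = i-\tfrac12$ is identical, and in every case $j+1-\beta-\frac1p \ge \tfrac12$, giving the uniform constant $2$. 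Summing the squares of the controlled terms yields $\|\mathscr{M}u\|_{X^m}\le 2\|u\|_{X^m}$. I do not anticipate a genuine obstacle here; the only point requiring a little care is verifying the commutation formula for $\ds^j \mathscr{M}$ and tracking that every weight exponent arising from the $X^m$ norm indeed satisfies the admissibility condition $\beta < j+1-\frac1p$ with the resulting constant bounded by $2$ — the change-of-variables representation $(\mathscr{M}u)(s)=\int_0^1 u(s\lambda)\,\mathrm{d}\lambda$ makes both of these essentially immediate.
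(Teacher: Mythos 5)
Your argument is correct: the representation $(\mathscr{M}u)(s)=\int_0^1 u(s\lambda)\,\mathrm{d}\lambda$, the commutation $\ds^j(\mathscr{M}u)(s)=\int_0^1\lambda^j(\ds^ju)(s\lambda)\,\mathrm{d}\lambda$, Minkowski's integral inequality, and the scaling substitution give exactly the constant $(j+1-\beta-\tfrac1p)^{-1}$, and your bookkeeping of the weights in $\|\cdot\|_{X^m}$ (worst case $j+1-\beta-\tfrac1p=\tfrac12$ from the unweighted $L^2$ term) correctly yields the factor $2$. Note that the paper does not prove this lemma but imports it from \cite[Corollary 4.13]{IguchiTakayama2024}, so there is no in-paper proof to compare with; your derivation is the standard Schur/Hardy-type argument and is a complete and valid justification of the statement as quoted.
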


\begin{lemma}\label{lem:CalIneq3}
Let $m$ be a non-negative integer and $\epsilon>0$. 
If $\tau|_{s=0}=0$, then we have 
\[
\begin{cases}
\|(\tau u)''\|_{Y^m} \lesssim \|\tau'\|_{X^{m+1}}\|u\|_{Y^{m+2\vee3}}, \\
\|s^\epsilon(\tau u)''\|_{Y^0} \lesssim (\|\tau'\|_{L^\infty}+\|\tau''\|_{Y^0})\|u\|_{Y^2}.
\end{cases}
\]
\end{lemma}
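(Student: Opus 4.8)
The plan is to reduce both estimates to the known calculus inequalities in Lemmas~\ref{lem:CalIneq1}, \ref{lem:CalIneqY1}, and \ref{lem:CalIneq2} by writing $\tau$ and $u$ as antiderivatives, in the spirit of the proof of Lemma~\ref{lem:algebraY}. Concretely, set $\tau(s)=-\int_s^1\tau'(\sigma)\,\mathrm d\sigma$ (using $\tau|_{s=0}=0$ is not even needed here, only that $\tau$ is the antiderivative of $\tau'$) and $x(s)=-\int_s^1 u(\sigma)\,\mathrm d\sigma$, so that $\|\tau\|_{X^{m+2}}\simeq\|\tau'\|_{Y^{m+1}}$ and $\|x\|_{X^{m+1}}\simeq\|u\|_{Y^m}$ by Lemma~\ref{lem:NormEqui}, and also $\|\tau'\|_{Y^{m+1}}\le\|\tau'\|_{X^{m+1}}$ by Lemma~\ref{lem:NormRelation}. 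The point of this substitution is that the second derivative gets distributed onto quantities for which we have product estimates.

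For the first estimate, I would expand $(\tau u)'' = \tau'' u + 2\tau' u' + \tau u''$ and treat the three terms separately in $Y^m$. The term $2\tau' u'$ is handled directly by Lemma~\ref{lem:CalIneq1}: $\|\tau' u'\|_{Y^m}\lesssim\|\tau\|_{X^{m+1\vee4}}\|x\|_{X^{m+1}}\lesssim\|\tau'\|_{Y^{m\vee3}}\|u\|_{Y^m}\lesssim\|\tau'\|_{X^{m+1}}\|u\|_{Y^{m+2\vee3}}$ after applying Lemma~\ref{lem:NormEqui} and absorbing $Y^{m\vee3}\hookleftarrow X^{m+1}$ (for $m\le2$ this needs $\|\tau'\|_{Y^3}\lesssim\|\tau'\|_{X^3}\le\|\tau'\|_{X^{m+1}}$ only when $m\ge2$, so for $m=0,1$ one instead uses the first line of Lemma~\ref{lem:CalIneq1}, giving $\|\tau'u'\|_{Y^m}\lesssim\|\tau\|_{X^{m+2}}\|x\|_{X^{m+2}}\lesssim\|\tau'\|_{Y^{m+1}}\|u\|_{Y^{m+1}}\lesssim\|\tau'\|_{X^{m+1}}\|u\|_{Y^{m+2\vee3}}$, which is even cleaner). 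For $\|\tau'' u\|_{Y^m}$ and $\|\tau u''\|_{Y^m}$ I would use the Leibniz-type tame estimates available through the $H^m(D)$ characterization (Lemma~\ref{lem:NormEq}) or, more in keeping with the paper's toolkit, rewrite $\tau''u=(\tau'w')'$-type expressions; but the simplest route is: $\tau$ is bounded (since $\tau(0)=0$ and $\tau'\in L^\infty\subset L^2$-ish, $\|\tau\|_{L^\infty}\lesssim\|\tau'\|_{L^1}\lesssim\|\tau'\|_{L^\infty}$), so that $\|\tau u''\|_{Y^m}\lesssim\|\tau\|_{X^{m\vee2}}\|u''\|_{Y^m}$ by the algebra property, and $\|u''\|_{Y^m}$ is controlled using $\|u'\|_{X^m}\le\|u\|_{X^{m+2}}$ from Lemma~\ref{lem:embedding2} together with $\|u'\|_{Y^{m}}\le\|u'\|_{X^m}$; the loss here is $u\in Y^{m+2}$, matched by the $Y^{m+2\vee3}$ on the right. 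Similarly $\|\tau''u\|_{Y^m}\lesssim\|\tau''\|_{X^{m\vee2}\,\text{-type}}\|u\|_{Y^m}$ and $\|\tau''\|$ is absorbed by $\|\tau'\|_{X^{m+1}}$.

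For the second estimate, with $\epsilon>0$ fixed, I would again expand $(\tau u)''=\tau''u+2\tau'u'+\tau u''$ and estimate each piece in the weighted $L^2$ norm underlying $Y^0$ (recall $\|v\|_{Y^0}=\|sv'\|_{L^2}+\dots$; more precisely $\|v\|_{Y^0}\simeq$ a weighted $L^2$ quantity). The term $s^\epsilon\tau'u'$: since $\tau'\in L^\infty$, $\|s^\epsilon\tau'u'\|_{Y^0}\lesssim\|\tau'\|_{L^\infty}\|s^\epsilon u'\|_{Y^0}\lesssim\|\tau'\|_{L^\infty}\|u\|_{Y^2}$, using that one derivative of $u$ costs at most $Y^1\subset Y^2$ in the appropriate weighted sense (the $s^\epsilon$ weight only helps). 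The term $s^\epsilon\tau u''$: bound $\|\tau\|_{L^\infty}\lesssim\|\tau'\|_{L^\infty}$ (or $\lesssim\|\tau''\|_{L^1}+|\tau'(1)|$; in any case $\lesssim\|\tau'\|_{L^\infty}$ suffices here since $\tau(0)=0$), hence $\|s^\epsilon\tau u''\|_{Y^0}\lesssim\|\tau'\|_{L^\infty}\|s^\epsilon u''\|_{Y^0}\lesssim\|\tau'\|_{L^\infty}\|u\|_{Y^2}$, where the last step notes that the $Y^0$ norm of $s^\epsilon u''$ involves $\|s^{1+\epsilon}u'''\|_{L^2}$-type terms all dominated by $\|u\|_{Y^2}$. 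The term $s^\epsilon\tau''u$: here $\|s^\epsilon\tau'' u\|_{Y^0}\lesssim\|\tau''\|_{Y^0}\|u\|_{L^\infty}$ by an algebra estimate in the weighted space, and $\|u\|_{L^\infty}\lesssim\|u\|_{Y^2}$ by the embedding in Lemma~\ref{lem:embedding2}; this contributes the $\|\tau''\|_{Y^0}$ term on the right-hand side. The $\epsilon>0$ is needed precisely to make the otherwise borderline weighted integral $\int_0^1 s^{2\epsilon-1}(\cdots)\,\mathrm ds$ converge after an integration by parts, exactly as in the proof of Lemma~\ref{lem:NormEquiv}.

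The main obstacle I anticipate is the careful bookkeeping of weights in the $Y^m$ norm near $s=0$: because $Y^m$ is defined through the identity $\|u\|_{X^{m+1}}^2=\|u\|_{L^2}^2+\|u'\|_{Y^m}^2$, distributing the second derivative of $\tau u$ requires matching each term $s^j\ds^{m+j+k}(\cdots)$ against products of the form (weighted derivative of $\tau'$) $\times$ (weighted derivative of $u$) with the correct total weight exponent, and the endpoint cases $m=0,1$ versus $m\ge2$ must be separated because Lemma~\ref{lem:CalIneq1} (and hence Lemma~\ref{lem:algebraY}) changes form there. The cleanest implementation is to push everything through the antiderivative substitution and the disc characterization so that only the already-established Lemmas~\ref{lem:NormEq}, \ref{lem:embedding2}, \ref{lem:algebraX}, \ref{lem:CalIneq1}, \ref{lem:NormEqui}, and \ref{lem:NormRelation} are invoked, avoiding any fresh weighted integration by parts except the single one (as in Lemma~\ref{lem:NormEquiv}) that the $s^\epsilon$ factor is designed to accommodate.
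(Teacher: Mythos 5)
Your treatment of the first estimate has a genuine gap: after expanding $(\tau u)''=\tau''u+2\tau'u'+\tau u''$, the bounds you propose for the two extreme terms are not valid in these weighted spaces. For $\tau u''$ you only use boundedness of $\tau$, so you are forced to control $\|u''\|_{Y^m}$; but $\|u''\|_{Y^m}\leq\|u'\|_{X^{m+1}}\leq\|u\|_{X^{m+3}}$ is the best available, and $\|u\|_{X^{m+3}}$ is \emph{not} dominated by $\|u\|_{Y^{m+2\vee3}}$ (already for $m=0$: $\|u''\|_{Y^0}=\|s^{1/2}u''\|_{L^2}$, whereas $\|u\|_{Y^2}$ only contains $\|s^{3/2}u''\|_{L^2}$). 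Likewise $\|\tau''u\|_{Y^m}\lesssim\|\tau''\|_{X^{m\vee2}}\|u\|_{Y^m}$ cannot be closed, because $\|\tau''\|_{X^{m\vee2}}$ is not bounded by $\|\tau'\|_{X^{m+1}}$ (again a weight mismatch: $\|\tau''\|_{L^2}$ versus $\|s^{1/2}\tau''\|_{L^2}$). Your remark that $\tau|_{s=0}=0$ is not needed here is also wrong: for $\tau\equiv1$ the right-hand side vanishes while the left-hand side is $\|u''\|_{Y^m}$. The missing idea is exactly the weight gain coming from $\tau(0)=0$: the paper does \emph{not} expand the second derivative, but uses $\|(\tau u)''\|_{Y^m}\leq\|(\tau u)'\|_{X^{m+1}}$ (definition of $Y^m$), writes $(\tau u)'=\tau'u+(\mathscr{M}\tau')\,su'$ via the identity $\tau(s)=s(\mathscr{M}\tau')(s)$, and then applies Lemma \ref{lem:algebraX}, Lemma \ref{lem:AvOp} ($\|\mathscr{M}\tau'\|_{X^{m+1}}\leq2\|\tau'\|_{X^{m+1}}$) and Lemma \ref{lem:NormRelation} ($\|su'\|_{X^{m+1\vee2}}\lesssim\|u\|_{Y^{m+2\vee3}}$). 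The factor $s$ attached to $u'$ is precisely what your decomposition discards.

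The same defect appears in your second estimate. For the term $s^\epsilon\tau u''$ you invoke only $\|\tau\|_{L^\infty}\lesssim\|\tau'\|_{L^\infty}$ and then need $\|s^{1/2+\epsilon}u''\|_{L^2}\lesssim\|u\|_{Y^2}$, which is false for small $\epsilon$ since $Y^2$ controls only $\|s^{3/2}u''\|_{L^2}$; the correct step is the pointwise bound $|\tau(s)|\leq s\|\tau'\|_{L^\infty}$, which gives $\|s^{1/2+\epsilon}\tau u''\|_{L^2}\leq\|\tau'\|_{L^\infty}\|s^{3/2+\epsilon}u''\|_{L^2}\leq\|\tau'\|_{L^\infty}\|u\|_{Y^2}$. (Also note $\|v\|_{Y^0}=\|s^{1/2}v\|_{L^2}$ involves no derivative, so no $\|s^{1+\epsilon}u'''\|_{L^2}$-type terms arise.) For the term $s^\epsilon\tau''u$, the unweighted embedding $\|u\|_{L^\infty}\lesssim\|u\|_{Y^2}$ you use is not available; one must keep the weight on $u$ and use $\|s^{1/2+\epsilon}\tau''u\|_{L^2}\leq\|s^{1/2}\tau''\|_{L^2}\|s^\epsilon u\|_{L^\infty}$ together with $\|s^\epsilon u\|_{L^\infty}\lesssim\|u\|_{X^1}\leq\|u\|_{Y^2}$ — this is exactly where the hypothesis $\epsilon>0$ enters. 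Your estimate of the middle term $s^\epsilon\tau'u'$ is fine and agrees with the paper.
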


\begin{proof}
We first note the identity $\tau(s)=s(\mathscr{M}\tau')(s)$. 
By lemmas \ref{lem:algebraX}, \ref{lem:NormRelation}, and \ref{lem:AvOp}, we see that 
\begin{align*}
\|(\tau u)''\|_{Y^m}
&\leq \|(\tau u)'\|_{X^{m+1}} \\
&\leq \|\tau'u\|_{X^{m+1}} + \|(\mathscr{M}\tau')su'\|_{X^{m+1}} \\
&\lesssim \|\tau'\|_{X^{m+1}}\|u\|_{X^{m+1\vee2}}+\|\mathscr{M}\tau'\|_{X^{m+1}}\|su'\|_{X^{m+1\vee2}} \\
&\lesssim \|\tau'\|_{X^{m+1}}\|u\|_{Y^{m+2\vee3}},
\end{align*}
which gives the first estimate of the lemma. 
We recall the embedding $\|s^\epsilon u\|_{L^\infty} \lesssim \|u\|_{X^1}$, which was given by \cite[Lemma 4.3]{IguchiTakayama2024}. 
Since $|\tau(s)| \leq s\|\tau'\|_{L^\infty}$, we see that 
\begin{align*}
\|s^\epsilon(\tau u)''\|_{Y^0}
&\leq \|s^{\frac12+\epsilon}\tau''u\|_{L^2} + 2\|s^\frac12\tau'u'\|_{L^2} + \|s^\frac12\tau u''\|_{L^2} \\
&\leq \|s^\frac12\tau''\|_{L^2}\|s^\epsilon u\|_{L^\infty} + 2\|\tau'\|_{L^\infty}( \|s^\frac12 u'\|_{L^2}+\|s^\frac32 u''\|_{L^2}) \\
&\lesssim \|\tau''\|_{Y^0}\|u\|_{X^1} + \|\tau'\|_{L^\infty}\|u\|_{Y^2},
\end{align*}
which together with $\|u\|_{X^1}\leq \|u\|_{Y^2}$ gives the second estimate of the lemma. 
\end{proof}

\subsection{Two-point boundary value problem}
In view of \eqref{BVP} we consider the two-point boundary value problem 
\begin{equation}\label{TBVP}
\begin{cases}
 -\tau''+|\bm{v}|^2\tau = h \quad\mbox{in}\quad (0,1), \\
 \tau(0)=0, \quad \tau'(1)=a,
\end{cases}
\end{equation}
where $\bm{v}(s)$ and $h(s)$ are given functions and $a$ is a constant.

\begin{lemma}[{\cite[Lemmas 3.5 and 3.7]{IguchiTakayama2024}}]\label{lem:EstSolBVP3}
For any $M>0$ there exists a constant $C=C(M)>0$ such that if $\|\bm{v}\|_{Y^0} \leq M$, then the solution $\tau$ 
to the two-point boundary value problem \eqref{TBVP} satisfies 
\[
\begin{cases}
 \|\tau\|_{L^\infty} \leq C(|a|+\|sh\|_{L^1}), \\
 \|s^\alpha\tau'\|_{L^p} \leq C(|a|+\|s^{\alpha+\frac{1}{p}}h\|_{L^1})
\end{cases}
\]
for any $p\in[1,\infty]$ and any $\alpha\geq0$ satisfying $\alpha+\frac{1}{p}\leq1$. 
\end{lemma}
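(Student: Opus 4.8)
The plan is to treat the two contributions — the inhomogeneous term $h$ and the boundary value $a$ — separately, exploiting the sign of the zeroth-order coefficient $|\bm v|^2\ge0$, and then to propagate weighted $L^1$ bounds on $h$ into weighted $L^p$ bounds on $\tau'$ by a direct integration of the equation. First I would reduce to the case $a=0$ by subtracting the explicit harmonic lift $s\mapsto as$: writing $\tau=\tilde\tau+as$, the function $\tilde\tau$ solves the same problem with right-hand side $\tilde h=h-|\bm v|^2 as$ and boundary data $\tilde\tau(0)=0$, $\tilde\tau'(1)=0$; since $\|\,|\bm v|^2 s\,\|$ in the relevant weighted $L^1$ norm is controlled by $\|\bm v\|_{L^2}^2\lesssim\|\bm v\|_{Y^0}^2\le M^2$ (using $\|\sigma^{1/2}\bm v^2\|_{L^1}\le\|\bm v\|_{L^2}^2$ and, for the $L^\infty$-weighted piece, the Hardy-type bound $\|s^{-1/2}\tilde\tau\|\lesssim\ldots$), this reduction costs only a factor $C(M)$. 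So it suffices to bound the solution of the pure Dirichlet/Neumann problem with data $h$ and zero boundary values.

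For the reduced problem I would first establish the $L^\infty$ bound. Testing the equation against $\tau$ itself and integrating over $(s,1)$, the boundary term at $s=1$ vanishes (Neumann condition) and the term at $s$ is $\tau(s)\tau'(s)$; combined with the non-negativity of $\int |\bm v|^2\tau^2$ this gives pointwise control of $\tau\tau'$, hence of $\frac12\frac{d}{ds}(\tau^2)$, by $\int_s^1 |h||\tau|\,d\sigma$. Since $\tau(0)=0$, a Grönwall/bootstrap argument — or more directly the representation $\tau(s)=\int_0^s\tau'(\sigma)d\sigma$ and the observation that $\tau'(\sigma)=-\int_\sigma^1(|\bm v|^2\tau - h)\,d\varsigma$ — yields $\|\tau\|_{L^\infty}\lesssim\|sh\|_{L^1}$ once one absorbs the $\|\bm v\|_{Y^0}^2\|\tau\|_{L^\infty}$ contribution using smallness is \emph{not} available, so instead I would use the sign: from $-\tau''+|\bm v|^2\tau=h$ with the given boundary conditions one has the maximum-principle bound $|\tau(s)|\le$ the solution of $-\phi''=|h|$ with the same boundary data, and the latter is explicitly $\phi(s)=\int_0^s\int_\sigma^1|h(\varsigma)|\,d\varsigma\,d\sigma$, whence $\|\tau\|_{L^\infty}\le\|\phi\|_{L^\infty}\le\int_0^1\varsigma|h(\varsigma)|\,d\varsigma=\|sh\|_{L^1}$ after switching the order of integration. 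This is clean and uniform in $M$; the $M$-dependence enters only through the reduction step above.

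For the weighted derivative bound, I would again use $\tau'(s)=-\int_s^1(|\bm v(\varsigma)|^2\tau(\varsigma)-h(\varsigma))\,d\varsigma$. Multiplying by $s^\alpha$ and taking the $L^p$ norm, the $h$-part is handled by a weighted Hardy/Minkowski inequality: $\|s^\alpha\int_s^1 h\,d\varsigma\|_{L^p}\lesssim\|s^{\alpha+1/p}h\|_{L^1}$ precisely when $\alpha+\tfrac1p\le1$ (this is the integrability threshold that makes the kernel bound work), and the $|\bm v|^2\tau$-part is bounded by $\|\tau\|_{L^\infty}\|\,|\bm v|^2\,\|_{L^1}\lesssim C(M)\|sh\|_{L^1}$, which is dominated by the right-hand side since $\|sh\|_{L^1}\le\|s^{\alpha+1/p}h\|_{L^1}$ for $\alpha+\tfrac1p\le1$. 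The main obstacle I anticipate is bookkeeping the $M$-dependence correctly through the $a$-reduction and making sure the weighted Hardy inequality is invoked with the exactly sharp exponent $\alpha+\tfrac1p\le1$ rather than a strict inequality; once those are pinned down the estimates follow by the elementary integral manipulations sketched above, which is why the paper simply cites \cite[Lemmas 3.5 and 3.7]{IguchiTakayama2024} for the details.
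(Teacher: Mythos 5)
Your overall architecture (subtract the lift $as$, use the sign of $|\bm{v}|^2$ through a comparison function for the $L^\infty$ bound, then estimate $\tau'(s)=\int_s^1(h-|\bm{v}|^2\tau)\,\mathrm{d}\varsigma$ by Minkowski's inequality) is reasonable, and the $L^\infty$ part does go through: $|\tilde\tau|\le\varphi$ with $\varphi(s)=\int_0^1\min(s,\varsigma)|\tilde h(\varsigma)|\,\mathrm{d}\varsigma$ gives $\|\tilde\tau\|_{L^\infty}\le\|s\tilde h\|_{L^1}$. (Two small caveats there: the inequality $\|\bm{v}\|_{L^2}^2\lesssim\|\bm{v}\|_{Y^0}^2$ you invoke in the reduction is false, since $\|\cdot\|_{Y^0}=\|s^{1/2}\cdot\|_{L^2}$ is the \emph{weaker} norm; the reduction is saved only because the lift contributes an extra factor of $s$, so that $\int_0^1 s^{\alpha+\frac1p+1}|\bm{v}|^2\,\mathrm{d}s\le\|\bm{v}\|_{Y^0}^2$. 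Also, since $|\bm{v}|^2$ may be unbounded, the comparison $|\tilde\tau|\le\varphi$ should be justified weakly, e.g.\ by testing with the negative part of $\varphi\mp\tilde\tau$.)

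The genuine gap is in the derivative estimate: you bound the zeroth-order contribution by $\|\tau\|_{L^\infty}\,\|\,|\bm{v}|^2\|_{L^1}$ and claim $\|\,|\bm{v}|^2\|_{L^1}\le C(M)$. The hypothesis $\|\bm{v}\|_{Y^0}\le M$ only controls $\int_0^1 s|\bm{v}(s)|^2\,\mathrm{d}s$; the unweighted integral $\int_0^1|\bm{v}|^2\,\mathrm{d}s$ can be infinite (take $\bm{v}(s)=s^{-3/4}\bm{e}$), and in the paper's applications $\bm{v}=\bm{x}''$ with $\bm{x}$ only in a weighted space, so no such bound is available. The mismatch is exactly in the regime of the lemma: after the Minkowski step the kernel produces the weight $\varsigma^{\alpha+\frac1p}$, and absorbing $\int_0^1\varsigma^{\alpha+\frac1p}|\bm{v}|^2\,\mathrm{d}\varsigma$ into $\|\bm{v}\|_{Y^0}^2$ would require $\alpha+\frac1p\ge1$, whereas the lemma assumes $\alpha+\frac1p\le1$. (Relatedly, $\alpha+\frac1p\le1$ is not the threshold for your Hardy/Minkowski bound on the $h$-part, which holds for all $\alpha\ge0$ and $p$; it is needed to dominate $\|sh\|_{L^1}$ by $\|s^{\alpha+\frac1p}h\|_{L^1}$ and, crucially, for the repair below.) The fix is to upgrade the $L^\infty$ bound to a weighted pointwise bound: from $|\tilde\tau(s)|\le\int_0^1\min(s,\varsigma)|\tilde h(\varsigma)|\,\mathrm{d}\varsigma$ and $\min(s,\varsigma)\le s^{1-\alpha-\frac1p}\varsigma^{\alpha+\frac1p}$ (here $\alpha+\frac1p\le1$ is used) one gets $|\tau(s)|\lesssim s^{1-\alpha-\frac1p}\bigl(|a|+\|s^{\alpha+\frac1p}h\|_{L^1}\bigr)$ up to a factor $1+M^2$; inserting this into $\int_s^1|\bm{v}|^2|\tau|\,\mathrm{d}\varsigma$ and applying your Minkowski step then bounds that term by $C(M)\bigl(|a|+\|s^{\alpha+\frac1p}h\|_{L^1}\bigr)$, which closes the argument.
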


\begin{lemma}\label{lem:HEstBVP}
For any $M>0$ and any integer $m\geq1$ there exists a constant $C=C(M,m)>0$ such that if $\|\bm{u}\|_{Y^{m\vee3}}\leq M$, 
then the solution $\tau$ to the two-point boundary value problem \eqref{TBVP} with $\bm{v}=\bm{u}'$ satisfies 
\[
\|\tau'\|_{X^m} \leq C(|a|+\|h\|_{Y^{m-1}}).
\]
\end{lemma}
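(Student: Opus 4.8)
The plan is to use the equation $-\tau'' = h - |\bm u'|^2\tau$ to trade one derivative of $\tau'$ for information on $h$ and lower-order information on $\tau$, and to run an induction on $m$. For the base case $m=1$, recall from Lemma \ref{lem:EstSolBVP3} (applied with $\bm v = \bm u'$, so $\|\bm v\|_{Y^0}\le\|\bm u'\|_{Y^0}\lesssim\|\bm u\|_{Y^1}\le M$) that $\|\tau\|_{L^\infty}\lesssim|a|+\|sh\|_{L^1}$ and $\|s^{1/2}\tau'\|_{L^2}\lesssim|a|+\|s^{1/2}h\|_{L^1}$, and note $\|sh\|_{L^1},\|s^{1/2}h\|_{L^1}\lesssim\|h\|_{L^2}\le\|h\|_{Y^0}$. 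Since $\|\tau'\|_{X^1}^2 = \|\tau'\|_{H^0}^2+\|s^{1/2}\tau''\|_{L^2}^2 = \|\tau'\|_{L^2}^2+\|s^{1/2}\tau''\|_{L^2}^2$, I still need $\|\tau'\|_{L^2}$ (not just the weighted version) and $\|s^{1/2}\tau''\|_{L^2}$. For the latter, from the equation $\|s^{1/2}\tau''\|_{L^2}\le\|s^{1/2}h\|_{L^2}+\|s^{1/2}|\bm u'|^2\tau\|_{L^2}\le\|h\|_{Y^0}+\|\bm u'\|_{L^\infty}^2\|s^{1/2}\tau\|_{L^2}$, and $\|\bm u'\|_{L^\infty}\lesssim\|\bm u'\|_{X^2}\le\|\bm u\|_{X^3}\lesssim\|\bm u\|_{Y^3}\le M$ by Lemmas \ref{lem:embedding2} and \ref{lem:NormRelation}, while $\|s^{1/2}\tau\|_{L^2}\le\|\tau\|_{L^\infty}$. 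The bound on $\|\tau'\|_{L^2}$ I would get by a Hardy-type / integration-by-parts argument from $\|s^{1/2}\tau'\|_{L^2}$, $\tau'(1)=a$, and the equation, or directly by the same integration-by-parts trick as in the proof of Lemma \ref{lem:NormEquiv}.

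For the inductive step, suppose the estimate holds up to order $m-1\ge1$ and prove it for $m\ge2$. Write $w=\tau'$. From the equation, $w' = \tau'' = |\bm u'|^2\tau - h$. The key identity is $\tau(s) = s(\mathscr M\tau')(s) = s(\mathscr Mw)(s)$ from the proof of Lemma \ref{lem:CalIneq3}, so $\|\tau'\|_{X^{m-1}\vee\text{lower}}$ controls $\tau$ in a higher weighted space via Lemma \ref{lem:AvOp} ($\|\mathscr Mw\|_{X^{m-1}}\le2\|w\|_{X^{m-1}}$, hence $\|\tau\|_{X^{m-1}}=\|s(\mathscr Mw)\|_{X^{m-1}}\lesssim\|\mathscr Mw\|_{Y^m}\lesssim\ldots$ using Lemma \ref{lem:NormRelation}). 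I would estimate $\|w\|_{X^m}$ by $\|w\|_{X^m}\lesssim\|w\|_{L^2}+\|w'\|_{Y^{m-1}}$ (essentially the defining relation $\|w\|_{X^m}^2\simeq\|w\|_{L^2}^2+\|w'\|_{Y^{m-1}}^2$ from the construction of $Y^{m-1}$; here one must be slightly careful that this is the relation $\|u\|_{X^{m+1}}^2=\|u\|_{L^2}^2+\|u'\|_{Y^m}^2$ shifted, which is exactly how $Y^m$ was defined). Then $\|w'\|_{Y^{m-1}} = \||\bm u'|^2\tau - h\|_{Y^{m-1}} \le \||\bm u'|^2\tau\|_{Y^{m-1}} + \|h\|_{Y^{m-1}}$, and the product term is handled by Lemma \ref{lem:algebraY} (algebra property of $Y^{m-1}$ for $m-1\ge2$, or the $m-1\le1$ cases) together with $\||\bm u'|^2\|_{Y^{m-1}}\lesssim\|\bm u'\|_{Y^{m-1}\vee3}^2\lesssim\|\bm u\|_{Y^{m\vee3}}^2\le M^2$ and the inductive bound $\|\tau\|_{Y^{m-1}}\lesssim\|\tau\|_{X^{m-1}}$, which I reduce to $\|\tau'\|_{X^{m-2}}$ (or $\|\tau'\|_{X^{m-1}}$) plus $\|\tau\|_{L^\infty}$ via the $\tau=s\mathscr M\tau'$ identity and Lemmas \ref{lem:AvOp}, \ref{lem:NormRelation}, and then invoke the induction hypothesis at order $m-1$. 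The term $\|w\|_{L^2}=\|\tau'\|_{L^2}$ is already controlled at the base-case level.

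The main obstacle I anticipate is bookkeeping the weighted-space gymnastics cleanly: the relation between $X^m$, $Y^{m-1}$, and the averaging/antiderivative identity $\tau = s\,\mathscr M\tau'$ must be applied at the right regularity indices so that the recursion actually closes (in particular verifying that $\|\bm u\|_{Y^{m\vee3}}\le M$ really does give $\|\bm u'\|_{Y^{m-1}\vee3}\le CM$, which needs $\|u'\|_{Y^k}\lesssim\|u\|_{X^{k+2}}\lesssim\|u\|_{Y^{k+2}}$-type inequalities — I would double-check whether Lemma \ref{lem:embedding2}'s $\|u'\|_{X^m}\le\|u\|_{X^{m+2}}$ combined with Lemma \ref{lem:NormRelation} suffices, or whether a direct $Y$-space version is needed). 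A secondary subtlety is the low-regularity endpoint $m=2$, where the algebra estimate for $Y^{m-1}=Y^1$ requires the $m=0,1$ branch of Lemma \ref{lem:algebraY} rather than the algebra branch, so one extra derivative of $\bm u$ is consumed — this is exactly why the hypothesis reads $\|\bm u\|_{Y^{m\vee3}}$ and not $\|\bm u\|_{Y^{m\vee2}}$, and I would make sure the constant count matches. Everything else is a routine application of the quoted calculus lemmas.
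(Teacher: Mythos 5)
Your overall skeleton is the same as the paper's: write $\|\tau'\|_{X^m}^2=\|\tau'\|_{L^2}^2+\|\tau''\|_{Y^{m-1}}^2$, replace $\tau''$ by $|\bm{u}'|^2\tau-h$ via the equation, get the low norms $\|\tau'\|_{L^2}$, $\|\tau'\|_{L^\infty}$ from Lemma \ref{lem:EstSolBVP3}, and induct on $m$. The genuine gap is in how you estimate the product term $\||\bm{u}'|^2\tau\|_{Y^{m-1}}$, in both the base case and the inductive step. In the base case you use $\|s^{\frac12}|\bm{u}'|^2\tau\|_{L^2}\le\|\bm{u}'\|_{L^\infty}^2\|s^{\frac12}\tau\|_{L^2}$ together with the chain $\|\bm{u}'\|_{L^\infty}\lesssim\|\bm{u}'\|_{X^2}\le\|\bm{u}\|_{X^3}\lesssim\|\bm{u}\|_{Y^3}$; this chain misquotes the lemmas (Lemma \ref{lem:embedding2} gives $\|u'\|_{X^2}\le\|u\|_{X^4}$, and Lemma \ref{lem:NormRelation} gives $\|u\|_{Y^3}\le\|u\|_{X^3}$, not the reverse), and the conclusion is actually false: $\|\bm{u}\|_{Y^3}$ controls $\bm{u}'\in H^1$-type information only with the weight $s$ on $\bm{u}''$, so $\bm{u}'$ need not be bounded near $s=0$ (e.g.\ $\bm{u}'(s)\sim\log\log(e^2/s)$ lies in the $Y^3$ class of $\bm{u}$ but not in $L^\infty$). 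In the inductive step you propose the generic algebra estimate of Lemma \ref{lem:algebraY}, which forces you to control $\||\bm{u}'|^2\|_{Y^{(m-1)\vee3}}$, hence $\|\bm{u}'\|_{Y^{(m-1)\vee3}}$; but the only available embedding is $\|u'\|_{Y^{k}}\le\|u\|_{X^{k+1}}\le\|u\|_{Y^{k+2}}$, so this needs $\|\bm{u}\|_{Y^{m+1}}$ (or $\|\bm{u}\|_{X^m}$), strictly more than the hypothesis $\|\bm{u}\|_{Y^{m\vee3}}\le M$. You flagged exactly this point as something to double-check, and the check fails: the recursion does not close at the stated regularity.

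What saves the argument — and what the paper uses — is that the product has the special structure $\tau\,\bm{u}'\cdot\bm{u}'$ with $\tau(0)=0$, so the vanishing of $\tau$ at $s=0$ supplies the missing weight. This is precisely Lemma \ref{lem:CalIneq2a}: $\||\bm{u}'|^2\tau\|_{Y^{m-1}}\lesssim\|\tau'\|_{L^2}\|\bm{u}\|_{Y^3}^2$ for $m=1$, $\lesssim\|\tau'\|_{L^\infty}\|\bm{u}\|_{Y^3}^2$ for $m=2$, and $\lesssim\|\tau'\|_{L^\infty\cap X^{m-2}}\|\bm{u}\|_{Y^m}^2$ for $m\ge3$, i.e.\ only low norms of $\tau'$ (supplied by Lemma \ref{lem:EstSolBVP3} and, for $X^{m-2}$, by the induction hypothesis) and $\bm{u}$ itself — not $\bm{u}'$ — measured in $Y^{m\vee3}$. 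Replacing your product estimates by Lemma \ref{lem:CalIneq2a} repairs both the base case and the induction and yields exactly the paper's proof; as written, however, your argument has a real gap at this step.
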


\begin{proof}
By the first equation in \eqref{TBVP} with $\bm{v}=\bm{u}'$, we see that 
\begin{align*}
\|\tau'\|_{X^m}
&\leq \|\tau'\|_{L^2} + \|\tau''\|_{Y^{m-1}} \\
&\leq \|\tau'\|_{L^2} + \||\bm{u}'|^2\tau\|_{Y^{m-1}} + \|h\|_{Y^{m-1}}.
\end{align*}
Here, by Lemma \ref{lem:CalIneq2a} we have 
\[
\||\bm{u}'|^2\tau\|_{Y^{m-1}} \lesssim
 \begin{cases}
  \|\tau'\|_{L^2} \|\bm{u}\|_{Y^3}^2 &\mbox{for}\quad m=1, \\
  \|\tau'\|_{L^\infty} \|\bm{u}\|_{Y^3}^2 &\mbox{for}\quad m=2, \\
  \|\tau'\|_{L^\infty\cap X^{m-2}} \|\bm{u}\|_{Y^m}^2 &\mbox{for}\quad m\geq3.
 \end{cases}
\]
Moreover, by Lemma \ref{lem:EstSolBVP3} together with $\|\bm{u}'\|_{Y^0} \leq \|\bm{u}\|_{Y^2}$, $\|s^\frac12 h\|_{L^1} \leq \|h\|_{Y^0}$, and 
$\|h\|_{L^1}\leq\|h\|_{Y^1}$ we get 
\[
\begin{cases}
 \|\tau'\|_{L^2} \leq C(\|\bm{u}\|_{Y^2})(|a|+\|h\|_{Y^0}), \\
 \|\tau'\|_{L^\infty} \leq C(\|\bm{u}\|_{Y^2})(|a|+\|h\|_{Y^1}).
\end{cases}
\]
Combining these estimates, we obtain the desired estimate inductively on $m$. 
\end{proof}

\section{Compatibility conditions}\label{sect:CC}
Let $(\bm{x},\tau)$ be a smooth solution of the problem \eqref{HP2}--\eqref{IC2} and put 
$\bm{x}_j^\mathrm{in}=(\dt^j\bm{x})|_{t=0}$ and $\tau_j^\mathrm{in}=(\dt^j\tau)|_{t=0}$ for $j=0,1,2,\ldots$. 
These initial values are determined, at least formally, from the initial data $(\bm{x}_0^\mathrm{in},\bm{x}_1^\mathrm{in})$ as follows. 
As was explained in Remark \ref{re:MainTh} (1), the initial tension $\tau_0^\mathrm{in}$ is determined as a unique solution 
of the two-point boundary value problem \eqref{BVP2} in the case $t=0$. 
Then, putting $t=0$ in the first equation in \eqref{HP2} we have $\bm{x}_2^\mathrm{in}=(\tau_0^\mathrm{in}\bm{x}_0^{\mathrm{in}\prime})'+\bm{g}$. 
Now, suppose that $\{\bm{x}_k^\mathrm{in}\}_{k=0}^{j+1}$ and $\{\tau_k^\mathrm{in}\}_{k=0}^{j-1}$ with an integer $j\geq1$. 
Differentiating \eqref{BVP2} $j$-times with respect to $t$ and then putting $t=0$, we obtain 
\begin{equation}\label{BVPini}
\begin{cases}
 -\tau_j^{\mathrm{in}\prime\prime}+|\bm{x}_0^{\mathrm{in}\prime\prime}|^2\tau_j^\mathrm{in} = h_j^\mathrm{in} \quad\mbox{in}\quad (0,1), \\
 \tau_j^\mathrm{in}(0)=0, \quad \tau_j^{\mathrm{in}\prime}(1)=-\bm{g}\cdot\bm{x}_j^{\mathrm{in}\prime}(1),
\end{cases}
\end{equation}

\noindent
where $h_j^\mathrm{in}$ has been already determined as 
\begin{align*}
h_j^\mathrm{in}
&= \sum_{j_1+j_2=j}\frac{j!}{j_1!j_2!}\bm{x}_{j_1+1}^{\mathrm{in}\prime}\cdot\bm{x}_{j_2+1}^{\mathrm{in}\prime}
 - \sum_{j_0+j_1+j_2=j,j_0\leq j-1}\frac{j!}{j_0!j_1!j_2!}(\bm{x}_{j_1}^{\mathrm{in}\prime\prime}\cdot\bm{x}_{j_2}^{\mathrm{in}\prime\prime})\tau_{j_0}^\mathrm{in}.
\end{align*}
By solving this two-point boundary value problem, $\tau_j^\mathrm{in}$ is uniquely determined. 
Then, differentiating the first equation in \eqref{HP2} $j$-times with respect to $t$ and then putting $t=0$, we obtain 
\begin{equation}\label{Xjini}
\bm{x}_{j+2}^\mathrm{in}=\sum_{j_0+j_1=j}\frac{j!}{j_0!j_1!}(\tau_{j_0}^\mathrm{in}\bm{x}_{j_1}^{\mathrm{in}\prime})',
\end{equation}

\noindent
so that $\bm{x}_{j+2}^\mathrm{in}$ is determined. 
This procedure can be continued up to a certain order $j$ depending on the regularity of the initial data $(\bm{x}_0^\mathrm{in},\bm{x}_1^\mathrm{in})$. 
More precisely, we have the following lemma.

\begin{lemma}\label{lem:EstIV}
Let $m\geq1$ be an integer and assume that $\bm{x}_0^\mathrm{in} \in X^m$ and $\bm{x}_1^\mathrm{in} \in X^{m-1}$. 
Then, the initial values $\{\bm{x}_j^\mathrm{in}\}_{j=0}^{m-1}$ are in fact determined by \eqref{BVPini} and \eqref{Xjini} such that 
$\bm{x}_j^\mathrm{in} \in X^{m-j}$ for $0\leq j\leq m-1$ except in the case $m=3$. 
In the case $m=3$, we have $\bm{x}_2^\mathrm{in} \in X_\epsilon^1$ for any $\epsilon>0$. 
\end{lemma}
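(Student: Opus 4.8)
The plan is to proceed by induction on $j$ following the recursive scheme \eqref{BVPini}--\eqref{Xjini}, tracking at each step the weighted Sobolev regularity of the newly produced quantities $\tau_j^\mathrm{in}$ and $\bm{x}_{j+2}^\mathrm{in}$. The base case is immediate: $\bm{x}_0^\mathrm{in}\in X^m$ and $\bm{x}_1^\mathrm{in}\in X^{m-1}$ are given, and $\tau_0^\mathrm{in}$ is the solution of \eqref{BVP2} at $t=0$, i.e. \eqref{TBVP} with $\bm{v}=\bm{x}_0^{\mathrm{in}\prime\prime}$, $h=|\bm{x}_1^{\mathrm{in}\prime}|^2$, and $a=-\bm{g}\cdot\bm{x}_0^{\mathrm{in}\prime}(1)$. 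The key tool here and throughout is Lemma \ref{lem:HEstBVP}: with $\bm{u}=\bm{x}_0^\mathrm{in}$, we have $\|\bm{u}\|_{Y^{k\vee3}}\lesssim\|\bm{x}_0^\mathrm{in}\|_{X^m}$ as long as $k\leq m-1$ (using Lemma \ref{lem:NormRelation}, $\|u\|_{Y^m}\leq\|u\|_{X^m}$), so $\|(\tau_0^\mathrm{in})'\|_{X^k}\lesssim |a|+\|h\|_{Y^{k-1}}$, and $\|h\|_{Y^{k-1}}=\||\bm{x}_1^{\mathrm{in}\prime}|^2\|_{Y^{k-1}}$ is controlled by Lemma \ref{lem:algebraY} (or Lemma \ref{lem:CalIneqY1}-type bounds in differentiated form) in terms of $\|\bm{x}_1^\mathrm{in}\|_{X^{m-1}}$ provided $k-1\leq m-2$. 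Thus $(\tau_0^\mathrm{in})'\in X^{m-2}$, which is exactly the regularity needed so that $\bm{x}_2^\mathrm{in}=(\tau_0^\mathrm{in}\bm{x}_0^{\mathrm{in}\prime})'+\bm{g}$ lies in $X^{m-2}$ by Lemma \ref{lem:EstAu} (with $\tau_0^\mathrm{in}|_{s=0}=0$), giving $\bm{x}_2^\mathrm{in}\in X^{m-2}$ — which is the claim for $j=2$, at least when $m\geq4$.

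For the inductive step, I would assume $\bm{x}_k^\mathrm{in}\in X^{m-k}$ for $0\leq k\leq j+1$ and $\tau_k^\mathrm{in}{}'\in X^{m-k-2}$ for $0\leq k\leq j-1$, and show $\tau_j^\mathrm{in}{}'\in X^{m-j-2}$ and $\bm{x}_{j+2}^\mathrm{in}\in X^{m-j-2}$. First estimate $h_j^\mathrm{in}$: the first sum $\sum_{j_1+j_2=j}\binom{j}{j_1}\bm{x}_{j_1+1}^{\mathrm{in}\prime}\cdot\bm{x}_{j_2+1}^{\mathrm{in}\prime}$ has each factor $\bm{x}_{j_i+1}^{\mathrm{in}\prime}$ with $\bm{x}_{j_i+1}^\mathrm{in}\in X^{m-j_i-1}$, so the product of derivatives lies in $Y^{m-j-3}$ by Lemma \ref{lem:CalIneq1} (balancing regularities via $j_1+j_2=j$); the second sum $\sum\binom{j}{j_0,j_1,j_2}(\bm{x}_{j_1}^{\mathrm{in}\prime\prime}\cdot\bm{x}_{j_2}^{\mathrm{in}\prime\prime})\tau_{j_0}^\mathrm{in}$ with $j_0\leq j-1$ is handled by Lemma \ref{lem:CalIneq2} (with $\tau_{j_0}^\mathrm{in}|_{s=0}=0$, using $\|\tau_{j_0}^\mathrm{in}{}'\|$ in the appropriate norm from the inductive hypothesis) to land in $Y^{m-j-3}$. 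Hence $h_j^\mathrm{in}\in Y^{m-j-3}$, and $a=-\bm{g}\cdot\bm{x}_j^{\mathrm{in}\prime}(1)$ is finite since $\bm{x}_j^{\mathrm{in}\prime}$ is continuous on $[0,1]$ when $m-j\geq3$ (Lemma \ref{lem:embedding2}: $\|\bm{x}_j^{\mathrm{in}\prime}\|_{L^\infty}\lesssim\|\bm{x}_j^\mathrm{in}\|_{X^3}$). Applying Lemma \ref{lem:HEstBVP} with $m\mapsto m-j-2$ and $\bm{u}=\bm{x}_0^\mathrm{in}$ (noting $\|\bm{x}_0^\mathrm{in}\|_{Y^{(m-j-2)\vee3}}\lesssim\|\bm{x}_0^\mathrm{in}\|_{X^m}$ as long as $m-j-2\geq1$, i.e. $j\leq m-3$) gives $\tau_j^\mathrm{in}{}'\in X^{m-j-2}$. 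Then $\bm{x}_{j+2}^\mathrm{in}=\sum_{j_0+j_1=j}\binom{j}{j_0}(\tau_{j_0}^\mathrm{in}\bm{x}_{j_1}^{\mathrm{in}\prime})'$: each term is controlled by Lemma \ref{lem:EstAu} with $\tau_{j_0}^\mathrm{in}|_{s=0}=0$ and the regularity $\tau_{j_0}^\mathrm{in}{}'\in X^{m-j_0-2}$, $\bm{x}_{j_1}^\mathrm{in}\in X^{m-j_1}$ (balancing via $j_0+j_1=j$), yielding $\bm{x}_{j+2}^\mathrm{in}\in X^{m-j-2}$. This closes the induction and produces $\bm{x}_j^\mathrm{in}\in X^{m-j}$ for all $0\leq j\leq m-1$.

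The exceptional case $m=3$ (and more precisely the borderline regularity in the second-to-last step) is where care is needed: when $m=3$, we have $\bm{x}_0^\mathrm{in}\in X^3$, $\bm{x}_1^\mathrm{in}\in X^2$, and the above scheme produces $\tau_0^\mathrm{in}{}'\in X^1$ via Lemma \ref{lem:HEstBVP} with $m=1$, $\bm{u}=\bm{x}_0^\mathrm{in}$ (so $\|\bm{u}\|_{Y^3}\lesssim\|\bm{x}_0^\mathrm{in}\|_{X^3}$ and $\|h\|_{Y^0}=\||\bm{x}_1^{\mathrm{in}\prime}|^2\|_{Y^0}\lesssim\|\bm{x}_1^\mathrm{in}\|_{X^2}^2$ by Lemma \ref{lem:CalIneq1}). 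But then $\bm{x}_2^\mathrm{in}=(\tau_0^\mathrm{in}\bm{x}_0^{\mathrm{in}\prime})'+\bm{g}$ with only $\tau_0^\mathrm{in}{}'\in X^1$ and $\bm{x}_0^\mathrm{in}\in X^3$ fails to land in $X^1$ under Lemma \ref{lem:EstAu} at the natural index (which would demand $\tau'\in X^{1\vee2}=X^2$). Instead, I would prove directly the weaker conclusion $\bm{x}_2^\mathrm{in}\in X_\epsilon^1$ using the norm characterization \eqref{NormEquiv}: writing $\tau_0^\mathrm{in}(s)=s(\mathscr{M}(\tau_0^\mathrm{in})')(s)$ and expanding $(\tau_0^\mathrm{in}\bm{x}_0^{\mathrm{in}\prime})'=(\tau_0^\mathrm{in})'\bm{x}_0^{\mathrm{in}\prime}+\tau_0^\mathrm{in}\bm{x}_0^{\mathrm{in}\prime\prime}$, one estimates $\|s^{1/2+\epsilon}\ds\bm{x}_2^\mathrm{in}\|_{L^2}$ and $\|\bm{x}_2^\mathrm{in}\|_{L^2}$ term by term: the key gain is that the weight $s^\epsilon$ converts the $X^1$-level control on $(\tau_0^\mathrm{in})'$ plus Hardy/averaging estimates (Lemma \ref{lem:AvOp}) into finite bounds — concretely, $\|s^{1/2+\epsilon}(\tau_0^\mathrm{in})''\bm{x}_0^{\mathrm{in}\prime}\|_{L^2}\lesssim\|(\tau_0^\mathrm{in})''\|_{L^2}\|s^\epsilon\bm{x}_0^{\mathrm{in}\prime}\|_{L^\infty}$ with $(\tau_0^\mathrm{in})''=|\bm{x}_0^{\mathrm{in}\prime\prime}|^2\tau_0^\mathrm{in}-|\bm{x}_1^{\mathrm{in}\prime}|^2\in L^2$ from the equation, and the embedding $\|s^\epsilon\bm{x}_0^{\mathrm{in}\prime}\|_{L^\infty}\lesssim\|\bm{x}_0^{\mathrm{in}\prime}\|_{X^1}\lesssim\|\bm{x}_0^\mathrm{in}\|_{X^3}$; similarly for the lower-order pieces using Lemma \ref{lem:CalIneq2a} and Lemma \ref{lem:NormEqui}. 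The main obstacle of the whole argument is precisely this $m=3$ endpoint analysis — elsewhere the induction is a bookkeeping exercise in matching regularity indices through the cited tame estimates, but at $m=3$ the natural Sobolev scale breaks down at $\bm{x}_2^\mathrm{in}$ and one must exploit the $\epsilon$-weighted spaces $X_\epsilon^k$ together with the precise form of the tension equation $(\tau_0^\mathrm{in})''=|\bm{x}_0^{\mathrm{in}\prime\prime}|^2\tau_0^\mathrm{in}-|\bm{x}_1^{\mathrm{in}\prime}|^2$ to recover any control at all.
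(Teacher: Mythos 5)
Your route is essentially the paper's: the cases $m=1,2$ are trivial, the case $m\geq4$ is pure index bookkeeping through Lemmas \ref{lem:HEstBVP}, \ref{lem:EstAu}, \ref{lem:CalIneq1}, \ref{lem:CalIneq2} (the paper simply cites \cite[Sections 8 and 9]{IguchiTakayama2024} for it, while you reconstruct the induction, which is fine), and the real content is the endpoint $m=3$, which both you and the paper handle by expanding $(\tau_0^{\mathrm{in}}\bm{x}_0^{\mathrm{in}\prime})'$ and $(\tau_0^{\mathrm{in}}\bm{x}_0^{\mathrm{in}\prime})''$, estimating $\|\bm{x}_2^{\mathrm{in}}\|_{L^2}$ and $\|s^{\frac12+\epsilon}\bm{x}_2^{\mathrm{in}\prime}\|_{L^2}$ term by term, and exploiting the embedding $\|s^\epsilon u\|_{L^\infty}\lesssim\|u\|_{X^1}$.

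However, one step of your $m=3$ argument fails as written: the claim that $\tau_0^{\mathrm{in}\prime\prime}=|\bm{x}_0^{\mathrm{in}\prime\prime}|^2\tau_0^{\mathrm{in}}-|\bm{x}_1^{\mathrm{in}\prime}|^2\in L^2$. With only $\bm{x}_1^{\mathrm{in}}\in X^2$ this is false in general: $X^2$ controls $\|\bm{x}_1^{\mathrm{in}\prime}\|_{L^2}$ and $\|s\,\bm{x}_1^{\mathrm{in}\prime\prime}\|_{L^2}$ but not $\|\bm{x}_1^{\mathrm{in}\prime}\|_{L^4}$ (a profile $u'(s)\sim s^{-\frac14-\delta}$ with small $\delta>0$ is admissible in $X^2$ but $|u'|^2\notin L^2$), so $|\bm{x}_1^{\mathrm{in}\prime}|^2$ need not be square integrable. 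What is true — by Lemmas \ref{lem:CalIneq1} and \ref{lem:CalIneq2}, or already contained in your own bound $\tau_0^{\mathrm{in}\prime}\in X^1$ — is $\tau_0^{\mathrm{in}\prime\prime}\in Y^0$, i.e. $s^{\frac12}\tau_0^{\mathrm{in}\prime\prime}\in L^2$. The repair is immediate and is exactly what the paper does: keep the weight $s^{\frac12}$ on $\tau_0^{\mathrm{in}\prime\prime}$ and bound $\|s^{\frac12+\epsilon}\tau_0^{\mathrm{in}\prime\prime}\bm{x}_0^{\mathrm{in}\prime}\|_{L^2}\leq\|s^{\frac12}\tau_0^{\mathrm{in}\prime\prime}\|_{L^2}\|s^\epsilon\bm{x}_0^{\mathrm{in}\prime}\|_{L^\infty}$, rather than stripping the weight off $\tau_0^{\mathrm{in}\prime\prime}$. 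Relatedly, for the remaining terms ($\|\bm{x}_2^{\mathrm{in}}\|_{L^2}$, $\tau_0^{\mathrm{in}\prime}\bm{x}_0^{\mathrm{in}\prime\prime}$, $\tau_0^{\mathrm{in}}\bm{x}_0^{\mathrm{in}\prime\prime\prime}$) note that $\tau_0^{\mathrm{in}\prime}\in X^1$ alone does not give $\tau_0^{\mathrm{in}\prime}\in L^\infty$; the paper first secures $\|\tau_0^{\mathrm{in}\prime}\|_{L^\infty}$ from Lemma \ref{lem:EstSolBVP3} and uses $|\tau_0^{\mathrm{in}}(s)|\leq s\|\tau_0^{\mathrm{in}\prime}\|_{L^\infty}$, so either add that (free) $L^\infty$ bound or redistribute the weights via $\|s^\epsilon u\|_{L^\infty}\lesssim\|u\|_{X^1}$ and Lemma \ref{lem:AvOp}, as you gesture at. With these corrections your argument coincides with the paper's proof.
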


\begin{proof}
The case $m=1,2$ is trivial. 
The case $m\geq4$ was proved in \cite[Sections 8 and 9]{IguchiTakayama2024}. 
Therefore, it is sufficient to show the case $m=3$. 
It follows from Lemma \ref{lem:EstSolBVP3} together with $\|\bm{x}_0^{\mathrm{in}\prime\prime}\|_{Y^0} \leq \|\bm{x}_0^\mathrm{in}\|_{X^3}$ and 
$|\bm{x}_0^{\mathrm{in}\prime}(1)| \lesssim \|\bm{x}_0^\mathrm{in}\|_{X^2}$ that 
\[
\|\tau_0^{\mathrm{in}\prime}\|_{L^\infty} \leq C(\|\bm{x}_0^\mathrm{in}\|_{X^3})( \|\bm{x}_0^\mathrm{in}\|_{X^2} + \|\bm{x}_1^\mathrm{in}\|_{X^2}^2 ),
\]
so that we have $\tau_0^{\mathrm{in}\prime}\in L^\infty$. 
By Lemmas \ref{lem:CalIneq1} and \ref{lem:CalIneq2}, we see that 
\begin{align*}
\|\tau_0^{\mathrm{in}\prime\prime}\|_{Y^0}
&\leq \| |\bm{x}_1^{\mathrm{in}\prime}|^2\|_{Y^0} + \||\bm{x}_0^{\mathrm{in}\prime\prime}|^2\tau_0^\mathrm{in}\|_{Y^0} \\
&\lesssim \|\bm{x}_1^\mathrm{in}\|_{X^2}^2 + \|\tau_0^{\mathrm{in}\prime}\|_{L^\infty} \|\bm{x}_0^\mathrm{in}\|_{X^3}^2,
\end{align*}
so that we have $\tau_0^{\mathrm{in}\prime\prime} \in Y^0$. 
By Lemma \ref{lem:EstAu}, we have 
\begin{align*}
\|\bm{x}_2^\mathrm{in}\|_{L^2}
&\leq \|(\tau_0^\mathrm{in}\bm{x}_0^{\mathrm{in}\prime})'\|_{L^2} + 1 \\
&\lesssim \|\tau_0^{\mathrm{in}\prime}\|_{L^\infty}\|\bm{x}_0^\mathrm{in}\|_{X^2}.
\end{align*}
Moreover, for any $\epsilon>0$ we see also that 
\begin{align*}
\|s^{\frac12+\epsilon}\bm{x}_2^{\mathrm{in}\prime}\|_{L^2}
&= \|s^{\frac12+\epsilon}(\tau_0^\mathrm{in}\bm{x}_0^{\mathrm{in}\prime})''\|_{L^2} \\
&\leq \|\tau_0^{\mathrm{in}\prime}\|_{L^\infty}( \|s^\frac32\bm{x}_0^{\mathrm{in}\prime\prime\prime}\|_{L^2}
  + 2\|s^\frac12\bm{x}_0^{\mathrm{in}\prime\prime}\|_{L^2} )
 + \|s^\frac12\tau_0^{\mathrm{in}\prime\prime}\|_{L^2}\|s^\epsilon\bm{x}_0^{\mathrm{in}\prime}\|_{L^\infty} \\
&\lesssim (\|\tau_0^{\mathrm{in}\prime}\|_{L^\infty}+\|\tau_0^{\mathrm{in}\prime\prime}\|_{Y^0}) \|\bm{x}_0^\mathrm{in}\|_{X^3},
\end{align*}
where we used $\|s^\epsilon u\|_{L^\infty} \leq C_\epsilon\|u\|_{X^1}$ which was shown in \cite[Lemma 4.3]{IguchiTakayama2024} and also follows from 
a sharper estimate given in the proof of Lemma \ref{lem:NormEquiv}. 
These estimates imply $\bm{x}_2^\mathrm{in} \in X_\epsilon^1$. 
\end{proof}

Differentiating the first boundary condition in \eqref{HP2} $j$-times with respect to $t$ and then putting $t=0$, we obtain 
\begin{equation}\label{CC}
\bm{x}_j^\mathrm{in}(1)=\bm{0}.
\end{equation}
This is a compatibility condition at order $j$ for the problem \eqref{HP2}--\eqref{IC2}. 
Under the assumptions in Lemma \ref{lem:EstIV}, the lemma and the Sobolev embedding theorem imply $\bm{x}_j^\mathrm{in} \in C^{m-1-j}((0,1])$ for 
$j=0,1,\ldots,m-1$, so that the traces $\{\bm{x}_j^\mathrm{in}(1)\}_{j=0}^{m-1}$ are defined. 
Therefore, the compatibility conditions make sense for $j=0,1,\ldots,m-1$.

\begin{definition}\label{def:CC}
Let $m\geq1$ be an integer. 
We say that the initial data $(\bm{x}_0^\mathrm{in},\bm{x}_1^\mathrm{in})$ for the initial boundary value problem \eqref{Eq}--\eqref{IC} and for the 
problem \eqref{HP2}--\eqref{IC2} satisfy the compatibility conditions up to order $m-1$ if \eqref{CC} holds for any $j=0,1,\ldots,m-1$. 
\end{definition}

The following propositions ensure that the initial data $(\bm{x}_0^\mathrm{in},\bm{x}_1^\mathrm{in})$ for the problem \eqref{HP2}--\eqref{IC2} 
and for the problem \eqref{Eq}--\eqref{IC} can be approximated by smooth initial data satisfying higher order compatibility conditions. 
These propositions are a key to show our main result Theorem \ref{th:main} in the case $m=4,5$.

\begin{proposition}\label{prop:AppID1}
Let $m$ and $p$ be positive integers and assume that the initial data $(\bm{x}_0^\mathrm{in},\bm{x}_1^\mathrm{in})\in X^m\times X^{m-1}$ 
for the problem \eqref{HP2}--\eqref{IC2} satisfies the compatibility conditions up to order $m-1$. 
Then, there exists a sequence of initial data $\{(\bm{x}_0^{\mathrm{in}(n)},\bm{x}_1^{\mathrm{in}(n)})\}_{n=1}^\infty \subset C^\infty([0,1])$, 
which converges to $(\bm{x}_0^\mathrm{in},\bm{x}_1^\mathrm{in})$ in $X^m\times X^{m-1}$ and satisfies the compatibility conditions up to order $m+p-1$. 
\end{proposition}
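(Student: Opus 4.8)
The plan is to construct the approximating sequence in two stages: first a crude smoothing that ignores the compatibility conditions, then a finite-rank correction that restores them. Fix $m$ and $p$. Since $(\bm{x}_0^\mathrm{in},\bm{x}_1^\mathrm{in})\in X^m\times X^{m-1}$, using the characterization $X^k\cong H^k(D)$ of Lemma \ref{lem:NormEq} and standard mollification on the disc $D$, I would first pick $\bm{v}_0^{(n)}\to\bm{x}_0^\mathrm{in}$ in $X^m$ and $\bm{v}_1^{(n)}\to\bm{x}_1^\mathrm{in}$ in $X^{m-1}$ with $\bm{v}_0^{(n)},\bm{v}_1^{(n)}\in C^\infty([0,1])$. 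These smooth data determine, via the recursion \eqref{BVPini}--\eqref{Xjini} (now valid up to order $m+p-1$ by the regularity analysis of \cite{IguchiTakayama2024}), initial values $\bm{v}_j^{(n),\mathrm{in}}$ whose traces at $s=1$ need not vanish. The correction will be of the form
\begin{equation*}
\bm{x}_0^{\mathrm{in}(n)}=\bm{v}_0^{(n)}+\sum_{j=0}^{m+p-1}\frac{(s-1)^j}{j!}\,\chi\!\left(\frac{s-1}{\delta_n}\right)a_j^{(n)},
\qquad
\bm{x}_1^{\mathrm{in}(n)}=\bm{v}_1^{(n)}+\sum_{j=0}^{m+p-2}\frac{(s-1)^j}{j!}\,\chi\!\left(\frac{s-1}{\delta_n}\right)b_j^{(n)},
\end{equation*}
with $\chi\in C_0^\infty(\mathbb R)$ equal to $1$ near $0$, a small scale $\delta_n$, and unknown vectors $a_j^{(n)},b_j^{(n)}\in\mathbb R^3$ to be chosen so that \eqref{CC} holds for $j=0,1,\dots,m+p-1$.

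Next I would set up the compatibility conditions as a nonlinear system for the vector $A^{(n)}=(a_0^{(n)},\dots,b_0^{(n)},\dots)$. The key structural fact, as sketched in the introduction, is that $\bm{x}_j^\mathrm{in}(1)$ depends on $(\partial_s^k\bm{x}_0^\mathrm{in})(1)$ for $k\le j$ in an essentially triangular way, with the leading coefficient being an invertible matrix $A_j$ built from $\bm{x}_0^{\mathrm{in}\prime}(1)$ and the nonlocal quantities $\theta_\ell$ (which are functionals of the full initial data through the two-point problems \eqref{BVPini}). The term $\frac{(s-1)^j}{j!}\chi(\frac{s-1}{\delta_n})a_j$ contributes $a_j$ to $(\partial_s^j\bm{x}_0^{\mathrm{in}(n)})(1)$ and, crucially, contributes to $(\partial_s^k\,\cdot\,)(1)$ for $k<j$ only through the cut-off, i.e. through terms whose $H^m$-size is controlled and which vanish as $\delta_n\to0$ in the relevant weaker norms while the $a_j$-dependence remains. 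Writing the system as $\Phi^{(n)}(A)=0$, one shows $\Phi^{(n)}$ is $C^1$, $D_A\Phi^{(n)}(0)$ is a block lower-triangular matrix with invertible diagonal blocks (the $A_j$'s, perturbed by $O(\delta_n)$), hence invertible with uniformly bounded inverse, and $\Phi^{(n)}(0)\to0$ as $n\to\infty$ because $\bm{v}_0^{(n)},\bm{v}_1^{(n)}$ converge to data satisfying \eqref{CC}. The implicit function theorem then yields a solution $A^{(n)}$ with $A^{(n)}\to0$.

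Finally I would check that the correction term goes to zero in the correct topology. Here the essential point, flagged in the introduction, is the continuity of $\delta\mapsto\frac{(s-1)^j}{j!}\chi(\frac{s-1}{\delta})$ into $H^j$ together with a bound showing its $X^m$-norm is $O(\delta^{\,\cdot})$ (or at worst bounded) while $A^{(n)}\to0$; one has to track that $j$ ranges only up to $m+p-1$ while the ambient space is $X^m$ with $m$ possibly smaller, so the $(s-1)^j$ prefactor supplies the needed extra vanishing and one never differentiates the cut-off $\chi(\frac{s-1}{\delta_n})$ more than $m$ times against a non-vanishing weight. Combining: $\bm{x}_0^{\mathrm{in}(n)}\to\bm{x}_0^\mathrm{in}$ in $X^m$, $\bm{x}_1^{\mathrm{in}(n)}\to\bm{x}_1^\mathrm{in}$ in $X^{m-1}$, each pair is smooth, and each satisfies the compatibility conditions up to order $m+p-1$ by construction.

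The main obstacle I anticipate is the invertibility and uniform estimate of $D_A\Phi^{(n)}(0)$: one must show that perturbing $\bm{x}_0^\mathrm{in}$ by the $a_j$-terms changes the nonlocal functionals $\theta_\ell$ (and hence the lower-order traces $\bm{x}_k^\mathrm{in}(1)$, $k<j$) in a controlled, asymptotically negligible way, so that the triangular structure is genuinely preserved up to a small perturbation — this is precisely where the nonlocality of the tension fights the standard argument, and where the smallness of $\delta_n$ (and the choice to not differentiate the cut-off too many times) has to be exploited carefully via Lemmas \ref{lem:EstSolBVP3}, \ref{lem:HEstBVP}, \ref{lem:EstAu}, and the algebra/tame estimates for $X^m$.
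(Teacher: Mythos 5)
Your overall strategy (mollify, then correct with cut-off polynomial terms $\frac{(s-1)^j}{j!}\psi(\frac{s-1}{\delta})a_j$ and solve for the coefficients by the implicit function theorem) is the same family of ideas as the paper's, but there is a genuine gap at the decisive step. You solve all compatibility conditions up to order $m+p-1$ in one shot and justify the implicit function theorem by claiming $\Phi^{(n)}(0)\to 0$ ``because $\bm{v}_0^{(n)},\bm{v}_1^{(n)}$ converge to data satisfying \eqref{CC}.'' The limit data only satisfy \eqref{CC} up to order $m-1$. For orders $j=m,\ldots,m+p-1$ the condition $\bm{x}_j^{\mathrm{in}}(1)=\bm{0}$ involves (by Lemma \ref{lem:ExpX}) the trace $(\ds^{j}\bm{x}_0)(1)$ (or $(\ds^{j-1}\bm{x}_1)(1)$), i.e.\ derivatives beyond the $X^m\times X^{m-1}$ regularity; these traces of the mollified data are not small and in general diverge as $n\to\infty$, so the residual at $A=0$ is not small and the IFT around $A=0$ does not apply. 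This is exactly why the paper splits the proof into Lemma \ref{lem:AppID1} (smooth approximation keeping the \emph{same} order of compatibility, where the residual is indeed small because the functionals $\Theta_j$ of Lemma \ref{lem:Theta} are continuous on $Y^{j+1}\times Y^j$ and the rescaled maps $\bm{\Xi}^\delta$ in \eqref{defXi} make the nonlocal terms $O(\delta^{1/2})$) and Lemma \ref{lem:AppID2} (raising the order by exactly one, iterated $p$ times). For the order-raising step the paper does \emph{not} make the residual small; it exploits the discontinuity of $\varepsilon\mapsto\psi_{m-1}^\varepsilon\in H^m$ at $\varepsilon=0$, replaces $\delta^{m-2}\bm{X}_m$ by the explicit expression \eqref{deffX2} which is continuous in $\varepsilon$, and applies the IFT around the \emph{nonzero} base point $\bm{a}^*=-\tau_0^{\mathrm{in}}(1)^{-k}\delta^{m-2}\bm{x}_m^{\mathrm{in}}(1)$; the corresponding corrector still tends to zero in $X^m$ because $\|\psi_{m-1}^\varepsilon\|_{H^{m-1}}\to 0$. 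Your proposal contains no substitute for this mechanism.

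Two further points you assert without justification. First, the invertibility of the Jacobian: in this problem the ``diagonal blocks'' are not abstract invertible matrices but scalars $\tau_0^{\mathrm{in}}(1)^{j+1}$ times the identity (see \eqref{ExpXi1}--\eqref{ExpXi2}), so invertibility requires $\tau_0^{\mathrm{in}}(1)>0$; this is not automatic (e.g.\ $\bm{g}=\bm{0}$, $\bm{x}_1^{\mathrm{in}}=\bm{0}$ gives $\tau_0^{\mathrm{in}}\equiv 0$) and is an explicit hypothesis in Lemmas \ref{lem:AppID1} and \ref{lem:AppID2}, guaranteed in the application by the stability condition. Second, making the off-diagonal (nonlocal) contributions uniformly negligible is not free with your unscaled ansatz: the paper works with $(\bm{u}_0,\bm{u}_1)=(\bm{x}_0',\bm{x}_1')$, uses only the correctors $\psi_{2l-1}^\delta$ with the amplification factors $\delta^{-2(l-1)}$, $\delta^{-(2l-1)}$ in \eqref{AppID1}, and rescales the equations by $\delta^j$, precisely so that $D_{\bm a}\bm{\Xi}^\delta(\cdot,\bm 0)$ is diagonal up to an $O(\delta^{1/2})$ perturbation, with the continuity/differentiability of the nonlocal functionals established in Lemmas \ref{lem:EstUjTj1}--\ref{lem:DEstUjTj2}. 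Your sketch acknowledges this difficulty but does not resolve it.
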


\begin{proposition}\label{prop:AppID2}
If, in addition to the assumptions of Proposition \ref{prop:AppID1}, $m\geq4$ and the initial data satisfies 
$|\bm{x}_0^{\mathrm{in}\prime}(s)|=1$ and $\bm{x}_0^{\mathrm{in}\prime}(s)\cdot\bm{x}_1^{\mathrm{in}\prime}(s)=0$ for $0<s<1$, 
then the approximated initial data can be constructed such that $|\bm{x}_0^{{\mathrm{in}(n)}\prime}(s)|=1$ and 
$\bm{x}_0^{\mathrm{in}(n)\prime}(s)\cdot\bm{x}_1^{\mathrm{in}(n)\prime}(s)=0$ holds for $0<s<1$.
\end{proposition}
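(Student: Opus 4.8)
The plan is to revisit the construction in the proof of Proposition \ref{prop:AppID1} and superimpose on it the two scalar constraints $|\bm{x}_0^{\mathrm{in}\prime}(s)|^2=1$ and $\bm{x}_0^{\mathrm{in}\prime}(s)\cdot\bm{x}_1^{\mathrm{in}\prime}(s)=0$. The natural way to enforce these is to change variables: instead of approximating $\bm{x}_0^{\mathrm{in}\prime}$ directly in $\mathbb{R}^3$, I would represent the unit tangent $\bm{x}_0^{\mathrm{in}\prime}(s)=\bm{e}(s)$ as a point on $S^2$ and approximate a chart variable (e.g.\ spherical angles, or a stereographic coordinate) so that $|\bm{e}|=1$ is automatic; then recover $\bm{x}_0^{\mathrm{in}(n)}(s)=-\int_s^1\bm{e}^{(n)}(\sigma)\,\mathrm{d}\sigma$, which also preserves the boundary condition $\bm{x}_0^{\mathrm{in}(n)}(1)=\bm{0}$ of order zero. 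The second constraint is linear in $\bm{x}_1^{\mathrm{in}\prime}$ once $\bm{x}_0^{\mathrm{in}\prime}$ is fixed: it says $\bm{x}_1^{\mathrm{in}\prime}(s)\perp\bm{e}^{(n)}(s)$ pointwise, so after smoothing $\bm{x}_1^{\mathrm{in}\prime}$ I would project it onto the plane orthogonal to $\bm{e}^{(n)}(s)$, i.e.\ replace it by $\bm{x}_1^{\mathrm{in}\prime}-(\bm{x}_1^{\mathrm{in}\prime}\cdot\bm{e}^{(n)})\bm{e}^{(n)}$, and again recover $\bm{x}_1^{\mathrm{in}(n)}$ by integration from $s=1$. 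Because the original data already satisfies the constraints, these modifications are small perturbations and preserve convergence in $X^m\times X^{m-1}$; the relevant norm estimates follow from Lemmas \ref{lem:algebraX}, \ref{lem:tame1}, \ref{lem:tame2} applied in the chart (and from Lemma \ref{lem:NormEqui} to pass between $\bm{x}$ and $\bm{x}'$).

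Next I would re-examine the perturbation scheme of Proposition \ref{prop:AppID1}, where one adds $\sum_{j=0}^{m-2}\delta^{-j}\psi_j^\delta a_j+\delta^{-(m-2)}\psi_{m-1}^\varepsilon a_{m-1}$ and solves for the $a_j$ by the implicit function theorem so that the higher compatibility conditions $\bm{x}_j^{\mathrm{in}(n)}(1)=\bm{0}$ hold. The issue is that an unconstrained additive perturbation of $\bm{x}_0^{\mathrm{in}\prime}$ destroys $|\bm{x}_0^{\mathrm{in}\prime}|=1$. The remedy is to make the perturbation tangential to $S^2$: near $s=1$ the cut-offs $\psi_j^\delta$ and $\psi_{m-1}^\varepsilon$ are supported in a small neighbourhood of $s=1$, so I would add the correction terms not to $\bm{x}_0^{\mathrm{in}\prime}$ itself but to the chart coordinate of $\bm{e}$, i.e.\ perturb $\bm{e}(s)$ by $\exp_{\bm{e}(s)}$ of a $T_{\bm{e}(s)}S^2$-valued polynomial-times-cut-off; this keeps $|\bm{x}_0^{\mathrm{in}\prime}|\equiv1$ exactly while still giving enough free parameters (two per order $j$, since $\dim T S^2=2$) to adjust the tangential components of $\bm{x}_j^{\mathrm{in}}(1)$. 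The normal component of $\bm{x}_j^{\mathrm{in}}(1)$ — i.e.\ the component along $\bm{e}(1)=\bm{x}_0^{\mathrm{in}\prime}(1)$ — must be handled separately, and here one uses that differentiating $|\bm{x}'|^2=1$ in $t$ forces the normal components of the $\bm{x}_j^{\mathrm{in}}$ to be \emph{determined} by lower-order data (a standard consequence of the constraint propagation, already implicit in \cite{IguchiTakayama2024}); so once the constraints are built in, the normal parts of the compatibility conditions are automatically satisfied whenever the tangential parts are, and only the tangential parts genuinely need to be solved. Similarly the perturbation of $\bm{x}_1^{\mathrm{in}\prime}$ is taken in the plane $\bm{e}^{(n)}(s)^\perp$, which again is two-dimensional and suffices, and which keeps $\bm{x}_0^{\mathrm{in}(n)\prime}\cdot\bm{x}_1^{\mathrm{in}(n)\prime}=0$ intact.

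The main obstacle, as in Proposition \ref{prop:AppID1}, is the nonlocal dependence of $\bm{x}_j^{\mathrm{in}}$ on $(\bm{x}_0^{\mathrm{in}},\bm{x}_1^{\mathrm{in}})$ through the tensions $\tau_k^{\mathrm{in}}$ (solutions of the two-point problems \eqref{BVPini}): changing $(\ds^j\bm{x}_0^{\mathrm{in}})(1)$ alters \emph{all} traces $\bm{x}_k^{\mathrm{in}}(1)$ for $k\le j$. The argument of Proposition \ref{prop:AppID1} overcomes this by noting that the nonlinearity of the resulting system for $(a_0,\dots,a_{m-1})$ becomes arbitrarily weak as $\delta,\varepsilon\to0$ (because the nonlocal terms $\theta_k$ depend on $\bm{x}_0^{\mathrm{in}}$ only through norms that the cut-offs make small, using the continuity of $\delta\mapsto\psi_j^\delta\in H^j$ against the discontinuity of $\delta\mapsto\psi_j^\delta\in H^{j+1}$), so the linear part dominates and the implicit function theorem applies. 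I expect the constrained version to require checking that this same smallness survives after passing to the chart coordinates and after the orthogonal projection of $\bm{x}_1^{\mathrm{in}\prime}$ — i.e.\ that the chart map and the projection are smooth with bounded derivatives on the relevant compact set (guaranteed by Lemma \ref{lem:tame2} once $\bm{e}(s)$ stays away from the chart's singular locus, which can be arranged since $[0,1]$ is compact, by patching finitely many charts if necessary), and that the linearized $(a_0,\dots,a_{m-1})$-to-(tangential traces) map remains invertible. The bookkeeping is more elaborate than in Proposition \ref{prop:AppID1}, but no new analytic difficulty arises; the proof is therefore a careful adaptation of the calculations of Section \ref{sect:AppID1}, which is why it is deferred to Section \ref{sect:AppID2}.
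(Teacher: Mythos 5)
Your overall strategy is the same as the paper's: build the two constraints into the parametrization of the data (an $\mathbb{S}^2$-valued tangent plus an orthogonal first variation) and exploit the fact that, once the constraints hold, the component of $\bm{x}_{j+2}^\mathrm{in}(1)$ along $\bm{x}_0^{\mathrm{in}\prime}(1)$ is forced by lower-order data, so that only the tangential part of each compatibility condition genuinely has to be solved for. The implementation differs: you work in local charts and perturb via $\exp_{\bm{e}(s)}$ of tangent-valued cut-offs (two free parameters per order, then invert the linearized tangential trace map), whereas the paper keeps the unconstrained three-parameter perturbations of Section \ref{sect:AppID1}, composes with the global normalization map $\bm{F}(\bm{v})=\bm{v}/|\bm{v}|$ and its derivative as in \eqref{defmodu}, and restores invertibility not by reducing parameters but by redefining the trace functionals $\bm{X}_{j+2}(\bm{v}_0,\bm{v}_1)$ so that their vanishing still implies $\bm{x}_{j+2}^\mathrm{in}(1)=\bm{0}$ while their linearization in $\bm{a}$ stays nondegenerate. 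Your route is viable in principle and buys a more geometric picture, at the price of chart-patching and of redoing the $\delta$-smallness estimates for the nonlocal terms through the exponential map; the paper's route reuses the Section \ref{sect:AppID1} machinery almost verbatim.

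There is, however, one genuine gap. You justify the key step --- that the normal components of the $\bm{x}_j^\mathrm{in}(1)$ are automatically determined --- by ``differentiating $|\bm{x}'|^2=1$ in $t$'', calling it a standard consequence of constraint propagation. But at this stage there is no solution to differentiate: the quantities $\bm{x}_j^\mathrm{in}$, $\tau_j^\mathrm{in}$ are defined only through the formal recursion \eqref{TPBVPj}--\eqref{EqUj} (two-point boundary value problems plus the evolution identity), applied to data that need not come from any actual motion. That the identities $\sum_{j_1+j_2=j}\frac{j!}{j_1!j_2!}\bm{u}_{j_1}\cdot\bm{u}_{j_2}=\delta_{j0}$ and $\sum_{j_1+j_2=j}\frac{j!}{j_1!j_2!}\bm{x}_{j_1+2}\cdot\bm{u}_{j_2}=\tau_j'+\bm{g}\cdot\bm{u}_j$ nevertheless hold for these formally defined objects is exactly the content of Lemmas \ref{lem:Str1} and \ref{lem:Str2}, and the paper stresses (Remarks \ref{re:Str1} and \ref{re:Str2}) that this is not trivial precisely because the existence of an underlying solution cannot be assumed; the proof proceeds by algebraic identities and induction on $j$, and uses the boundary condition $\tau_j'(1)=-\bm{g}\cdot\bm{u}_j(1)$ of \eqref{TPBVPj} to conclude that the normal trace vanishes inductively. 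Without supplying these lemmas (or an equivalent argument), your reduction of the compatibility conditions to their tangential parts is unproved, and with it the count of free parameters in your implicit-function-theorem step.
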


Our proofs of these propositions are rather technical and long so we postpone them until Sections \ref{sect:AppID1} and \ref{sect:AppID2}.

\section{Quasilinear problem}\label{sect:QLS}
As was shown in Iguchi and Takayama \cite{IguchiTakayama2023-2}, a linearized problem for unknowns $(\bm{y},\nu)$ 
to the nonlinear problem \eqref{Eq}--\eqref{IC} around $(\bm{x},\tau)$ can be solved uniquely in a weighted Sobolev space. 
However, the map $(\bm{x},\tau) \mapsto (\bm{y},\nu)$ reveals loss of twice derivatives. 
A standard procedure to overcome this difficulty is to reduce the nonlinear problem into a quasilinear problem introducing new quantities.

\subsection{Derivation of the quasilinear problem}
For our problem, it is convenient to introduce new unknowns $(\bm{y},\nu)$ by 
\begin{equation}\label{defynu}
\bm{y} = \ddot{\bm{x}}, \quad \nu = \ddot{\tau}.
\end{equation}
Differentiating \eqref{Eq}, \eqref{BC}, and \eqref{BVP} twice with respect to $t$, we obtain 
\begin{equation}\label{QLy}
\begin{cases}
 \ddot{\bm{y}}=(\tau\bm{y}')'+(\nu\bm{x}')'+\bm{f} &\mbox{in}\quad (0,1)\times(0,T), \\
 \bm{y}=\bm{0} &\mbox{on}\quad \{s=1\}\times(0,T), \\
 (\bm{y},\dot{\bm{y}})|_{t=0}=(\bm{y}_0^\mathrm{in},\bm{y}_1^\mathrm{in}) &\mbox{in}\quad (0,1)
\end{cases}
\end{equation}
and 
\begin{equation}\label{QLnu}
\begin{cases}
 -\nu''+|\bm{x}''|^2\nu = 2\dot{\bm{x}}'\cdot\dot{\bm{y}}' - 2(\bm{x}''\cdot\bm{y}'')\tau + h &\mbox{in}\quad (0,1)\times(0,T), \\
 \nu = 0 &\mbox{on}\quad \{s=0\}\times(0,T), \\
 \nu' = -\bm{g}\cdot\bm{y}' &\mbox{on}\quad \{s=1\}\times(0,T),
\end{cases}
\end{equation}
where 
\begin{equation}\label{QLfh}
\begin{cases}
 \bm{f} = 2(\dot{\tau}\dot{\bm{x}}')', \\
 h = 2( |\ddot{\bm{x}}'|^2 - |\dot{\bm{x}}''|^2\tau - 2(\bm{x}''\cdot\dot{\bm{x}}'')\dot{\tau}).
\end{cases}
\end{equation}
These equations are used to determine $(\bm{y},\nu)$. 
On the other hand, to determine $(\bm{x},\tau)$ we use \eqref{defynu}, that is, 
\begin{equation}\label{QLxtau}
\begin{cases}
 \ddot{\bm{x}}=\bm{y}, &(\bm{x},\dot{\bm{x}})|_{t=0}=(\bm{x}_0^\mathrm{in},\bm{x}_1^\mathrm{in}), \\
 \ddot{\tau}=\nu,      &(\tau,\dot{\tau})|_{t=0}=(\tau_0^\mathrm{in},\tau_1^\mathrm{in}).
\end{cases}
\end{equation}
In this section, we will show the well-posedness of the initial boundary value problem \eqref{QLy}--\eqref{QLxtau} for unknowns $(\bm{x},\bm{y},\tau,\nu)$ 
in a weighted Sobolev space.

\subsection{Compatibility conditions for the quasilinear problem}
Before going to show the well-posedness, we explain compatibility conditions for this problem. 
Let $(\bm{x},\bm{y},\tau,\nu)$ be a smooth solution of \eqref{QLy}--\eqref{QLxtau} and put 
$(\bm{x}_j^\mathrm{in},\bm{y}_j^\mathrm{in},\tau_j^\mathrm{in},\nu_j^\mathrm{in})=\dt^j(\bm{x},\bm{y},\tau,\nu)|_{t=0}$ for $j=0,1,2,\ldots$. 
These initial values are determined, at least formally, from the initial data 
$(\bm{x}_0^\mathrm{in},\bm{x}_1^\mathrm{in},\bm{y}_0^\mathrm{in},\bm{y}_1^\mathrm{in},\tau_0^\mathrm{in},\tau_1^\mathrm{in})$. 
In fact, these initial values are determined inductively as follows: 
Suppose that $\{\bm{y}_k^\mathrm{in}\}_{k=0}^{j+1}$ and $\{\nu_k^\mathrm{in}\}_{k=0}^{j-1}$ are determined for a non-negative integer $j$. 
It follows from \eqref{QLxtau} that $\bm{x}_{k+2}^\mathrm{in}=\bm{y}_k^\mathrm{in}$ and $\tau_{k+2}^\mathrm{in}=\nu_k^\mathrm{in}$ so that 
$\{\bm{x}_k^\mathrm{in}\}_{k=0}^{j+3}$ and $\{\tau_k^\mathrm{in}\}_{k=0}^{j+1}$ are determined. 
Differentiating \eqref{QLnu} $j$-times with respect to $t$ and then putting $t=0$, we see that 
\[
\begin{cases}
 -\nu_j^{\mathrm{in}\prime\prime}+|\bm{x}_0^{\mathrm{in}\prime\prime}|^2\nu_j^\mathrm{in}
  = H_j \quad\mbox{in}\quad (0,1), \\
 \nu_j^\mathrm{in}(0)=0, \quad \nu_j^{\mathrm{in}\prime}(1)=-\bm{g}\cdot\bm{y}_j^{\mathrm{in}\prime}(1),
\end{cases}
\]
where $H_j$ is a polynomial of $\{\bm{x}_k^\mathrm{in}\}_{k=0}^{j+3}$, $\{\tau_k^\mathrm{in}\}_{k=0}^{j+1}$, and their derivatives, 
so that $H_j$ is already determined. 
This two-point boundary value problem for $\nu_j^\mathrm{in}$ is uniquely solved so that $\nu_j^\mathrm{in}$ and $\tau_{j+2}^\mathrm{in}$ are determined. 
Differentiating \eqref{QLy} $j$-times with respect to $t$ and then putting $t=0$, we see that 
$\bm{y}_{j+2}^\mathrm{in}$ is a polynomial $\{\bm{x}_k^\mathrm{in}\}_{k=0}^{j+2}$, $\{\tau_k^\mathrm{in}\}_{k=0}^{j+2}$, and their derivatives, 
so that $\bm{y}_{j+2}^\mathrm{in}$ is also determined. 
On the other hand, differentiating the boundary condition in \eqref{QLy} $j$-times with respect to $t$ and then putting $t=0$ we have 
\begin{equation}\label{QLCC}
\bm{y}_j^\mathrm{in}(1)=\bm{0}. 
\end{equation}

\noindent
This is the compatibility condition at order $j$ for the problem \eqref{QLy}--\eqref{QLxtau}. 
As will be shown in Lemma \ref{lem:EstQLIV}, if $(\bm{x}_0^\mathrm{in},\bm{x}_1^\mathrm{in},\bm{y}_0^\mathrm{in})\in X^m$ and 
$(\bm{y}_1^\mathrm{in},\tau_0^\mathrm{in},\tau_0^{\mathrm{in}\prime},\tau_1^\mathrm{in},\tau_1^{\mathrm{in}\prime}) \in X^{m-1}$ with $m\geq4$, 
then $\bm{y}_j^\mathrm{in}$ is in fact determined as a function in $X^{m-j}$ for $j=0,1,\ldots,m$. 
Therefore, the compatibility conditions make sense up to order $m-1$.

\begin{lemma}\label{lem:EstQLIV}
For any integer $m\geq4$ and any positive constants $M_0$, there exists a positive constant $C_0$ such that if the initial data satisfy 
\begin{equation}\label{CondID2}
\begin{cases}
 \|(\bm{x}_0^\mathrm{in},\bm{x}_1^\mathrm{in},\bm{y}_0^\mathrm{in})\|_{X^m}
  + \|(\bm{y}_1^\mathrm{in},\tau_0^{\mathrm{in}\prime},\tau_1^{\mathrm{in}\prime})\|_{X^{m-1}} \leq M_0, \\
 \tau_0^\mathrm{in}(0)=\tau_1^\mathrm{in}(0)=0, 
\end{cases}
\end{equation}
then the initial values defined above satisfy 
$\sum_{j=0}^m \|\bm{x}_{j+2}^\mathrm{in}\|_{X^{m-j}} + \sum_{j=0}^{m-2} \|\tau_{j+2}^{\mathrm{in}\prime}\|_{X^{m-1-j}} \leq C_0$. 
\end{lemma}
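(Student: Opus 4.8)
The plan is to run the inductive construction of the initial values $(\bm{x}_j^{\mathrm{in}},\bm{y}_j^{\mathrm{in}},\tau_j^{\mathrm{in}},\nu_j^{\mathrm{in}})$ exactly as it is set up in the paragraph preceding the statement, and to track norms at each step using the tame and algebra estimates of Section~\ref{sect:pre} together with the boundary-value-problem estimates (Lemmas~\ref{lem:EstSolBVP3} and~\ref{lem:HEstBVP}). First I would record the base data: by hypothesis $(\bm{x}_0^{\mathrm{in}},\bm{x}_1^{\mathrm{in}},\bm{y}_0^{\mathrm{in}})\in X^m$ and $(\bm{y}_1^{\mathrm{in}},\tau_0^{\mathrm{in}\prime},\tau_1^{\mathrm{in}\prime})\in X^{m-1}$ with norms bounded by $M_0$, and $\tau_0^{\mathrm{in}}$, $\tau_1^{\mathrm{in}}$ vanish at $s=0$; note also that by Lemma~\ref{lem:NormRelation} these control the corresponding $Y$-norms, which is what the BVP lemmas want. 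From $\eqref{QLxtau}$ one immediately gets $\bm{x}_{k+2}^{\mathrm{in}}=\bm{y}_k^{\mathrm{in}}$ and $\tau_{k+2}^{\mathrm{in}}=\nu_k^{\mathrm{in}}$, so the whole problem reduces to estimating $\bm{y}_j^{\mathrm{in}}$ in $X^{m-j}$ and $\nu_j^{\mathrm{in}\prime}$ in $X^{m-1-j}$ for the relevant ranges of $j$.

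The induction is on $j$. Assume $\{\bm{y}_k^{\mathrm{in}}\}_{k\le j+1}$ are bounded in the appropriate $X$-spaces and $\{\nu_k^{\mathrm{in}\prime}\}_{k\le j-1}$ likewise; equivalently $\{\bm{x}_k^{\mathrm{in}}\}_{k\le j+3}$ and $\{\tau_k^{\mathrm{in}\prime}\}_{k\le j+1}$ are bounded. The step has two parts. First, solve the two-point boundary value problem for $\nu_j^{\mathrm{in}}$: its right-hand side $H_j$ is a fixed polynomial in the already-controlled quantities and their $s$-derivatives, so I would bound $\|H_j\|_{Y^{m-3-j}}$ (the natural target order, one below $X^{m-2-j}$) by repeatedly applying Lemmas~\ref{lem:algebraX}, \ref{lem:tame1}, \ref{lem:CalIneq1}, \ref{lem:CalIneq2a}, \ref{lem:algebraY} to the products that appear (products of first and second $s$-derivatives of the $\bm{x}_k^{\mathrm{in}}$ times the $\tau_k^{\mathrm{in}}$), and then invoke Lemma~\ref{lem:HEstBVP} with $\bm{u}$ taken from $\bm{x}_0^{\mathrm{in}}$ to get $\|\nu_j^{\mathrm{in}\prime}\|_{X^{m-2-j}}\le C(M_0)\bigl(|\bm{g}\cdot\bm{y}_j^{\mathrm{in}\prime}(1)|+\|H_j\|_{Y^{m-3-j}}\bigr)$; the boundary trace is controlled by the Sobolev embedding of Lemma~\ref{lem:embedding2}. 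For the lowest values of $m-j$ where $m-3-j\le 0$, one instead uses the $L^\infty$/weighted-$L^p$ estimates of Lemma~\ref{lem:EstSolBVP3}, exactly as in the $m=3$ part of the proof of Lemma~\ref{lem:EstIV}. Second, differentiate $\eqref{QLy}$ $j$ times and set $t=0$: this expresses $\bm{y}_{j+2}^{\mathrm{in}}$ as $(\tau_0^{\mathrm{in}}\bm{y}_j^{\mathrm{in}\prime})' + (\nu_j^{\mathrm{in}}\bm{x}_0^{\mathrm{in}\prime})' + (\text{lower-order polynomial})$, and Lemma~\ref{lem:EstAu} (using $\tau_0^{\mathrm{in}}(0)=0$, $\nu_j^{\mathrm{in}}(0)=0$) plus the product estimates bound $\|\bm{y}_{j+2}^{\mathrm{in}}\|_{X^{m-2-j}}$ in terms of the quantities already estimated. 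Carrying this forward for $j=0,1,\dots,m-2$ closes the induction and yields the stated bound with $C_0=C_0(M_0,m)$.

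The main obstacle is bookkeeping at the low-regularity end of the chain, i.e.\ the terms where the target Sobolev order drops to $0$ or $1$: there Lemma~\ref{lem:HEstBVP} (which requires $m\ge 1$ in its index) and Lemma~\ref{lem:EstAu} in its $m=0$ form must be used with care, the weighted $L^1$ estimate Lemma~\ref{lem:CalIneqY3} is needed for the source terms of the form $\|s^{1/2}\tau u''v''\|_{L^1}$, and one must check that every nonlinear product that appears in $H_j$ is of a type covered by Lemmas~\ref{lem:CalIneq1}--\ref{lem:CalIneqY3} with at least one factor carried in a high enough space to absorb the $L^\infty$-type loss. This is precisely the point where the hypothesis $m\ge 4$ is used: it guarantees that throughout the induction there is always enough regularity in reserve (via the embeddings $\|\ds^k u\|_{L^\infty}\lesssim\|u\|_{X^{2k+2}}$ of Lemma~\ref{lem:embedding2}) so that every product estimate in Section~\ref{sect:pre} applies. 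Since $H_j$ is a universal polynomial, once the pattern is verified for a generic interior $j$ and the handful of boundary cases, the remaining estimates are routine and I would simply state them as following by the cited lemmas.
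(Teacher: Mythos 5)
Your overall strategy is the same as the paper's: induct on $j$, at each step solving the two-point boundary value problem for $\nu_j^\mathrm{in}=\tau_{j+2}^\mathrm{in}$ via Lemma \ref{lem:HEstBVP} (with the source controlled by the calculus inequalities of Section \ref{sect:pre}) and then recovering $\bm{y}_{j+2}^\mathrm{in}=\bm{x}_{j+4}^\mathrm{in}$ from the differentiated equation via Lemma \ref{lem:EstAu}; the paper packages the same induction through the quantities $\mathcal{T}_j$ and $\mathcal{X}_{j+2}$ and the two recursions \eqref{EstX}, \eqref{EstT}.

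However, the indices you write in the key step do not prove the stated bound. You correctly reduce the lemma to showing $\|\nu_j^{\mathrm{in}\prime}\|_{X^{m-1-j}}\lesssim 1$ (since $\nu_j^\mathrm{in}=\tau_{j+2}^\mathrm{in}$ and the conclusion requires $\|\tau_{j+2}^{\mathrm{in}\prime}\|_{X^{m-1-j}}$), but in the induction step you only claim $\|\nu_j^{\mathrm{in}\prime}\|_{X^{m-2-j}}\le C(|\bm{g}\cdot\bm{y}_j^{\mathrm{in}\prime}(1)|+\|H_j\|_{Y^{m-3-j}})$, which is one derivative short: at $j=m-2$ this gives only an $X^0$ bound where the lemma demands $X^1$, and more generally it never yields the asserted sum over $\|\tau_{j+2}^{\mathrm{in}\prime}\|_{X^{m-1-j}}$. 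The repair is exactly what the paper does: apply Lemma \ref{lem:HEstBVP} at order $m-1-j$, which requires bounding $\|H_j\|_{Y^{m-2-j}}$; this follows from Lemmas \ref{lem:CalIneq1} and \ref{lem:CalIneq2} applied to the products $\bm{x}_{j_1+1}^{\mathrm{in}\prime}\cdot\bm{x}_{j_2+1}^{\mathrm{in}\prime}$ and $\tau_{j_0}^\mathrm{in}\,\bm{x}_{j_1}^{\mathrm{in}\prime\prime}\cdot\bm{x}_{j_2}^{\mathrm{in}\prime\prime}$, with the already-controlled lower-order factors absorbing the $L^\infty$-type losses (here $m\geq4$ enters). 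With the correct index the low-regularity boundary case you worry about ($m-3-j\leq0$, forcing a detour through Lemma \ref{lem:EstSolBVP3} as in the $m=3$ case of Lemma \ref{lem:EstIV}) simply does not arise: for $0\leq j\leq m-2$ one has $m-1-j\geq1$ and $m-2-j\geq0$, so Lemma \ref{lem:HEstBVP} applies throughout the chain. Aside from this off-by-one bookkeeping (and the spurious degenerate case it creates), your argument is the paper's argument.
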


\begin{proof}
This lemma can be proved by a similar argument to that in \cite[Sections 8 and 9]{IguchiTakayama2024}. 
Here, for completeness we give a proof. 
For simplicity, we omit the superscript ``in'' from the notation so that we write $\bm{x}_j, \bm{y}_j, \tau_j$, and $\nu_j$ instead of
$\bm{x}_j^{\rm in}, \bm{y}_j^{\rm in}, \tau_j^{\rm in}$, and $\nu_j^{\rm in}$, respectively, throughout the proof. 
For an integer $j=1,2,\ldots,m$, we put 
\[
\begin{cases}
\mathcal{T}_{j} = \|(\tau_0',\tau_1')\|_{X^{m-1}} + \sum_{k=2}^j \|\tau_k'\|_{X^{m+1-k}}, \\
\mathcal{X}_{j+2} = \|(\bm{x}_0,\bm{x}_1)\|_{X^m} + \sum_{k=2}^{j+2} \|\bm{x}_k\|_{X^{m+2-k}}.
\end{cases}
\]
Then, it is sufficient to show that $\mathcal{X}_{m+2}+\mathcal{T}_{m} \leq C_0$. 
Since $\bm{y}_j=\bm{x}_{j+2}$ and $\nu_j=\tau_{j+2}$, it is not difficult to see that 
$\bm{x}_{j+2} = \sum_{j_0+j_1=j}\frac{j!}{j_0!j_1!}(\tau_{j_0}\bm{x}_{j_1}')'$ holds for $j\geq2$. 
Therefore, by Lemma \ref{lem:EstAu} we see that 
\begin{align*}
\|\bm{x}_{j+2}\|_{X^{m-j}}
&\lesssim \|\tau_j'\|_{X^{m-j}}\|\bm{x}_0\|_{X^{m+2-j\vee4}} + \sum_{j_0+j_1=j, j_0\leq j-1}\|\tau_{j_0}'\|_{X^{m-j\vee2}}\|\bm{x}_{j_1}\|_{X^{m+2-j}} \\
&\lesssim \mathcal{T}_j\mathcal{X}_j
\end{align*}
so that 
\begin{equation}\label{EstX}
\mathcal{X}_{j+2} \lesssim \mathcal{X}_{j+1} + \mathcal{T}_j\mathcal{X}_j
\end{equation}
for $j=2,3,\ldots,m$. 
On the other hand, for $j\geq2$ we have 
\[
\begin{cases}
 -\tau_j''+|\bm{x}_0''|^2\tau_j = h_j \quad\mbox{in}\quad (0,1), \\
 \tau_j(0)=0, \quad \tau_j'(1)=-\bm{g}\cdot\bm{x}_j'(1),
\end{cases}
\]
where $h_j=\sum_{j_1+j_2=j}\frac{j!}{j_1!j_2!}\bm{x}_{j_1+1}'\cdot\bm{x}_{j_2+1}'
 - \sum_{j_0+j_1+j_2=j,j_0\leq j-1}\frac{j!}{j_0!j_1!j_2!}\tau_{j_0}\bm{x}_{j_1}''\cdot\bm{x}_{j_2}''$. 
Therefore, by Lemma \ref{lem:HEstBVP} together with $\|\bm{x}_0'\|_{Y^{m+1-j\vee3}} \leq \|\bm{x}_0\|_{X^{m+2-j\vee4}} \leq \|\bm{x}_0\|_{X^m}$ and 
$|\bm{x}_j'(1)| \lesssim \|\bm{x}_j\|_{X^2} \leq \|\bm{x}_j\|_{X^{m+2-j}}$, we obtain 
\[
\|\tau_j\|_{X^{m+1-j}} \leq C(\|\bm{x}_0\|_{X^m})( \|\bm{x}_j\|_{X^{m+2-j}} + \|h_j\|_{Y^{m-j}} )
\]
for $j=2,3,\ldots,m$. 
To evaluate $\|h_j\|_{Y^{m-j}}$, note first that by Lemmas \ref{lem:CalIneq1} and \ref{lem:CalIneq2} we have 
\[
\begin{cases}
 \|u'v'\|_{Y^k} \lesssim \min\{ \|u\|_{X^{k+1\vee4}} \|v\|_{X^{k+1}}, \|u\|_{X^{k+2}}\|v\|_{X^{k+2}} \}, \\
 \|\tau u''v''\|_{Y^k} \lesssim \|\tau'\|_{X^{k-1\vee2}} \|u\|_{X^{k+2\vee4}} \|v\|_{X^{k+2}}
\end{cases}
\]
for $k=0,1,2,\ldots$, if $\tau(0)=0$. 
Therefore, we see that 
\begin{align*}
\|h_j\|_{Y^{m-j}}
&\lesssim \sum_{j_1+j_2=j}\|\bm{x}_{j_1+1}'\cdot\bm{x}_{j_2+1}'\|_{Y^{m-j}}
 + \sum_{j_0+j_1+j_2=j,j_0\leq j-1}\|\tau_{j_0}\bm{x}_{j_1}''\cdot\bm{x}_{j_2}''\|_{Y^{m-j}} \\
&\lesssim \|\bm{x}_1\|_{X^{m+1-j\vee4}} \|\bm{x}_{j+1}\|_{X^{m+1-j}}
 + \sum_{j_1,j_2\leq j-1} \|\bm{x}_{j_1+1}\|_{X^{m+2-j}} \|\bm{x}_{j_2+1}\|_{X^{m+2-j}} \\
&\quad\;
 + \sum_{j_0+j_1+j_2=j, j_0\leq j-1, j_1\leq j_2} \|\tau_{j_0}'\|_{X^{m-1-j \vee2}} \|\bm{x}_{j_1}\|_{X^{m+2-j\vee4}} \|\bm{x}_{j_2}\|_{X^{m+2-j}} \\
&\lesssim (1+\mathcal{T}_{j-1})\mathcal{X}_{j+1}^2,
\end{align*}
so that $\|\tau_j\|_{X^{m+1-j}} \leq C(\|\bm{x}_0\|_{X^m})( \mathcal{X}_j + (1+\mathcal{T}_{j-1})\mathcal{X}_{j+1}^2)$.
Particularly, we obtain 
\begin{equation}\label{EstT}
\mathcal{T}_j \leq \mathcal{T}_{j-1} + C(\|\bm{x}_0\|_{X^m})( \mathcal{X}_j + (1+\mathcal{T}_{j-1})\mathcal{X}_{j+1}^2)
\end{equation}
for $j=2,3,\ldots,m$. 
Now, using \eqref{EstX} and \eqref{EstT} inductively we obtain $\mathcal{T}_m,\mathcal{X}_{m+2}\lesssim1$. 
In fact, by the assumptions of the lemma we have $\mathcal{T}_1,\mathcal{X}_3\lesssim 1$. 
Suppose that $\mathcal{T}_{j-1},\mathcal{X}_{j+1} \lesssim 1$ holds for some $j=2,3,\ldots,m$. 
Then, it follows from \eqref{EstT} that $\mathcal{T}_j \lesssim1$ and then from \eqref{EstX} that $\mathcal{X}_{j+2} \lesssim 1$. 
Therefore, by induction we obtain the desired estimates. 
\end{proof}

\subsection{Well-posedness of the quasilinear problem}
The following proposition shows the well-posedness of the quasilinear problem \eqref{QLy}--\eqref{QLxtau}.

\begin{proposition}\label{prop:QLWP}
For any integer $m\geq4$ and any positive constants $M_0$ and $c_0$, 
there exists a small positive time $T$ such that if the initial data satisfy \eqref{CondID2}, the stability condition 
\begin{equation}\label{CondID3}
\frac{\tau_0^\mathrm{in}(s)}{s} \geq 2c_0 \quad\mbox{for}\quad 0<s<1, 
\end{equation}
and the compatibility conditions \eqref{QLCC} up to order $m-1$, then the initial boundary value problem \eqref{QLy}--\eqref{QLxtau} has a unique solution 
$(\bm{x},\bm{y},\tau,\nu)$ on the time interval $[0,T]$ satisfying $\bm{x},\dot{\bm{x}},\bm{y}\in\mathscr{X}_T^m$, 
$\tau',\dot{\tau}',\nu'\in\mathscr{X}_T^{m-1,*}$, and the stability condition \eqref{SC}. 
\end{proposition}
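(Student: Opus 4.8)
The plan is to solve the coupled system \eqref{QLy}--\eqref{QLxtau} by a contraction mapping argument, using the already-established linear theory of \cite{IguchiTakayama2023-2} for the hyperbolic block \eqref{QLy} and the two-point boundary value estimates (Lemmas \ref{lem:EstSolBVP3}, \ref{lem:HEstBVP}) for the elliptic block \eqref{QLnu}. First I would fix a large constant $R$ and a small time $T$, and introduce the closed ball
\[
\mathcal{B}_{T,R} = \bigl\{ (\bm{x},\bm{y},\tau,\nu) : \bm{x},\dot{\bm{x}},\bm{y}\in\mathscr{X}_T^m,\ \tau',\dot{\tau}',\nu'\in\mathscr{X}_T^{m-1,*},\ \text{norms}\leq R,\ \text{data at }t=0\text{ correct},\ \tfrac{\tau}{s}\geq c_0 \bigr\}.
\]
Given $(\tilde{\bm{x}},\tilde{\bm{y}},\tilde\tau,\tilde\nu)\in\mathcal{B}_{T,R}$, I would define the image as follows: first recover $(\bm{x},\tau)$ by integrating \eqref{QLxtau} in time from the prescribed data, which gains two time derivatives and is where the quasilinearization pays off; then, with the coefficients $\tau$, $\bm{x}'$, $|\bm{x}''|^2$ now fixed and the source terms $\bm{f},h$ in \eqref{QLfh} built from $(\tilde{\bm{x}},\tilde\tau)$ and their time derivatives, solve the linear hyperbolic problem \eqref{QLy} for $\bm{y}$ using \cite[Theorem 2.8]{IguchiTakayama2023-2}, and solve the linear two-point boundary value problem \eqref{QLnu} for $\nu$ via Lemma \ref{lem:HEstBVP} applied slicewise in $t$ together with the calculus inequalities of Section \ref{sect:pre} (Lemmas \ref{lem:CalIneqY1}, \ref{lem:CalIneqY2}, \ref{lem:CalIneq3}) to estimate $\dot{\bm{x}}'\cdot\dot{\bm{y}}'$, $(\bm{x}''\cdot\bm{y}'')\tau$, and the time-differentiated versions in $\mathscr{X}_T^{m-1,*}$.

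The key structural point making the hyperbolic solve legitimate is that the coefficient $\tau$ in $(\tau\bm{y}')'$ satisfies $\tau|_{s=0}=0$ and the stability condition \eqref{SC}; this is exactly the hypothesis under which \cite[Theorem 2.8]{IguchiTakayama2023-2} gives well-posedness, and the degeneracy at $s=0$ is matched by the weights in $X^m$. I would verify that for $T$ small the propagated stability condition $\tau/s\geq c_0$ persists: since $\tau = \tau_0^\mathrm{in} + t\tau_1^\mathrm{in} + \int_0^t(t-t')\nu\,dt'$ and $\tau_0^\mathrm{in}/s\geq 2c_0$ with $\|\nu'\|$, $\|\tau_1^{\mathrm{in}\prime}\|$ controlled (using $\tau_1^\mathrm{in}(0)=0$ so $|\tau_1^\mathrm{in}(s)|\leq s\|\tau_1^{\mathrm{in}\prime}\|_{L^\infty}$ and likewise for $\nu$), the perturbation of $\tau/s$ is $O(T)$ uniformly in $s$. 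Then I would carry out the two estimates that close the argument: (i) a \emph{self-map} estimate showing the image lies in $\mathcal{B}_{T,R}$ for $R$ depending only on $M_0,c_0,m$ and $T$ small, where the time-integration in \eqref{QLxtau} contributes factors of $T$ (or $T^2$) that absorb the constants from the linear solves; and (ii) a \emph{contraction} estimate in a lower-order norm (one derivative less than $m$, exactly the standard trick for quasilinear problems since the top-order norm is only bounded, not contracted) — for two data in $\mathcal{B}_{T,R}$ the differences satisfy linear problems of the same type with source terms that are differences, and the $O(T)$ gain from the $\ddot{\bm{x}}=\bm{y}$, $\ddot\tau=\nu$ integrations gives a Lipschitz constant $<1$. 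Uniqueness in the stated class follows from the contraction estimate applied to two genuine solutions, and the compatibility conditions \eqref{QLCC} up to order $m-1$ are precisely what is needed to invoke \cite[Theorem 2.8]{IguchiTakayama2023-2} with the regularity $\bm{y}\in\mathscr{X}_T^m$.

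I expect the main obstacle to be bookkeeping the \emph{nonlocal} coupling: $\nu$ depends on $\bm{y}$ through the source of \eqref{QLnu}, while $\bm{y}$ depends on $\nu$ through the term $(\nu\bm{x}')'$ in \eqref{QLy}, so the two linear solves are not independent and one must be careful about which iterate supplies which coefficient/source. The cleanest route is to feed \eqref{QLy} the source $(\tilde\nu\bm{x}')'$ built from the \emph{previous} iterate $\tilde\nu$ (estimated by Lemma \ref{lem:EstAu2}, which requires $\tilde\nu|_{s=0}=0$ — guaranteed since $\nu=\ddot\tau$ and $\tau(0,t)\equiv 0$), thereby decoupling the solves within one iteration step; the price is that the fixed point must be taken for the full quadruple, but the $O(T)$ factors from \eqref{QLxtau} still dominate. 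A secondary technical point is checking that the source $h$ in \eqref{QLnu}, which contains $|\dot{\bm{x}}''|^2\tau$ and $(\bm{x}''\cdot\dot{\bm{x}}'')\dot\tau$, lies in $\mathscr{Y}_T^{m-1,*}$ with the right bound — this is where Lemmas \ref{lem:CalIneqY2} and \ref{lem:CalIneqY1} are used, and the weight $\tau\sim s$ near the free end is essential for the first term to be finite. Everything else is routine once the function-space framework of Section \ref{sect:pre} is in hand.
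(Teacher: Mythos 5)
Your overall architecture (a fixed-point iteration built on the linear theory of \cite{IguchiTakayama2023-2} together with the double time integration in \eqref{QLxtau}) is the same as the paper's, but the decoupling you propose at the end is a genuine gap. You feed the hyperbolic solve the source $(\tilde{\nu}\bm{x}')'$ built from the \emph{previous} iterate and then recover the new $\nu$ from \eqref{QLnu}, justifying convergence by saying that the $O(T)$ factors from \eqref{QLxtau} still dominate. But the loop $\tilde{\nu}\mapsto\bm{y}\mapsto\nu$ never passes through \eqref{QLxtau}: the elliptic estimate for $\nu$ in terms of $\bm{y}$ is pointwise in time, and the estimate of \cite[Theorem 2.8]{IguchiTakayama2023-2} (cf.\ \eqref{EE1}, \eqref{EE3}) controls $\opnorm{\bm{y}(t)}_m$ by a \emph{sup-in-time} norm of the source at order $m-2$ (resp.\ $m-3$ for differences) plus time integrals only of the top time derivative, because the higher time derivatives in $\opnorm{\cdot}_m$ are recovered from the equation pointwise in $t$, not by energy integration. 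Consequently the difference of iterates picks up $\check{\tilde{\nu}}$ with an $O(1)$ constant, not $O(T)$, and neither the contraction nor (with a single radius $R$) the self-map estimate closes: the bound for $\bm{y}$ depends on the radius assumed for $\nu$ and vice versa with no smallness, which is circular. The term $(\nu\bm{x}')'$ is exactly the ``localized term'' that Theorem 2.8 of the companion paper was designed to keep on the left-hand side; accordingly, the paper solves \eqref{QLy} and \eqref{QLnu} as one \emph{coupled} linear system for $(\bm{y},\nu)$, so that the only sources, $\bm{f}$ and $h$ of \eqref{QLfh}, are built from the lagged pair $(\bm{x},\tau)$, whose differences vanish at $t=0$ to high order and do gain smallness through \eqref{QLxtau}; the self-map is then closed with a hierarchy of constants $M_1,M_2,M_3$ rather than a single $R$. (One could try to rescue your scheme by exploiting that differences of iterates vanish to high order at $t=0$ and converting the sup-in-time source terms into time integrals at the cost of one extra time derivative, but that regularity is not available at the threshold $m=4$, and it is not what you argued.)

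A second, smaller gap: your contraction is in a norm one derivative lower, so the limit is a priori only in $\mathscr{X}_T^{m-1}\times\mathscr{X}_T^{m-2,*}$; membership in the ball gives $L^\infty$-in-time bounds at top order but not the claimed strong continuity $\bm{y}\in\mathscr{X}_T^m$, $\nu'\in\mathscr{X}_T^{m-1,*}$. The paper recovers this by a final bootstrap: it regards \eqref{QLy}, \eqref{QLnu} as a linear problem with the limit coefficients and applies \cite[Theorem 2.8]{IguchiTakayama2023-2} again (twice, first for $\bm{y}$ and $\nu'$ at order $m-2$, then for $\nu'\in\mathscr{X}_T^{m-1,*}$ after upgrading $h$). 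You would need to add such a step, and likewise the persistence of the stability condition and of the prescribed Taylor coefficients at $t=0$ should be built into the iteration set as in \eqref{SolSet}.
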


\begin{proof}
By Lemma \ref{lem:EstQLIV}, we can determine the initial values $\bm{x}_j^\mathrm{in}$ for $0\leq j\leq m+2$ and $\tau_j^\mathrm{in}$ for $0\leq j\leq m$, 
which satisfy $\bm{x}_{j+2} \in X^{m-j}$ for $0\leq j\leq m$ and $\tau_{j+2}' \in X^{m-1-j}$ for $0\leq j\leq m-2$. 
For $T,M_1,M_2,M_3>0$, we define $S^m_T(M_1,M_2,M_3)$ the set of all $(\bm{x},\tau)$ satisfying 
\begin{equation}\label{SolSet}
\begin{cases}
 \bm{x},\dot{\bm{x}},\ddot{\bm{x}} \in \mathscr{X}_T^m, \quad
  \tau', \dot{\tau}', \ddot{\tau}' \in \mathscr{X}_T^{m-1,*}, \\
 (\dt^j\bm{x})|_{t=0}=\bm{x}_j^\mathrm{in} \ (0\leq j\leq m+2), \quad
  (\dt^j\tau)|_{t=0}=\tau_j^\mathrm{in} \ (0\leq j\leq m), \\
 c_0s \leq \tau(s,t) \leq M_1s \quad\mbox{for}\quad (s,t)\in[0,1]\times[0,T], \\
 \opnorm{(\bm{x},\dot{\bm{x}})(t)}_m + \opnorm{(\tau',\dot{\tau}')(t)}_{m-1,*} \leq M_1, \\
 \opnorm{\ddot{\bm{x}}(t)}_m + \opnorm{\ddot{\tau}'(t)}_{m-2} \leq M_2, \quad \opnorm{\ddot{\tau}'(t)}_{m-1,*} \leq M_3 \quad\mbox{for}\quad t\in[0,T].
\end{cases}
\end{equation}
In view of Lemma \ref{lem:NormEq}, it is standard to check that there exist a small $T_0>0$ and large $M_{01},M_{02},M_{03}>0$ such that 
$S^m_{T_0}(M_{01},M_{02},M_{03})\ne\emptyset$.

For given $(\bm{x},\tau)\in S^m_T(M_1,M_2,M_3)$, we consider 
the initial boundary value problem \eqref{QLy} and \eqref{QLnu} for unknowns $(\bm{y},\nu)$, 
where $\bm{f}$ and $h$ are defind by \eqref{QLfh}. 
By Lemma \ref{lem:EstAu2}, we have 
\[
\begin{cases}
 \opnorm{\bm{f}}_{m-2} \lesssim \opnorm{\dot{\tau}'}_{m-2} \opnorm{\dot{\bm{x}}}_m, \\
 \opnorm{\dt\bm{f}}_{m-2} \lesssim \opnorm{(\dot{\tau}',\ddot{\tau}')}_{m-2} \opnorm{(\dot{\bm{x}},\ddot{\bm{x}})}_m.
\end{cases}
\]
As for $h$, we decompose it as $h=h_1+h_2$ with $h_1=2|\ddot{\bm{x}}'|^2$ and $h_2= - 2(|\dot{\bm{x}}''|^2\tau + 2(\bm{x}''\cdot\dot{\bm{x}}'')\dot{\tau})$. 
By Lemma \ref{lem:CalIneqY2}, we have $\opnorm{h_2}_{m-2}^\dag \lesssim \opnorm{(\tau',\dot{\tau}')}_{m-2} \opnorm{(\bm{x},\dot{\bm{x}})}_m^2$. 
By Lemma \ref{lem:CalIneqY1}, we have also $\opnorm{h_1}_{m-2}^\dag \lesssim \opnorm{\ddot{\bm{x}}}_{m-1}^2 \leq \opnorm{\dot{\bm{x}}}_m^2$ in the case $m\geq5$; 
and $\opnorm{h_1}_{m-3}^\dag = \opnorm{h_1}_1^\dag \lesssim \opnorm{\ddot{\bm{x}}}_{3}^2 \leq \opnorm{\dot{\bm{x}}}_m^2$ and 
$\opnorm{h_1}_{m-2}^\dag = \opnorm{h_1}_2^\dag \lesssim \opnorm{\ddot{\bm{x}}}_{4}^2 = \opnorm{\ddot{\bm{x}}}_m^2$ in the case $m=4$. 
Moreover, in the case $m=4$, by Lemma \ref{lem:embedding2} we have 
$\|s^\frac12\dt^{m-2}h_1\|_{L^1} \lesssim \|\dt^2\bm{x}\|_{X^2} \|\dt^4\bm{x}\|_{X^1} + \|\dt^3\bm{x}\|_{X^2}\|\dt^3\bm{x}\|_{X^1} \lesssim \|\dot{\bm{x}}\|_m^2$. 
Furthermore, by Lemmas \ref{lem:CalIneqY1} and \ref{lem:CalIneqY2} we have 
$\|s^\frac12\dt^{m-1}h\|_{L^1} \lesssim \opnorm{\dt h}_{m-2}
 \lesssim ( 1 + \opnorm{ (\tau',\dot{\tau}',\ddot{\tau}') }_{m-2} ) \opnorm{ (\bm{x},\dot{\bm{x}},\ddot{\bm{x}}) }_m^2$. 
Therefore, 
\[
\begin{cases}
 \opnorm{h}_{m-3}^\dag + \|s^\frac12\dt^{m-2}h\|_{L^1} \lesssim ( 1 + \opnorm{ (\tau',\dot{\tau}') }_{m-2} ) \opnorm{ (\bm{x},\dot{\bm{x}}) }_m^2, \\
 \opnorm{h}_{m-2}^\dag \lesssim ( 1 + \opnorm{ (\tau',\dot{\tau}') }_{m-2} ) \opnorm{ (\bm{x},\dot{\bm{x}},\ddot{\bm{x}}) }_m^2, \\
 \|s^\frac12\dt^{m-1}h\|_{L^1} \lesssim ( 1 + \opnorm{ (\tau',\dot{\tau}',\ddot{\tau}') }_{m-2} ) \opnorm{ (\bm{x},\dot{\bm{x}},\ddot{\bm{x}}) }_m^2.
\end{cases}
\]
Particularly, we have $\bm{f}\in\mathscr{X}_T^{m-2}$, $\dt^{m-1}\bm{f}\in L^1(0,T;L^2)$, $h\in\mathscr{Y}_T^{m-2}$, 
and $s^\frac12\dt^{m-1}h\in L^1((0,1)\times(0,T))$. 
On the other hand, it is straightforward to see that the data $(\bm{y}_0^\mathrm{in},\bm{y}_1^\mathrm{in},\bm{f},h)$ satisfy the compatibility conditions 
up to order $m-1$. 
Therefore, we can apply the existence theorem \cite[Theorem 2.8]{IguchiTakayama2023-2}, which guarantees the unique existence of a solution $(\bm{y},\nu)$ to 
the initial boundary value problem \eqref{QLy} and \eqref{QLnu} in the class $\bm{y}\in\mathscr{X}_T^m$ and $\nu'\in\mathscr{X}_T^{m-1,*}$. 
Moreover, the solution satisfies 
\begin{align}\label{EE1}
\opnorm{\bm{y}(t)}_m + \opnorm{\nu'(t)}_{m-2}
&\leq C_1\mathrm{e}^{C_2t}\biggl\{ \|\bm{y}_0^\mathrm{in}\|_{X^m} + \|\bm{y}_1^\mathrm{in}\|_{X^{m-1}} \\
&\quad\;
 + \max_{0\leq t'\leq t}\bigl( \opnorm{\bm{f}(t')}_{m-2} + \opnorm{h(t')}_{m-3}^\dag + \|s^\frac12\dt^{m-2}h(t')\|_{L^1} \bigr) \nonumber \\
&\quad\;
 + C_2\int_0^t (\|\dt^{m-1}\bm{f}(t')\|_{L^2} + \|s^\frac12\dt^{m-1}h(t')\|_{L^1} )\mathrm{d}t' \biggr\} \nonumber \\
&\leq C_1\mathrm{e}^{C_2t}, \nonumber
\end{align}

\noindent
where we denote the constants $C_1=C(m,c_0,M_0,M_1)$ and $C_2=C(m,c_0,M_0,M_1,M_2)$. 
These constants may change from line to line. 
We have also 
\begin{align}\label{EE2}
\opnorm{\nu'(t)}_{m-1,*}
&\leq C_1( \opnorm{\bm{y}(t)}_m + \opnorm{\nu'(t)}_{m-2} + \opnorm{h(t)}_{m-2}^\dag ) \\
&\leq C_2\mathrm{e}^{C_2t}. \nonumber
\end{align}

\noindent
We define a map $\Phi$ by $\Phi(\bm{x},\tau):=(\bm{y},\nu)$.

By using this solution $(\bm{y},\nu)$, we define $(\tilde{\bm{x}},\tilde{\tau})$ as a unique solution to \eqref{QLxtau}. 
Obviously, $(\tilde{\bm{x}},\tilde{\tau})$ satisfy the conditions in the first two lines in \eqref{SolSet}. 
By \eqref{EE1} and \eqref{EE2}, we have also 
\[
\begin{cases}
 \opnorm{ (\tilde{\bm{x}},\dt\tilde{\bm{x}} )(t) }_m + \opnorm{ (\tilde{\tau}',\dt\tilde{\tau}')(t) }_{m-1,*} \leq C_0 + C_3t^2, \\
 \opnorm{ \dt^2\tilde{\bm{x}}(t) }_m + \opnorm{ \dt^2\tilde{\tau}'(t) }_{m-2} \leq C_1\mathrm{e}^{C_2t}, \\
 \opnorm{ \dt^2\tilde{\tau}'(t) }_{m-1,*} \leq C_2\mathrm{e}^{C_2t}, \\
 |\frac{1}{s}\tilde{\tau}(s,t) - \frac{1}{s}\tau_0^\mathrm{in}(s)| \leq C_2t \quad\mbox{for}\quad (s,t)\in[0,1]\times[0,T],
\end{cases}
\]
where we denote the constants $C_0=C(m,c_0,M_0)$ and $C_3=C(m,c_0,M_0,M_1,M_2,M_3)$ and used Lemma \ref{lem:EstQLIV} to evaluate initial values. 
Now, we define the constants $M_1,M_2,M_3$ as $M_1=2C_0$, $M_2=2C_1$, $M_3=2C_2$, and choose the time $T>0$ so small that 
$C_3T^2\leq C_0$, $\mathrm{e}^{C_2T}\leq2$, and $C_2T\leq c_0$. 
Then, we see that $(\tilde{x},\tilde{\tau})$ satisfy all the conditions in \eqref{SolSet}. 
We define a map $\Psi$ by $\Psi(\bm{x},\tau):=(\tilde{\bm{x}},\tilde{\tau})$. 
We have shown that there exist large constants $M_1,M_2,M_3>0$ and a small time $T>0$ such that the set $S^m_T(M_1,M_2,M_3)$ 
is not empty and that $\Psi$ maps $S^m_T(M_1,M_2,M_3)$ into itself. 
In the following we fix these constants.

We first take $(\bm{x}^{(0)},\tau^{(0)}) \in S^m_T(M_1,M_2,M_3)$ arbitrarily. 
Then, we define inductively 
\[
(\bm{y}^{(n)},\nu^{(n)}):=\Phi(\bm{x}^{(n)},\tau^{(n)}), \quad(\bm{x}^{(n+1)},\tau^{(n+1)}):=\Psi(\bm{x}^{(n)},\tau^{(n)})
\]
for $n=0,1,2,\ldots$. 
$\{(\bm{x}^{(n)},\bm{y}^{(n)},\tau^{(n)},\nu^{(n)})\}_{n=0}^\infty$ is a sequence of approximate solutions to the initial value problem 
for the quasilinear system \eqref{QLy}--\eqref{QLxtau}. 
We note that $\{(\bm{x}^{(n)},\tau^{(n)})\}_{n=0}^\infty$ satisfy the uniform bounds in \eqref{SolSet}. 
We proceed to show that these approximate solutions converge to a solution of the problem. 
To this end, we put $\check{\bm{x}}^{(n)}=\bm{x}^{(n+1)}-\bm{x}^{(n)}$, $\check{\bm{y}}^{(n)}=\bm{y}^{(n+1)}-\bm{y}^{(n)}$, 
$\check{\tau}^{(n)}=\tau^{(n+1)}-\tau^{(n)}$, and $\check{\nu}^{(n)}=\nu^{(n+1)}-\nu^{(n)}$. 
Then, we see that $(\check{\bm{y}}^{(n)},\check{\nu}^{(n)})$ satisfy 
\[
\begin{cases}
 \dt^2\check{\bm{y}}^{(n)}=(\tau^{(n)}\check{\bm{y}}^{(n)\prime})'+(\check{\nu}^{(n)}\bm{x}^{(n)\prime})'+\bm{F}^{(n)} &\mbox{in}\quad (0,1)\times(0,T), \\
 \check{\bm{y}}^{(n)}=\bm{0} &\mbox{on}\quad \{s=1\}\times(0,T), \\
 (\check{\bm{y}}^{(n)},\dt\check{\bm{y}}^{(n)})|_{t=0}=(\bm{0},\bm{0}) &\mbox{in}\quad (0,1)
\end{cases}
\]
and 
\[
\begin{cases}
 -\check{\nu}^{(n)\prime\prime}+|\bm{x}^{(n)\prime\prime}|^2\check{\nu}^{(n)} \\
 \phantom{\nu}
 = 2\dot{\bm{x}}^{(n)\prime}\cdot\dt\check{\bm{y}}^{(n)\prime} - 2(\bm{x}^{(n)\prime\prime}\cdot\check{\bm{y}}^{(n)\prime\prime})\tau^{(n)} + H^{(n)}
  &\mbox{in}\quad (0,1)\times(0,T), \\
 \check{\nu}^{(n)} = 0 &\mbox{on}\quad \{s=0\}\times(0,T), \\
 \check{\nu}^{(n)\prime} = -\bm{g}\cdot\check{\bm{y}}^{(n)\prime} &\mbox{on}\quad \{s=1\}\times(0,T),
\end{cases}
\]
where 
\[
\begin{cases}
 \bm{F}^{(n)} = \bm{f}^{(n+1)}-\bm{f}^{(n)} + (\check{\tau}^{(n)}\bm{y}^{(n+1)\prime} + \nu^{(n+1)}\check{\bm{x}}^{(n)\prime})', \\
 H^{(n)} = h^{(n+1)}-h^{(h)} - ((\bm{x}^{(n+1)}+\bm{x}^{(n)})''\cdot\check{\bm{x}}^{(n)\prime\prime})\nu^{(n+1)} \\
 \ \ 
 + 2\bigl\{ (\dt\check{\bm{x}}^{(n)})'\cdot\dot{\bm{y}}^{(n+1)\prime} - (\bm{x}^{(n+1)\prime\prime}\cdot\bm{y}^{(n+1)\prime\prime})\check{\tau}^{(n)}
 -(\check{\bm{x}}^{(n)\prime\prime}\cdot\bm{y}^{(n+1)\prime\prime})\tau^{(n)} \bigr\},
\end{cases}
\]
and $(\bm{f}^{(n)},h^{(n)})$ are defined by \eqref{QLfh} with $(\bm{x},\tau)$ replaced with $(\bm{x}^{(n)},\tau^{(n)})$. 
We see also that $(\check{\bm{x}}^{(n)},\check{\tau}^{(n)})$ satisfy 
\[
\begin{cases}
 \dt^2\check{\bm{x}}^{(n)}=\check{\bm{y}}^{(n-1)}, &(\check{\bm{x}}^{(n)},\dt\check{\bm{x}}^{(n)})|_{t=0}=(\bm{0},\bm{0}), \\
 \dt^2\check{\tau}^{(n)}=\check{\nu}^{(n-1)},      &(\check{\tau}^{(n)},\dt\check{\tau}^{(n)})|_{t=0}=(0,0).
\end{cases}
\]
In the same way as the previous evaluation for $(\bm{f},h)$, by Lemmas \ref{lem:EstAu2}--\ref{lem:CalIneqY3} we see that 
\[
\begin{cases}
 \opnorm{\bm{F}^{(n)}}_{m-3} + \opnorm{H^{(n)}}_{m-3}^\dag \lesssim \opnorm{(\check{\bm{x}}^{(n)},\dt\check{\bm{x}}^{(n)})}_{m-1}
  + \opnorm{(\check{\tau}^{(n)\prime},\dt\check{\tau}^{(n)\prime})}_{m-2,*}, \\
 \opnorm{\dt\bm{F}^{(n)}}_{m-3} + \|s^\frac12\dt^{m-2}H^{(n)}\|_{L^1} \\
 \quad
  \lesssim \opnorm{ (\check{\bm{x}}^{(n)},\dt\check{\bm{x}}^{(n)},\dt^2\check{\bm{x}}^{(n)}) }_{m-1}
  + \opnorm{ (\check{\tau}^{(n)\prime},\dt\check{\tau}^{(n)\prime},\dt^2\check{\tau}^{(n)\prime}) }_{m-2,*}. 
\end{cases}
\]
Moreover, since $(\dt^j\check{\bm{x}}^{(n)})|_{t=0}=\bm{0}$ for $0\leq j\leq m+2$ and $(\dt^j\check{\tau}^{(n)})|_{t=0}=0$ for $0\leq j\leq m$, 
we have 
\begin{align*}
&\opnorm{(\check{\bm{x}}^{(n)},\dt\check{\bm{x}}^{(n)})(t)}_{m-1}
  + \opnorm{(\check{\tau}^{(n)\prime},\dt\check{\tau}^{(n)\prime})(t)}_{m-2,*} \\
&\lesssim \int_0^t\bigl( \opnorm{\dt^2\check{\bm{x}}^{(n)}(t')}_{m-1}
  + \opnorm{\dt^2\check{\tau}^{(n)\prime}(t')}_{m-2,*} \bigr)\mathrm{d}t'.
\end{align*}
Now, we apply the energy estimate obtained in \cite[Theorem 2.8]{IguchiTakayama2023-2} to obtain 
\begin{align}\label{EE3}
\opnorm{\check{\bm{y}}^{(n)}(t)}_{m-1} + \opnorm{\check{\nu}^{(n)\prime}(t)}_{m-2,*}
&\lesssim \max_{0\leq t'\leq t}\bigl( \opnorm{ \bm{F}^{(n)}(t') }_{m-3} + \opnorm{ H^{(n)}(t') }_{m-3}^\dag \bigr) \\
&\quad\;
 + \int_0^t \bigl( \|\dt^{m-2}\bm{F}^{(n)}(t')\|_{L^2} + \|s^\frac12\dt^{m-2}H^{(n)}(t')\|_{L^1} \bigr)\mathrm{d}t' \nonumber \\
&\lesssim \int_0^t\bigl( \opnorm{\dt^2\check{\bm{x}}^{(n)}(t')}_{m-1}
  + \opnorm{\dt^2\check{\tau}^{(n)\prime}(t')}_{m-2,*} \bigr)\mathrm{d}t' \nonumber \\
&= \int_0^t\bigl( \opnorm{\check{\bm{y}}^{(n-1)}(t')}_{m-1}
  + \opnorm{\check{\nu}^{(n-1)\prime}(t')}_{m-2,*} \bigr)\mathrm{d}t'. \nonumber
\end{align}

\noindent
This shows that $\{(\bm{y}^{(n)},\nu^{(n)\prime})\}_{n=0}^\infty$ converge in $\mathscr{X}_T^{m-1}\times\mathscr{X}_T^{m-2,*}$. 
Particularly, $\{\nu^{(n)}\}_{n=0}^\infty$ also converges in $\mathscr{X}_T^{m-2,*}$. 
Then, we see that $\{(\bm{x}^{(n)},\dot{\bm{x}}^{(n)})\}_{n=0}^\infty$ and $\{(\tau^{(n)},\tau^{(n)\prime},\dot{\tau}^{(n)},\dot{\tau}^{(n)\prime})\}_{n=0}^\infty$ 
converge in $\mathscr{X}_T^{m-1}$ and $\mathscr{X}_T^{m-2,*}$, respectively. 
Let $(\bm{x},\bm{y},\tau,\nu)$ be the limit of $\{(\bm{x}^{(n)},\bm{y}^{(n)},\tau^{(n)},\nu^{(n)})\}_{n=0}^\infty$. 
Then, $(\bm{x},\bm{y},\tau,\nu)$ is a solution to \eqref{QLy}--\eqref{QLxtau} satisfying 
$\bm{x},\dot{\bm{x}},\bm{y}\in\mathscr{X}_T^{m-1}$ and $\tau',\dot{\tau}',\nu'\in\mathscr{X}_T^{m-2,*}$, and the stability condition \eqref{SC}.

We next show that the solution satisfies the regularity $\bm{x},\dot{\bm{x}},\bm{y}\in\mathscr{X}_T^{m}$ and $\tau',\dot{\tau}',\nu'\in\mathscr{X}_T^{m-1,*}$. 
Since the approximate solutions $(\bm{x}^{(n)},\tau^{(n)})$ satisfy the uniform bounds in \eqref{SolSet}, by standard compactness arguments we have 
$\opnorm{(\bm{x},\dot{\bm{x}},\ddot{\bm{x}})(t)}_m + \opnorm{ (\tau',\dot{\tau}',\ddot{\tau}')(t)}_{m-1,*} \leq C_2$ for $t\in[0,T]$, 
so that $(\bm{f},h)$ defined by \eqref{QLfh} satisfy $\bm{f}\in\mathscr{X}_T^{m-2}$, $\dt^{m-1}\bm{f}\in L^1(0,T;L^2)$, $h\in\mathscr{Y}_T^{m-3}$, 
$s^\frac12\dt^{m-2}h\in C^0([0,T];L^1)$, and $s^\frac12\dt^{m-1}h\in L^1((0,1)\times(0,T))$. 
Now, we regard \eqref{QLy} and \eqref{QLnu} as a linear problem for $(\bm{y},\tau)$ and apply the existence theorem \cite[Theorem 2.8]{IguchiTakayama2023-2} 
to obtain $\bm{y}\in\mathscr{X}_T^m$ and $\nu'\in\mathscr{X}_T^{m-2}$. 
Then, we see that $h\in\mathscr{Y}_T^{m-2}$. 
Therefore, by applying the existence theorem \cite[Theorem 2.8]{IguchiTakayama2023-2} again to obtain $\nu'\in\mathscr{X}_T^{m-1,*}$. 
Moreover, in view of \eqref{QLxtau} we easily see that $(\bm{x},\tau)$ also satisfy the desired regularity. 
Finally, the uniqueness of solutions follows from an estimate similar to \eqref{EE3}. 
\end{proof}

\section{Proof of Theorem \ref{th:main}}\label{sect:Proof}
In this section, we will prove our main result, that is, Theorem \ref{th:main}. 
The proof is divided into three cases: the case $m\geq6$, the case $m=5$, and the critical case $m=4$. 
In the following, we assume that the initial data $(\bm{x}_0^\mathrm{in},\bm{x}_1^\mathrm{in})$ satisfy \eqref{CondID}.

\subsection{The case $m\geq6$}
In this subsection, we will prove Theorem \ref{th:main} in the case $m\geq6$. 
Let $\{\bm{x}_j^\mathrm{in}\}_{j=0}^m$ and $\{\tau_j^\mathrm{in}\}_{j=0}^{m-2}$ be the initial values determined from the initial data 
$(\bm{x}_0^\mathrm{in},\bm{x}_1^\mathrm{in})$ as in Section \ref{sect:CC}. 
Putting $(\bm{y}_0^\mathrm{in},\bm{y}_1^\mathrm{in})=(\bm{x}_2^\mathrm{in},\bm{x}_3^\mathrm{in})$, 
we consider the initial boundary value problem to the quasilinear system \eqref{QLy}--\eqref{QLxtau} for unknown $(\bm{x},\bm{y},\tau,\nu)$. 
Since the initial data $(\bm{x}_0^\mathrm{in},\bm{x}_1^\mathrm{in})\in X^m\times X^{m-1}$ are assumed to satisfy the compatibility conditions up to order $m-1$ 
and the stability condition \eqref{CondID3}, we see by Lemma \ref{lem:EstQLIV} that $\bm{y}_0^\mathrm{in}\in X^{m-2}$, 
$\bm{y}_1^\mathrm{in}, \tau_0^{\mathrm{in}\prime},\tau_1^{\mathrm{in}\prime}\in X^{m-3}$, and that the initial data 
$(\bm{x}_0^\mathrm{in},\bm{x}_1^\mathrm{in},\bm{y}_0^\mathrm{in},\bm{y}_1^\mathrm{in},\tau_0^\mathrm{in},\tau_1^\mathrm{in})$ for the problem 
\eqref{QLy}--\eqref{QLxtau} satisfy the compatibility conditions up to order $m-3$. 
Therefore, we can apply Proposition \ref{prop:QLWP} with $m$ replaced with $m-2$ to obtain a unique solution $(\bm{x},\bm{y},\tau,\nu)$ 
to the quasilinear system \eqref{QLy}--\eqref{QLxtau} on some time interval $[0,T]$ satisfying 
$\bm{x},\dot{\bm{x}},\ddot{\bm{x}} \in \mathscr{X}_T^{m-2}$, $\tau',\dot{\tau}',\ddot{\tau}' \in \mathscr{X}_T^{m-3,*}$, 
and the stability condition \eqref{SC}.

Moreover, we see that $(\bm{x},\tau)$ satisfy \eqref{HP2} and \eqref{BVP2}. 
In fact, putting $\bm{r}=\ddot{\bm{x}}-(\tau\bm{x}')' - \bm{g}$ we see from \eqref{QLy} and \eqref{QLxtau} that $\ddot{\bm{r}}=\bm{0}$. 
Since we have set $(\bm{y}_0^\mathrm{in},\bm{y}_1^\mathrm{in})=(\bm{x}_2^\mathrm{in},\bm{x}_3^\mathrm{in})$, 
it holds also that $(\bm{r},\dot{\bm{r}})|_{t=0}=(\bm{0},\bm{0})$. 
Therefore, we obtain $\bm{r}\equiv\bm{0}$, which implies \eqref{HP2}. 
Putting $r=-\tau''+|\bm{x}''|^2\tau - |\dot{\bm{x}}'|^2$ we see from \eqref{QLnu} and \eqref{QLxtau} that $\ddot{r}=0$ and $(r,\dot{r})|_{t=0}=(0,0)$, 
so that $r\equiv0$, which implies the first equation in \eqref{BVP2}. 
Putting $a=(\tau'+\bm{g}\cdot\bm{x}')|_{s=1}$, we see that $\ddot{a}=0$ and $(a,\dot{a})|_{t=0}=(0,0)$, so that $a\equiv0$, 
which implies the third equation in \eqref{BVP2}. 
Similarly, we have $\ddot{\tau}|_{s=0}=0$ and $(\tau,\dot{\tau})|_{s=1}=(0,0)$, so that $\tau|_{s=1}\equiv0$, which implies the second equation in \eqref{BVP2}.

We proceed to show the regularity of the solution, that is, $\bm{x}\in\mathscr{X}_T^m$ and $\tau'\in\mathscr{X}_T^{m-1,*}$. 
Since we already knew that $\ddot{\bm{x}}\in\mathscr{X}_T^{m-2}$ and $\ddot{\tau}\in\mathscr{X}_T^{m-3,*}$, it is sufficient to show that 
$\dt^j\bm{x}\in C^0([0,T];X^{m-j})$ and $\dt^j\tau'\in C^0([0,T];X^{m-1-j})$ for $j=0,1$. 
To this end, we consider the following preliminary problem 
\begin{equation}\label{EP}
(\tau u')'=f \quad\mbox{in}\quad (0,1)\times(0,T). 
\end{equation}

\begin{lemma}\label{lem:reg}
Let $j$ be a non-negative integer and $M_0>0$ and assume that $\tau$ satisfies $M_0^{-1}s \leq \tau(s,t)\leq M_0s$ and $\tau'\in C^0([0,T];X^{j\vee2})$ 
and that $f\in C^0([0,T];X^j)$. 
If $u\in C^0([0,T];L^2)$ solves \eqref{EP}, then we have $u\in C^0([0,T];X^{j+2})$. 
\end{lemma}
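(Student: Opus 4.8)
The plan is to freeze the time variable and regard \eqref{EP} as the degenerate second-order ordinary differential equation $(\tau u')'=f$ on $(0,1)$, to establish an a priori estimate $\|u(t)\|_{X^{j+2}}\lesssim\|u(t)\|_{L^2}+\|f(t)\|_{X^j}$ (with constant depending on $M_0$ and $\|\tau'(t)\|_{X^{j\vee 2}}$), and to recover the continuity in time from it at the end. Near the non-degenerate end $s=1$ this is elementary: on any $[\delta,1]$ we have $\tau\geq M_0^{-1}\delta$, integrating once gives $\tau u'=F+c$ with $F(s)=\int_0^s f(\sigma)\,\mathrm{d}\sigma$, so $u'=(F+c)/\tau\in L^\infty([\delta,1])$; then $u''=(f-\tau'u')/\tau$ and a routine bootstrap, using that near $s=1$ the weights in the $X$-norms are bounded (so $f(\cdot,t)\in H^j$ and $\tau(\cdot,t)\in H^{(j\vee 2)+1}$ there) and $\tau$ is bounded below, gives $u\in H^{j+2}$ on $[\delta,1]$.

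The core is the degenerate end $s=0$, and here I would pass to the disc via Lemma~\ref{lem:NormEq}: with $u^\sharp(x)=u(|x|^2)$ on the unit disc $D\subset\mathbb{R}^2$ one has $u\in X^m\iff u^\sharp\in H^m(D)$. Writing $\tau(s,t)=s\,a(s,t)$ with $a=\mathscr{M}(\tau')$, the hypotheses give $M_0^{-1}\leq a\leq M_0$ and, by Lemma~\ref{lem:AvOp}, $a\in X^{j\vee 2}$, that is, $a^\sharp\in H^{j\vee 2}(D)$. A direct computation shows that $u\mapsto u^\sharp$ turns the operator $(\tau u')'$ into $\tfrac14\nabla\cdot(a^\sharp\nabla u^\sharp)$, so that \eqref{EP} becomes the uniformly elliptic divergence-form equation $\nabla\cdot(a^\sharp\nabla u^\sharp)=4f^\sharp$ on $D$, with $f^\sharp\in H^j(D)$. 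Cutting off near the centre of $D$ by $\chi$ with $\chi\equiv1$ near $s=0$ and the support of $\chi$ contained in $\{s\leq 2/3\}$, and using the previous paragraph to control $u$ on the support of $\nabla\chi$, one gets $\nabla\cdot(a^\sharp\nabla(\chi u)^\sharp)\in H^j(D)$ with $(\chi u)^\sharp$ compactly supported in the interior of $D$; interior elliptic regularity for divergence-form operators with $H^{j\vee 2}$ coefficients then yields $(\chi u)^\sharp\in H^{j+2}(D)$, hence $\chi u\in X^{j+2}$. The point behind the bootstrap is that $j\vee 2\geq2>\dim D/2$, so $H^{j\vee 2}(D)$ is an algebra and a multiplier on $H^k(D)$ for $0\leq k\leq j\vee 2$, which is exactly what is needed to reach $H^{j+2}$. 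Combining with the non-degenerate end gives $u(\cdot,t)\in X^{j+2}$ and the asserted a priori estimate. (Alternatively, one can avoid the disc and argue by induction on $j$ directly in the weighted spaces, estimating $u'$ and $su''=(f-\tau'u')/a$ with Lemmas~\ref{lem:AvOp}, \ref{lem:algebraX} and \ref{lem:tame2}; I find the disc route cleaner.)

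For the continuity in time, fix $t_0$ and, for $t$ near $t_0$, subtract the equations: with $\tau_0:=\tau(\cdot,t_0)$,
\[
\big(\tau_0(u(t)-u(t_0))'\big)'=f(t)-f(t_0)+\big((\tau_0-\tau(\cdot,t))u'(t)\big)'.
\]
Applying the a priori estimate with coefficient $\tau_0$ and bounding the last term, by Lemma~\ref{lem:EstAu}, by $\lesssim\|\tau'(t_0)-\tau'(\cdot,t)\|_{X^{j\vee 2}}\|u(t)\|_{X^{j+2}}$, we obtain
\[
\|u(t)-u(t_0)\|_{X^{j+2}}\lesssim\|u(t)-u(t_0)\|_{L^2}+\|f(t)-f(t_0)\|_{X^j}+\|\tau'(t_0)-\tau'(\cdot,t)\|_{X^{j\vee 2}}\|u(t)\|_{X^{j+2}},
\]
whose right-hand side tends to $0$ as $t\to t_0$ by $u\in C^0([0,T];L^2)$, $f\in C^0([0,T];X^j)$, $\tau'\in C^0([0,T];X^{j\vee 2})$ and the locally uniform bound on $\|u(t)\|_{X^{j+2}}$ just obtained. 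Hence $u\in C^0([0,T];X^{j+2})$.

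I expect the main obstacle to be the degenerate end: verifying that $(\tau u')'$ really is conjugate, through the map $u\mapsto u^\sharp$, to the honest uniformly elliptic operator $\tfrac14\nabla\cdot(a^\sharp\nabla\,\cdot\,)$ on $D$; that $a^\sharp\in H^{j\vee 2}(D)$ is precisely the coefficient regularity that lets the elliptic bootstrap run all the way up to $H^{j+2}(D)$; and that the cut-off reduction to an interior problem is legitimate (in particular that the pulled-back equation does not pick up a source concentrated at the centre). The small values $j=0,1$, where $j\vee 2=2$ and there is no slack, need particular care. The non-degenerate end and the time-continuity step are comparatively routine once the a priori estimate is secured.
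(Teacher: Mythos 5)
Your route is essentially the one the paper acknowledges in Remark \ref{re:reg} and then deliberately declines: pass to the disc via $u^\sharp(x)=u(x_1^2+x_2^2)$, observe that $(\tau u')'$ is conjugate to $\tfrac14\nabla\cdot(\mu^\sharp\nabla u^\sharp)$ with $\mu=\mathscr{M}\tau'$ bounded above and below and $\mu^\sharp\in H^{j\vee2}(D)$, and invoke elliptic regularity (your factor-of-$4$ computation is correct). The paper instead argues directly in the weighted spaces: it integrates \eqref{EP} once to get $u'=\mu^{-1}\mathscr{M}f$, estimates $\|u'\|_{X^j}$ by Lemmas \ref{lem:algebraX} and \ref{lem:AvOp}, obtains the top-order term from $\tau\ds^{j+2}u=\ds^jf-[\ds^{j+1},\tau]u'$ with a commutator estimate, and gets continuity in time by applying the same estimate to differences --- which is also how you conclude, and your use of Lemma \ref{lem:EstAu} for the coefficient-difference term is fine. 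The caveat is that the two points you yourself flag are exactly where the work lies and are not discharged in the proposal: (i) with only $u\in C^0([0,T];L^2)$ the pulled-back equation could a priori carry a Dirac mass at the centre (the radial solution with $\tau u'\to c\neq0$ as $s\to0$ behaves like $\log s$, lies in $L^2$ but not in $H^1$ near the origin, and solves \eqref{EP} away from $s=0$); ruling this out, and securing the $H^1$ starting regularity needed for the weak formulation near the centre, amounts precisely to the identity $\tau u'=\int_0^s f$, i.e.\ $u'=\mu^{-1}\mathscr{M}f$ --- and once that is written down, the paper's short weighted-space estimate finishes the lemma with no elliptic machinery, so the detour through $D$ buys relatively little; (ii) for $j=0,1$ the coefficient is only in $H^2(D)$, so the interior $H^{j+2}$ regularity is not the textbook Lipschitz-coefficient statement and needs an intermediate $W^{2,p}$ bootstrap (it does go through in dimension two since $H^2(D)\subset C^0\cap W^{1,p}$ for all $p<\infty$). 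With these two points filled in, your argument is correct; your treatment of the non-degenerate end and of the time continuity parallels the paper's.
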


\begin{remark}\label{re:reg}
Let $D$ be the unit disc in $\mathbb{R}^2$ centered at the origin. 
For a function $u$ defined in the open interval $(0,1)$, we put $u^\sharp(x_1,x_2)=u(x_1^2+x_2^2)$ for $x=(x_1,x_2)\in D$. 
Then, \eqref{EP} is transformed into the boundary value problem 
\[
\begin{cases}
 \nabla\cdot(\mu^\sharp\nabla u^\sharp)=f^\sharp &\mbox{in}\quad D\times(0,T), \\
 N^\perp\cdot\nabla u^\sharp = 0 &\mbox{on}\quad \partial D\times(0,T),
\end{cases}
\]
where $\mu(s,t)=\frac{1}{s}\tau(s,t)$ and $N$ is the unit normal vector on $\partial D$, so that $N^\perp\cdot\nabla$ is a tangential derivative on $\partial D$. 
By Lemma \ref{lem:NormEq}, the condition $f\in C^0([0,T];X^j)$ is transformed into $f^\sharp \in C^0([0,T];H^j(D))$. 
Therefore, if we suppose a sufficient regularity on $\mu$, then the result of this lemma follows from a standard elliptic regularity theorem. 
Here, we will give a direct proof of this lemma. 
\end{remark}

\begin{proof}[Proof of Lemma \ref{lem:reg}]
In view of the identity $\|u\|_{X^{j+2}}^2 = \|u\|_{L^2}^2 + \|u'\|_{X^j}^2 + \|s^{\frac{j}{2}+1}\ds^{j+2}u\|_{L^2}^2$, 
it is sufficient to show that $u' \in C^0([0,T];X^j)$ and $s^{\frac{j}{2}+1}\ds^{j+2}u \in C^0([0,T];L^2)$. 
We introduce $\mu(s,t)=\frac{1}{s}\tau(s,t)=(\mathscr{M}(\tau'(\cdot,t)))(s)$, where $\mathscr{M}$ is an averaging operator defined by \eqref{AvOp}. 
By Lemma \ref{lem:AvOp}, we have $\mu\in C^0([0,T];X^{j\vee2})$. 
We note also that $\mu$ is strictly positive, that is, $\mu(s,t)\geq M_0^{-1}$. 
We first evaluate $u'$. 
Integrating \eqref{EP} over $[0,s]$, we have $u'=\frac{1}{\mu}\mathscr{M}f$, so that 
\[
\|u'\|_{X^j}
\lesssim \|\tfrac{1}{\mu}\|_{X^{j\vee2}} \|\mathscr{M}f\|_{X^j}
 \lesssim C(\|\mu\|_{X^{j\vee2}}) \|f\|_{X^j} 
 \lesssim C(\|\tau'\|_{X^{j\vee2}}) \|f\|_{X^j}.
\]
To evaluate $s^{\frac{j}{2}+1}\ds^{j+2}u$, we differentiate \eqref{EP} $j$-times with respect to $s$ to obtain 
$\tau\ds^{j+2}u = \ds^j f - [\ds^{j+1},\tau]u'$, so that 
\begin{align*}
\|s^{\frac{j}{2}+1}\ds^{j+2}u\|_{L^2}
&\lesssim \|s^\frac{j}{2}\ds^j f\|_{L^2} + \|s^\frac{j}{2}[\ds^{j+1},\tau]u'\|_{L^2} \\
&\lesssim \|f\|_{X^j} + \|\tau'\|_{X^{j\vee2}}\|u'\|_{X^j},
\end{align*}
where we used a commutator estimate given in \cite[Lemma 4.9]{IguchiTakayama2024}. 
Summarizing the above estimates, we obtain 
\[
\|u(t)\|_{X^{j+2}} \leq \|u(t)\|_{L^2} + C(\|\tau'(t)\|_{X^{j\vee2}}) \|f(t)\|_{X^j}.
\]
This shows that $u\in L^\infty(0,T;X^{j+2})$. 
To show the strong continuity in time, it is sufficient to evaluate $u(s,t_1)-u(s,t_2)$ as above. 
\end{proof}

We go back to show the desired regularity of the solution $(\bm{x},\tau)$ obtained above by bootstrap arguments. 
We already knew that 
\begin{equation}\label{reg1}
\begin{cases}
 (\tau\bm{x}')'=\ddot{\bm{x}}-\bm{g} \in C^0([0,T];X^{m-2}), \\
 \tau' \in C^0([0,T];X^{m-3}),
\end{cases}
\end{equation}
so that by Lemma \ref{lem:reg} we obtain $\bm{x} \in C^0([0,T];X^{m-1})$. 
Differentiating the first equation in \eqref{Eq} with respect to $t$ and using Lemma \ref{lem:EstAu}, we see that 
\[
\begin{cases}
 (\tau\dot{\bm{x}}')'=\dddot{\bm{x}}-(\dot{\tau}\bm{x}')' \in C^0([0,T];X^{m-3}), \\
 \tau' \in C^0([0,T];X^{m-3}),
\end{cases}
\]
so that by Lemma \ref{lem:reg} we obtain $\dot{\bm{x}} \in C^0([0,T];X^{m-1})$. 
Then, we evaluate $\tau'$. 
In view of the identity $\|\tau'\|_{X^{j+1}}^2=\|\tau'\|_{L^2}^2+\|\tau''\|_{Y^j}^2$, 
it is sufficient to evaluate $\tau''$ in the space $Y^j$. 
By the first equation in \eqref{BVP} and Lemmas \ref{lem:CalIneq1} and \ref{lem:CalIneq2}, 
we see that $\tau''=\tau\bm{x}''\cdot\bm{x}''-\dot{\bm{x}}'\cdot\dot{\bm{x}}' \in C^0([0,T];Y^{m-3})$, so that $\tau' \in C^0([0,T];X^{m-2})$. 
This together with \eqref{reg1} and Lemma \ref{lem:reg} implies $\bm{x} \in C^0([0,T];X^m)$. 
Then, by Lemmas \ref{lem:CalIneq1} and \ref{lem:CalIneq2} we see that 
$\tau''=\tau\bm{x}''\cdot\bm{x}''-\dot{\bm{x}}'\cdot\dot{\bm{x}}' \in C^0([0,T];Y^{m-2})$, so that $\tau' \in C^0([0,T];X^{m-1})$. 
Similarly, by Lemmas \ref{lem:CalIneq1} and \ref{lem:CalIneq2} we see that 
$\dot{\tau}''=\dot{\tau}\bm{x}''\cdot\bm{x}'' + 2\tau\dot{\bm{x}}''\cdot\bm{x}''-2\ddot{\bm{x}}'\cdot\dot{\bm{x}}' \in C^0([0,T];Y^{m-3})$, 
so that $\dot{\tau}' \in C^0([0,T];X^{m-2})$.

We have shown the existence of a solution $(\bm{x},\tau)$ to the problem \eqref{HP2}--\eqref{IC2} 
satisfying $\bm{x}\in\mathscr{X}_T^m$, $\tau'\in\mathscr{X}_T^{m-1,*}$, and the stability condition \eqref{SC}. 
This solution is not necessarily a solution to the original problem \eqref{Eq}--\eqref{IC} 
if the initial data $\bm{x}_0^\mathrm{in}$ does not satisfy the constraints $|\bm{x}_0^{\mathrm{in}\prime}(s)|\equiv1$ 
and $\bm{x}_0^{\mathrm{in}\prime}(s)\cdot\bm{x}_1^{\mathrm{in}\prime}(s)\equiv0$. 
However, we have assumed in Theorem \ref{th:main} that the initial data $(\bm{x}_0^\mathrm{in},\bm{x}_1^\mathrm{in})$ satisfy these constraints, 
so that we can apply \cite[Theorem 2.5]{IguchiTakayama2024} to ensure that the solution satisfies $|\bm{x}'(s,t)|\equiv1$. 
Therefore, $(\bm{x},\tau)$ is in fact a solution to the original problem \eqref{Eq}--\eqref{IC}. 
The uniqueness of solutions has been shown in \cite[Theorem 2.3]{IguchiTakayama2024}. 
Now, the proof is complete. 
\hfill$\Box$

\subsection{The case $m=5$}
In this subsection, we will prove Theorem \ref{th:main} in the case $m=5$. 
By Proposition \ref{prop:AppID2}, we can approximate the initial data $(\bm{x}_0^\mathrm{in},\bm{x}_1^\mathrm{in})$ by 
$\{(\bm{x}_0^{\mathrm{in}(n)},\bm{x}_1^{\mathrm{in}(n)})\}_{n=1}^\infty \subset X^6\times X^5$, 
which converge to $(\bm{x}_0^\mathrm{in},\bm{x}_1^\mathrm{in})$ in $X^5\times X^4$ and satisfy 
$|\bm{x}_0^{\mathrm{in}(n)\prime}(s)|=1$ and $\bm{x}_0^{\mathrm{in}(n)\prime}(s)\cdot\bm{x}_1^{\mathrm{in}(n)\prime}(s)=0$ for $0<s<1$, 
and the compatibility conditions up to order $5$. 
Then, as in \cite[Section 8]{IguchiTakayama2024} we see that the corresponding initial tensions satisfy 
$\tau_0^{\mathrm{in}(n)\prime} \to \tau_0^{\mathrm{in}\prime}$ 
in $X^4$, so that without loss of generality we can assume that $\frac{1}{s}\tau_0^{\mathrm{in}(n)}(s) \geq \frac32c_0$ for $0<s<1$. 
Therefore, by the result obtained in the previous subsection, for each $n\in\mathbb{N}$ the initial boundary value problem \eqref{Eq}--\eqref{IC} 
with this initial data $(\bm{x}_0^{\mathrm{in}(n)},\bm{x}_1^{\mathrm{in}(n)})$ has a unique solution $(\bm{x}^{(n)},\tau^{(n)})$ 
on some time interval $[0,T_n]$ satisfying $\bm{x}^{(n)}\in\mathscr{X}_{T_n}^6$, $\tau^{(n)\prime}\in\mathscr{X}_{T_n}^{5,*}$, 
and the stability condition \eqref{SC}. 
Now, we apply the following a priori estimate, which was essentially given in \cite[Theorem 2.1]{IguchiTakayama2024}.

\begin{proposition}\label{prop:APE}
For any integer $m\geq 4$ and any positive constants $M_0$, $M_*$, and $c_0$, 
there exist a positive time $T=T(M_*,c_0)$ 
and a large constant $C=C(M_0,M_*,c_0,m)$ such that if the initial data satisfy 
\begin{equation}\label{CondID_APE}
\begin{cases}
 \|\bm{x}_0^\mathrm{in}\|_{X^m}+\|\bm{x}_1^\mathrm{in}\|_{X^{m-1}} \leq M_0, \qquad
 \|\bm{x}_0^\mathrm{in}\|_{X^4}+\|\bm{x}_1^\mathrm{in}\|_{X^3} \leq M_*, 
 \\
 \frac{\tau_0^\mathrm{in}(s)}{s} \geq 2c_0 \quad\mbox{for}\quad 0<s<1, 
\end{cases}
\end{equation}
where $\tau_0^\mathrm{in}(s)=\tau(s,0)$ is the initial tension, 
then any regular solution $(\bm{x},\tau)$ to the initial boundary value problem \eqref{Eq}--\eqref{IC} satisfies the stability condition \eqref{SC}, 
$\opnorm{\bm{x}(t)}_m \leq C$, and 
\[
\begin{cases}
 C^{-1}s \leq \tau(s,t) \leq Cs, \quad \sum_{j=1}^{m-3}|\dt^j\tau(s,t)|\leq Cs, \quad |\dt^{m-2}\tau(s,t)|\leq Cs^\frac12; \\
 \opnorm{\tau'(t)}_{m-1,*} \leq C \qquad\ \mbox{in the case $m\geq5$}, \\
 \opnorm{\tau'(t)}_{3,*,\epsilon} \leq C(\epsilon) \qquad\mbox{in the case $m=4$}
\end{cases}
\]
for $0\leq t\leq T$ and $\epsilon>0$, 
where the constant $C(\epsilon)$ depends also on $\epsilon$. 
\end{proposition}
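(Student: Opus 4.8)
The plan is to prove Proposition \ref{prop:APE} by a weighted energy estimate for the quasilinear wave equation $\ddot{\bm{x}}-(\tau\bm{x}')'=\bm{g}$, coupled with the two-point boundary value problem estimates for the tension, organized as a bootstrap from the lowest admissible index $m=4$ to general $m$. The point of this organization is to get the \emph{tame} dependence claimed in the statement: the time $T$ and the low-order bounds are controlled purely by $M_*$ and $c_0$, while the high-order bound $\opnorm{\bm{x}(t)}_m\le C$ is allowed to depend on $M_0$ but remains valid on the same interval $[0,T]$. Since $\tau$ and its time derivatives $\dt^j\tau$ are recovered from $(\bm{x}',\dot{\bm{x}}')$ and lower-order data through boundary value problems of the form \eqref{TBVP} (differentiate \eqref{BVP} in $t$), the asserted pointwise bounds $C^{-1}s\le\tau\le Cs$, $|\dt^j\tau(s,t)|\lesssim s$ for $1\le j\le m-3$ and $|\dt^{m-2}\tau(s,t)|\lesssim s^{1/2}$, together with $\opnorm{\tau'(t)}_{m-1,*}\le C$ (resp. $\opnorm{\tau'(t)}_{3,*,\epsilon}\le C(\epsilon)$ when $m=4$, via Lemma \ref{lem:NormEquiv}), reduce once $\opnorm{\bm{x}(t)}_m$ is under control to Lemmas \ref{lem:EstSolBVP3}, \ref{lem:HEstBVP}, \ref{lem:CalIneq1}, \ref{lem:CalIneq2}, exactly as in \cite[Sections 5--9]{IguchiTakayama2024}.

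First I would record the basic energy identity. Applying $\dt^j$ to the first equation in \eqref{Eq}, pairing with $\dt^{j+1}\bm{x}$ in $L^2$, and using $\tau|_{s=0}=0$ to discard the free-end boundary term and $\bm{x}|_{s=1}=\bm{0}$ at the fixed end, one obtains control of $\frac{d}{dt}\bigl(\|\dt^{j+1}\bm{x}\|_{L^2}^2+(\tau\,\dt^j\bm{x}',\dt^j\bm{x}')_{L^2}\bigr)$ by commutator terms and by terms coming from $\dt^j((\nu\,\text{-type})')$ where the time derivatives hit $\tau$. Because the stability condition gives $\tau\simeq s$, this weighted energy is equivalent to $\|\dt^{j+1}\bm{x}\|_{L^2}^2+\|s^{1/2}\dt^j\bm{x}'\|_{L^2}^2$, which is how the weights in $\|\cdot\|_{X^m}$ enter; the remaining purely spatial part of $\|\bm{x}(t)\|_{X^m}$ is recovered by differentiating the equation in $s$ and using $\tau\,\ds^{k+1}\bm{x}=\ds^{k-1}(\dt^2\bm{x}-\bm{g})-[\ds^{k},\tau]\bm{x}'$ together with the commutator estimate \cite[Lemma 4.9]{IguchiTakayama2024} and the elliptic argument of Lemma \ref{lem:reg}. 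Summing over $j$ and $k$ and invoking the tame product and composition estimates (Lemmas \ref{lem:tame1}, \ref{lem:tame2}, \ref{lem:algebraX}, \ref{lem:EstAu}, \ref{lem:CalIneq1}, \ref{lem:CalIneq2}) so that every nonlinear contribution carries at most one factor in the top-order norm and all others in the $X^4\times X^3$ norm, one arrives at a differential inequality $\frac{d}{dt}\opnorm{\bm{x}(t)}_m^2\le C(M_*,c_0)\bigl(1+\opnorm{\bm{x}(t)}_m^2\bigr)$ valid on any interval where the stability condition persists and $\opnorm{\bm{x}(t)}_4$ stays bounded.

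The stability condition is propagated by a continuity argument: from the bound on $\dot\tau$ one gets $\bigl|\tfrac1s\tau(s,t)-\tfrac1s\tau_0^{\mathrm{in}}(s)\bigr|\le C(M_*,c_0)\,t$, so there is $T=T(M_*,c_0)>0$ on which $\tau(s,t)/s\ge c_0$; on the same interval the $m=4$ case of the inequality above, with $M_*$ in place of $M_0$, keeps $\opnorm{\bm{x}(t)}_4\le C(M_*,c_0)$. Feeding these back into the general-$m$ inequality and applying Gronwall yields $\opnorm{\bm{x}(t)}_m\le C(M_0,M_*,c_0,m)$ on $[0,T]$, and then the tension estimates follow as indicated. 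The main obstacle is the interplay of the degeneracy at $s=0$ with the nonlocal dependence of $\tau$ on $\bm{x}$: the commutator $[\ds^{k},\tau]\bm{x}'$ and the source terms in the boundary value problems for $\dt^j\tau$ must be placed in the correct weighted spaces so that the half-derivative gain of the map $h\mapsto(\text{solution of }\eqref{TBVP})$ is not wasted, which is exactly where the Section \ref{sect:pre} estimates and the careful bookkeeping of \cite{IguchiTakayama2024} are needed; at the endpoint $m=4$ this bookkeeping is so tight that the top time-derivative $\dt^2\tau'$ is only controlled after inserting the weight $s^\epsilon$, which is the origin of the weaker norm $\opnorm{\cdot}_{3,*,\epsilon}$.
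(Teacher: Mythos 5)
Your overall architecture (fix $T$ from the low norm, then a tame high-order estimate closed by Gronwall, with the tension bounds recovered from the two-point boundary value problems) is the right shape, but the central step does not close as written, and the missing point is exactly the difficulty this paper and \cite{IguchiTakayama2024} are organized around. In your scheme the terms where all time derivatives fall on the tension, i.e.\ $((\dt^{j}\tau)\bm{x}')'$, are treated as benign commutator-type contributions to be absorbed by Gronwall. But $\dt^{j}\tau$ is determined by the elliptic problem \eqref{BVPini}, whose source $h_j$ contains $2\dot{\bm{x}}'\cdot\dt^{j+1}\bm{x}'$; so at the top level $j=m-1$ (the pairing of $\dt^{m-1}$ of \eqref{Eq} with $\dt^{m}\bm{x}$, which is what you need to control $\|\dt^m\bm{x}\|_{L^2}$ and $\|s^{1/2}\dt^{m-1}\bm{x}'\|_{L^2}$) the coefficient $\dt^{m-1}\tau$ is controlled, via Lemma \ref{lem:EstSolBVP3}, only in terms of $\dt^{m}\bm{x}'$ --- one derivative beyond what your energy (or the norm $\opnorm{\bm{x}}_m$ itself) reaches. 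This nonlocal loss of derivatives is why the claimed inequality $\frac{d}{dt}\opnorm{\bm{x}(t)}_m^2\le C(M_*,c_0)(1+\opnorm{\bm{x}(t)}_m^2)$ does not follow from the naive multiplier argument, and the same obstruction already defeats your proposed self-contained derivation of the $m=4$ bound ``with $M_*$ in place of $M_0$''; at that borderline even $\dt^{2}\tau'$ is only controlled after inserting $s^\epsilon$, and no straightforward pairing produces it.

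The paper's proof avoids this by never performing a top-order naive energy estimate. The case $m=4$ is quoted from \cite[Theorem 2.1]{IguchiTakayama2024}, which is what fixes $T=T(M_*,c_0)$ and the stability condition. For $m\ge5$ (written out for $m=5$) one introduces only $E(t)=\|\dt^{4}\bm{x}(t)\|_{X^1}^2+\|\dt^{3}\bm{x}(t)\|_{X^2}^2$, shows $\opnorm{\bm{x}(t)}_{5,*}+\opnorm{\tau'(t)}_{3,*}\lesssim 1+E(t)^{1/2}$ through the identity $\bm{x}'=\mu^{-1}(\mathscr{M}\ddot{\bm{x}}-\bm{g})$ with $\mu=\mathscr{M}\tau'$ and the tame estimates (this is the elliptic recovery you also envisage), and then bounds $E$ not by the plain pairing but by the special functional $\mathscr{E}$ of \cite[Proposition 6.2]{IguchiTakayama2024} applied to $(\bm{y},\nu)=(\dt^{3}\bm{x},\dt^{3}\tau)$: that functional contains $\|(\tau\bm{y}')'\|_{L^2}^2$, the correction $2((\tau\bm{y}')',\bm{f})_{L^2}$ and the boundary term $-(\varphi'/\varphi)\nu^2|_{s=1}$, and its time derivative is controlled only because the constraint $|\bm{x}'|=1$ yields the almost-orthogonality $\bm{x}'\cdot\bm{y}'=f$ with $f$ of lower order, which is what neutralizes the dangerous $(\nu\bm{x}')'$ contribution; an auxiliary estimate $\|s^\epsilon\dt^{3}\tau'\|_{L^\infty}\lesssim1+E^{1/2}$ (Lemma \ref{lem:EstX_special_bis}) handles the worst source term. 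After Gronwall, $\dt^{5}\bm{x}$ is read off directly from \eqref{Eq} and $\opnorm{\tau'}_{4,*}$ from the boundary value problem. So the missing ingredients in your proposal are precisely the use of the constraint-induced orthogonality and the second-order energy functional with its boundary correction; without them the loss of derivatives caused by the nonlocal tension cannot be circumvented.
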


\begin{remark}
This proposition gives a slightly better estimate than \cite[Theorem 2.1]{IguchiTakayama2024} on the existence time $T$, 
which does not depend on the higher order norm of the initial data. 
For the sake of completeness, we will sketch a proof of this proposition in Section \ref{sect:APE}. 
\end{remark}

By this proposition, there exist a positive time $T$ and a large constant $C$ independent of $n$ 
such that we can extend the solution $(\bm{x}^{(n)},\tau^{(n)})$ up to $T$ 
and the solution satisfies the uniform bounds 
$\opnorm{\bm{x}^{(n)}(t)}_5 + \opnorm{\tau^{(n)\prime}(t)}_{4,*} \leq C$ for $t\in[0,T]$. 
Here, by Lemma \ref{lem:NormEq} we see that the embedding $X^{j+1}\hookrightarrow X^j$ is compact so that by the Aubin--Lions lemma the embeddings 
$\mathscr{X}_T^5\hookrightarrow\mathscr{X}_T^4$ and $\mathscr{X}_T^{4,*}\hookrightarrow\mathscr{X}_T^{3,*}$ are also compact. 
Therefore, $\{(\bm{x}^{(n)},\tau^{(n)})\}_{n=1}^\infty$ has a subsequence $\{(\bm{x}^{(n_j)},\tau^{(n_j)})\}_{j=1}^\infty$ such that 
$\{\bm{x}^{(n_j)}\}_{j=1}^\infty$ converges to a $\bm{x}$ in $\mathscr{X}_T^4$ and that $(\tau^{(n_j)},\tau^{(n_j)\prime})\}_{j=1}^\infty$ to 
$(\tau,\tau')$ in $X^{3,*}$. 
Obviously, $(\bm{x},\tau)$ is a unique solution to \eqref{Eq}--\eqref{IC}. 
As a result, without taking a subsequence, $\{(\bm{x}^{(n)},\tau^{(n)\prime})\}_{n=1}^\infty$ converge to $(\bm{x},\tau')$ in 
$\mathscr{X}_T^4\times\mathscr{X}_T^{3,*}$. 
Moreover, by standard compactness arguments we have also 
\begin{equation}\label{APE}
\opnorm{\bm{x}(t)}_5 + \opnorm{\tau'(t)}_{4,*} \leq C \quad\mbox{for}\quad t\in[0,T].
\end{equation}

It remains to show that the solution has the regularity $\bm{x}\in\mathscr{X}_T^5$ and $\tau'\in\mathscr{X}_T^{4,*}$. 
We put $(\bm{y},\nu)=(\ddot{\bm{x}},\ddot{\tau})$, which satisfy \eqref{QLy} and \eqref{QLnu}, where 
$(\bm{y}_0^\mathrm{in},\bm{y}_1^\mathrm{in})=(\bm{x}_2^\mathrm{in},\bm{x}_3^\mathrm{in})$ and $(\bm{f},h)$ are defined by \eqref{QLfh}. 
By Lemma \ref{lem:EstIV}, we have $(\bm{y}_0^\mathrm{in},\bm{y}_1^\mathrm{in})\in X^3\times X^2$. 
Since we have already knew that $(\bm{x},\tau') \in \mathscr{X}_T^4 \times \mathscr{X}_T^{3,*}$ and that they satisfy \eqref{APE}, 
by Lemmas \ref{lem:EstAu}--\ref{lem:CalIneq1}, \ref{lem:CalIneq2}, and \ref{lem:CalIneqY3}, we have 
\[
\begin{cases}
 \opnorm{\bm{f}}_1 \lesssim \opnorm{\tau'}_{3,*} \opnorm{\bm{x}}_4, \\
 \opnorm{\bm{f}}_2 \lesssim \opnorm{\tau'}_{4,*} \opnorm{\bm{x}}_5, \\
 \|h\|_{Y^0} + \|s^\frac12\dt h\|_{L^1} \lesssim (1+\opnorm{\tau'}_{3,*})\opnorm{\bm{x}}_4^2, \\
 \|s^\frac12\dt^2 h\|_{L^1} \lesssim (1+\opnorm{\tau'}_{4,*})\opnorm{\bm{x}}_5^2.
\end{cases}
\]
Particularly, we have $\bm{f}\in\mathscr{X}_T^1$, $\dt^2\bm{f}\in L^1(0,T;L^2)$, $h\in\mathscr{Y}_T^0$, $s^\frac12\dt h\in C^0([0,T];L^1)$, and 
$s^\frac12\dt^2 h\in L^1((0,1)\times(0,T))$. 
Therefore, we can apply the existence and uniqueness theorem \cite[Theorem 2.8]{IguchiTakayama2023-2} to obtain 
$\ddot{\bm{x}}\in\mathscr{X}_T^3$ and $\ddot{\tau}'\in\mathscr{X}_T^{1,*}$. 
Then, by Lemmas \ref{lem:CalIneqY1} and \ref{lem:CalIneqY2}, we see that $h\in\mathscr{Y}_T^1$. 
Hence, we can apply \cite[Theorem 2.8]{IguchiTakayama2023-2} 
again to obtain $\ddot{\tau}'\in\mathscr{X}_T^{2,*}$. 
We have already knew that 
\[
\begin{cases}
 (\tau\bm{x}')'=\ddot{\bm{x}}-\bm{g} \in C^0([0,T];X^3), \\
 \tau' \in C^0([0,T];X^3),
\end{cases}
\]
so that by Lemma \ref{lem:reg} we obtain $\bm{x} \in C^0([0,T];X^5)$. 
Similarly, we have $(\tau\dot{\bm{x}}')'=\dddot{\bm{x}}-(\dot{\tau}\bm{x}')' \in C^0([0,T];X^2)$, 
so that by Lemma \ref{lem:reg} again we obtain $\dot{\bm{x}} \in C^0([0,T];X^4)$. 
Therefore, we get $\bm{x}\in\mathscr{X}_T^5$. 
As before, to evaluate $\tau'$ in $X^{j+1}$ it is sufficient to evaluate $\tau''$ in $Y^j$. 
By Lemmas \ref{lem:CalIneq1} and \ref{lem:CalIneq2}, we see that $\tau''=\tau\bm{x}''\cdot\bm{x}''-\dot{\bm{x}}'\cdot\dot{\bm{x}}' \in C^0([0,T];Y^3)$, 
so that $\tau' \in C^0([0,T];X^4)$. 
We see also that $\dot{\tau}''=\dot{\tau}\bm{x}''\cdot\bm{x}'' + 2\tau\dot{\bm{x}}''\cdot\bm{x}''-2\ddot{\bm{x}}'\cdot\dot{\bm{x}}' \in C^0([0,T];Y^2)$, 
so that $\dot{\tau}' \in C^0([0,T];X^3)$. 
Therefore, we get $\tau'\in\mathscr{X}_T^{4,*}$. 
Now, the proof is complete. 
\hfill$\Box$

\subsection{The case $m=4$}
In this last subsection, we will prove Theorem \ref{th:main} in the critical case $m=4$. 
By Proposition \ref{prop:AppID2}, we can approximate the initial data $(\bm{x}_0^\mathrm{in},\bm{x}_1^\mathrm{in})$ by 
$\{(\bm{x}_0^{\mathrm{in}(n)},\bm{x}_1^{\mathrm{in}(n)})\}_{n=1}^\infty \subset X^5\times X^4$, 
which converges to $(\bm{x}_0^\mathrm{in},\bm{x}_1^\mathrm{in})$ in $X^4\times X^3$ and satisfies 
$|\bm{x}_0^{\mathrm{in}(n)\prime}(s)|=1$ and $\bm{x}_0^{\mathrm{in}(n)\prime}(s)\cdot\bm{x}_1^{\mathrm{in}(n)\prime}(s)=0$ for $0<s<1$, 
and the compatibility conditions up to order $4$. 
Then, as in \cite[Section 8]{IguchiTakayama2024} we see that the corresponding initial tensions satisfy 
$\tau_0^{\mathrm{in}(n)\prime} \to \tau_0^{\mathrm{in}\prime}$ in $L^\infty$, 
so that without loss of generality we can assume that $\frac{1}{s}\tau_0^{\mathrm{in}(n)}(s) \geq \frac32c_0$ for $0<s<1$. 
Therefore, by the result obtained in the previous subsection, for each $n\in\mathbb{N}$ the initial value problem \eqref{Eq}--\eqref{IC} 
with this initial data $(\bm{x}_0^{\mathrm{in}(n)},\bm{x}_1^{\mathrm{in}(n)})$ has a unique solution $(\bm{x}^{(n)},\tau^{(n)})$ on some time interval 
$[0,T_n]$ satisfying $\bm{x}^{(n)}\in\mathscr{X}_{T_n}^5$, $\tau^{(n)\prime}\in\mathscr{X}_{T_n}^{4,*}$, and the stability condition \eqref{SC}. 
Then, by Proposition \ref{prop:APE}, there exist a positive time $T$ and a large constant $C$ independent of $n$ 
such that we can extend the solution $(\bm{x}^{(n)},\tau^{(n)})$ up to $T$ and the solution satisfies the uniform bounds 
\begin{equation}\label{APE2}
\opnorm{\bm{x}^{(n)}(t)}_4 + \opnorm{\tau^{(n)\prime}(t)}_{3,*,\epsilon} \leq C(\epsilon) \quad\mbox{for}\quad t\in[0,T]
\end{equation}
for any $\epsilon>0$, where $C(\epsilon)$ is a positive constant depending on $\epsilon$, but not on $n$ nor $t$. 
We know that the embedding $\mathscr{X}_T^4\hookrightarrow\mathscr{X}_T^3$ is compact. 
On the other hand, for any $p\in[1,2)$ we have $\|u\|_{L^p} \leq C(p,\epsilon)\|s^\epsilon u\|_{L^2}$ for any $\epsilon\in(0,\frac1p-\frac12)$. 
Therefore, we have 
\begin{align*}
& \|u(t)\|_{L^p}+\|u'(t)\|_{L^p}+\|s^\frac12 u''(t)\|_{L^p}+\|s^\frac 32u'''(t)\|_{L^p} \\
& + \|\dot{u}(t)\|_{L^p}+\|\dot{u}'(t)\|_{L^p}+\|s \dot{u}''(t)\|_{L^p}
  + \|\ddot{u}(t)\|_{L^p}+\|s^\frac12 \ddot{u}'(t)\|_{L^p} \leq C(p,\epsilon) \opnorm{u(t)}_{3,*,\epsilon}
\end{align*}
for any $\epsilon\in(0,\frac1p-\frac12)$. 
Particularly, as in the proof of \cite[Proposition 3.2]{Takayama2018} we obtain 
\begin{gather*}
\|u^\sharp(t)\|_{W^{3,p}(D)} + \|\dot{u}^\sharp(t)\|_{W^{2,p}(D)} + \|\ddot{u}^\sharp(t)\|_{W^{1,p}(D)} \leq C(p,\epsilon) \opnorm{u(t)}_{3,*,\epsilon}
\end{gather*}
for any $\epsilon\in(0,\frac1p-\frac12)$, where $u^\sharp(x,t)=u(x_1^2+x_2^2,t)$. 
Since the embedding $W^{j+1,p}(D) \hookrightarrow W^{j,2}(D)$ is compact if $p>1$, 
this estimate implies that the embedding $\mathscr{X}_T^{3,*,\epsilon} \hookrightarrow \mathscr{X}_T^{2,*}$ is also compact. 
Therefore, $\{(\bm{x}^{(n)},\tau^{(n)})\}_{n=1}^\infty$ has a subsequence $\{(\bm{x}^{(n_j)},\tau^{(n_j)})\}_{j=1}^\infty$ such that 
$\{\bm{x}^{(n_j)}\}_{j=1}^\infty$ converges to a $\bm{x}$ in $\mathscr{X}_T^3$ and that $(\tau^{(n_j)},\tau^{(n_j)\prime})\}_{j=1}^\infty$ to 
$(\tau,\tau')$ in $\mathscr{X}_T^{2,*}$. 
Particularly, $(\bm{x},\tau)$ is a solution of \eqref{Eq}--\eqref{IC} satisfying the stability condition \eqref{SC}. 
Moreover, by standard compactness arguments, without loss of generality we see also that $\{\dt^i\bm{x}^{(n_j)}\}_{j=1}^\infty$ converges to $\dt^i\bm{x}$ 
in $L^\infty(0,T;X^{4-i})$ in the weak-$*$ topology for $i=0,1,\ldots,4$. 
Therefore, we can apply the uniqueness theorem in \cite[Theorem 2.3]{IguchiTakayama2024} to verify that $(\bm{x},\tau)$ is a unique solution of 
\eqref{Eq}--\eqref{IC}. 
As a result, without taking a subsequence, $\{(\bm{x}^{(n)},\tau^{(n)\prime})\}_{n=1}^\infty$ converge to $(\bm{x},\tau')$ in 
$\mathscr{X}_T^3\times\mathscr{X}_T^{2,*}$, which satisfies 
$\opnorm{\bm{x}(t)}_4 + \opnorm{\tau'(t)}_{3,*,\epsilon} \leq C(\epsilon)$ for $t\in[0,T]$ and for any $\epsilon>0$. 
We see also that $\dt^j\bm{x} \in C_\mathrm{w}([0,T];X^{4-j})$ for $j=0,1,2,3$.

It remains to show the strong continuity in time, that is, 
the solution has the regularity $\bm{x}\in\mathscr{X}_T^4$ and $\tau'\in\mathscr{X}_T^{3,*,\epsilon}$ for any $\epsilon>0$. 
In this critical case $m=4$, we do not have enough regularity on $\bm{x}$ to apply directly the existence and uniqueness theorem 
\cite[Theorem 2.8]{IguchiTakayama2023-2} to obtain the regularity $\ddot{\bm{x}}\in\mathscr{X}_T^2$. 
Instead, we will apply the energy estimate obtained in \cite[Section 6]{IguchiTakayama2024} by utilizing fully the constraint $|\bm{x}'|=1$. 
We put $(\bm{y}^{(n)},\nu^{(n)})=(\ddot{\bm{x}}^{(n)},\ddot{\tau}^{(n)})$, which solves \eqref{QLy} and \eqref{QLnu} with $(\bm{f},h)$ replaced with 
$(\bm{f}^{(n)},h^{(n)})$, which are defined by \eqref{QLfh} with $(\bm{x},\tau)$ replaced with $(\bm{x}^{(n)},\tau^{(n)})$. 
Moreover, we have the following almost orthogonality 
\[
\bm{x}^{(n)\prime}\cdot\bm{y}^{(n)\prime} = f^{(n)}
\quad\mbox{in}\quad (0,1)\times(0,T),
\]
where $f^{(n)}=-|\dot{\bm{x}}^{(n)\prime}|^2$. 
Following \cite[Section 6]{IguchiTakayama2024}, we define an energy functional $\mathscr{E}^{(n)}(t)$ by 
\begin{align*}
\mathscr{E}^{(n)}(t)
&= (\tau^{(n)}\dot{\bm{y}}^{(n)\prime},\dot{\bm{y}}^{(n)\prime})_{L^2}^2 + \|(\tau^{(n)}\bm{y}^{(n)\prime})'\|_{L^2}^2
 + 2((\tau^{(n)}\bm{y}^{(n)\prime})',\bm{f}^{(n)})_{L^2} - \biggl( \frac{\varphi^{(n)\prime}}{\varphi^{(n)}}(\nu^{(n)})^2 \biggr)\biggr|_{s=1},
\end{align*}
where $\varphi^{(n)}(\cdot,t)$ is a unique solution to the initial value problem 
\[
\begin{cases}
 -\varphi^{(n)\prime\prime}(\cdot,t)+|\bm{x}^{(n)\prime\prime}(\cdot,t)|^2\varphi^{(n)}(\cdot,t)=0 \quad\mbox{in}\quad (0,1), \\
 \varphi^{(n)}(0,t)=0, \quad \varphi^{(n)\prime}(0,t)=1.
\end{cases}
\]
Since we have the uniform bound \eqref{APE2} of the solution $(\bm{x}^{(n)},\tau^{(n)})$, as in the proof of \cite[Proposition 6.2]{IguchiTakayama2024} 
with $\epsilon=\frac18$, we obtain 
\begin{align*}
\biggl| \frac{\mathrm{d}}{\mathrm{d}t}\mathscr{E}^{(n)}(t) \biggr|
&\lesssim \opnorm{\bm{y}^{(n)}}_{2,*}^2 + \|(\bm{f}^{(n)},\dot{\bm{f}}^{(n)})\|_{L^2}^2 + \|s^\frac38\dot{f}^{(n)}\|_{L^2}^2 + |\dot{f}^{(n)}|_{s=1}|^2 \\
&\quad\;
 + \|s\dot{h}^{(n)}\|_{L^1}^2 + \|s^\frac58h^{(n)}\|_{L^2}^2.
\end{align*}
Since we have the uniform bound \eqref{APE2}, we easily obtain $\opnorm{\bm{y}^{(n)}}_{2,*}^2 + \|\bm{f}^{(n)}\|_{L^2}^2 + |\dot{f}^{(n)}|_{s=1}|^2 \lesssim 1$. 
We evaluate the other terms as follows: 
\begin{align*}
\|\dot{\bm{f}}^{(n)}\|_{L^2}^2
&\lesssim \|\ddot{\tau}^{(n)\prime}\dot{\bm{x}}^{(n)\prime}\|_{L^2} + \|\ddot{\tau}^{(n)}\dot{\bm{x}}^{(n)\prime\prime}\|_{L^2}
 + \|(\dot{\tau}^{(n)}\ddot{\bm{x}}^{(n)\prime})'\|_{L^2} \\
&\lesssim \|s^\frac18\ddot{\tau}^{(n)\prime}\|_{L^\infty}\|s^{-\frac18}\|_{L^4}\|\dot{\bm{x}}^{(n)\prime}\|_{L^4}
 + \|s^\frac12\ddot{\tau}^{(n)\prime}\|_{L^\infty}\|s^\frac12\dot{\bm{x}}^{(n)\prime\prime}\|_{L^2}
 + \|\dot{\tau}^{(n)\prime}\|_{L^\infty} \|\ddot{\bm{x}}^{(n)}\|_{X^2} \\
&\lesssim \|s^\frac18\ddot{\tau}^{(n)\prime}\|_{L^\infty}\|\dot{\bm{x}}^{(n)}\|_{X^3}
 + \|\dot{\tau}^{(n)\prime}\|_{L^\infty} \|\ddot{\bm{x}}^{(n)}\|_{X^2},
\end{align*}
\begin{align*}
\|s^\frac38\dot{f}^{(n)}\|_{L^2}
&\lesssim \|s^\frac18\dot{\bm{x}}^{(n)\prime}\|_{L^4} \|s^\frac14\ddot{\bm{x}}^{(n)\prime}\|_{L^4} \\
&\lesssim \|\dot{\bm{x}}^{(n)}\|_{X^3} \|\ddot{\bm{x}}^{(n)}\|_{X^2},
\end{align*}
\begin{align*}
\|s\dot{h}^{(n)}\|_{L^1}
&\lesssim \|s^\frac12\ddot{\bm{x}}^{(n)\prime}\|_{L^2} \|s^\frac12\dddot{\bm{x}}^{(n)\prime}\|_{L^2}
 + \|\tau^{(n)\prime}\|_{L^\infty}\|s\dot{\bm{x}}^{(n)\prime\prime}\|_{L^2} \|s\ddot{\bm{x}}^{(n)\prime\prime}\|_{L^2} \\
&\quad\;
 + \|\dot{\tau}^{(n)\prime}\|_{L^\infty} \bigl( \|s\dot{\bm{x}}^{(n)\prime\prime}\|_{L^2}^2
  + \|s\bm{x}^{(n)\prime\prime}\|_{L^2}\|s\ddot{\bm{x}}^{(n)\prime\prime}\|_{L^2} \bigr) \\
&\quad\;
 + \|s^\frac12\ddot{\tau}^{(n)\prime}\|_{L^\infty} \|\bm{x}^{(n)\prime\prime}\|_{L^2} \|s^\frac12\dot{\bm{x}}^{(n)\prime\prime}\|_{L^2} \\
&\lesssim \|\ddot{\bm{x}}^{(n)}\|_{X^1} \|\dddot{\bm{x}}^{(n)}\|_{X^1}
 + \|\tau^{(n)\prime}\|_{L^\infty} \|\dot{\bm{x}}^{(n)}\|_{X^2} \|\ddot{\bm{x}}^{(n)}\|_{X^2} \\
&\quad\;
 + \|\dot{\tau}^{(n)\prime}\|_{L^\infty} \bigl( \|\dot{\bm{x}}^{(n)}\|_{X^2}^2 + \|\bm{x}^{(n)}\|_{X^2}\|\ddot{\bm{x}}^{(n)}\|_{X^2} \bigr)
 + \|s^\frac12\ddot{\tau}^{(n)\prime}\|_{L^\infty} \|\bm{x}^{(n)}\|_{X^4} \|\dot{\bm{x}}^{(n)}\|_{X^3}, 
\end{align*}
and 
\begin{align*}
\|s^\frac12h^{(n)}\|_{L^2}^2
&\lesssim \|s^\frac14\ddot{\bm{x}}^{(n)\prime}\|_{L^4}^2 + \|\tau^{(n)\prime}\|_{L^\infty}\|s^\frac34\dot{\bm{x}}^{(n)\prime\prime}\|_{L^4}^2
 + \|\dot{\tau}^{(n)\prime}\|_{L^\infty}\|s^\frac34\bm{x}^{(n)\prime\prime}\|_{L^4}\|s^\frac34\dot{\bm{x}}^{(n)\prime\prime}\|_{L^4} \\
&\lesssim \|\ddot{\bm{x}}^{(n)}\|_{X^2}^2 + \|\tau^{(n)\prime}\|_{L^\infty}\|\dot{\bm{x}}^{(n)}\|_{X^3}^2
 + \|\dot{\tau}^{(n)\prime}\|_{L^\infty} \|\bm{x}^{(n)}\|_{X^3} \|\dot{\bm{x}}^{(n)}\|_{X^3}.
\end{align*}
Therefore, we obtain $\bigl|\frac{\mathrm{d}}{\mathrm{d}t}\mathscr{E}^{(n)}(t)\bigr| \lesssim 1$, so that 
$| \mathscr{E}^{(n)}(t)-\mathscr{E}^{(n)}(t_0) | \lesssim |t-t_0|$. 
Using this estimate together with the technique used in \cite{Majda1984, MajdaBertozzi2002}, we see that the corresponding energy functional 
$\mathscr{E}(t)$ to the solution $(\bm{y},\nu)=(\ddot{\bm{x}},\ddot{\tau})$ 
satisfies $| \mathscr{E}(t)-\mathscr{E}(t_0) | \lesssim |t-t_0|$. 
Since $((\tau(t)\bm{y}'(t))',\bm{f}(t))_{L^2}$ and $\frac{\varphi'(1,t)}{\varphi(1,t)}(\nu(1,t))^2$ are both continuous in $t$, 
we see that 
$(\tau(t)\dot{\bm{y}}'(t),\dot{\bm{y}}'(t))_{L^2}+\|(\tau(t)\bm{y}'(t))'\|_{L^2}^2$ is also continuous in $t$. 
This fact together with the weak continuity $\dt^j\bm{x} \in C_\mathrm{w}([0,T];X^{4-j})$ for $j=2,3$ implies the strong continuity, that is, 
$\dt^j\bm{x} \in C^0([0,T];X^{4-j})$ for $j=2,3$. 
Therefore, we have 
\[
\begin{cases}
 (\tau\bm{x}')'=\ddot{\bm{x}}-\bm{g} \in C^0([0,T];X^2), \\
 \tau' \in C^0([0,T];X^2),
\end{cases}
\]
so that by Lemma \ref{lem:reg} we obtain $\bm{x} \in C^0([0,T];X^4)$. 
Then, we have $(\tau\dot{\bm{x}}')'=\dddot{\bm{x}}-(\dot{\tau}\bm{x}')' \in C^0([0,T];X^1)$, 
so that by Lemma \ref{lem:reg} again we obtain $\dot{\bm{x}} \in C^0([0,T];X^3)$. 
Therefore, we get $\bm{x}\in \mathscr{X}_T^{4,*}$. 
Once we obtain this regularity of $\bm{x}$, The regularity of $\tau$, that is, $\tau'\in\mathscr{X}_T^{3,*,\epsilon}$ 
can be obtained in the same way as the proof of \cite[Lemma 8.2]{IguchiTakayama2024}, so we omit it. 
Finally, from the first equation in \eqref{Eq}, this regularity of $(\bm{x},\tau')$ implies 
$\dt^4\bm{x}\in C([0,T];L^2)$, that is, $\bm{x}\in \mathscr{X}_T^4$. 
Now, the proof is complete. 
\hfill$\Box$

\section{Approximation of the initial data I}\label{sect:AppID1}
The objective in this section is to give a proof of Proposition \ref{prop:AppID1}, 
which ensure that the initial data $(\bm{x}_0^\mathrm{in},\bm{x}_1^\mathrm{in})$ for the problem \eqref{HP2}--\eqref{IC2} can be 
approximated by smooth initial data satisfying higher order compatibility conditions. 
The proof is divided into two steps. 
In the first step, we approximate the initial data by smooth data keeping the compatibility conditions up to the same order.

\begin{lemma}\label{lem:AppID1}
Let $m\geq1$ be an integer and assume that the initial data $(\bm{x}_0^\mathrm{in},\bm{x}_1^\mathrm{in})\in X^m\times X^{m-1}$ 
for the problem \eqref{HP2}--\eqref{IC2} satisfies the compatibility conditions up to order $m-1$. 
In the case $m\geq3$, we also assume $\tau_0^\mathrm{in}(1)>0$. 
Then, for any $\varepsilon>0$ there exist approximate initial data $(\bm{x}_0^{\mathrm{in},\varepsilon},\bm{x}_1^{\mathrm{in},\varepsilon}) \in C^\infty([0,1])$ 
satisfying the compatibility conditions up to order $m-1$ and 
$\|\bm{x}_0^{\mathrm{in},\varepsilon}-\bm{x}_0^{\mathrm{in}}\|_{X^m} + \|\bm{x}_1^{\mathrm{in},\varepsilon}-\bm{x}_1^{\mathrm{in}}\|_{X^{m-1}} < \varepsilon$. 
\end{lemma}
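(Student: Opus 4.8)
The plan is to proceed in two stages. First I would replace $(\bm{x}_0^\mathrm{in},\bm{x}_1^\mathrm{in})$ by smooth data that is close in $X^m\times X^{m-1}$; this in general destroys the compatibility conditions of order $\ge 2$. Then I would restore those conditions by adding a \emph{small} smooth correction localized near $s=1$, the correction being determined by the inverse function theorem applied to a finite-dimensional system. Concretely, using the isomorphism $X^k\cong H^k(D)$ of Lemma~\ref{lem:NormEq} restricted to radial functions, and the density of smooth radial functions there, $C^\infty([0,1])$ is dense in $X^k$, so I pick $(\bm v_0,\bm v_1)\in C^\infty([0,1])$ with $\|\bm v_0-\bm{x}_0^\mathrm{in}\|_{X^m}+\|\bm v_1-\bm{x}_1^\mathrm{in}\|_{X^{m-1}}$ as small as desired, and generate from it the initial values $\tau_j^\flat,\bm x_j^\flat$ via \eqref{BVPini}--\eqref{Xjini}. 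By the continuity of the solution operator of the two-point boundary value problem (Lemmas~\ref{lem:EstSolBVP3},~\ref{lem:HEstBVP}) and the induction of Lemma~\ref{lem:EstIV}, these depend continuously on $(\bm v_0,\bm v_1)$ in the relevant weighted norms, so $\bm x_j^\flat(1)\to\bm{x}_j^\mathrm{in}(1)=\bm 0$ for $0\le j\le m-1$ and $\tau_0^\flat(1)\to\tau_0^\mathrm{in}(1)>0$. For $m\le 2$ only $\bm{x}_0^\mathrm{in}(1)=\bm{x}_1^\mathrm{in}(1)=\bm 0$ is at stake, restored by subtracting the constants $\bm v_0(1),\bm v_1(1)$; from here on $m\ge 3$, the hypothesis $\tau_0^\mathrm{in}(1)>0$ is used, and I assume $\tau_0^\flat(1)\ge\tfrac12\tau_0^\mathrm{in}(1)$.

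Next I fix $\psi\in C_0^\infty(\mathbb R)$ with $\psi\equiv 1$ near $0$, set $\psi_k^\delta(s)=\tfrac{(s-1)^k}{k!}\psi(\tfrac{s-1}{\delta})$ for a parameter $\delta>0$ to be chosen, and look for the approximate data as $\bm x_0^{\mathrm{in},\varepsilon}=\bm v_0+\sum_{k}\delta^{-k}\psi_k^\delta\bm a_k$ (sum over even $k\in\{0,\dots,m-1\}$) and $\bm x_1^{\mathrm{in},\varepsilon}=\bm v_1+\sum_{k}\delta^{-(k-1)}\psi_{k-1}^\delta\bm a_k$ (sum over odd $k\in\{1,\dots,m-1\}$), with $\bm a_0,\dots,\bm a_{m-1}\in\mathbb R^3$. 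Each summand is smooth, supported in $[1-2\delta,1]$, of $X^m$-norm $\le C(m,\delta)|\bm a_k|$, and, being an explicit polynomial near $s=1$, has vanishing derivatives at $s=1$ of all orders except the critical one ($\ds^k$, resp.\ $\ds^{k-1}$), at which it equals $\delta^{-k}\bm a_k$ (resp.\ $\delta^{-(k-1)}\bm a_k$). The compatibility conditions up to order $m-1$ for these data amount to $\Phi_\delta(\bm a)=\bm 0$, where $\Phi_\delta(\bm a)=(\bm x_j^\sharp(1))_{j=0}^{m-1}$ and $\bm x_j^\sharp$ is generated from $(\bm x_0^{\mathrm{in},\varepsilon},\bm x_1^{\mathrm{in},\varepsilon})$ by \eqref{BVPini}--\eqref{Xjini}; $\Phi_\delta$ is $C^1$ near $\bm a=\bm 0$ (the BVP solution operator is smooth, the recursion polynomial) and $\Phi_\delta(\bm 0)=(\bm x_j^\flat(1))_j\to\bm 0$.

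The hard part is the invertibility of $D\Phi_\delta(\bm 0)$. From the recursion one checks that $\bm x_j^\mathrm{in}$ depends on $\ds^\ell\bm{x}_0^\mathrm{in}$ only for $\ell\le j$ and on $\ds^\ell\bm{x}_1^\mathrm{in}$ only for $\ell\le j-1$, and that its top-order \emph{local} dependence — on $\ds^j\bm{x}_0^\mathrm{in}$ if $j$ is even, on $\ds^{j-1}\bm{x}_1^\mathrm{in}$ if $j$ is odd — has coefficient $\pm(\tau_0^\mathrm{in})^{\lfloor j/2\rfloor}$ at $s=1$, while every remaining contribution of derivatives of order $\le j$ enters either locally at strictly lower order (hence, for a correction that is an explicit polynomial near $s=1$, contributes nothing at $s=1$), or non-locally through some tension $\tau_i^\mathrm{in}$, i.e.\ as an integral over $(0,1)$ which, since the correction is supported in $[1-2\delta,1]$, is $O(\delta)$-small relative to the leading term. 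A direct count then shows that, after rescaling the $k$-th column of $D\Phi_\delta(\bm 0)$ by $\delta^{k}$ (resp.\ $\delta^{k-1}$), the $(j,k)$ block is $O(\delta)$ for $k>j$, is $O(1)$ for $k\le j$, and the diagonal block is $\pm(\tau_0^\flat(1))^{\lfloor k/2\rfloor}I_3+O(\delta)$, the $O(1)$ entries being bounded only in terms of $m$, $\tau_0^\mathrm{in}(1)$, and $\|\bm{x}_0^\mathrm{in}\|_{X^m}+\|\bm{x}_1^\mathrm{in}\|_{X^{m-1}}$. Thus the rescaled matrix is lower triangular with invertible diagonal plus an $O(\delta)$ perturbation; fixing $\delta$ small (depending only on those data) makes $D\Phi_\delta(\bm 0)$ invertible with $\|D\Phi_\delta(\bm 0)^{-1}\|\le C(m,\delta)$, uniformly while $(\bm v_0,\bm v_1)$ stays close to $(\bm{x}_0^\mathrm{in},\bm{x}_1^\mathrm{in})$.

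Finally, with $\delta$ fixed as above and $(\bm v_0,\bm v_1)$ chosen close enough that $|\Phi_\delta(\bm 0)|$ is small, the inverse function theorem produces $\bm a=\bm a^\star$ with $\Phi_\delta(\bm a^\star)=\bm 0$ and $|\bm a^\star|\le C(m,\delta)|\Phi_\delta(\bm 0)|$. The resulting $(\bm x_0^{\mathrm{in},\varepsilon},\bm x_1^{\mathrm{in},\varepsilon})$ is smooth, satisfies the compatibility conditions up to order $m-1$, and differs from $(\bm{x}_0^\mathrm{in},\bm{x}_1^\mathrm{in})$ in $X^m\times X^{m-1}$ by at most $\|\bm v_0-\bm{x}_0^\mathrm{in}\|_{X^m}+\|\bm v_1-\bm{x}_1^\mathrm{in}\|_{X^{m-1}}+C(m,\delta)|\bm a^\star|$, which is $<\varepsilon$ once $(\bm v_0,\bm v_1)$ was taken close enough at the outset. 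The crux, as indicated, is that the nonlocality carried by the tension prevents the linearized correction map from being triangular, and it is the localization scale $\delta$, used together with $\tau_0^\mathrm{in}(1)>0$, that recovers an invertible dominant structure.
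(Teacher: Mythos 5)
Your overall architecture is the same as the paper's (smooth approximation, then a finite-dimensional correction built from the cut-off polynomials $\psi_k^\delta$ localized in $[1-2\delta,1]$, with the compatibility conditions solved by an implicit/inverse function theorem whose linearization is dominated by powers of $\tau_0(1)$, the nonlocal tension terms being controlled through the localization scale $\delta$). The gap is in the key quantitative claim about the linearized matrix. With your normalization (corrections $\delta^{-k}\psi_k^\delta\bm a_k$ and rescaling only the \emph{columns} by $\delta^k$), the below-diagonal blocks are \emph{not} $O(1)$. The reason is that the compatibility functional at order $\ell$ depends, through the tensions $\tau_0,\ldots,\tau_{\ell-2}$ (equivalently the trace functionals $\Theta_i=\tau_i(1)$, which are bounded only on $Y^{i+1}\times Y^{i}$, i.e.\ they see derivatives of $\bm{x}_0$ up to order $\ell$), on \emph{interior} derivatives of the data of order up to $\ell$, not merely on boundary derivatives of order $\le$ the local order. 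A correction aimed at adjusting $\ds^k\bm{x}_0(1)$ has interior derivatives of order $r>k$ of size $\sim\delta^{-k}\delta^{k-r+\frac12}$ in $L^2$, and the localization of its support only buys a single factor $\delta^{\frac12}$; so its contribution to row $\ell$ through $\Theta_{\ell-2}$ is of order $\delta^{-\ell+\frac12}$ (genuinely, e.g.\ through the term $\bm{u}_1\cdot\bm{u}_{\ell-1}$ in $h_{\ell-2}$), which after your column rescaling by $\delta^k$ is $\delta^{k-\ell+\frac12}\to\infty$ for $k<\ell$. Hence ``lower triangular with $O(1)$ subdiagonal plus $O(\delta)$ perturbation'' is false, and your invertibility argument does not close: a triangular part whose subdiagonal entries blow up like negative powers of $\delta$ has an inverse of size $\delta^{-(m-3)}$ (roughly), which the $O(\delta^{3/2})$ superdiagonal perturbation cannot beat once $m\ge5$. (Incidentally, invertibility of $D\Phi_\delta(\bm0)$ for each fixed small $\delta$ is still true, because your matrix is conjugate by $\mathrm{diag}(\delta^{j})$ to a diagonally dominant one; but that is precisely the bookkeeping you have not done.)

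The paper's fix, which you would need to adopt, is to weight the \emph{rows} as well: it works with $\bm{u}_j=\bm{x}_j'$, takes corrections of amplitude $\delta^{-(k-2)}\psi_{k-1}^\delta$ and defines $\bm{\Xi}_\ell^\delta=\delta^{\ell-2}\bm{X}_\ell$, so that every correction has size $O(\delta^{\frac12-j})$ in the norm $Y^{j+1}\times Y^{j}$ on which $\Theta_j$ is $C^1$ (Lemma \ref{lem:Theta}, proved via the estimates of Lemmas \ref{lem:EstUjTj1}--\ref{lem:DEstUjTj2}); combined with the structure of Lemma \ref{lem:ExpX} this makes \emph{every} off-diagonal and error entry uniformly $O(\delta^{\frac12})$, so the linearization is a small perturbation of the invertible diagonal $\mathrm{diag}(\tau_0(1)^{j+1})$ rather than a triangular matrix with large subdiagonal. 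Note also that the uniform-in-$\delta$ entry bounds you invoke implicitly rest on the differentiability and quantitative mapping properties of the nonlocal functionals $\Theta_j$ on these low-regularity weighted spaces, which is where the paper spends most of its effort (Section \ref{sect:AppID1}); citing smoothness of the BVP solution operator for fixed smooth data is not enough to justify them.
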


The second step is to compensate the smooth approximated initial data in order that they satisfy higher order compatibility conditions.

\begin{lemma}\label{lem:AppID2}
Let $m\geq1$ be an integer and assume that the initial data $(\bm{x}_0^\mathrm{in},\bm{x}_1^\mathrm{in})\in C^\infty([0,1])$ 
for the problem \eqref{HP2}--\eqref{IC2} satisfies the compatibility conditions up to order $m-1$. 
In the case $m\geq2$, we also assume that $\tau_0^\mathrm{in}(1)>0$. 
Then, for any $\varepsilon>0$ there exist approximate initial data $(\bm{x}_0^{\mathrm{in},\varepsilon},\bm{x}_1^{\mathrm{in},\varepsilon}) \in C^\infty([0,1])$ 
satisfying the compatibility conditions up to order $m$ and 
$\|\bm{x}_0^{\mathrm{in},\varepsilon}-\bm{x}_0^{\mathrm{in}}\|_{X^m} + \|\bm{x}_1^{\mathrm{in},\varepsilon}-\bm{x}_1^{\mathrm{in}}\|_{X^{m-1}} < \varepsilon$. 
\end{lemma}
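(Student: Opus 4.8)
The plan is to dispose of the case $m=1$ directly and to treat $m\ge 2$ by a finite‑dimensional implicit function theorem argument based on localized perturbations of the data, following the strategy sketched in the introduction but in its simplest, one‑step form. For $m=1$ the only condition to be gained is $\bm{x}_1^{\mathrm{in}}(1)=\bm{0}$; since $\bm{x}_1^{\mathrm{in}}$ is part of the data I would simply replace it by $\bm{x}_1^{\mathrm{in}}-\bm{x}_1^{\mathrm{in}}(1)\,\psi\bigl(\tfrac{s-1}{\varepsilon}\bigr)$ with $\psi\in C_0^\infty(\mathbb R)$, $\psi\equiv 1$ near $0$ and $\varepsilon>0$ small; this leaves $\bm{x}_0^{\mathrm{in}}(1)$ untouched, so the order‑$0$ condition persists, and the correction has $X^0$‑norm $O(\varepsilon^{1/2})$, which is below any prescribed tolerance for $\varepsilon$ small.

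For $m\ge 2$ I would look for approximate data of the form $\bm{x}_i^{\mathrm{in},\delta,\varepsilon}=\bm{x}_i^{\mathrm{in}}+\sum_{j}\psi_j^\delta\,a_{i,j}+\psi_\star^\varepsilon\,a_\star$, $i=0,1$, where $\psi_j^\delta(s)=\frac{(s-1)^j}{j!}\psi\bigl(\tfrac{s-1}{\delta}\bigr)$, where $\psi_\star^\varepsilon$ is one further cut‑off profile of the same type carrying a separate small parameter $\varepsilon$ (namely $\psi_m^\varepsilon$ inserted in $\bm{x}_0^{\mathrm{in}}$ when $m$ is even, and $\psi_{m-1}^\varepsilon$ in $\bm{x}_1^{\mathrm{in}}$ when $m$ is odd), and $a_{i,j},a_\star\in\mathbb R^3$ are to be determined. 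Since $\psi_j^\delta$ vanishes with its first $j-1$ derivatives at $s=1$ while $\partial_s^j\psi_j^\delta(1)=1$, adding $\psi_j^\delta a_{i,j}$ to $\bm{x}_i^{\mathrm{in}}$ modifies $\partial_s^j\bm{x}_i^{\mathrm{in}}(1)$ by $a_{i,j}$ and leaves all lower‑order traces unchanged, so in particular the already valid conditions of orders $0$ and $1$ are preserved automatically. Running the recursion \eqref{BVPini}--\eqref{Xjini} on the perturbed data, the compatibility conditions of orders $2,\dots,m$ become a system $\Phi^{\delta,\varepsilon}(a)=\bm 0$ for the finite vector $a=(a_{i,j},a_\star)$, which I would write as $\Phi^{\delta,\varepsilon}(a)=\Phi^{\delta,\varepsilon}(\bm 0)+L^{\delta,\varepsilon}a+N^{\delta,\varepsilon}(a)$ with $N^{\delta,\varepsilon}(\bm 0)=\bm 0$, $DN^{\delta,\varepsilon}(\bm 0)=0$; because the unperturbed data already satisfies the conditions up to order $m-1$, one has $\Phi^{\delta,\varepsilon}(\bm 0)=(\bm 0,\dots,\bm 0,\bm x_m^{\mathrm{in}}(1))$. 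The structural point, to be extracted from the recursion, is that $L^{\delta,\varepsilon}$ is ``lower triangular plus small'' in a suitable ordering of indices: each unknown is paired with one condition whose pivot (diagonal) coefficient is a positive power of $\tau_0^{\mathrm{in}}(1)$ --- nonzero by the standing hypothesis $\tau_0^{\mathrm{in}}(1)>0$, which is exactly why it is imposed for $m\ge 2$ --- while every coupling that breaks the triangular form, as well as the entire influence of $a_\star$ on the conditions of order $<m$, acts only through the two‑point tension problems \eqref{BVPini} and is therefore $O(\delta^\kappa)$ or $O(\varepsilon^\kappa)$ for some $\kappa>0$; these bounds I would obtain from the weighted estimates of Section \ref{sect:pre}, especially Lemmas \ref{lem:EstSolBVP3} and \ref{lem:HEstBVP}.

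Given this structure, for $\delta$ small and all $\varepsilon$ small $L^{\delta,\varepsilon}$ is invertible with uniformly bounded inverse and, the nonlinear remainder being carried by the small localized profiles, $a\mapsto-(L^{\delta,\varepsilon})^{-1}\bigl(\Phi^{\delta,\varepsilon}(\bm 0)+N^{\delta,\varepsilon}(a)\bigr)$ is a contraction on a fixed ball; its unique fixed point $a^{\delta,\varepsilon}$ has $|a_\star^{\delta,\varepsilon}|=O(1)$ (it must cancel the $O(1)$ residual $\bm x_m^{\mathrm{in}}(1)$ against the nonzero pivot), while the remaining components are $O(\varepsilon^\kappa)$ since the only force driving them is the $O(\varepsilon^\kappa)$ feedback of $\psi_\star^\varepsilon$. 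To conclude I would estimate the perturbation in $X^m\times X^{m-1}$: near $s=1$ these weighted norms coincide with ordinary Sobolev norms, so $\|\psi_\star^\varepsilon\|_{X^m}$ (resp.\ $X^{m-1}$) $\simeq\varepsilon^{1/2}\to 0$ by the continuity of $\delta\mapsto\psi_j^\delta$ into $H^j$, making the top correction $o(1)$ despite $|a_\star^{\delta,\varepsilon}|=O(1)$; although $\|\psi_j^\delta\|_{X^m}$ diverges as $\delta\to 0$ for $j<m$ (the discontinuity of $\delta\mapsto\psi_j^\delta$ at $\delta=0$ in higher‑order spaces), one first fixes $\delta$ below all the thresholds above and then chooses $\varepsilon$ much smaller than an appropriate power of $\delta$, so that $|a_{i,j}^{\delta,\varepsilon}|\,\|\psi_j^\delta\|_{X^m}=O(\varepsilon^\kappa)\cdot O(\delta^{-\mathrm{const}})\to 0$; the resulting data is smooth because $\psi\in C_0^\infty$, and $\delta,\varepsilon$ can be taken small enough, depending on the prescribed tolerance, to finish. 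I expect the main obstacle to be precisely the structural analysis of $L^{\delta,\varepsilon}$ and $N^{\delta,\varepsilon}$ together with the bookkeeping of the powers of $\delta$ and $\varepsilon$: one must show that the localized corrections propagate through the iterated tension boundary value problems \eqref{BVPini} in the ``triangular plus nonlocally small'' pattern and that these powers fit together, which is where the nonlocal character of the problem emphasized in the introduction has to be controlled quantitatively in the weighted spaces.
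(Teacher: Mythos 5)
Your plan is essentially the paper's own construction: the $m=1$ step is verbatim, and for $m\ge2$ you use the same ansatz of fixed-$\delta$ profiles $\psi_j^\delta$ (one coefficient per compatibility condition) plus a single top-order profile carrying its own parameter $\varepsilon$ (cf.\ \eqref{AppID3}--\eqref{AppID4}, written there for $\bm{u}_j=\bm{x}_j'$ rather than $\bm{x}_j$, with the same parity choices), the same pivots given by powers of $\tau_0^\mathrm{in}(1)>0$, the same observation that the top profile feeds back into the lower-order conditions only through the nonlocal tension functionals and is therefore small there, and the same final estimate: an $O(1)$ top coefficient times $\|\psi^\varepsilon\|\lesssim\varepsilon^{1/2}$ from \eqref{EstCut-off}, plus $o(1)$ lower coefficients times a fixed constant $C_\delta$, with $\delta$ fixed first and $\varepsilon\to0$ afterwards. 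The genuine difference is how the finite-dimensional system is solved: the paper extends $\psi_j^\delta\equiv0$ to $\delta\le0$, defines the top functional by the explicit trace expansion \eqref{deffX2} so that it is continuous across $\varepsilon=0$ (where the true compatibility functional is not, cf.\ \eqref{deffX1bis}), and applies the classical implicit function theorem at $(\varepsilon,\bm{a})=(0,\bm{a}^*)$ with the explicitly computed $\bm{a}^*$, after which only continuity of $\bm{a}(\varepsilon)$ is needed; you stay at $\varepsilon>0$ and run a quantitative contraction with bounds uniform in $\varepsilon$, which is viable but costs precisely the bookkeeping you flag. Two inputs you would still have to prove are exactly the paper's Lemma \ref{lem:ExpX} (the top-order trace enters each condition linearly, multiplied by a power of the tension at $s=1$) and Lemma \ref{lem:Theta} (the nonlocal functionals $\Theta_j$ are $C^1$ on the $Y$-spaces, proved via Lemmas \ref{lem:EstUjTj1}--\ref{lem:DEstUjTj2}); the boundary-value estimates of Lemmas \ref{lem:EstSolBVP3} and \ref{lem:HEstBVP} that you cite do not by themselves give the Lipschitz/differentiable dependence of $a\mapsto\Phi^{\delta,\varepsilon}(a)$ through the iterated recursion. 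Also, one justification is off as stated: the nonlinear remainder $N^{\delta,\varepsilon}$ is not ``carried by the small localized profiles'' --- quadratic couplings among the fixed-$\delta$ lower-order coefficients arise from products of $O(1)$ traces (since $\partial_s^j\psi_j^\delta(1)=1$) and have $O(1)$ coefficients, so a contraction on a fixed ball about the origin need not have a small Lipschitz constant. The repair is the paper's choice of base point: center the iteration at $(\bm{0},\dots,\bm{0},a_\star^{(0)})$ with $a_\star^{(0)}$ the analogue of $\bm{a}^*$, i.e.\ $-\tau_0^\mathrm{in}(1)^{-k}\bm{x}_m^\mathrm{in}(1)$ up to the normalizations, where the residual is $O(\varepsilon^\kappa)$; there only the lower-order components need to range over a small ball, the mixed and pure second derivatives involving $a_\star$ are small because $a_\star$ acts only nonlocally, and your size claims ($|a_\star|=O(1)$, remaining components $o(1)$) and the ``fix $\delta$, then let $\varepsilon\to0$'' conclusion go through.
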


Once we obtain these two lemmas, we can easily show Proposition \ref{prop:AppID1} by a standard technique. 
In the rest of this section, we prove these two lemmas.

\subsection{Structure of the compatibility conditions}
In order to prove Lemmas \ref{lem:AppID1} and \ref{lem:AppID2}, we need to know the structure of the compatibility conditions in detail and 
to prepare several notations. 
Given initial data $(\bm{x}_0,\bm{x}_1)$, we let $(\bm{x},\tau)$ be a smooth solution of the problem \eqref{HP2} and \eqref{BVP2} under the initial conditions 
$(\bm{x},\dot{\bm{x}})|_{t=0}=(\bm{x}_0,\bm{x}_1)$. 
Then, we put $\bm{x}_j=(\dt^j\bm{x})|_{t=0}$ and $\tau_j=(\dt^j\tau)|_{t=0}$ for $j=0,1,2,\ldots$. 
As before, these initial values $\bm{x}_j$ and $\tau_j$ are determined uniquely up to certain order $j$ 
depending on the regularity of initial data $(\bm{x}_0,\bm{x}_1)$. 
We put $\bm{u}_j=\bm{x}_j'$ for $j=0,1,2,\ldots$. 
We note that the initial data $(\bm{x}_0,\bm{x}_1)$ can be recovered from $(\bm{u}_0,\bm{u}_1)$ by 
$\bm{x}_j(s)=-\int_s^1\bm{u}_j(\sigma)\mathrm{d}\sigma$ for $j=0,1$ due to the compatibility conditions at order $0$ and $1$. 
Therefore, $\bm{x}_j$ and $\tau_j$ are determined from $\bm{u}_0$ and $\bm{u}_1$. 
In fact, they are determined inductively from the following equations 
\begin{equation}\label{TPBVPj}
\begin{cases}
 -\tau_j''+|\bm{u}_0'|^2\tau_j = h_j \quad\mbox{in}\quad (0,1), \\
 \tau_j(0)=0, \quad \tau_j'(1) = -\bm{g}\cdot\bm{u}_j(1),
\end{cases}
\end{equation}
where 
\[
h_j = \sum_{j_1+j_2=j}\frac{j!}{j_1!j_2!}\bm{u}_{j_1+1}\cdot\bm{u}_{j_2+1}
 - \sum_{j_0+j_1+j_2=j,j_0\leq j-1}\frac{j!}{j_0!j_1!j_2!}(\bm{u}_{j_1}'\cdot\bm{u}_{j_2}')\tau_{j_0},
\]
and 
\begin{equation}\label{EqUj}
\bm{x}_{j+2} = \sum_{j_0+j_1=j}\frac{j!}{j_0!j_1!}(\tau_{j_0}\bm{u}_{j_1})' + \delta_{j0}\,\bm{g}, \qquad \bm{u}_{j+2} = \bm{x}_{j+2}'
\end{equation}
for $j=0,1,2,\ldots$, where $\delta_{j0}$ is the Kronecker delta. 
We proceed to express $\bm{x}_{j+2}(1)$ explicitly in terms of $\bm{u}_0$ and $\bm{u}_1$.

\begin{lemma}\label{lem:ExpX}
For any non-negative integer $j$, on $s=1$ we have 
\[
\begin{cases}
 \bm{x}_{2j+2} = \tau_0^{j+1}\ds^{2j+1}\bm{u}_0 + \mathcal{P}_{2j+2}(\ds^{\leq 2j}\bm{u}_0,\ds^{\leq 2j-1}\bm{u}_1,\tau_0,\ldots,\tau_{2j}), \\
 \bm{x}_{2j+3} = \tau_0^{j+1}\ds^{2j+1}\bm{u}_1 + \mathcal{P}_{2j+3}(\ds^{\leq 2j+1}\bm{u}_0,\ds^{\leq 2j}\bm{u}_1,\tau_0,\ldots,\tau_{2j+1}),
\end{cases}
\]
where $\mathcal{P}_m$ is a polynomial in the indicated variables and $\ds^{\leq m}u=\{\ds^ju | j=0,1,\ldots,m\}$. 

\end{lemma}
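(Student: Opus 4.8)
The plan is to prove both identities at once by strong induction on $j\ge 0$, using the recursion \eqref{EqUj}, which gives $\bm{x}_{2j+2}=\sum_{j_0+j_1=2j}\frac{(2j)!}{j_0!\,j_1!}(\tau_{j_0}\bm{u}_{j_1})'$ (plus the constant $\bm{g}$ when $j=0$) and likewise $\bm{x}_{2j+3}=\sum_{j_0+j_1=2j+1}\frac{(2j+1)!}{j_0!\,j_1!}(\tau_{j_0}\bm{u}_{j_1})'$. For the base case $j=0$ one computes directly: $\bm{x}_2=(\tau_0\bm{u}_0)'+\bm{g}=\tau_0\ds\bm{u}_0+(\tau_0'\bm{u}_0+\bm{g})$ and $\bm{x}_3=(\tau_0\bm{u}_1)'+(\tau_1\bm{u}_0)'=\tau_0\ds\bm{u}_1+(\tau_0'\bm{u}_1+\tau_1\ds\bm{u}_0+\tau_1'\bm{u}_0)$, which are exactly of the asserted form, the parenthesized remainders being $\mathcal{P}_2$ and $\mathcal{P}_3$.

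For the inductive step I would first record the companion statement for $\bm{u}_k=\bm{x}_k'$, obtained by differentiating once the expressions for $\bm{x}_k$ already available for $k\le 2j+1$: for each $i$ with $2i\le 2j$ one has $\bm{u}_{2i}=\tau_0^{\,i}\ds^{2i}\bm{u}_0+(\text{polynomial in }\ds^{\le 2i-1}\bm{u}_0,\ \ds^{\le 2i-2}\bm{u}_1,\ \tau_0,\dots,\tau_{2i-2}\text{ and their }s\text{-derivatives})$, and similarly $\bm{u}_{2i+1}=\tau_0^{\,i}\ds^{2i}\bm{u}_1+\cdots$. In the sum for $\bm{x}_{2j+2}$ I then isolate the single term with $j_0=0$, $j_1=2j$, namely $(\tau_0\bm{u}_{2j})'=\tau_0\bm{u}_{2j}'+\tau_0'\bm{u}_{2j}$; inserting the expansion of $\bm{u}_{2j}$ and differentiating once produces the top term $\tau_0\,\ds(\tau_0^{\,j}\ds^{2j}\bm{u}_0)=\tau_0^{\,j+1}\ds^{2j+1}\bm{u}_0$ plus terms carrying a factor $\tau_0'$ or a lower $\bm{u}_0$-derivative, while every other summand carries a factor $\tau_{j_0}$ with $j_0\ge 1$ and hence lands in the remainder. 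It then remains only to check orders: the largest odd index among the $\bm{u}_{j_1}$ is $2j-1$, which after the single differentiation in $(\tau_1\bm{u}_{2j-1})'$ contributes $\ds^{2j-1}\bm{u}_1$ and no more; the even-index $\bm{u}_{j_1}$ contribute at most $\ds^{2j}\bm{u}_0$; and the tensions appearing directly as $\tau_{j_0}$ ($j_0\le 2j$) or recursively inside the $\bm{u}_{j_1}$ (only $\tau_{j_0'}$ with $j_0'\le j_1-2\le 2j-2$) all have index $\le 2j$. This gives $\bm{x}_{2j+2}=\tau_0^{\,j+1}\ds^{2j+1}\bm{u}_0+\mathcal{P}_{2j+2}(\ds^{\le 2j}\bm{u}_0,\ds^{\le 2j-1}\bm{u}_1,\tau_0,\dots,\tau_{2j})$; the computation for $\bm{x}_{2j+3}$ is identical with $\bm{u}_0$ replaced by $\bm{u}_1$ in the top term and all bounds shifted by one, so that $\ds^{\le 2j+1}\bm{u}_0$, $\ds^{\le 2j}\bm{u}_1$, $\tau_0,\dots,\tau_{2j+1}$ appear in $\mathcal{P}_{2j+3}$.

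These identities in fact hold pointwise on $(0,1)$, with $\mathcal{P}_m$ depending a priori on the tension functions and their $s$-derivatives; restricting to $s=1$ gives the statement, and if one wants $\mathcal{P}_m$ to involve only the values $\tau_0(1),\dots,\tau_{2j}(1)$ one eliminates the $s$-derivatives of the $\tau_k$ at $s=1$ via the boundary relation $\tau_k'(1)=-\bm{g}\cdot\bm{u}_k(1)$ and the differential equation $\tau_k''=|\bm{u}_0'|^2\tau_k-h_k$ in \eqref{TPBVPj}, differentiated in $s$ as needed. The substance of the proof is not conceptual but exactly the index bookkeeping above: one must verify that neither this reduction of the $\tau_k$-derivatives nor the recursive substitution of the $\bm{u}_{j_1}$ ever pushes a $\bm{u}_0$- or $\bm{u}_1$-derivative, or a tension index, beyond the advertised threshold. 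This is the delicate point, and it succeeds precisely because in \eqref{EqUj} the top-index factor $\bm{u}_{2j}$ (resp. $\bm{u}_{2j+1}$) occurs only multiplied by $\tau_0$, whereas tensions of high index enter only together with few derivatives.
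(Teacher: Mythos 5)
Your argument is correct and is essentially the paper's proof: both proceed by induction through the recursions \eqref{TPBVPj} and \eqref{EqUj}, isolate the single term $(\tau_0\bm{u}_{2j})'$ (resp.\ $(\tau_0\bm{u}_{2j+1})'$) carrying the top derivative with coefficient $\tau_0^{j+1}$, and then eliminate the $s$-derivatives of the $\tau_k$ at $s=1$ via the boundary condition $\tau_k'(1)=-\bm{g}\cdot\bm{u}_k(1)$ and the equation $\tau_k''=|\bm{u}_0'|^2\tau_k-h_k$. The only difference is organizational: the paper makes your ``delicate bookkeeping'' explicit by proving, within the induction, closed-form expressions for $\ds^{k}\bm{u}_{j+2}$ and $\ds^{k+2}\tau_{j}$ on $s=1$ with the derivative order $k$ tracked, which is exactly the strengthened induction hypothesis your sketch implicitly relies on.
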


\begin{proof}
Throughout this proof, we denote polynomials by the same symbol $\mathcal{P}$, which may change from line to line. 
It follows from the first equation in \eqref{TPBVPj} that 
$\tau_j''=\mathcal{P}(\ds^{\leq1}\bm{u}_0,\ldots,\ds^{\leq1}\bm{u}_j,\bm{u}_{j+1},$ $\tau_0,\ldots,\tau_j)$, so that 
\[
\ds^{k+2}\tau_j = \mathcal{P}(\ds^{\leq k+1}\bm{u}_0,\ldots,\ds^{\leq k+1}\bm{u}_j,\ds^{\leq k}\bm{u}_{j+1},\ds^{\leq k}\tau_0,\ldots,\ds^{\leq k}\tau_j).
\]
Similarly, it follows from \eqref{EqUj} that 
\[
\ds^k\bm{u}_{j+2} = \tau_0\ds^{k+2}\bm{u}_j + \mathcal{P}(\ds^{\leq k+2}\bm{u}_0,\ldots,\ds^{\leq k+2}\bm{u}_{j-1},\ds^{\leq k+1}\bm{u}_{j},
 \ds^{\leq k+2}\tau_0,\ldots,\ds^{\leq k+2}\tau_j).
\]
Using these equations inductively on $j$ and the second boundary condition in \eqref{TPBVPj}, on $s=1$ we obtain 
\[
\begin{cases}
 \ds^{k+2}\tau_{2j} = \mathcal{P}(\ds^{\leq 2j+k+1}\bm{u}_0,\ds^{\leq 2j+k}\bm{u}_1, \tau_0,\ldots,\tau_{2j}), \\
 \ds^{k}\bm{u}_{2j+2} = \tau_0^{j+1}\ds^{2j+k+2}\bm{u}_0 + \mathcal{P}(\ds^{\leq 2j+k+1}\bm{u}_0,\ds^{\leq 2j+k}\bm{u}_1, \tau_0,\ldots,\tau_{2j}),
\end{cases}
\]
and 
\[
\begin{cases}
 \ds^{k+2}\tau_{2j+1} = \mathcal{P}(\ds^{\leq 2j+k+2}\bm{u}_0,\ds^{\leq 2j+k+1}\bm{u}_1, \tau_0,\ldots,\tau_{2j+1}), \\
 \ds^{k}\bm{u}_{2j+3} = \tau_0^{j+1}\ds^{2j+k+2}\bm{u}_1 + \mathcal{P}(\ds^{\leq 2j+k+2}\bm{u}_0,\ds^{\leq 2j+k+1}\bm{u}_1, \tau_0,\ldots,\tau_{2j+1}).
\end{cases}
\]
Since $\bm{x}_{j+2} = \tau_0\bm{u}_j'+\mathcal{P}(\ds^{\leq 1}\bm{u}_0,\ldots,\ds^{\leq 1}\bm{u}_{j-1},\bm{u}_j,\tau_0,\ldots,\tau_j)$, 
these equations imply the desired ones. 
\end{proof}

\subsection{Analysis of the functional $\Theta_j(\bm{u}_0,\bm{u}_1)$}
We recall that $\tau_j(1)$ is determined uniquely from $\bm{u}_0$ and $\bm{u}_1$ through solving the two-point boundary value problem \eqref{TPBVPj}, 
so that $\tau_j(1)$ depends non-locally on $(\bm{u}_0,\bm{u}_1)$. 
In view of this, we define functionals $\Theta_j=\Theta_j(\bm{u}_0,\bm{u}_1)$ by 
\begin{equation}\label{defTheta}
\Theta_j(\bm{u}_0,\bm{u}_1)=\tau_j(1)
\end{equation}
for $j=0,1,2,\ldots$. 
In order to show Proposition \ref{prop:AppID1}, we use the property of these functionals.

\begin{lemma}\label{lem:Theta}
It holds that $\Theta_0 \in C^1(X^1\times X^0;\mathbb{R})$ and 
$\Theta_j \in C^1(Y^{j+1}\times Y^{j};\mathbb{R})$ for $j=1,2,3,\ldots$. 
\end{lemma}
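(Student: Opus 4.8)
The plan is to prove the claimed $C^1$ regularity of the functionals $\Theta_j$ by induction on $j$, using the inductive description of $\tau_j$ through the two-point boundary value problems \eqref{TPBVPj} together with the linear solution estimates from Lemma~\ref{lem:HEstBVP} (and Lemma~\ref{lem:EstSolBVP3}) and the algebra/multiplication inequalities for the spaces $Y^m$ established in Lemmas~\ref{lem:algebraY} and \ref{lem:CalIneq2a}. The key structural point is that, for $\bm{u}_0$ in a fixed ball of $X^1$ (respectively $Y^{j+1}$), the map $\bm{v}\mapsto\tau$ sending the data of \eqref{TPBVPj} to its solution is a bounded linear isomorphism between the appropriate weighted Sobolev spaces, and it depends smoothly on the coefficient $|\bm{u}_0'|^2$; evaluating at $s=1$ is a bounded linear functional on $X^{k}$ for $k\ge1$ by the Sobolev embedding $X^2\hookrightarrow C^0([0,1])$.

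First I would treat the base case $j=0$. Here $h_0=|\bm{u}_1|^2$ and the boundary value $\tau_0'(1)=-\bm{g}\cdot\bm{u}_0(1)$. Writing the solution operator of \eqref{TPBVPj} explicitly, $\tau_0$ depends on $(\bm{u}_0,\bm{u}_1)$ through the coefficient $|\bm{u}_0'|^2$, the source $|\bm{u}_1|^2$, and the scalar $\bm{u}_0(1)$. By Lemma~\ref{lem:HEstBVP} with $m=1$ and $\bm{u}=\bm{x}_0$ (so that $\|\bm{u}\|_{Y^{1\vee3}}=\|\bm{u}\|_{Y^3}$ is controlled by $\|\bm{u}_0\|_{X^1}$-type quantities after recovering $\bm{x}_0$ from $\bm{u}_0$), one gets $\tau_0'\in X^1$ with a bound, hence $\tau_0\in X^2\hookrightarrow C^0$ and $\Theta_0(\bm{u}_0,\bm{u}_1)=\tau_0(1)$ is well defined. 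Fréchet differentiability: the map $\bm{u}_0\mapsto|\bm{u}_0'|^2$ is smooth from $X^1$ into $L^1$-type spaces, $\bm{u}_1\mapsto|\bm{u}_1|^2$ is smooth from $X^0=L^2$ into $L^1$, and the solution operator of the ODE depends analytically (in fact linearly in the source, smoothly in the zeroth-order coefficient through a Neumann-series / resolvent argument, using positivity of the coefficient and the energy estimates) on these data; composing with the bounded evaluation functional at $s=1$ gives $\Theta_0\in C^1$. The continuity of the derivative follows from the same estimates applied to differences.

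Next, for the inductive step, assume $\Theta_k\in C^1(Y^{k+1}\times Y^k;\mathbb R)$ for $0\le k\le j-1$ and, more, that $\tau_k$ as a function of $(\bm{u}_0,\bm{u}_1)$ is $C^1$ into $X^{(m+1-k)}$-type spaces for the relevant index (this stronger inductive hypothesis is what actually feeds the next step). Look at \eqref{TPBVPj} for index $j$: the source $h_j$ is a polynomial in $\bm{u}_1,\dots,\bm{u}_{j+1}$ and in $\tau_0,\dots,\tau_{j-1}$ and their derivatives, and by \eqref{EqUj} each $\bm{u}_{k}$ for $2\le k\le j+1$ is itself built from $(\tau_{k'}\bm{u}_{k''})'$. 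Using Lemmas~\ref{lem:algebraY}, \ref{lem:CalIneq2a}, \ref{lem:CalIneq3}, and \ref{lem:AvOp} to track the weighted norms, one shows $h_j\in Y^{j-1}$ with a bound polynomial in the relevant norms of $\bm{u}_0\in Y^{j+1}$ and $\bm{u}_1\in Y^j$; moreover the map $(\bm{u}_0,\bm{u}_1)\mapsto h_j$ is $C^1$ by the inductive hypothesis and the multilinearity of the polynomial. The boundary value $\tau_j'(1)=-\bm{g}\cdot\bm{u}_j(1)$ is controlled since $\bm{u}_j(1)$ is a polynomial in traces of $\bm{u}_0,\bm{u}_1$ and lower $\tau$'s at $s=1$ (Lemma~\ref{lem:ExpX}), and these traces are bounded linear-or-smooth functions on the relevant spaces; it is here that the structure $\bm{x}_{2j+2}=\tau_0^{j+1}\ds^{2j+1}\bm{u}_0+\cdots$ is used so that all needed derivatives of $\bm{u}_0$ at $s=1$ stay within the range permitted by $\bm{u}_0\in Y^{j+1}$ (equivalently $\bm{x}_0\in X^{j+2}$, giving $\ds^{\le j}\bm{u}_0\in C^0$). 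Then Lemma~\ref{lem:HEstBVP} with $m=j$ gives $\tau_j'\in X^j$, hence $\tau_j\in X^{j+1}\hookrightarrow C^0$, and $\Theta_j(\bm{u}_0,\bm{u}_1)=\tau_j(1)$ is $C^1$ as a composition of the $C^1$ data-to-source map, the $C^1$ solution operator of the linear BVP (smooth in the zeroth-order coefficient $|\bm{u}_0'|^2$, linear in $h_j$ and in the Neumann datum), and the bounded evaluation at $s=1$.

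The main obstacle I expect is the careful bookkeeping of weighted Sobolev indices in the inductive step: one must verify that every product and every application of $(\tau\,\cdot\,)'$ or $\mathscr M$ appearing in $h_j$ and in the expressions of $\bm{u}_k(1)$ lands in exactly the space $Y^{j-1}$ (or its trace) with the norm controlled by $\|\bm{u}_0\|_{Y^{j+1}}$ and $\|\bm{u}_1\|_{Y^{j}}$ and \emph{no more}, so that the regularity does not degrade along the induction. In particular the terms $(\bm{u}_{j_1}'\cdot\bm{u}_{j_2}')\tau_{j_0}$ with $j_0\le j-1$ must be estimated with the $\tau_{j_0}$ factor (which has one more derivative of regularity than a generic factor, being a solution of an elliptic problem) absorbing the loss — this is precisely what Lemmas~\ref{lem:CalIneq2a} and \ref{lem:CalIneq3} are designed for, and matching their hypotheses (e.g.\ $\tau|_{s=0}=0$, which holds here) to the situation is the delicate part. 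A secondary, more routine, point is to justify differentiability of the solution operator of \eqref{TPBVPj} in its coefficient: this can be done once and for all by writing $\tau = L_{\bm v}^{-1}(h,\text{datum})$ and using that $\bm v\mapsto L_{\bm v}$ is (affine-)smooth into bounded operators together with smoothness of operator inversion on the open set where $L_{\bm v}$ is invertible, the latter guaranteed by the a priori estimates of Lemmas~\ref{lem:EstSolBVP3} and \ref{lem:HEstBVP}.
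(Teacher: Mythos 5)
Your overall architecture---inductively control $\bm{u}_k,\tau_k$, identify the candidate derivative by linearizing the two-point boundary value problem, and compose with evaluation at $s=1$---is the same as the paper's. The gap is in the quantitative core, and it is exactly the part you defer as ``bookkeeping''. For the base case you invoke Lemma \ref{lem:HEstBVP} with $m=1$, whose hypothesis is $\|\bm{u}_0\|_{Y^{1\vee3}}=\|\bm{u}_0\|_{Y^3}\leq M$; this is \emph{not} controlled by $\|\bm{u}_0\|_{X^1}$ (it costs two extra derivatives), so the conclusion $\tau_0'\in X^1$ is unavailable on the stated domain $X^1\times X^0$---and also unnecessary: the paper instead uses Lemma \ref{lem:EstSolBVP3}, whose hypothesis $\|\bm{u}_0'\|_{Y^0}\leq\|\bm{u}_0\|_{X^1}$ does hold, to get only $\tau_0'\in L^\infty$, which already makes the trace $\tau_0(1)$ meaningful. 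The same overreach breaks your inductive step: with $(\bm{u}_0,\bm{u}_1)\in Y^{j+1}\times Y^j$ the top-order quantity $\bm{u}_{j+1}$ is only an $s$-weighted $L^2$ function (Lemma \ref{lem:EstUjTj2}: $\|s\bm{u}_{j+1}\|_{L^2}$ is what is bounded), so the leading term $\bm{u}_1\cdot\bm{u}_{j+1}$ of $h_j$ lies only in a weighted $L^1$ space; your claims $h_j\in Y^{j-1}$ and $\tau_j'\in X^j$ (hence $\tau_j\in X^{j+1}$) would require roughly $\bm{u}_0\in Y^{2j+1}$, $\bm{u}_1\in Y^{2j}$ (compare the scaling $\|\tau_j''\|_{Y^k}\leq C(\|\bm{u}_0\|_{Y^{k+j+2}},\|\bm{u}_1\|_{Y^{k+j+1}})$ of Lemma \ref{lem:EstUjTj1}), and for $j=1$ even the coefficient hypothesis $\|\bm{u}_0\|_{Y^{1\vee3}}\leq M$ of Lemma \ref{lem:HEstBVP} fails since only $\bm{u}_0\in Y^2$ is given. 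So the induction as you set it up cannot close on $Y^{j+1}\times Y^j$; the regularity degrades precisely where you assumed it would not.

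What makes the lemma true at this minimal regularity---and what the paper actually does---is to abandon the unweighted scale at the top of the hierarchy and work at trace level with weighted low-norm estimates: Lemma \ref{lem:EstSolBVP3} with weights yields $\|(\tau_j,s\tau_j')\|_{L^\infty}+\|s\bm{u}_{j+1}\|_{L^2}\leq C(\|\bm{u}_0\|_{Y^{j+1}},\|\bm{u}_1\|_{Y^j})$ (Lemmas \ref{lem:EstUjTj1}--\ref{lem:EstUjTj2}), which is just enough to define $\Theta_j(\bm{u}_0,\bm{u}_1)=\tau_j(1)$; differentiability is then obtained by writing the linearized hierarchy \eqref{DTPBVPj}--\eqref{DEqUj}, proving the analogous weighted bounds for $(\hat{\bm{u}}_k,\hat{\tau}_k)$ (Lemmas \ref{lem:DEstUjTj1}--\ref{lem:DEstUjTj2}), and estimating the quadratic remainder for the difference quotient. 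Your idea of differentiating the solution operator in its zeroth-order coefficient by operator inversion is fine in principle, but without replacing your $X^j/Y^{j-1}$ bookkeeping by these weighted endpoint estimates the proof does not go through; supplying them is the substance of the paper's argument, not a routine verification.
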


\begin{remark}\label{re:Theta}
In fact, we have $\Theta_0 \in C^\infty(X^1\times X^0;\mathbb{R})$ and $\Theta_j \in C^\infty(Y^{j+1}\times Y^{j};\mathbb{R})$ for $j=1,2,3,\ldots$. 
However, for our purpose, it is sufficient to show that they are of class $C^1$. 
\end{remark}

In the rest of this subsection, we prove Lemma \ref{lem:Theta}. 
Since $\tau_0$ satisfies \eqref{TPBVPj} with $j=0$, it follows from Lemma \ref{lem:EstSolBVP3} together with $\|\bm{u}_0'\|_{Y^0} \leq \|\bm{u}_0\|_{X^1}$ 
and $|\bm{u}_0(1)| \lesssim \|\bm{u}_0\|_{X^1}$ that 
\begin{equation}\label{EstT0}
\|\tau_0'\|_{L^\infty}\leq C(\|\bm{u}_0\|_{X^1})( \|\bm{u}_0\|_{X^1} + \|\bm{u}_1\|_{L^2}^2). 
\end{equation}
This shows that the functional $\Theta_0$ is in fact defined on $X^1\times X^0$. 
To study the map $\Theta_j$ for $j\geq1$, we evaluate $\bm{u}_j$ and $\tau_j$.

\begin{lemma}\label{lem:EstUjTj1}
For any integers $j\geq0$ and $k\geq1$, we have 
\[
\begin{cases}
 \|\tau_j'\|_{L^\infty} + \|\tau_j''\|_{Y^0} \leq C( \|\bm{u}_0\|_{Y^{j+2}}, \|\bm{u}_1\|_{Y^{j+1}}), \\
 \|\tau_j''\|_{Y^k}+\|\bm{u}_{j+2}\|_{Y^k} \leq C( \|\bm{u}_0\|_{Y^{k+j+2}}, \|\bm{u}_1\|_{Y^{k+j+1}}).
\end{cases}
\]
\end{lemma}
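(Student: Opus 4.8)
The plan is to prove Lemma~\ref{lem:EstUjTj1} by induction on $j$, treating the estimates for $\tau_j$ and $\bm{u}_{j+2}$ simultaneously at each step. The base case $j=0$ is essentially the content of \eqref{EstT0} together with the estimates already obtained for $\tau_0''$ in the proof of Lemma~\ref{lem:EstIV}; in fact, applying Lemmas~\ref{lem:CalIneq1}, \ref{lem:CalIneq2a}, and \ref{lem:NormRelation} to the equation $-\tau_0''+|\bm{u}_0'|^2\tau_0=h_0$ with $h_0=|\bm{u}_1|^2$ gives $\|\tau_0''\|_{Y^k}\lesssim\||\bm{u}_1|^2\|_{Y^k}+\||\bm{u}_0'|^2\tau_0\|_{Y^k}$, which is bounded by $C(\|\bm{u}_0\|_{Y^{k+2}},\|\bm{u}_1\|_{Y^{k+1}})$ using the algebra property of $Y^m$ from Lemma~\ref{lem:algebraY} and Lemma~\ref{lem:CalIneq2a}. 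Then $\bm{u}_2=\bm{x}_2'=(\tau_0\bm{u}_0)'$ and Lemma~\ref{lem:CalIneq3} (applied with the roles adjusted) bounds $\|\bm{u}_2\|_{Y^k}=\|(\tau_0\bm{u}_0)'\|_{Y^k}\leq\|(\tau_0\bm{u}_0)''\|_{Y^{k-1}}+\ldots\lesssim\|\tau_0'\|_{X^k}\|\bm{u}_0\|_{Y^{k+1\vee3}}$; since $\|\tau_0'\|_{X^k}\leq\|\tau_0'\|_{L^2}+\|\tau_0''\|_{Y^{k-1}}$, this closes the base case.

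For the inductive step, I would assume the estimates hold for all indices $<j$ and examine \eqref{TPBVPj} and \eqref{EqUj}. First, the right-hand side $h_j$ is a sum of terms $\bm{u}_{j_1+1}\cdot\bm{u}_{j_2+1}$ with $j_1+j_2=j$ and $(\bm{u}_{j_1}'\cdot\bm{u}_{j_2}')\tau_{j_0}$ with $j_0+j_1+j_2=j$, $j_0\leq j-1$. Each index appearing is at most $j+1$, and whenever index $j+1$ appears it is $\bm{u}_{j+1}$ which by the inductive hypothesis at level $j-1$ lies in $Y^{\text{appropriate}}$; the products are handled by Lemmas~\ref{lem:algebraY}, \ref{lem:CalIneq2a}, and the known bounds on lower $\tau_{j_0}$. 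This gives $\|h_j\|_{Y^k}\leq C(\|\bm{u}_0\|_{Y^{k+j+1}},\|\bm{u}_1\|_{Y^{k+j}})$ and $\|h_j\|_{Y^0}\leq C(\|\bm{u}_0\|_{Y^{j+1}},\|\bm{u}_1\|_{Y^j})$. Then Lemma~\ref{lem:HEstBVP} with $\bm{u}=\bm{u}_0$ — noting $|\bm{u}_j(1)|\lesssim\|\bm{u}_j\|_{X^2}$ is controlled by the inductive hypothesis for $\bm{u}_{j-2+2}=\bm{u}_j$ — yields $\|\tau_j'\|_{X^{k}}\leq C(|\bm{u}_j(1)|+\|h_j\|_{Y^{k-1}})$, hence the bound on $\|\tau_j''\|_{Y^k}$ via $\|\tau_j''\|_{Y^{k-1}}\leq\|\tau_j'\|_{X^k}$; the $\|\tau_j'\|_{L^\infty}$ and $\|\tau_j''\|_{Y^0}$ bounds come from the $k=1$ case together with Lemma~\ref{lem:EstSolBVP3} or the embedding $Y^1\hookrightarrow L^\infty$-type estimate. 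Finally, for $\bm{u}_{j+2}$ one writes $\bm{x}_{j+2}=\sum_{j_0+j_1=j}\binom{j}{j_0}(\tau_{j_0}\bm{u}_{j_1})'+\delta_{j0}\bm{g}$, differentiates once more, and applies Lemma~\ref{lem:CalIneq3}'s first inequality to each summand: $\|(\tau_{j_0}\bm{u}_{j_1})''\|_{Y^k}\lesssim\|\tau_{j_0}'\|_{X^{k+1}}\|\bm{u}_{j_1}\|_{Y^{k+2\vee3}}$, where both factors are now controlled — the $\tau_{j_0}'$ factor by the estimate for $\tau_{j_0}$ just established (or inductively if $j_0<j$), and $\bm{u}_{j_1}$ with $j_1\leq j$ by the inductive hypothesis (when $j_1\leq j-2$) or directly when $j_1=j$ but then $j_0=0$ so only $\tau_0$ appears and we already control it.

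The main obstacle I anticipate is bookkeeping the weighted indices correctly so that every application of the calculus inequalities (Lemmas~\ref{lem:algebraY}, \ref{lem:CalIneq2a}, \ref{lem:CalIneq3}) lands with the stated exponents $k+j+2$ and $k+j+1$ — in particular making sure the $\vee 3$ and $\vee 4$ thresholds in those lemmas do not force a larger loss of derivatives than advertised, and checking the edge case $k=1$ (where $Y^0$ bounds rather than $Y^{k-1}=Y^0$ appear, and where the $L^\infty$ estimates must be extracted separately). A secondary technical point is that Lemma~\ref{lem:HEstBVP} requires $\|\bm{u}_0\|_{Y^{m\vee3}}\leq M$ with $m$ the target regularity, so for small $k+j$ one must invoke the $\vee3$ form, which is harmless since all constants are allowed to depend on $\|\bm{u}_0\|_{Y^{j+2}}$ etc. Once the induction is set up with a clean statement of which norms control which traces $\bm{u}_j(1)$, the verification is a routine, if lengthy, application of the lemmas from Section~\ref{sect:pre}, so I would present the base case in full and then indicate the inductive step schematically, pointing to the relevant lemma for each product.
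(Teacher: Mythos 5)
Your proposal is correct and follows essentially the same route as the paper's proof: an induction on $j$ built on the recursions \eqref{TPBVPj} and \eqref{EqUj}, the boundary-value estimates of Lemmas \ref{lem:EstSolBVP3}/\ref{lem:HEstBVP}, and the product estimates of Lemmas \ref{lem:algebraY}, \ref{lem:CalIneq2a}, and \ref{lem:CalIneq3}; the paper merely organizes it as the intermediate bounds \eqref{EstTj} and \eqref{EstTj''} in terms of all of $\bm{u}_0,\ldots,\bm{u}_{j+1}$ before closing via the $\bm{u}_{j+2}$ recursion, and estimates $\tau_j''$ directly from the equation rather than citing Lemma \ref{lem:HEstBVP}. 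Only the bookkeeping you yourself flag needs care: $\bm{u}_2=(\tau_0\bm{u}_0)''$ (not $(\tau_0\bm{u}_0)'$), the trace should be taken as $|\bm{u}_j(1)|\lesssim\|\bm{u}_j\|_{Y^1}$ rather than via $\|\bm{u}_j\|_{X^2}$, your claimed $\|h_j\|_{Y^k}\leq C(\|\bm{u}_0\|_{Y^{k+j+1}},\|\bm{u}_1\|_{Y^{k+j}})$ is one derivative too optimistic at small $k$ (the bound with indices $k+j+2$, $k+j+1$, which suffices, is what actually comes out), and the $L^\infty$ bound on $\tau_j'$ must come from Lemma \ref{lem:EstSolBVP3} with $\|h_j\|_{L^1}$ rather than from $\|\tau_j'\|_{X^1}$.
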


\begin{proof}
Since $\tau_j$ satisfies \eqref{TPBVPj}, it follows from Lemma \ref{lem:EstSolBVP3} together with $|\tau_{j_0}(s)|\leq s\|\tau_{j_0}'\|_{L^\infty}$ that 
$\|\tau_j'\|_{L^\infty} \leq C(\|(\bm{u}_0,\ldots,\bm{u}_j)\|_{X^1},\|\bm{u}_{j+1}\|_{L^2},\|(\tau_0',\ldots,\tau_{j-1}')\|_{L^\infty})$. 
Using this inductively and Lemma \ref{lem:NormRelation}, we obtain 
\begin{equation}\label{EstTj}
\|\tau_j'\|_{L^\infty} \leq C(\|(\bm{u}_0,\ldots,\bm{u}_j)\|_{Y^2},\|\bm{u}_{j+1}\|_{Y^1}).
\end{equation}

By Lemmas \ref{lem:algebraY} and \ref{lem:CalIneq2a}, we see that 
\begin{align*}
\|\tau_j''\|_{Y^l}
&\lesssim \sum_{j_1+j_2=j}\|\bm{u}_{j_1+1}\cdot\bm{u}_{j_2+1}\|_{Y^l} + \sum_{j_0+j_1+j_2=j}\|\tau_{j_0}\bm{u}_{j_1}'\cdot\bm{u}_{j_2}'\|_{Y^l} \\
&\lesssim \sum_{j_1+j_2=j}\|\bm{u}_{j_1+1}\|_{Y^{l+1}} \|\bm{u}_{j_2+1}\|_{Y^{l+1}} \\
&\quad\;
 + \sum_{j_0+j_1+j_2=j}
 \begin{cases}
  \|\tau_{j_0}'\|_{L^\infty}\|\bm{u}_{j_1}\|_{Y^{l+2}}\|\bm{u}_{j_2}\|_{Y^{l+2}} &(l=0,1) \\
  (\|\tau_{j_0}'\|_{L^\infty}+\|\tau_{j_0}''\|_{Y^{l-2}})\|\bm{u}_{j_1}\|_{Y^{l+1}}\|\bm{u}_{j_2}\|_{Y^{l+1}} &(l\geq2)
 \end{cases} \\
&\lesssim \|(\bm{u}_1,\ldots,\bm{u}_{j+1})\|_{Y^{l+1}}^2 \\
&\quad\;
 + 
 \begin{cases}
  \|(\tau_0',\ldots,\tau_j')\|_{L^\infty}\|(\bm{u}_0,\ldots,\bm{u}_j)\|_{Y^{l+2}}^2 &(l=0,1) \\
  ( \|(\tau_0',\ldots,\tau_j')\|_{L^\infty} + \|(\tau_0'',\ldots,\tau_j'')\|_{Y^{l-2}} ) \|(\bm{u}_0,\ldots,\bm{u}_j)\|_{Y^{l+1}}^2 &(l\geq2).
 \end{cases}
\end{align*}
Using this inductively on $j$ and \eqref{EstTj}, we obtain 
\begin{equation}\label{EstTj''}
\|\tau_j''\|_{Y^l} \leq C( \|(\bm{u}_0,\ldots,\bm{u}_j)\|_{Y^{l+2}}, \|\bm{u}_{j+1}\|_{Y^{l+1}} )
\end{equation}
for $l=0,1,2,\ldots$.

On the other hand, it follows from \eqref{EqUj} together with Lemma \ref{lem:CalIneq3}, \eqref{EstTj}, and \eqref{EstTj''} that 
\begin{align*}
\|\bm{u}_{j+2}\|_{Y^k} 
&\lesssim \sum_{j_0+j_1=j}\|(\tau_{j_0}\bm{u}_{j_1})''\|_{Y^k} \\
&\lesssim (\|(\tau_0',\ldots,\tau_j')\|_{L^\infty} + \|(\tau_0'',\ldots,\tau_j'')\|_{Y^k} ) \|(\bm{u}_0,\ldots,\bm{u}_j)\|_{Y^{k+2}} \\
&\leq C( \|(\bm{u}_0,\ldots,\bm{u}_j)\|_{Y^{k+2}}, \|\bm{u}_{j+1}\|_{Y^{k+1}} ).
\end{align*}
Using this inductively on $j$, we obtain the desired estimate for $\bm{u}_{j+2}$. 
Then, plugging the estimate into \eqref{EstTj} and \eqref{EstTj''}, we get the other estimates of the lemma. 
\end{proof}

\begin{lemma}\label{lem:EstUjTj2}
For any integer $j\geq1$, we have 
\[
\|s\bm{u}_{j+1}\|_{L^2} + \|(\tau_j,s\tau_j')\|_{L^\infty} \leq C( \|\bm{u}_0\|_{Y^{j+1}}, \|\bm{u}_1\|_{Y^{j}}).
\]
\end{lemma}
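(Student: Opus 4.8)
The plan is to exploit the structure of the two-point boundary value problem \eqref{TPBVPj} and estimate the quantities $\tau_j$, $s\tau_j'$, and $s\bm{u}_{j+1}$ directly, without passing through the $L^\infty$ bound on $\tau_j'$ alone (which, by Lemma \ref{lem:EstUjTj1}, already costs two extra derivatives on $\bm{u}_0$). First I would recall from \eqref{EqUj} that $\bm{x}_{j+1}=\sum_{j_0+j_1=j-1}\frac{(j-1)!}{j_0!j_1!}(\tau_{j_0}\bm{u}_{j_1})'+\delta_{j-1,0}\bm{g}$, hence $\bm{u}_{j+1}=\bm{x}_{j+1}'=\sum(\tau_{j_0}\bm{u}_{j_1})''$. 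Using the identity $\tau_{j_0}(s)=s(\mathscr{M}\tau_{j_0}')(s)$ and Lemma \ref{lem:CalIneq3} (more precisely its proof, together with Lemma \ref{lem:AvOp}), one controls $\|s(\tau_{j_0}\bm{u}_{j_1})''\|_{L^2}$ by $(\|\tau_{j_0}'\|_{L^\infty}+\|\tau_{j_0}''\|_{Y^0})\|\bm{u}_{j_1}\|_{Y^1}$ — the weight $s$ being exactly what is needed to absorb the top-order term $\tau_{j_0}\bm{u}_{j_1}''$ via $\|s^{3/2}\bm{u}_{j_1}''\|_{L^2}\le\|\bm{u}_{j_1}\|_{Y^1}$ and the term $\tau_{j_0}'\bm{u}_{j_1}'$ via $\|s^{1/2}\bm{u}_{j_1}'\|_{L^2}\le\|\bm{u}_{j_1}\|_{Y^1}$. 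Summing over the multi-index and invoking Lemma \ref{lem:EstUjTj1} (applied with $j$ replaced by $j_0\le j-1$, which needs only $\bm{u}_0\in Y^{j+1}$, $\bm{u}_1\in Y^j$) gives $\|s\bm{u}_{j+1}\|_{L^2}\le C(\|\bm{u}_0\|_{Y^{j+1}},\|\bm{u}_1\|_{Y^j})$.

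For the tension terms, I would apply Lemma \ref{lem:EstSolBVP3} to \eqref{TPBVPj} with $p=\infty$, $\alpha=0$ (for $\tau_j$) and $p=\infty$, $\alpha=1$ (for $s\tau_j'$), so that
\[
\|\tau_j\|_{L^\infty}\le C(|\bm{g}\cdot\bm{u}_j(1)|+\|sh_j\|_{L^1}),\qquad
\|s\tau_j'\|_{L^\infty}\le C(|\bm{g}\cdot\bm{u}_j(1)|+\|sh_j\|_{L^1}).
\]
The boundary value $|\bm{u}_j(1)|$ is controlled by $\|\bm{u}_j\|_{X^2}\lesssim\|\bm{u}_j\|_{Y^3}$; for $j\ge2$ this is $\le C(\|\bm{u}_0\|_{Y^{j+1}},\|\bm{u}_1\|_{Y^j})$ by the $\bm{u}_{j}$-bound of Lemma \ref{lem:EstUjTj1} (taking $k=3$, $j$ replaced by $j-2$), and for $j=1$ it is simply $\|\bm{u}_1\|_{Y^3}$; in every case this is within the asserted budget. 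The remaining task is $\|sh_j\|_{L^1}$, where $h_j=\sum_{j_1+j_2=j}c\,\bm{u}_{j_1+1}\cdot\bm{u}_{j_2+1}-\sum_{j_0\le j-1}c\,\tau_{j_0}\bm{u}_{j_1}'\cdot\bm{u}_{j_2}'$. For the quadratic term, $\|s\,\bm{u}_{j_1+1}\cdot\bm{u}_{j_2+1}\|_{L^1}\le\|s^{1/2}\bm{u}_{j_1+1}\|_{L^2}\|s^{1/2}\bm{u}_{j_2+1}\|_{L^2}\le\|\bm{u}_{j_1+1}\|_{Y^0}\|\bm{u}_{j_2+1}\|_{Y^0}$; by Lemma \ref{lem:EstUjTj1} each factor is bounded by $C(\|\bm{u}_0\|_{Y^{j+1}},\|\bm{u}_1\|_{Y^j})$ since $j_i+1\le j$ so $Y^{j_i+3}\subset Y^{j+2}$ — wait, I must check this index carefully: the worst case $j_1=j,j_2=0$ gives the factor $\|\bm{u}_{j+1}\|_{Y^0}$, which by the $k=1$ case of Lemma \ref{lem:EstUjTj1} with $j$ replaced by $j-1$ is $\le C(\|\bm{u}_0\|_{Y^{j+1}},\|\bm{u}_1\|_{Y^j})$, so this is fine. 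For the cubic term, use Lemma \ref{lem:CalIneqY3}, which gives $\|s^{1/2}\tau_{j_0}\bm{u}_{j_1}'\bm{u}_{j_2}'\|_{L^1}\lesssim\|\tau_{j_0}'\|_{L^2}\|\bm{u}_{j_1}\|_{X^3}\|\bm{u}_{j_2}\|_{X^3}$ (hence also $\|s\cdot\|_{L^1}$ by $s\le s^{1/2}$), and then bound the three factors by Lemma \ref{lem:EstUjTj1} with $j_0\le j-1$; the most demanding choice is $j_1$ or $j_2$ equal to $j$, needing $\|\bm{u}_j\|_{X^3}\lesssim\|\bm{u}_j\|_{Y^4}$, which by Lemma \ref{lem:EstUjTj1} (with $k=4$, $j$ replaced by $j-2$, valid for $j\ge2$; for $j=1$ the cubic sum is empty) is $\le C(\|\bm{u}_0\|_{Y^{j+1}},\|\bm{u}_1\|_{Y^j})$.

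Collecting the three pieces yields $\|s\bm{u}_{j+1}\|_{L^2}+\|\tau_j\|_{L^\infty}+\|s\tau_j'\|_{L^\infty}\le C(\|\bm{u}_0\|_{Y^{j+1}},\|\bm{u}_1\|_{Y^j})$, which is the claim. The main obstacle I anticipate is the bookkeeping of Sobolev indices in the multi-index sums: one must verify that in every term appearing in $h_j$ and in $\bm{u}_{j+1}$, the regularity demanded of $\bm{u}_0$ never exceeds $Y^{j+1}$ and that of $\bm{u}_1$ never exceeds $Y^j$. This works precisely because the weight $s$ (resp. $s^{1/2}$ inside the $L^1$ norm) buys back exactly one derivative in each factor, matching the one-derivative gap between the hypothesis of the present lemma and that of Lemma \ref{lem:EstUjTj1}; the cleanest way to organize this is to treat the extreme cases (one index equal to $j$, the rest equal to $0$) explicitly and note all intermediate cases are strictly better.
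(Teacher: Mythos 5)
Your overall plan is the same as the paper's: express $\bm{u}_{j+1}$ through \eqref{EqUj} and bound $\|s\bm{u}_{j+1}\|_{L^2}$ by a weighted product estimate, then apply Lemma \ref{lem:EstSolBVP3} to \eqref{TPBVPj} with the weighted norm $\|sh_j\|_{L^1}$. However, the index bookkeeping --- which you yourself identify as the crux --- fails at exactly the delicate places, so that several steps, as written, demand more regularity than the lemma allows. Recall that Lemma \ref{lem:EstUjTj1} gives, for $i\geq2$ and $k\geq1$, $\|\bm{u}_i\|_{Y^k}\leq C(\|\bm{u}_0\|_{Y^{k+i}},\|\bm{u}_1\|_{Y^{k+i-1}})$. (i) For the boundary datum you use $|\bm{u}_j(1)|\lesssim\|\bm{u}_j\|_{X^2}\lesssim\|\bm{u}_j\|_{Y^3}$, which by the above costs $(\|\bm{u}_0\|_{Y^{j+3}},\|\bm{u}_1\|_{Y^{j+2}})$, not the claimed $(Y^{j+1},Y^j)$; for $j=1$ it even requires $\|\bm{u}_1\|_{Y^3}$ against the allowed $\|\bm{u}_1\|_{Y^1}$. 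The fix is the sharper trace bound $|u(1)|\lesssim\|u\|_{Y^1}$ (e.g. via $|u(1)|\lesssim\|su\|_{H^1}$), which is what the paper uses; then $\|\bm{u}_j\|_{Y^1}$ costs exactly $(Y^{j+1},Y^j)$ for $j\geq2$ and is trivial for $j=1$. (ii) In the quadratic part of $h_j$ you split the weight evenly, which is fine for the intermediate factors $\bm{u}_2,\ldots,\bm{u}_j$, but for the extreme term you need $\|\bm{u}_{j+1}\|_{Y^0}$, and your justification (Lemma \ref{lem:EstUjTj1} with $k=1$, $j$ replaced by $j-1$) actually costs $(Y^{j+2},Y^{j+1})$, again over budget. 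The remedy is to put the whole weight on the extreme factor, $\|s\,\bm{u}_{j+1}\cdot\bm{u}_1\|_{L^1}\leq\|\bm{u}_1\|_{L^2}\|s\bm{u}_{j+1}\|_{L^2}$, and invoke the first half of the lemma --- this is precisely why $\|s\bm{u}_{j+1}\|_{L^2}$ is estimated first. (iii) For the cubic part, Lemma \ref{lem:CalIneqY3} concerns $\tau u''v''$ with position-type arguments, so the factors it produces are $\|\bm{x}_{j_1}\|_{X^3}\simeq\|\bm{u}_{j_1}\|_{Y^2}$, not $\|\bm{u}_{j_1}\|_{X^3}$; and in any case your budget check via $\|\bm{u}_j\|_{Y^4}$ (Lemma \ref{lem:EstUjTj1} with $k=4$) would cost $(Y^{j+4},Y^{j+3})$. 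The within-budget argument is elementary: $|\tau_{j_0}(s)|\leq s\|\tau_{j_0}'\|_{L^\infty}$ gives $\|s\tau_{j_0}\bm{u}_{j_1}'\cdot\bm{u}_{j_2}'\|_{L^1}\leq\|\tau_{j_0}'\|_{L^\infty}\|s\bm{u}_{j_1}'\|_{L^2}\|s\bm{u}_{j_2}'\|_{L^2}\leq\|\tau_{j_0}'\|_{L^\infty}\|\bm{u}_{j_1}\|_{Y^1}\|\bm{u}_{j_2}\|_{Y^1}$, and $\|\bm{u}_k\|_{Y^1}$ with $k\leq j$ is within budget.

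A smaller slip of the same kind occurs in your first step: $\|s^{3/2}u''\|_{L^2}$ is not controlled by $\|u\|_{Y^1}$ (that norm contains no second derivative), so the factor accompanying $\|\tau_{j_0}'\|_{L^\infty}+\|\tau_{j_0}''\|_{Y^0}$ must be $\|\bm{u}_{j_1}\|_{Y^2}$ --- exactly what Lemma \ref{lem:CalIneq3} with $\epsilon=\tfrac12$ gives --- which is harmless since $j_1\leq j-1$ keeps it within budget. With the corrections in (i)--(iii) your argument coincides with the paper's proof; as written, however, those three steps do not close under the hypotheses $\bm{u}_0\in Y^{j+1}$, $\bm{u}_1\in Y^{j}$.
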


\begin{proof}
By Lemma \ref{lem:CalIneq3} with $\epsilon=\frac12$, we see that 
\begin{align*}
\|s\bm{u}_{j+1}\|_{L^2}
&\lesssim \sum_{j_0+j_1=j-1} \|s^\frac12(\tau_{j_0}\bm{u}_{j_1})''\|_{Y^0} \\
&\lesssim \sum_{j_0+j_1=j-1} ( \|\tau_{j_0}'\|_{L^\infty} + \|\tau_{j_0}''\|_{Y^0} ) \|\bm{u}_{j_1}\|_{Y^2},
\end{align*}
which together with Lemma \ref{lem:EstUjTj1} yields the desired estimate for $\bm{u}_{j+1}$.

Since $\tau_j$ satisfies \eqref{TPBVPj}, it follows from Lemma \ref{lem:EstSolBVP3} together with the embedding $|u(1)|\lesssim\|u\|_{Y^1}$ that 
$\|(\tau_j,s\tau_j')\|_{L^\infty} \leq C(\|\bm{u}_0\|_{Y^2})( \|\bm{u}_j\|_{Y^1}+\|sh_j\|_{L^1})$. 
Here, we see that 
\begin{align*}
\|sh_j\|_{L^1}
&\lesssim \sum_{j_1+j_2=j} \|s\bm{u}_{j_1+1}\cdot\bm{u}_{j_2+1}\|_{L^1}
 + \sum_{j_0+j_1+j_2=j,j_0\leq j-1} \|s\tau_{j_0}\bm{u}_{j_1}'\cdot\bm{u}_{j_2}'\|_{L^1} \\
&\lesssim \|\bm{u}_1\|_{L^2} \|s\bm{u}_{j+1}\|_{L^2} + \|s^\frac12(\bm{u}_2,\ldots,\bm{u}_j)\|_{L^2}^2 
 + \|(\tau_0',\ldots,\tau_{j-1}')\|_{L^\infty} \|s(\bm{u}_0',\ldots,\bm{u}_j')\|_{L^2}^2 \\
&\lesssim \|\bm{u}_1\|_{Y^1} \|s\bm{u}_{j+1}\|_{L^2} + \|(\bm{u}_2,\ldots,\bm{u}_j)\|_{Y^0}^2 
 + \|(\tau_0',\ldots,\tau_{j-1}')\|_{L^\infty} \|(\bm{u}_0,\ldots,\bm{u}_j)\|_{Y^1}^2.
\end{align*}
These estimates together with Lemma \ref{lem:EstUjTj1} imply the desired estimate for $\tau_j$.
\end{proof}

Lemma \ref{lem:EstUjTj2} implies that the functional $\Theta_j$ is in fact defined on $Y^{j+1}\times Y^j$ for $j=1,2,3,\ldots$.

We proceed to show the differentiability of the map $\Theta_j$ for $j=0,1,2,\ldots$. 
To this end, we need to study the differentiability of the map $(\bm{u}_0,\bm{u}_1) \mapsto (\bm{u}_j, \tau_j)$. 
With a slight abuse of notation, we write $\bm{u}_j=\bm{u}_j(\bm{u}_0,\bm{u}_1)$ and $\tau_j=\tau_j(\bm{u}_0,\bm{u}_1)$. 
This would cause no confusion. 
Assuming that these maps are Fr\'echet differentiable in appropriate spaces, 
we write $\hat{\bm{u}}_j=D\bm{u}_j(\bm{u}_0,\bm{u}_1)[\hat{\bm{u}}_0,\hat{\bm{u}}_1]$ and 
$\hat{\tau}_j=D\tau_j(\bm{u}_0,\bm{u}_1)[\hat{\bm{u}}_0,\hat{\bm{u}}_1]$. 
Then, we see that $\hat{\bm{u}}_j$ and $\hat{\tau}_j$ satisfy 
\begin{equation}\label{DTPBVPj}
\begin{cases}
 -\hat{\tau}_j''+|\bm{u}_0'|^2\hat{\tau}_j = \hat{h}_j \quad\mbox{in}\quad (0,1), \\
 \hat{\tau}_j(0)=0, \quad \hat{\tau}_j'(1) = -\bm{g}\cdot\hat{\bm{u}}_j(1),
\end{cases}
\end{equation}
where 
\begin{align*}
\hat{h}_j 
&= 2\sum_{j_1+j_2=j}\frac{j!}{j_1!j_2!}\bm{u}_{j_1+1}\cdot\hat{\bm{u}}_{j_2+1} 
 - 2\sum_{j_0+j_1+j_2=j}\frac{j!}{j_0!j_1!j_2!}(\bm{u}_{j_1}'\cdot\hat{\bm{u}}_{j_2}')\tau_{j_0} \\
&\quad\;
 - \sum_{j_0+j_1+j_2=j,j_0\leq j-1}\frac{j!}{j_0!j_1!j_2!}(\bm{u}_{j_1}'\cdot\bm{u}_{j_2}')\hat{\tau}_{j_0},
\end{align*}
and 
\begin{equation}\label{DEqUj}
\hat{\bm{u}}_{j+2} = \sum_{j_0+j_1=j}\frac{j!}{j_0!j_1!}(\tau_{j_0}\hat{\bm{u}}_{j_1}+\hat{\tau}_{j_0}\bm{u}_{j_1})''
\end{equation}
for $j=0,1,2,\ldots$.

Since $\hat{\tau}_0$ is defined as a unique solution to the two-point boundary value problem \eqref{DTPBVPj} with $j=0$, 
by Lemma \ref{lem:EstSolBVP3} together with the estimates $|\hat{\bm{u}}_0(1)| \lesssim \|\hat{\bm{u}}_0\|_{X^1}$, 
$\|\hat{h}_0\|_{L^1} \leq 2\|\bm{u}_1\|_{X^0}\|\hat{\bm{u}}_1\|_{X^0} + 2\|\tau_0'\|_{L^\infty}\|\bm{u}_0\|_{X^1}\|\hat{\bm{u}}_0\|_{X^1}$, 
and \eqref{EstT0}, we obtain 
\begin{equation}\label{EstDT0}
\|\hat{\tau}_0'\|_{L^\infty} \leq C(\|\bm{u}_0\|_{X^1},\|\bm{u}_1\|_{X^0})( \|\hat{\bm{u}}_0\|_{X^1} + \|\hat{\bm{u}}_1\|_{X^0} ).
\end{equation}
Therefore, we see that the map $(\hat{\bm{u}}_0,\hat{\bm{u}}_1) \mapsto \hat{\tau}_0$ defines a linear bounded operator from $X^1\times X^0$ to $W^{1,\infty}$. 
Moreover, by considering a similar two-point boundary value problem to \eqref{DTPBVPj} for 
$\tau_0(\bm{u}_0+\hat{\bm{u}}_0,\bm{u}_1+\hat{\bm{u}}_1)-\tau_0(\bm{u}_0,\bm{u}_1)-\hat{\tau}_0$, we also obtain 
\begin{align*}
&\|(\tau_0(\bm{u}_0+\hat{\bm{u}}_0,\bm{u}_1+\hat{\bm{u}}_1)-\tau_0(\bm{u}_0,\bm{u}_1)-\hat{\tau}_0)'\|_{L^\infty} \\
&\leq  C(\|(\bm{u}_0,\hat{\bm{u}}_0)\|_{X^1},\|(\bm{u}_1,\hat{\bm{u}}_1)\|_{X^0})( \|\hat{\bm{u}}_0\|_{X^1} + \|\hat{\bm{u}}_1\|_{X^0} )^2.
\end{align*}
This shows that the map $\tau_0 \colon X^1\times X^0\to W^{1,\infty}$ is Fr\'echet differentiable and the derivative applied to 
$[\hat{\bm{u}}_0,\hat{\bm{u}}_1]$ is given by $\hat{\tau}_0$. 
Particularly, we can easily deduce that $\Theta_0 \colon X^1\times X^0\to\mathbb{R}$ is Fr\'echet differentiable and 
$D\Theta_0(\bm{u}_0,\bm{u}_1)[\hat{\bm{u}}_0,\hat{\bm{u}}_1]$ is in fact given by $\hat{\tau}_0(1)$. 
Similarly, a Lipschitz continuity of the linear bounded operator $D\Theta_0(\bm{u}_0,\bm{u}_1)$ with respect to $(\bm{u}_0,\bm{u}_1) \in X^1\times X^0$ 
can be proved. 
Therefore, we have $\Theta_0 \in C^1(X^1\times X^0;\mathbb{R})$.

To show the differentiability of the map $\Theta_j$ for $j\geq1$, we proceed to evaluate $\hat{\bm{u}}_j$ and $\hat{\tau}_j$ defined by 
\eqref{DTPBVPj} and \eqref{DEqUj}.

\begin{lemma}\label{lem:DEstUjTj1}
For any integers $j\geq0$ and $k\geq1$, we have 
\[
\begin{cases}
 \|\hat{\tau}_j'\|_{L^\infty} + \|\hat{\tau}_j''\|_{Y^0}
  \leq C( \|\bm{u}_0\|_{Y^{j+2}}, \|\bm{u}_1\|_{Y^{j+1}})( \|\hat{\bm{u}}_0\|_{Y^{j+2}} + \|\hat{\bm{u}}_1\|_{Y^{j+1}}), \\
 \|\hat{\tau}_j''\|_{Y^k}+\|\hat{\bm{u}}_{j+2}\|_{Y^k}
  \leq C( \|\bm{u}_0\|_{Y^{k+j+2}}, \|\bm{u}_1\|_{Y^{k+j+1}})( \|\hat{\bm{u}}_0\|_{Y^{k+j+2}} + \|\hat{\bm{u}}_1\|_{Y^{k+j+1}}).
\end{cases}
\]
\end{lemma}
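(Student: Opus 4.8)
The plan is to prove the two estimates simultaneously by induction on $j$, following exactly the scheme used for Lemma \ref{lem:EstUjTj1} but now keeping track of the linear dependence on the hat-variables $(\hat{\bm{u}}_0,\hat{\bm{u}}_1)$. Throughout I would freely invoke Lemmas \ref{lem:EstUjTj1} and \ref{lem:EstUjTj2} to control the non-hatted coefficients $\bm{u}_{j_1}$, $\tau_{j_0}$ appearing in \eqref{DTPBVPj}--\eqref{DEqUj}, together with the embedding $|u(1)|\lesssim\|u\|_{Y^1}$ and the elementary bound $\|\hat{\tau}_j\|_{L^\infty}\leq\|\hat{\tau}_j'\|_{L^1}\leq\|\hat{\tau}_j'\|_{L^\infty}$, which holds because $\hat{\tau}_j(0)=0$. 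The linearity is automatic: $\hat{h}_j$ is linear in the hats and \eqref{DEqUj} is linear, so the only real work is index bookkeeping.

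For the base case $j=0$, the bound on $\|\hat{\tau}_0'\|_{L^\infty}$ is precisely \eqref{EstDT0}. To bound $\|\hat{\tau}_0''\|_{Y^k}$ for $k\geq0$ one writes $\hat{\tau}_0''=|\bm{u}_0'|^2\hat{\tau}_0-\hat{h}_0$ from \eqref{DTPBVPj}, with $\hat{h}_0=2\bm{u}_1\cdot\hat{\bm{u}}_1-2(\bm{u}_0'\cdot\hat{\bm{u}}_0')\tau_0$, and applies Lemmas \ref{lem:algebraY} and \ref{lem:CalIneq2a}, using \eqref{EstT0} and \eqref{EstDT0} for the coefficients. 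The estimate for $\hat{\bm{u}}_2$ then follows from \eqref{DEqUj} with $j=0$, i.e. $\hat{\bm{u}}_2=(\tau_0\hat{\bm{u}}_0+\hat{\tau}_0\bm{u}_0)''$, by Lemma \ref{lem:CalIneq3}.

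For the inductive step, fix $j\geq1$ and assume both estimates for all smaller indices. First I would estimate $\hat{\tau}_j$. Since $\hat{\tau}_j$ solves \eqref{DTPBVPj} with $a=-\bm{g}\cdot\hat{\bm{u}}_j(1)$, Lemma \ref{lem:EstSolBVP3} gives $\|\hat{\tau}_j'\|_{L^\infty}\lesssim\|\hat{\bm{u}}_j\|_{Y^1}+\|s\hat{h}_j\|_{L^1}$ with constant depending only on $\|\bm{u}_0\|_{Y^2}$; the term $\|s\hat{h}_j\|_{L^1}$ is handled as in the proof of Lemma \ref{lem:EstUjTj2}, the hatted factors $\hat{\bm{u}}_{j_2+1}$, $\hat{\bm{u}}_{j_2}'$ ($j_2\leq j$) and $\hat{\tau}_{j_0}$ ($j_0\leq j-1$) being controlled by the inductive hypothesis, and $\|\hat{\bm{u}}_j\|_{Y^1}=\|\hat{\bm{u}}_{(j-2)+2}\|_{Y^1}$ for $j\geq2$ coming from the previous induction step. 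Then $\hat{\tau}_j''=|\bm{u}_0'|^2\hat{\tau}_j-\hat{h}_j$ together with Lemmas \ref{lem:algebraY} and \ref{lem:CalIneq2a} gives the bound on $\|\hat{\tau}_j''\|_{Y^k}$ for all $k\geq0$, exactly as \eqref{EstTj''} was obtained. Finally, with $\hat{\tau}_{j_0}$ ($j_0\leq j$) now available, \eqref{DEqUj} writes $\hat{\bm{u}}_{j+2}$ as a sum of terms $(\tau_{j_0}\hat{\bm{u}}_{j_1})''$ and $(\hat{\tau}_{j_0}\bm{u}_{j_1})''$, and Lemma \ref{lem:CalIneq3} (with $\tau=\tau_{j_0},u=\hat{\bm{u}}_{j_1}$ in the first case and $\tau=\hat{\tau}_{j_0},u=\bm{u}_{j_1}$ in the second, using $\|\hat{\tau}_{j_0}'\|_{X^{k+1}}\leq\|\hat{\tau}_{j_0}'\|_{L^2}+\|\hat{\tau}_{j_0}''\|_{Y^k}$) produces the desired bound on $\|\hat{\bm{u}}_{j+2}\|_{Y^k}$. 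This closes the induction.

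The main obstacle is the weighted-index bookkeeping: one must check that every hatted factor is ultimately estimated in a norm no stronger than $Y^{k+j+2}$ for $\hat{\bm{u}}_0$ and $Y^{k+j+1}$ for $\hat{\bm{u}}_1$, i.e. that the orders demanded by the $Y^{m+2\vee3}$ in Lemma \ref{lem:CalIneq3} and by the case structure of Lemma \ref{lem:CalIneq2a} line up with what the inductive hypothesis delivers. Since this arithmetic is literally the same as in the proofs of Lemmas \ref{lem:EstUjTj1} and \ref{lem:EstUjTj2}, the verification is routine once the induction is set up as above, and I would present it concisely by referring back to those computations.
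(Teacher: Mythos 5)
Your overall scheme is the paper's own: the same recursion \eqref{DTPBVPj}--\eqref{DEqUj} for $(\hat{\tau}_j,\hat{\bm{u}}_{j+2})$, with Lemma \ref{lem:EstSolBVP3} for the two-point problem, Lemmas \ref{lem:algebraY} and \ref{lem:CalIneq2a} for $\hat{\tau}_j''$, Lemma \ref{lem:CalIneq3} for $\hat{\bm{u}}_{j+2}$, and Lemmas \ref{lem:EstUjTj1}--\ref{lem:EstUjTj2} for the unhatted coefficients; the organizational difference (a simultaneous induction on $j$, versus the paper's intermediate bounds of the form \eqref{EstDTj}, \eqref{EstDTj''} with the right-hand sides still containing $\|(\hat{\bm{u}}_0,\ldots,\hat{\bm{u}}_j)\|$ and $\|\hat{\bm{u}}_{j+1}\|$, closed first for $\hat{\bm{u}}_{j+2}$ and then back-substituted) is immaterial, and your index bookkeeping is consistent with the claimed $Y^{k+j+2}\times Y^{k+j+1}$ dependence.

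There is, however, one step that fails as written. In the inductive estimate of $\hat{\tau}_j$ you assert that Lemma \ref{lem:EstSolBVP3} gives $\|\hat{\tau}_j'\|_{L^\infty}\lesssim\|\hat{\bm{u}}_j\|_{Y^1}+\|s\hat{h}_j\|_{L^1}$ and propose to handle $\|s\hat{h}_j\|_{L^1}$ as in Lemma \ref{lem:EstUjTj2}. That pairing is wrong: in Lemma \ref{lem:EstSolBVP3} the weighted datum $\|s\hat{h}_j\|_{L^1}$ (the case $\alpha+\frac1p=1$) only controls $\|\hat{\tau}_j\|_{L^\infty}$ and $\|s\hat{\tau}_j'\|_{L^\infty}$ -- which is precisely the content of the weaker companion Lemma \ref{lem:DEstUjTj2}, with the weaker norms $Y^{j+1}\times Y^{j}$ -- and not the unweighted $\|\hat{\tau}_j'\|_{L^\infty}$, which is both part of the statement you must prove and is required downstream (Lemma \ref{lem:CalIneq2a} in the cases $l=0,1$ and Lemma \ref{lem:CalIneq3} need $\|\hat{\tau}_{j_0}'\|_{L^\infty}$, not just $\|s\hat{\tau}_{j_0}'\|_{L^\infty}$). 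The correct step, which the stronger hypotheses $\|\hat{\bm{u}}_0\|_{Y^{j+2}}$, $\|\hat{\bm{u}}_1\|_{Y^{j+1}}$ of this lemma afford, is to use the unweighted bound $\|\hat{\tau}_j'\|_{L^\infty}\leq C(\|\bm{u}_0\|_{Y^2})(\|\hat{\bm{u}}_j\|_{Y^1}+\|\hat{h}_j\|_{L^1})$ and to estimate $\|\hat{h}_j\|_{L^1}$ directly, absorbing the degeneracy through $|\tau_{j_0}(s)|\leq s\|\tau_{j_0}'\|_{L^\infty}$ and $|\hat{\tau}_{j_0}(s)|\leq s\|\hat{\tau}_{j_0}'\|_{L^\infty}$ and using $X^1$-norms of $\bm{u}_k,\hat{\bm{u}}_k$ (dominated by $Y^2$-norms), exactly as in \eqref{EstDT0} and in the analogous step for $\|h_j\|_{L^1}$ inside the proof of Lemma \ref{lem:EstUjTj1}. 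With this correction the remainder of your argument closes as you describe.
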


\begin{proof}
The proof is almost the same as that of Lemma \ref{lem:EstUjTj1}. 
We recall that $\hat{\tau}_j$ is defined as a unique solution to the two-point boundary value problem \eqref{DTPBVPj}. 
By Lemma \ref{lem:EstSolBVP3}, we have $\|\hat{\tau}_j'\|_{L^\infty} \leq C(\|\bm{u}_0\|_{Y^2})(\|\hat{\bm{u}}_j\|_{Y^1}+\|\hat{h}_j\|_{L^1})$. 
Here, it follows from Lemmas \ref{lem:NormRelation} and \ref{lem:EstUjTj1} that 
\begin{align*}
\|\hat{h}_j\|_{L^1}
&\lesssim \|(\bm{u}_1,\ldots,\bm{u}_{j+1})\|_{L^2} \|(\hat{\bm{u}}_1,\ldots,\hat{\bm{u}}_{j+1})\|_{L^2}
 + \|(\hat{\tau}_0',\ldots,\hat{\tau}_{j-1}')\|_{L^\infty} \|(\bm{u}_0,\ldots,\bm{u}_j)\|_{X^1}^2 \\
&\quad\;
 + \|(\tau_0',\ldots,\tau_j')\|_{L^\infty} \|(\bm{u}_0,\ldots,\bm{u}_j)\|_{X^1} \|(\hat{\bm{u}}_0,\ldots,\hat{\bm{u}}_j)\|_{X^1} \\
&\leq C( \|\bm{u}_0\|_{Y^{j+2}},\|\bm{u}_1\|_{Y^{j+1}} )
 ( \|(\hat{\bm{u}}_0,\ldots,\hat{\bm{u}}_j)\|_{Y^2}+\|\hat{\bm{u}}_{j+1}\|_{Y^1} + \|(\hat{\tau}_0',\ldots,\hat{\tau}_{j-1}')\|_{L^\infty} ).
\end{align*}
Therefore, by induction on $j$ together with \eqref{EstDT0} we obtain 
\begin{equation}\label{EstDTj}
\|\hat{\tau}_j'\|_{L^\infty} \leq C( \|\bm{u}_0\|_{Y^{j+2}},\|\bm{u}_1\|_{Y^{j+1}} )
 ( \|(\hat{\bm{u}}_0,\ldots,\hat{\bm{u}}_j)\|_{Y^2}+\|\hat{\bm{u}}_{j+1}\|_{Y^1} ).
\end{equation}

By Lemmas \ref{lem:algebraY} and \ref{lem:CalIneq2a}, we see that 
\begin{align*}
\|\hat{\tau}_j''\|_{Y^l}
&\lesssim \sum_{j_1+j_2=j} \|\bm{u}_{j_1+1}\|_{Y^{l+1}} \|\hat{\bm{u}}_{j_2+1}\|_{Y^{l+1}} \\
&\quad\;
 + \sum_{j_0+j_1+j_2=j} 
 \begin{cases}
  \|\tau_{j_0}'\|_{L^\infty} \|\bm{u}_{j_1}\|_{Y^{l+2}} \|\hat{\bm{u}}_{j_2}\|_{Y^{l+2}} \\
  \quad
    + \|\hat{\tau}_{j_0}'\|_{L^\infty} \|\bm{u}_{j_1}\|_{Y^{l+2}} \|\bm{u}_{j_2}\|_{Y^{l+2}} &(l=0,1) \\
  (\|\tau_{j_0}'\|_{L^\infty}+\|\tau_{j_0}''\|_{Y^{l-2}}) \|\bm{u}_{j_1}\|_{Y^{l+1}} \|\hat{\bm{u}}_{j_2}\|_{Y^{l+1}} \\
  \quad
   + (\|\hat{\tau}_{j_0}'\|_{L^\infty}+\|\hat{\tau}_{j_0}''\|_{Y^{l-2}}) \|\bm{u}_{j_1}\|_{Y^{l+1}} \|\bm{u}_{j_2}\|_{Y^{l+1}} &(l\geq2).
 \end{cases}
\end{align*}
Using this inductively on $j$ together with Lemma \ref{lem:EstUjTj1} and \eqref{EstDTj}, we obtain 
\begin{equation}\label{EstDTj''}
\|\hat{\tau}_j''\|_{Y^l} \leq C(\|\bm{u}_0\|_{Y^{l+j+2}}, \|\bm{u}_1\|_{Y^{l+j+1}})
 ( \|(\hat{\bm{u}}_0,\ldots,\hat{\bm{u}}_j)\|_{Y^{l+2}}+\|\hat{\bm{u}}_{j+1}\|_{Y^{l+1}} )
\end{equation}
for $l=0,1,2,\ldots$.

On the other hand, it follows from \eqref{DEqUj} together with Lemma \ref{lem:CalIneq3}, \eqref{EstDTj}, and \eqref{EstDTj''} that 
\begin{align*}
\|\hat{\bm{u}}_{j+2}\|_{Y^k}
&\lesssim ( \|(\tau_0',\ldots,\tau_j')\|_{L^\infty} + \|(\tau_0'',\ldots,\tau_j'')\|_{Y^k} ) \|(\hat{\bm{u}}_0,\ldots,\hat{\bm{u}}_j)\|_{Y^{k+2}} \\
&\quad\;
 + ( \|(\hat{\tau}_0',\ldots,\hat{\tau}_j')\|_{L^\infty} + \|(\hat{\tau}_0'',\ldots,\hat{\tau}_j'')\|_{Y^k} ) \|(\bm{u}_0,\ldots,\bm{u}_j)\|_{Y^{k+2}} \\
&\leq C(\|\bm{u}_0\|_{Y^{k+j+2}},\|\bm{u}_1\|_{Y^{k+j+1}}) ( \|(\hat{\bm{u}}_0,\ldots,\hat{\bm{u}}_j)\|_{Y^{k+2}} + \|\hat{\bm{u}}_{j+1}\|_{Y^{k+1}} ).
\end{align*}
Using this inductively on $j$, we obtain the desired estimate for $\hat{\bm{u}}_{j+2}$. 
Then, plugging the estimate into \eqref{EstDTj} and \eqref{EstDTj''}, we get the other estimates of the lemma. 
\end{proof}

\begin{lemma}\label{lem:DEstUjTj2}
For any integer $j\geq1$, we have 
\[
\|s\hat{\bm{u}}_{j+1}\|_{L^2} + \|(\hat{\tau}_j,s\hat{\tau}_j')\|_{L^\infty}
 \leq C( \|\bm{u}_0\|_{Y^{j+1}}, \|\bm{u}_1\|_{Y^{j}})( \|\hat{\bm{u}}_0\|_{Y^{j+1}} + \|\hat{\bm{u}}_1\|_{Y^{j}}).
\]
\end{lemma}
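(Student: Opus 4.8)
The plan is to follow exactly the structure of the proof of Lemma~\ref{lem:EstUjTj2}, replacing the variables $\bm{u}_{j+1},\tau_j$ by their linearized counterparts $\hat{\bm{u}}_{j+1},\hat{\tau}_j$, and invoking the already-established linearized estimates of Lemma~\ref{lem:DEstUjTj1} in place of Lemma~\ref{lem:EstUjTj1}. First I would treat $\hat{\bm{u}}_{j+1}$. From \eqref{DEqUj} with $j$ replaced by $j-1$ we have $\hat{\bm{u}}_{j+1}=\sum_{j_0+j_1=j-1}\frac{(j-1)!}{j_0!j_1!}(\tau_{j_0}\hat{\bm{u}}_{j_1}+\hat{\tau}_{j_0}\bm{u}_{j_1})''$, so by Lemma~\ref{lem:CalIneq3} with $\epsilon=\tfrac12$,
\[
\|s\hat{\bm{u}}_{j+1}\|_{L^2}\lesssim\sum_{j_0+j_1=j-1}\bigl\{(\|\tau_{j_0}'\|_{L^\infty}+\|\tau_{j_0}''\|_{Y^0})\|\hat{\bm{u}}_{j_1}\|_{Y^2}+(\|\hat{\tau}_{j_0}'\|_{L^\infty}+\|\hat{\tau}_{j_0}''\|_{Y^0})\|\bm{u}_{j_1}\|_{Y^2}\bigr\}.
\]
Now Lemma~\ref{lem:EstUjTj1} bounds $\|\tau_{j_0}'\|_{L^\infty}+\|\tau_{j_0}''\|_{Y^0}$ and $\|\bm{u}_{j_1}\|_{Y^2}$ in terms of $\|\bm{u}_0\|_{Y^{j+1}},\|\bm{u}_1\|_{Y^j}$ (since $j_0,j_1\le j-1$), and Lemma~\ref{lem:DEstUjTj1} bounds $\|\hat{\tau}_{j_0}'\|_{L^\infty}+\|\hat{\tau}_{j_0}''\|_{Y^0}$ and $\|\hat{\bm{u}}_{j_1}\|_{Y^2}$ linearly by $\|\hat{\bm{u}}_0\|_{Y^{j+1}}+\|\hat{\bm{u}}_1\|_{Y^j}$ with a coefficient depending on $\|\bm{u}_0\|_{Y^{j+1}},\|\bm{u}_1\|_{Y^j}$. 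This yields the claimed bound for $\|s\hat{\bm{u}}_{j+1}\|_{L^2}$.

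Next I would estimate $\hat{\tau}_j$. Since $\hat{\tau}_j$ solves the two-point boundary value problem \eqref{DTPBVPj}, Lemma~\ref{lem:EstSolBVP3} (with $\bm{v}=\bm{u}_0'$, $\|\bm{u}_0'\|_{Y^0}\le\|\bm{u}_0\|_{Y^2}$) together with the embedding $|\hat{\bm{u}}_j(1)|\lesssim\|\hat{\bm{u}}_j\|_{Y^1}$ gives $\|(\hat{\tau}_j,s\hat{\tau}_j')\|_{L^\infty}\le C(\|\bm{u}_0\|_{Y^2})(\|\hat{\bm{u}}_j\|_{Y^1}+\|s\hat{h}_j\|_{L^1})$. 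It then remains to bound $\|s\hat{h}_j\|_{L^1}$. Writing out $\hat{h}_j$ (the three sums in its definition), the worst terms are $\|s\,\bm{u}_1\cdot\hat{\bm{u}}_{j+1}\|_{L^1}\lesssim\|\bm{u}_1\|_{L^2}\|s\hat{\bm{u}}_{j+1}\|_{L^2}$, handled by the bound for $\|s\hat{\bm{u}}_{j+1}\|_{L^2}$ just obtained, and the term $\|s\,(\bm{u}_{j_1}'\cdot\bm{u}_{j_2}')\hat{\tau}_{j_0}\|_{L^1}\lesssim\|s\hat{\tau}_{j_0}'\|_{L^\infty}\|s(\bm{u}_{j_1}',\bm{u}_{j_2}')\|_{L^2}^2$ (using $|\hat\tau_{j_0}(s)|\le s\|\hat\tau_{j_0}'\|_{L^\infty}$, but here $j_0\le j-1$ so $\|\hat\tau_{j_0}'\|_{L^\infty}$ is controlled by Lemma~\ref{lem:DEstUjTj1}); all remaining terms are products of two factors, one unhatted and bounded by Lemma~\ref{lem:EstUjTj1} and one hatted and bounded linearly by Lemma~\ref{lem:DEstUjTj1}, after using $\|s^{1/2}u\|_{L^2}\le\|u\|_{Y^0}\le\|u\|_{Y^1}$ and $\|su'\|_{L^2}\lesssim\|u\|_{Y^1}$. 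Collecting these estimates and using $\|\hat{\bm{u}}_j\|_{Y^1}\le\|\hat{\bm{u}}_j\|_{Y^{j+1\vee\cdots}}$ controlled by Lemma~\ref{lem:DEstUjTj1} completes the bound.

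I do not expect a genuine obstacle here: the lemma is the linearized analogue of Lemma~\ref{lem:EstUjTj2}, every nonlinear product is replaced by a product with at most one linearized factor, and the requisite building blocks (Lemmas~\ref{lem:EstSolBVP3}, \ref{lem:CalIneq3}, \ref{lem:algebraY}, \ref{lem:CalIneq2a}, \ref{lem:EstUjTj1}, \ref{lem:DEstUjTj1}) are already in place. The only point requiring a little care is bookkeeping the Sobolev indices so that every unhatted factor needs at most $\|\bm{u}_0\|_{Y^{j+1}},\|\bm{u}_1\|_{Y^j}$ and every hatted factor at most $\|\hat{\bm{u}}_0\|_{Y^{j+1}},\|\hat{\bm{u}}_1\|_{Y^j}$; since the highest-derivative term $\hat{\bm{u}}_{j+1}$ enters only through $\|s\hat{\bm{u}}_{j+1}\|_{L^2}$, which we have just bounded by exactly these norms, the indices close up without loss. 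Hence the proof is a routine adaptation, and I would present it in condensed form, citing the proof of Lemma~\ref{lem:EstUjTj2} for the parts that are verbatim the same.
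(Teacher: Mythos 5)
Your plan reproduces the paper's proof essentially step for step: the bound on $\|s\hat{\bm{u}}_{j+1}\|_{L^2}$ via \eqref{DEqUj} and Lemma \ref{lem:CalIneq3} with $\epsilon=\frac12$ combined with Lemmas \ref{lem:EstUjTj1} and \ref{lem:DEstUjTj1}, and then the bound on $\|(\hat{\tau}_j,s\hat{\tau}_j')\|_{L^\infty}$ via Lemma \ref{lem:EstSolBVP3} applied to \eqref{DTPBVPj} together with a term-by-term estimate of $\|s\hat{h}_j\|_{L^1}$, are exactly the paper's steps. One bookkeeping correction: the terms $\bm{u}_{j+1}\cdot\hat{\bm{u}}_1$ and $(\bm{u}_0'\cdot\hat{\bm{u}}_0')\tau_j$ in $\hat{h}_j$ contain unhatted factors that Lemma \ref{lem:EstUjTj1} controls only through the too-strong norms $\|\bm{u}_0\|_{Y^{j+2}},\|\bm{u}_1\|_{Y^{j+1}}$, so, as the paper does, you must bound $\|s\bm{u}_{j+1}\|_{L^2}$ and $\|\tau_j\|_{L^\infty}$ by the already-proved nonlinear Lemma \ref{lem:EstUjTj2} (absent from your list of building blocks) to keep the constant depending only on $\|\bm{u}_0\|_{Y^{j+1}}$ and $\|\bm{u}_1\|_{Y^{j}}$.
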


\begin{proof}
The proof is almost the same as that of Lemma \ref{lem:EstUjTj2}. 
By Lemma \ref{lem:CalIneq3} with $\epsilon=\frac12$, we see that 
\begin{align*}
\|s\hat{\bm{u}}_{j+1}\|_{L^2}
&\lesssim \sum_{j_0+j_1=j-1} \{ ( \|\tau_{j_0}'\|_{L^\infty} + \|\tau_{j_0}''\|_{Y^0} ) \|\hat{\bm{u}}_{j_1}\|_{Y^2}
 + ( \|\hat{\tau}_{j_0}'\|_{L^\infty} + \|\hat{\tau}_{j_0}''\|_{Y^0} ) \|\bm{u}_{j_1}\|_{Y^2}\},
\end{align*}
which together with Lemmas \ref{lem:EstUjTj1} and \eqref{lem:DEstUjTj1} yields the desired estimate for $\hat{\bm{u}}_{j+1}$.

Since $\tau_j$ satisfies \eqref{DTPBVPj}, it follows from Lemma \ref{lem:EstSolBVP3} that 
$\|(\hat{\tau}_j,s\hat{\tau}_j')\|_{L^\infty} \leq C(\|\bm{u}_0\|_{Y^2})( \|\hat{\bm{u}}_j\|_{Y^1}+\|s\hat{h}_j\|_{L^1})$. 
Here, we see that 
\begin{align*}
\|s\hat{h}_j\|_{L^1}
&\lesssim \|\bm{u}_1\|_{Y^1} \|s\hat{\bm{u}}_{j+1}\|_{L^2} + \|s\bm{u}_{j+1}\|_{L^2}\|\hat{\bm{u}}_1\|_{Y^1}
 + \|(\bm{u}_2,\ldots,\bm{u}_j)\|_{Y^0}\|(\hat{\bm{u}}_2,\ldots,\hat{\bm{u}}_j)\|_{Y^0} \\
&\quad\;
 + \|\tau_j\|_{L^\infty}\|\bm{u}_0\|_{Y^2}\|\hat{\bm{u}}_0\|_{Y^2}
 + \|(\tau_0',\ldots,\tau_{j-1}')\|_{L^\infty} \|(\bm{u}_0,\ldots,\bm{u}_j)\|_{Y^1}\|(\hat{\bm{u}}_0,\ldots,\hat{\bm{u}}_j)\|_{Y^1} \\
&\quad\;
 + \|(\hat{\tau}_0',\ldots,\hat{\tau}_{j-1}')\|_{L^\infty} \|(\bm{u}_0,\ldots,\bm{u}_j)\|_{Y^1}^2.
\end{align*}
These estimates together with Lemmas \ref{lem:EstUjTj1}--\ref{lem:DEstUjTj1} imply the desired estimate for $\hat{\tau}_j$. 
\end{proof}

By this Lemma \ref{lem:DEstUjTj2}, we see that the map $(\hat{\bm{u}}_0,\hat{\bm{u}}_1) \mapsto (\hat{\tau}_j,s\hat{\tau}_j')$ defines a linear operator 
from $Y^{j+1}\times Y^j$ to $L^\infty$ for $j=1,2,3,\ldots$. 
Moreover, by a similar argument to the case $j=0$ we can also prove that the map $\Theta_j \colon Y^{j+1}\times Y^j \to \mathbb{R}$ is continuously differentiable 
and $D\Theta_j(\bm{u}_0,\bm{u}_1)[\hat{\bm{u}}_0,\hat{\bm{u}}_1]$ is in fact given by $\hat{\tau}_j(1)$ for $j=1,2,3,\ldots$. 
Since the proof is standard and straightforward, we omit it. 
The proof of Lemma \ref{lem:Theta} is complete. 
\hfill$\qed$

\subsection{Proof of Lemma \ref{lem:AppID1}}
{\bf The case $m=1,2$.} 
This case is trivial. 
In fact, the case $m=1$ can be proved as follows. 
As can be shown similarly to the proof of \cite[Lemma 4.6]{Takayama2018}, $C^\infty([0,1])$ is dense in $X^k$ for any non-negative integer $k$. 
Therefore, for any $\varepsilon>0$ there exist $(\tilde{\bm{x}}_0^{\mathrm{in},\varepsilon},\tilde{\bm{x}}_1^{\mathrm{in},\varepsilon}) \in C^\infty([0,1])$ 
such that $\|\tilde{\bm{x}}_0^{\mathrm{in},\varepsilon}-\bm{x}_0^{\mathrm{in}}\|_{X^1}
 + \|\tilde{\bm{x}}_1^{\mathrm{in},\varepsilon}-\bm{x}_1^{\mathrm{in}}\|_{X^0} < \varepsilon$. 
Then, we define the approximate data by 
$\bm{x}_0^{\mathrm{in},\varepsilon}(s)=\tilde{\bm{x}}_0^{\mathrm{in},\varepsilon}(s)-\tilde{\bm{x}}_0^{\mathrm{in},\varepsilon}(1)$ and 
$\bm{x}_1^{\mathrm{in},\varepsilon}(s)=\tilde{\bm{x}}_1^{\mathrm{in},\varepsilon}(s)$. 
Obviously, the data $(\bm{x}_0^{\mathrm{in},\varepsilon},\bm{x}_1^{\mathrm{in},\varepsilon})$ satisfy the compatibility condition at order $0$, that is, 
$\bm{x}_0^{\mathrm{in},\varepsilon}(1)=\bm{0}$. 
Since $\bm{x}_0^\mathrm{in}(1)=\bm{0}$, we have $(\bm{x}_0^{\mathrm{in},\varepsilon}-\bm{x}_0^\mathrm{in})(s)
 = -\int_s^1(\tilde{\bm{x}}_0^{\mathrm{in},\varepsilon}-\bm{x}_0^\mathrm{in})'(\sigma)\mathrm{d}\sigma$. 
Moreover, by Lemma \ref{lem:NormEqui} we see that 
\begin{align*}
\|\bm{x}_0^{\mathrm{in},\varepsilon}-\bm{x}_0^{\mathrm{in}}\|_{X^1}
&\leq \|\bm{x}_0^{\mathrm{in},\varepsilon}-\bm{x}_0^{\mathrm{in}}\|_{L^2} + \|s^\frac12(\bm{x}_0^{\mathrm{in},\varepsilon}-\bm{x}_0^{\mathrm{in}})'\|_{L^2} \\
&\leq 2\|s(\tilde{\bm{x}}_0^{\mathrm{in},\varepsilon}-\bm{x}_0^{\mathrm{in}})'\|_{L^2}
 + \|s^\frac12(\tilde{\bm{x}}_0^{\mathrm{in},\varepsilon}-\bm{x}_0^{\mathrm{in}})'\|_{L^2} \\
&\leq 3\|\tilde{\bm{x}}_0^{\mathrm{in},\varepsilon}-\bm{x}_0^{\mathrm{in}}\|_{X^1}.
\end{align*}
Therefore, we obtain 
$\|\bm{x}_0^{\mathrm{in},\varepsilon}-\bm{x}_0^{\mathrm{in}}\|_{X^1} + \|\bm{x}_1^{\mathrm{in},\varepsilon}-\bm{x}_1^{\mathrm{in}}\|_{X^0} < 3\varepsilon$. 
This shows the lemma in the case $m=1$. 
Similarly, we can prove the case $m=2$.

\medskip
\noindent
{\bf The case $m=2k+1$ with $k\geq1$.}
We recall that $\bm{x}_j$ for $j=0,1,\ldots,m-1$ are determined uniquely from the initial data $(\bm{u}_0,\bm{u}_1)\in Y^{m-1}\times Y^{m-2}$; see Lemma \ref{lem:EstIV}. 
Taking this into account, we introduce a functional $\bm{X}_j$ as 
\begin{equation}\label{defXj}
\bm{X}_j(\bm{u}_0,\bm{u}_1)=\bm{x}_j(1)
\end{equation}
for $j=0,1,\ldots,m-1$. 
Then, we see that the initial data $(\bm{x}_0,\bm{x}_1)$ satisfy the compatibility condition at order $j$ if and only if 
$\bm{X}_j(\bm{x}_0',\bm{x}_1')=\bm{0}$ holds.

Let $\psi\in C_0^\infty(\mathbb{R})$ be a cut-off function satisfying $\psi(s)=1$ for $|s|\leq1$ and put 
\begin{equation}\label{defCut-off}
\psi_j^\delta(s) = \frac{(s-1)^j}{j!}\psi\left(\frac{s-1}{\delta}\right)
\end{equation}
for $\delta>0$ and $j=0,1,2,\ldots$. 
Obviously, we have 
\begin{equation}\label{EstCut-off}
 \begin{cases}
  (\ds^k\psi_j^\delta)(1) = \delta_{kj} &\mbox{for}\quad j,k=0,1,2,\ldots, \\
  \|\psi_j^\delta\|_{H^k} \leq C_{j,k}\delta^{j-k+\frac12} &\mbox{for}\quad 0<\delta\leq1, \ j,k=1,2,3,\ldots,
 \end{cases}
\end{equation}
where $\delta_{kj}$ is the Kronecker delta and $C_{j,k}$ is a positive constant depending only on $j$ and $k$. 
For $\bm{a}=(\bm{a}_1,\bm{a}_2,\ldots,\bm{a}_{m-2}) \in (\mathbb{R}^3)^{m-2}$, we are going to construct the approximate initial data 
$(\bm{U}_0^\delta,\bm{U}_1^\delta)$ in the form 
\begin{equation}\label{AppID1}
 \begin{cases}
  \bm{U}_0^\delta(s) = \bm{u}_0(s) + \sum_{l=1}^k \delta^{-2(l-1)}\psi_{2l-1}^\delta(s)\bm{a}_{2l-1}, \\
  \bm{U}_1^\delta(s) = \bm{u}_1(s) + \sum_{l=1}^{k-1} \delta^{-(2l-1)}\psi_{2l-1}^\delta(s)\bm{a}_{2l}. 
 \end{cases}
\end{equation}
With a slight abuse of notation, we write $\bm{U}_0^\delta=\bm{U}_0^\delta(\bm{u}_0,\bm{a})$ and $\bm{U}_1^\delta=\bm{U}_1^\delta(\bm{u}_1,\bm{a})$. 
This would cause no confusion. 
Here, we note that $(\bm{u}_0,\bm{u}_1)$ will play as smooth approximation of the initial data and $\bm{a}$ will be chosen so that 
$(\bm{U}_0^\delta,\bm{U}_1^\delta)$ satisfy the compatibility conditions up to order $m-1$. 
Then, we define a map $\bm{\Xi}^\delta = (\bm{\Xi}_2^\delta, \bm{\Xi}_3^\delta, \ldots,\bm{\Xi}_{m-1}^\delta)$ by 
\begin{equation}\label{defXi}
\bm{\Xi}_{j+2}^\delta(\bm{u}_0,\bm{u}_1,\bm{a}) = \delta^j\bm{X}_{j+2}(\bm{U}_0^\delta(\bm{u}_0,\bm{a}),\bm{U}_1^\delta(\bm{u}_1,\bm{a}))
\end{equation}
for $j=0,1,\ldots,m-3$. 
We proceed to solve the equation $\bm{\Xi}^\delta(\bm{u}_0,\bm{u}_1,\bm{a})=\bm{0}$ in terms of $\bm{a}$. 
To this end, we need to investigate the map $\bm{\Xi}^\delta$ in detail.

It follows from \eqref{EstCut-off} and \eqref{AppID1} that 
\[
 \begin{cases}
  (\ds^{2j}\bm{U}_0^\delta)(1) = (\ds^{2j}\bm{u}_0)(1), \\
  (\ds^{2j+1}\bm{U}_0^\delta)(1) = (\ds^{2j+1}\bm{u}_0)(1) + \delta^{-2j}\bm{a}_{2j+1}
 \end{cases}
\]
for $j=0,1,\ldots,k-1$ and that 
\[
 \begin{cases}
  (\ds^{2j}\bm{U}_1^\delta)(1) = (\ds^{2j}\bm{u}_1)(1), \\
  (\ds^{2j+1}\bm{U}_1^\delta)(1) = (\ds^{2j+1}\bm{u}_1)(1) + \delta^{-(2j+1)}\bm{a}_{2j+2}
 \end{cases}
\]
for $j=0,1,\ldots,k-2$. 
Therefore, by Lemma \ref{lem:ExpX} together with \eqref{defTheta} and \eqref{defXj} we obtain 
\begin{align}\label{ExpXi1}
& \bm{\Xi}_{2j+2}^\delta(\bm{u}_0,\bm{u}_1,\bm{a}) \\
&= \Theta_0(\bm{U}_0^\delta,\bm{U}_1^\delta)^{j+1}\bm{a}_{2j+1} + \delta^{2j} \{ \Theta_0(\bm{U}_0^\delta,\bm{U}_1^\delta)^{j+1}(\ds^{2j+1}\bm{u}_0)(1) 
 \nonumber \\
&\qquad
 + \mathcal{P}_{2j+2}( (\ds^{\leq 2j}\bm{U}_0^\delta)(1),(\ds^{\leq 2j-1}\bm{U}_1^\delta)(1),
  \Theta_0(\bm{U}_0^\delta,\bm{U}_1^\delta),\ldots,\Theta_{2j}(\bm{U}_0^\delta,\bm{U}_1^\delta)) \} \nonumber
\end{align}
for $j=0,1,\ldots,k-1$ and 
\begin{align}\label{ExpXi2}
& \bm{\Xi}_{2j+3}^\delta(\bm{u}_0,\bm{u}_1,\bm{a}) \\
&= \Theta_0(\bm{U}_0^\delta,\bm{U}_1^\delta)^{j+1}\bm{a}_{2j+2} + \delta^{2j+1} \{ \Theta_0(\bm{U}_0^\delta,\bm{U}_1^\delta)^{j+1}(\ds^{2j+1}\bm{u}_1)(1) 
 \nonumber \\
&\qquad
 + \mathcal{P}_{2j+3}( (\ds^{\leq 2j+1}\bm{U}_0^\delta)(1),(\ds^{\leq 2j}\bm{U}_1^\delta)(1),
  \Theta_0(\bm{U}_0^\delta,\bm{U}_1^\delta),\ldots,\Theta_{2j+1}(\bm{U}_0^\delta,\bm{U}_1^\delta)) \} \nonumber
\end{align}
for $j=0,1,\ldots,k-2$. 
These expressions together with Lemma \ref{lem:Theta} imply that 
\begin{equation}\label{C^1}
\bm{\Xi}^\delta \in C^1(Y^{m-1}\times Y^{m-2}\times(\mathbb{R}^3)^{m-2};(\mathbb{R}^3)^{m-2}). 
\end{equation}

We proceed to evaluate the derivative of the map $\bm{\Xi}^\delta(\bm{u}_0,\bm{u}_1,\bm{a})$ with respect to $\bm{a}$ at $\bm{a}=\bm{0}$, that is, 
$D_{\bm{a}}\bm{\Xi}^\delta(\bm{u}_0,\bm{u}_1,\bm{0})[\hat{\bm{a}}]$. 
By \eqref{EstCut-off} and \eqref{AppID1}, we see easily that 
\[
\|D_{\bm{a}}\bm{U}_0^\delta(\bm{u}_0,\bm{u}_1,\bm{a})[\hat{\bm{a}}]\|_{H^{j+1}} + \|D_{\bm{a}}\bm{U}_1^\delta(\bm{u}_0,\bm{u}_1,\bm{a})[\hat{\bm{a}}]\|_{H^j}
\lesssim \delta^{\frac12-j}|\hat{\bm{a}}|
\]
for $0<\delta\leq1$ and for $j=0,1,2,\ldots$, so that by Lemma \ref{lem:Theta} we obtain 
\[
|D_{\bm{a}}\Theta_j(\bm{U}_0^\delta,\bm{U}_1^\delta)|_{\bm{a}=\bm{0}}[\hat{\bm{a}}]|
\leq 
\begin{cases}
 C(\|\bm{u}_0\|_{X^1},\|\bm{u}_1\|_{X^0}) \delta^\frac12|\hat{\bm{a}}| &\mbox{for}\quad j=0, \\
 C(\|\bm{u}_0\|_{Y^{j+1}},\|\bm{u}_1\|_{Y^j}) \delta^{\frac12-j}|\hat{\bm{a}}| &\mbox{for}\quad j=1,2,3,\ldots.
\end{cases}
\]
Therefore, we get 
\[
\begin{cases}
 D_{\bm{a}}\bm{\Xi}_{2j+2}^\delta(\bm{u}_0,\bm{u}_1,\bm{0})[\hat{\bm{a}}] = \tau_0(1)^{j+1}\hat{\bm{a}}_{2j+1}
  + \delta^\frac12\bm{R}_{2j+2}^\delta(\bm{u}_0,\bm{u}_1)[\hat{\bm{a}}] &\mbox{for}\quad 0\leq j\leq k-1, \\
 D_{\bm{a}}\bm{\Xi}_{2j+3}^\delta(\bm{u}_0,\bm{u}_1,\bm{0})[\hat{\bm{a}}] = \tau_0(1)^{j+1}\hat{\bm{a}}_{2j+2}
  + \delta^\frac12\bm{R}_{2j+3}^\delta(\bm{u}_0,\bm{u}_1)[\hat{\bm{a}}] &\mbox{for}\quad 0\leq j\leq k-2,
\end{cases}
\]
where $\bm{R}_{j+2}^\delta$ satisfies 
\[
|\bm{R}_{j+2}^\delta(\bm{u}_0,\bm{u}_1)[\hat{\bm{a}}]| \leq C(\|\bm{u}_0\|_{Y^{j+2}},\|\bm{u}_1\|_{Y^{j+1}})|\hat{\bm{a}}|
\]
for $0<\delta\leq1$ and for $j=0,1,\ldots, m-3$. 
This shows that for any positive constants $M$ and $c$ there exists a positive constant $\delta_0$ such that 
if $\tau_0(1)\geq c$ and $\|\bm{u}_0\|_{Y^{m-1}}+\|\bm{u}_1\|_{Y^{m-2}} \leq M$, then the map 
$D_{\bm{a}}\bm{\Xi}^\delta(\bm{u}_0,\bm{u}_1,\bm{0}) \colon (\mathbb{R}^3)^{m-2} \to (\mathbb{R}^3)^{m-2}$ is invertible for any $\delta\in(0,\delta_0]$.

Let $(\bm{x}_0^\mathrm{in},\bm{x}_1^\mathrm{in})$ be the initial data stated in the lemma and put 
$(\bm{u}_0^\mathrm{in},\bm{u}_1^\mathrm{in})=(\bm{x}_0^{\mathrm{in}\prime},\bm{x}_1^{\mathrm{in}\prime})$. 
Then, we have $(\bm{u}_0^\mathrm{in},\bm{u}_1^\mathrm{in}) \in Y^{m-1}\times Y^{m-2}$. 
Since the initial data satisfy the compatibility conditions up to order $m-1$, 
we have $\bm{X}_j(\bm{u}_0^\mathrm{in},\bm{u}_1^\mathrm{in})=\bm{0}$ for $j=0,1,\ldots,m-1$. 
Therefore, in view of \eqref{defXi}, we see that 
\begin{equation}\label{CondIFT}
\begin{cases}
 \bm{\Xi}^\delta(\bm{u}_0^\mathrm{in},\bm{u}_1^\mathrm{in},\bm{0})=\bm{0}, \\
 D_{\bm{a}}\bm{\Xi}^\delta(\bm{u}_0^\mathrm{in},\bm{u}_1^\mathrm{in},\bm{0}) \colon (\mathbb{R}^3)^{m-2} \to (\mathbb{R}^3)^{m-2}
  \mbox{ is invertible}
\end{cases}
\end{equation}
for any sufficiently small $\delta>0$. 
Now, we fix the parameter $\delta$ so that \eqref{CondIFT} holds. 
By \eqref{C^1} and \eqref{CondIFT} we can apply the implicit function theorem to see that the equation $\bm{\Xi}^\delta(\bm{u}_0,\bm{u}_1,\bm{a})=\bm{0}$ 
for $\bm{a}$ can be uniquely solved in a neighborhood of $(\bm{u}_0,\bm{u}_1,\bm{a})=(\bm{u}_0^\mathrm{in},\bm{u}_1^\mathrm{in},\bm{0})$ in 
$Y^{j-1}\times Y^{j-2}\times(\mathbb{R}^3)^{m-2}$ and that the solution satisfies 
$|\bm{a}| \lesssim \|\bm{u}_0-\bm{u}_0^\mathrm{in}\|_{Y^{m-1}}+\|\bm{u}_1-\bm{u}_1^\mathrm{in}\|_{Y^{m-2}}$.

We are ready to construct the approximate initial data $(\bm{x}_0^{\mathrm{in},\varepsilon},\bm{x}_1^{\mathrm{in},\varepsilon})$. 
By the density of the space $C^\infty([0,1])$, for any $\varepsilon>0$ there exist 
$(\tilde{\bm{x}}_0^{\mathrm{in},\varepsilon},\tilde{\bm{x}}_1^{\mathrm{in},\varepsilon})\in C^\infty([0,1])$ such that 
$\|\tilde{\bm{x}}_0^{\mathrm{in},\varepsilon}-\bm{x}_0^{\mathrm{in}}\|_{X^m}
 + \|\tilde{\bm{x}}_1^{\mathrm{in},\varepsilon}-\bm{x}_1^{\mathrm{in}}\|_{X^{m-1}} < \varepsilon$. 
Put $\tilde{\bm{u}}_0^{\mathrm{in},\varepsilon}=\tilde{\bm{x}}_0^{\mathrm{in},\varepsilon \prime}$ and 
$\tilde{\bm{u}}_1^{\mathrm{in},\varepsilon}=\tilde{\bm{x}}_1^{\mathrm{in},\varepsilon \prime}$. 
Then, we have $\|\tilde{\bm{u}}_0^{\mathrm{in},\varepsilon}-\bm{u}_0^{\mathrm{in}}\|_{Y^{m-1}}
 + \|\tilde{\bm{u}}_1^{\mathrm{in},\varepsilon}-\bm{u}_1^{\mathrm{in}}\|_{Y^{m-2}} < \varepsilon$. 
Therefore, for any small $\varepsilon>0$ there exists a unique $\bm{a}^\varepsilon\in(\mathbb{R}^3)^{m-2}$ such that 
$\bm{\Xi}^\delta(\tilde{\bm{u}}_0^{\mathrm{in},\varepsilon},\tilde{\bm{u}}_1^{\mathrm{in},\varepsilon},\bm{a}^\varepsilon)=\bm{0}$ and 
$|\bm{a}^\varepsilon| \lesssim \varepsilon$. 
Now, we define the approximate initial data by 
$\bm{u}_j^{\mathrm{in},\varepsilon}=\bm{U}_j^\delta(\tilde{\bm{u}}_j^{\mathrm{in},\varepsilon},\bm{a}^\varepsilon)$ and 
$\bm{x}_j^{\mathrm{in},\varepsilon}(s)=-\int_s^1\bm{u}_j^{\mathrm{in},\varepsilon}(\sigma)\mathrm{d}\sigma$ for $j=0,1$. 
Then, by \eqref{AppID1} we have 
\begin{align*}
\|\bm{u}_0^{\mathrm{in},\varepsilon}-\bm{u}_0^\mathrm{in}\|_{Y^{m-1}} + \|\bm{u}_1^{\mathrm{in},\varepsilon}-\bm{u}_1^\mathrm{in}\|_{Y^{m-2}}
&\leq \|\tilde{\bm{u}}_0^{\mathrm{in},\varepsilon}-\bm{u}_0^\mathrm{in}\|_{Y^{m-1}}
 + \|\tilde{\bm{u}}_1^{\mathrm{in},\varepsilon}-\bm{u}_1^\mathrm{in}\|_{Y^{m-2}} + C_\delta |\bm{a}^\varepsilon| \\
&\lesssim \varepsilon.
\end{align*}
Therefore, we see that $(\bm{x}_0^{\mathrm{in},\varepsilon},\bm{x}_1^{\mathrm{in},\varepsilon})$ satisfy the desired properties.

\medskip
\noindent
{\bf The case $m=2k+2$ with $k\geq1$.} 
This case can be proved in the same way as in the previous case by replacing the definition \eqref{AppID1} of $\bm{U}_0^\delta$ and $\bm{U}_1^\delta$ with 
\begin{equation}\label{AppID2}
 \begin{cases}
  \bm{U}_0^\delta(s) = \bm{u}_0(s) + \sum_{l=1}^k \delta^{-2(l-1)}\psi_{2l-1}^\delta(s)\bm{a}_{2l-1}, \\
  \bm{U}_1^\delta(s) = \bm{u}_1(s) + \sum_{l=1}^k \delta^{-(2l-1)}\psi_{2l-1}^\delta(s)\bm{a}_{2l}. 
 \end{cases}
\end{equation}
The proof of Lemma \ref{lem:AppID1} is complete. 
\hfill$\qed$

\subsection{Proof of Lemma \ref{lem:AppID2}}
The proof is carried out in a similar way to the proof of Lemma \ref{lem:AppID1}. 
We recall that $\psi_j^\delta$ is defined by \eqref{defCut-off} for $\delta>0$ and $j=0,1,2,\ldots$. 
We extend these cut-off functions as $\psi_j^\delta(s)\equiv0$ for $\delta\leq0$. 
Then, we see that for each $j=0,1,2,\ldots$, the map $\mathbb{R}\ni\delta\mapsto\psi_j^\delta\in H^j$ is continuous. 
We note however that the map $\mathbb{R}\ni\delta\mapsto\psi_j^\delta\in H^{j+1}$ is not continuous at $\delta=0$.

\medskip
\noindent{\bf The case $m=1$.} 
This case is trivial. 
In fact, for $\varepsilon>0$ we define the approximate data by $\bm{x}_0^{\mathrm{in},\varepsilon}(s)=\bm{x}_0^{\mathrm{in}}(s)$ and 
$\bm{x}_1^{\mathrm{in},\varepsilon}(s)=\bm{x}_1^{\mathrm{in}}(s)-\psi_0^\varepsilon(s)\bm{x}_1^{\mathrm{in}}(1)$. 
Obviously, the data $(\bm{x}_0^{\mathrm{in},\varepsilon},\bm{x}_1^{\mathrm{in},\varepsilon})$ satisfy the compatibility conditions up to order 1, that is, 
$\bm{x}_0^{\mathrm{in},\varepsilon}(1)=\bm{x}_1^{\mathrm{in},\varepsilon}(1)=\bm{0}$. 
Moreover, by \eqref{EstCut-off} we have 
$\|\bm{x}_1^{\mathrm{in},\varepsilon}-\bm{x}_1^{\mathrm{in}}\|_{X^0} \leq \|\psi_0^\varepsilon\|_{L^2}|\bm{x}_1^{\mathrm{in}}(1)| \lesssim \varepsilon^\frac12$. 
This shows the lemma in the case $m=1$.

\medskip
\noindent{\bf The case $m=2k$ with $k\geq1$.} 
We still use the functional $\bm{X}_j$ defined by \eqref{defXj}. 
For $\varepsilon\in\mathbb{R}$ and $\bm{a}=(\bm{a}_1,\bm{a}_2,\ldots,\bm{a}_{m-1}) \in (\mathbb{R}^3)^{m-1}$, 
we are going to construct the approximate initial data $(\bm{U}_0^\delta,\bm{U}_1^\delta)$ in the form 
\begin{equation}\label{AppID3}
 \begin{cases}
  \bm{U}_0^\delta(s) = \bm{u}_0^\mathrm{in}(s) + \sum_{l=1}^{k-1} \delta^{-2(l-1)}\psi_{2l-1}^\delta(s)\bm{a}_{2l-1}
   + \delta^{-2(k-1)}\psi_{2k-1}^\varepsilon(s)\bm{a}_{2k-1}, \\
  \bm{U}_1^\delta(s) = \bm{u}_1^\mathrm{in}(s) + \sum_{l=1}^{k-1} \delta^{-(2l-1)}\psi_{2l-1}^\delta(s)\bm{a}_{2l},
 \end{cases}
\end{equation}
instead of \eqref{AppID1}, where $\delta$ is a positive parameter and $\bm{u}_j^\mathrm{in}=\bm{x}_j^{\mathrm{in}\prime}$ for $j=0,1$ as before. 
Again, with a slight abuse of notation, we write $\bm{U}_j^\delta=\bm{U}_j^\delta(\varepsilon,\bm{a})$ for $j=0,1$. 
We will choose $\bm{a}$ so that $(\bm{U}_0^\delta,\bm{U}_1^\delta)$ satisfy the compatibility conditions up to order $m$. 
We define a map $\bm{\mathfrak{X}}^\delta=(\bm{\mathfrak{X}}_2^\delta,\bm{\mathfrak{X}}_3^\delta,\ldots,\bm{\mathfrak{X}}_m^\delta)$ by 
\begin{equation}\label{deffX1}
\bm{\mathfrak{X}}_{j+2}^\delta(\varepsilon,\bm{a}) = \delta^j\bm{X}_{j+2}(\bm{U}_0^\delta(\varepsilon,\bm{a}),\bm{U}_1^\delta(\varepsilon,\bm{a}))
\end{equation}
for $j=0,1,\ldots,m-3$ and 
\begin{align}\label{deffX2}
\bm{\mathfrak{X}}_{m}^\delta(\varepsilon,\bm{a}) 
& = \Theta_0(\bm{U}_0^\delta,\bm{U}_1^\delta)^{k}\bm{a}_{m-1} + \delta^{m-2} \{ \Theta_0(\bm{U}_0^\delta,\bm{U}_1^\delta)^{k}(\ds^{m-1}\bm{u}_0)(1) 
\\
&\qquad
 + \mathcal{P}_{m}( (\ds^{\leq m-2}\bm{U}_0^\delta)(1),(\ds^{\leq m-3}\bm{U}_1^\delta)(1),
  \Theta_0(\bm{U}_0^\delta,\bm{U}_1^\delta),\ldots,\Theta_{m-2}(\bm{U}_0^\delta,\bm{U}_1^\delta)) \}. \nonumber
\end{align}
\begin{remark}
By Lemma \ref{lem:ExpX} together with \eqref{defTheta} and \eqref{defXj} we have 
\begin{equation}\label{deffX1bis}
\bm{\mathfrak{X}}_m^\delta(\varepsilon,\bm{a}) = \delta^{m-2}\bm{X}_m(\bm{U}_0^\delta(\varepsilon,\bm{a}),\bm{U}_1^\delta(\varepsilon,\bm{a}))
\end{equation}
for $\varepsilon>0$; see also \eqref{ExpXi1}. 
However, this identity does not hold for $\varepsilon\leq0$. 
In fact, the left-hand side is continous in $(\varepsilon,\bm{a})$ whereas the right-hand side has a discontinuity at $\varepsilon=0$ 
due to the discontinuity of the map $\varepsilon\mapsto(\ds^{m-1}\psi_{m-1}^\varepsilon)(1)$ at $\varepsilon=0$. 
We will use this discontinuity to construct the approximate initial data stated in Lemma \ref{lem:AppID2}. 
\end{remark}
We see also that similar expressions to \eqref{ExpXi1} and \eqref{ExpXi2} holds 
by replacing $\bm{\Xi}_{j+2}^\delta(\bm{u}_0,\bm{u}_1,\bm{a})$ with 
$\bm{\mathfrak{X}}_{j+2}^\delta(\varepsilon,\bm{a})$ for $j=0,1,\ldots,m-3$, 
so that by Lemma \ref{lem:Theta} 
the map $\bm{\mathfrak{X}}^\delta \colon \mathbb{R}\times(\mathbb{R}^3)^{m-1} \to (\mathbb{R}^3)^{m-1}$ is continuous 
and partially differentiable 
with respect to $\bm{a}$. 
Moreover, the partial derivative $D_{\bm{a}}\bm{\mathfrak{X}}^\delta(\varepsilon,\bm{a})$ is also continuous and 
\[
\begin{cases}
 D_{\bm{a}}\bm{\mathfrak{X}}_{2j+2}^\delta(0,\bm{0},\ldots,\bm{0},\bm{a}_{m-1})[\hat{\bm{a}}] = \tau_0^\mathrm{in}(1)^{j+1}\hat{\bm{a}}_{2j+1}
  + \delta^\frac12\bm{R}_{2j+2}^\delta[\hat{\bm{a}}] &\mbox{for}\quad 0\leq j\leq k-1, \\
 D_{\bm{a}}\bm{\mathfrak{X}}_{2j+3}^\delta(0,\bm{0},\ldots,\bm{0},\bm{a}_{m-1})[\hat{\bm{a}}] = \tau_0^\mathrm{in}(1)^{j+1}\hat{\bm{a}}_{2j+2}
  + \delta^\frac12\bm{R}_{2j+3}^\delta[\hat{\bm{a}}] &\mbox{for}\quad 0\leq j\leq k-2,
\end{cases}
\]
where $\bm{R}_{j+2}^\delta$ satisfies $|\bm{R}_{j+2}^\delta[\hat{\bm{a}}]| \lesssim |\hat{\bm{a}}|$ for $0<\delta\leq1$ and $j=0,1,\ldots,m-2$. 
Therefore, there exists a positive constant $\delta_0$ such that for any $\delta\in(0,\delta_0]$ the map 
$D_{\bm{a}}\bm{\mathfrak{X}}(0,\bm{0},\ldots,\bm{0},\bm{a}_{m-1}) \colon (\mathbb{R}^3)^{m-1} \to (\mathbb{R}^3)^{m-1}$ is invertible. 
Now, we fix $\delta=\delta_0$.

Since the initial data $(\bm{x}_0^\mathrm{in},\bm{x}_1^\mathrm{in})$ satisfy the compatibility conditions up to $m-1$, we have 
$\bm{\mathfrak{X}}_{j+2}^\delta(0,\bm{0},\ldots,\bm{0},\bm{a}_{m-1}) = \delta^j\bm{X}_{j+2}(\bm{u}_0^\mathrm{in},\bm{u}_1^\mathrm{in})
 = \delta^j\bm{x}_{j+2}^{\mathrm{in}}(1)=\bm{0}$ for $j=0,1,\ldots,m-3$. 
We have also 
\[
\bm{\mathfrak{X}}_m^\delta(0,\bm{0},\ldots,\bm{0},\bm{a}_{m-1}) = \tau_0^\mathrm{in}(1)^k \bm{a}_{m-1} + \delta^{m-2}\bm{x}_m^\mathrm{in}(1).
\]
Taking this into account, we put $\bm{a}_{m-1}^*=-\tau_0^\mathrm{in}(1)^{-k}\delta^{m-2}\bm{x}_m^\mathrm{in}(1)$ and 
$\bm{a}^*=(\bm{0},\ldots,\bm{0},\bm{a}_{m-1}^*) \in (\mathbb{R}^3)^{m-1}$. 
Then, we have 
\[
\begin{cases}
 \bm{\mathfrak{X}}^\delta(0,\bm{a}^*)=\bm{0}, \\
 D_{\bm{a}}\bm{\mathfrak{X}}(0,\bm{a}^*) \colon (\mathbb{R}^3)^{m-1} \to (\mathbb{R}^3)^{m-1} \mbox{ is invertible}.
\end{cases}
\]
Therefore, we can apply the implicit function theorem to see that the equation $\bm{\mathfrak{X}}^\delta(\epsilon,\bm{a})=\bm{0}$ for $\bm{a}$ 
can be uniquely solved in a neighborhood of $(\varepsilon,\bm{a})=(0,\bm{a}^*)$. 
We denote the solution by $\bm{a}(\varepsilon)$, which is continuous and satisfies $\bm{a}(0)=\bm{a}^*$.

We define the approximate initial data by $\bm{u}_j^{\mathrm{in},\varepsilon}=\bm{U}_j^\delta(\varepsilon,\bm{a}(\varepsilon))$ and 
$\bm{x}_j^{\mathrm{in},\varepsilon}(s)=-\int_s^1\bm{u}_j^{\mathrm{in},\varepsilon}(\sigma)\mathrm{d}\sigma$ for $0<\varepsilon\ll1$ and $j=0,1$. 
Then, by \eqref{AppID3} and \eqref{EstCut-off} we have 
\begin{align*}
\|\bm{u}_0^{\mathrm{in},\varepsilon}-\bm{u}_0^\mathrm{in}\|_{Y^{m-1}} + \|\bm{u}_1^{\mathrm{in},\varepsilon}-\bm{u}_1^\mathrm{in}\|_{Y^{m-2}}
&\leq C_\delta( |\bm{a}(\varepsilon)-\bm{a}^*| + \varepsilon^\frac12|\bm{a}(\varepsilon)|),
\end{align*}
which converges to 0 as $\varepsilon\to+0$. 
Therefore, we see that $(\bm{x}_0^{\mathrm{in},\varepsilon},\bm{x}_1^{\mathrm{in},\varepsilon})$ satisfy the desired properties.

\medskip
\noindent
{\bf The case $m=2k+1$ with $k\geq1$.} 
This case can be proved in the same way as in the previous case by replacing the definition \eqref{AppID3} of $\bm{U}_0^\delta$ and $\bm{U}_1^\delta$ with 
\begin{equation}\label{AppID4}
 \begin{cases}
  \bm{U}_0^\delta(s) = \bm{u}_0^\mathrm{in}(s) + \sum_{l=1}^k \delta^{-2(l-1)}\psi_{2l-1}^\delta(s)\bm{a}_{2l-1}, \\
  \bm{U}_1^\delta(s) = \bm{u}_1^\mathrm{in}(s) + \sum_{l=1}^{k-1} \delta^{-(2l-1)}\psi_{2l-1}^\delta(s)\bm{a}_{2l}
   + \delta^{-(2k-1)}\psi_{2k-1}^\varepsilon(s)\bm{a}_{2k}.
 \end{cases}
\end{equation}
The proof of Lemma \ref{lem:AppID2} is complete. 
\hfill$\qed$

\section{Approximation of the initial data II}\label{sect:AppID2}
The objective in this section is to give a proof of Proposition \ref{prop:AppID2}, 
which ensure that the initial data $(\bm{x}_0^\mathrm{in},\bm{x}_1^\mathrm{in})$ for the problem \eqref{Eq}--\eqref{IC} can be approximated 
by smooth initial data satisfying higher order compatibility conditions and the constraints 
$|\bm{x}_0^{\mathrm{in}\prime}(s)|=1$ and $\bm{x}_0^{\mathrm{in}\prime}(s)\cdot\bm{x}_1^{\mathrm{in}\prime}(s)=0$.

We recall that the approximate initial data for the problem \eqref{HP2}--\eqref{IC2} were constructed in the previous section in the form 
\eqref{AppID1}, \eqref{AppID2}, \eqref{AppID3}, and \eqref{AppID4}. 
However, these approximated data do not satisfy the constraints $|\bm{u}_0^\mathrm{in}(s)|=1$ and $\bm{u}_0^\mathrm{in}(s)\cdot\bm{u}_1^\mathrm{in}(s)=0$, 
even if the original data satisfy them. 
Therefore, we need to modify these approximations. 
To this end, we introduce a map $\bm{F} : \mathbb{R}^3\setminus\{\bm{0}\} \to \mathbb{S}^2$ by 
\[
\bm{F}(\bm{v}) = \frac{\bm{v}}{|\bm{v}|}.
\]
Then, we have $D\bm{F}(\bm{v})[\hat{\bm{v}}] = \frac{1}{|\bm{v}|}P(\bm{v})\hat{\bm{v}}$, where $P(\bm{v})$ is the projection onto the orthogonal complement 
of the one-dimensional space spanned by $\bm{v}$ and defined by 
\[
P(\bm{v})=\mathrm{Id} - \bm{F}(\bm{v})\otimes\bm{F}(\bm{v}).
\]
Therefore, $D\bm{F}(\bm{v})[\hat{\bm{v}}]\cdot\bm{v}=0$ holds for any $\bm{v}\in\mathbb{R}^3\setminus\{\bm{0}\}$ and $\hat{\bm{v}}\in\mathbb{R}^3$. 
Our strategy of the modification is that we are going to construct the approximate initial data in the form 
\begin{equation}\label{defmodu}
\begin{cases}
 \bm{u}_0 = \bm{F}(\bm{v}_0), \\
 \bm{u}_1 = D\bm{F}(\bm{v}_0)[\bm{v}_1],
\end{cases}
\end{equation}
where $\bm{v}_0$ and $\bm{v}_1$ will be constructed in a similar form to \eqref{AppID1}, \eqref{AppID2}, \eqref{AppID3}, and \eqref{AppID4}. 
Then, the constraints $|\bm{u}_0(s)|=1$ and $\bm{u}_0(s)\cdot\bm{u}_1(s)=0$ are automatically satisfied. 
Moreover, if $\bm{v}_0$ and $\bm{v}_1$ themselves satisfy these constraints, then $(\bm{u}_0,\bm{u}_1)$ are coincided with $(\bm{v}_0,\bm{v}_1)$.

A drawback of this strategy is caused by the degeneracy of the map $\bm{F}$. 
In fact, if we use the maps $\bm{\Xi}^\delta$ and $\bm{\mathfrak{X}}^\delta$ defined by \eqref{defXi} and \eqref{deffX1}--\eqref{deffX2}, respectively, 
then the derivatives $D_{\bm{a}}\bm{\Xi}^\delta(\bm{u}_0^\mathrm{in},\bm{u}_1^\mathrm{in},\bm{0})$ and $D_{\bm{a}}\bm{\mathfrak{X}}^\delta(0,\bm{a}^*)$ 
are not invertible due to the degeneracy, so that we cannot apply the implicit function theorem for the maps. 
Therefore, we need to modify these maps in order that the implicit function theorem can be applied.

To illustrate the idea of the modification, we first consider the compatibility condition at order $2$, that is, $\bm{x}_2(1)=\bm{0}$. 
In the following, we use the same notations as those in the previous section. 
Suppose that the initial data $(\bm{u}_0,\bm{u}_1)$ are given in the form \eqref{defmodu}. 
Then, we see that 
\begin{align*}
\bm{x}_2(1)
&= (\tau_0\bm{u}_0)'(1)+\bm{g} \\
&= \frac{\tau_0(1)}{|\bm{v}_0(1)|}P(\bm{v}_0(1))\bm{v}_0'(1) - (\bm{u}_0(1)\cdot\bm{g})\bm{u}_0(1) + \bm{g} \\
&= P(\bm{v}_0(1))\left( \frac{\tau_0(1)}{|\bm{v}_0(1)|}\bm{v}_0'(1) +  P(\bm{v}_0(1))\bm{g} \right).
\end{align*}
Therefore, it is sufficient to define $\bm{X}_2(\bm{v}_0,\bm{v}_1)$ by 
\[
\bm{X}_2(\bm{v}_0,\bm{v}_1) := \frac{\tau_0(1)}{|\bm{v}_0(1)|}\bm{v}_0'(1) +  P(\bm{v}_0(1))\bm{g},
\]
rather than $\bm{X}_2(\bm{v}_0,\bm{v}_1)=(\tau_0\bm{u}_0)'(1)+\bm{g}$. 
We note that if $\bm{v}_0$ satisfies the constraint $|\bm{v}_0(s)|=1$, then these two definitions coincide.

We proceed to consider the general case, that is, the compatibility condition at order $j+2$ with $j\geq1$, that is, $\bm{x}_{j+2}(1)=\bm{0}$. 
As in the case $j=0$, we are going to find an appropriate definition of $\bm{X}_{j+2}(\bm{v}_0,\bm{v}_1)$, 
which would coincide with the definition in the previous section when $\bm{v}_0$ and $\bm{v}_1$ satisfy the constraints.

\begin{lemma}\label{lem:Str1}
If the initial data $(\bm{u}_0,\bm{u}_1)$ satisfy the constraints $|\bm{u}_0(s)|=1$ and $\bm{u}_0(s)\cdot\bm{u}_1(s)=0$ for $0<s<1$, 
then for any non-negative integer $j$ we have 
\[
\sum_{j_1+j_2=j}\frac{j!}{j_1!j_2!}\bm{u}_{j_1}(s)\cdot\bm{u}_{j_2}(s) = \delta_{j0}
\]
for $0<s<1$ as long as $\{\bm{u}_k\}_{k=0}^j$ can be defined by \eqref{TPBVPj} and \eqref{EqUj}, where $\delta_{j0}$ is the Kronecker delta. 
\end{lemma}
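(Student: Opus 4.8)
The plan is to recognize the left-hand side as $\partial_t^j\big(|\bm{x}'|^2\big)\big|_{t=0}$ for the (formal) solution $(\bm{x},\tau)$ whose time-Taylor coefficients at $t=0$ are the $\bm{x}_j,\tau_j$ determined by \eqref{TPBVPj}--\eqref{EqUj}, and to show that $\phi:=|\bm{x}'|^2-1$ has all of its time-Taylor coefficients at $t=0$ equal to zero. Setting
\[
\phi_j:=(\partial_t^j\phi)\big|_{t=0}=\sum_{j_1+j_2=j}\frac{j!}{j_1!j_2!}\bm{u}_{j_1}\cdot\bm{u}_{j_2}-\delta_{j0},
\]
the two constraints $|\bm{u}_0|=1$ and $\bm{u}_0\cdot\bm{u}_1=0$ are exactly $\phi_0=\phi_1=0$, so the whole content of the lemma is to propagate this vanishing to every order, which I will do by a recursion coming from the equations.

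First I would derive, working at the level of these Taylor coefficients (equivalently, as an identity of formal power series in $t$ produced by \eqref{TPBVPj} and \eqref{EqUj}), the relation
\[
\partial_t^2\big(|\bm{x}'|^2\big)=\tau\,\partial_s^2\big(|\bm{x}'|^2\big)+2\tau'\,\partial_s\big(|\bm{x}'|^2\big)+2\tau''\phi .
\]
This follows from $\partial_t^2|\bm{x}'|^2=2|\dot{\bm{x}}'|^2+2\bm{x}'\cdot\ddot{\bm{x}}'$, the identity $\ddot{\bm{x}}'=(\tau\bm{x}')''$ obtained by differentiating the interior equation $\ddot{\bm{x}}=(\tau\bm{x}')'+\bm{g}$ once in $s$ (the constant vector $\bm{g}$ drops out), the elementary relations $\bm{x}'\cdot\bm{x}''=\frac12\partial_s|\bm{x}'|^2$ and $\bm{x}'\cdot\bm{x}'''=\frac12\partial_s^2|\bm{x}'|^2-|\bm{x}''|^2$, and finally the interior equation $-\tau''+|\bm{x}''|^2\tau=|\dot{\bm{x}}'|^2$, which turns $2|\dot{\bm{x}}'|^2-2\tau|\bm{x}''|^2$ into $-2\tau''$. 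The point to stress is that only the \emph{interior} parts of \eqref{TPBVPj} and \eqref{EqUj} are used, so the relation holds whenever $\{\bm{u}_k\}_{k=0}^{j}$ are defined, independently of the boundary conditions.

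Next I would differentiate this relation $j$ times in $t$, set $t=0$, and use $\partial_s\phi=\partial_s|\bm{x}'|^2$ and $\partial_s^2\phi=\partial_s^2|\bm{x}'|^2$ to obtain the recursion
\[
\phi_{j+2}=\sum_{k=0}^{j}\binom{j}{k}\Big(\tau_k\,\partial_s^2\phi_{j-k}+2(\partial_s\tau_k)\,\partial_s\phi_{j-k}+2(\partial_s^2\tau_k)\,\phi_{j-k}\Big),
\]
which expresses $\phi_{j+2}$ linearly in $\phi_0,\dots,\phi_j$ and their $s$-derivatives. A strong induction on $j$ then concludes: $\phi_0=\phi_1=0$ by the constraints, and if $\phi_0=\dots=\phi_{j}=0$ as functions on $(0,1)$, then their $s$-derivatives also vanish, so the recursion gives $\phi_{j+2}=0$; hence $\phi_j=0$ for every $j$ for which $\bm{u}_j$ is defined, and reading off $\phi_j=0$ is precisely the asserted identity.

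I expect no real obstacle here; the only points requiring care are the bookkeeping showing that the relation for $\partial_t^2|\bm{x}'|^2$, and hence the recursion, is a consequence solely of the interior equations defining $\{\bm{u}_k\}$ and $\{\tau_k\}$ (so that the hypothesis ``as long as $\{\bm{u}_k\}_{k=0}^j$ can be defined'' is exactly what is used), the harmless disappearance of the constant $\bm{g}$ after one $s$-differentiation, and keeping the combinatorial factors $\frac{j!}{j_1!j_2!}$ straight when passing between $\partial_t^j(\bm{x}'\cdot\bm{x}')|_{t=0}$ and the sum over $\bm{u}_{j_1}\cdot\bm{u}_{j_2}$.
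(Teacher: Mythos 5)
Your proposal is correct and follows essentially the same route as the paper: the paper's proof rests on exactly the identity $\partial_t^2(|\bm{u}|^2)=2(\ddot{\bm{u}}-(\tau\bm{u})'')\cdot\bm{u}+2(\tau''-|\bm{u}'|^2\tau+|\dot{\bm{u}}|^2)+2\tau'(|\bm{u}|^2-1)'+\tau(|\bm{u}|^2-1)''+2\tau''(|\bm{u}|^2-1)$, which after inserting the interior equations \eqref{TPBVPj} and \eqref{EqUj} (at the level of Taylor coefficients in $t$) yields your recursion for $\phi_{j+2}$ in terms of $\phi_0,\dots,\phi_j$, concluded by the same strong induction from $\phi_0=\phi_1=0$. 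No gaps.
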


\begin{remark}\label{re:Str1}
If $\{\bm{u}_k\}_{k=0}^j$ are defined by $\bm{u}_k=(\dt^k\bm{u})|_{t=0}$ with a function $\bm{u}$ satisfying $|\bm{u}(s,t)|=1$, 
then we can easily obtain the identity of the lemma by differentiating $|\bm{u}(s,t)|^2=1$ $j$-times with respect to $t$ and then by putting $t=0$. 
However, we cannot use a priori the existence of such a function $\bm{u}$ and $\{\bm{u}_k\}_{k=0}^j$ were defined by \eqref{TPBVPj} and \eqref{EqUj}, 
so that the lemma is not so trivial. 
\end{remark}

\begin{proof}[Proof of Lemma \ref{lem:Str1}]
We use the identity 
\begin{align*}
\dt^2(|\bm{u}|^2) 
&= 2(\ddot{\bm{u}}-(\tau\bm{u})'')\cdot\bm{u} + 2(\tau''-|\bm{u}'|^2\tau+|\dot{\bm{u}}|^2 ) \\
&\quad\;
 + 2\tau'(|\bm{u}|^2-1)' + \tau(|\bm{u}|^2-1)'' + 2\tau''(|\bm{u}|^2-1),
\end{align*}
which holds for any smooth functions $\bm{u}=\bm{u}(s,t)$ and $\tau=\tau(s,t)$. 
Plugging $\bm{u}=\sum_{k=0}^{j+2}\frac{t^k}{k!}\bm{u}_k(s)$ and $\tau=\sum_{k=0}^j\frac{t^k}{k!}\tau_k(s)$ into the above identity, 
differentiating the resulting equation $j$-times with respect to $t$, putting $t=0$, and then using \eqref{TPBVPj} and \eqref{EqUj} we obtain 
\begin{align*}
\sum_{j_1+j_2=j+2}\frac{j!}{j_1!j_2!}\bm{u}_{j_1}\cdot\bm{u}_{j_2}
 = \sum_{j_1+j_2=j}\frac{j!}{j_1!j_2!}\Biggl\{ 
& 2\tau_{j_1}'\Biggl( \sum_{k_1+k_2=j_2}\frac{j_2!}{k_1!k_2!}\bm{u}_{k_1}\cdot\bm{u}_{k_2}-\delta_{j_20}\Biggr)' \\
&+ \tau_{j_1}\Biggl( \sum_{k_1+k_2=j_2}\frac{j_2!}{k_1!k_2!}\bm{u}_{k_1}\cdot\bm{u}_{k_2}-\delta_{j_20}\Biggr)'' \\
&+ 2\tau_{j_1}''\Biggl( \sum_{k_1+k_2=j_2}\frac{j_2!}{k_1!k_2!}\bm{u}_{k_1}\cdot\bm{u}_{k_2}-\delta_{j_20} \Biggr) \Biggr\}.
\end{align*}
Using this inductively, we obtain the desired identity. 
\end{proof}

We recall that $\bm{x}_{j+2}$ for $j\geq0$ was defined by \eqref{EqUj}. 

\begin{lemma}\label{lem:Str2}
Under the same assumptions of Lemma \ref{lem:Str1}, for any non-negative integer $j$ we have 
\[
\sum_{j_1+j_2=j}\frac{j!}{j_1!j_2!}\bm{x}_{j_1+2}(s)\cdot\bm{u}_{j_2}(s)=\tau_j'(s)+\bm{g}\cdot\bm{u}_j(s)
\]
for $0<s<1$ as long as $\{\bm{x}_{k+2}\}_{k=0}^j$, $\{\bm{u}_k\}_{k=0}^j$, and $\{\tau_k\}_{k=0}^j$ can be defined 
by \eqref{TPBVPj} and \eqref{EqUj}. 
\end{lemma}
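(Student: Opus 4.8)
\textbf{Proof plan for Lemma \ref{lem:Str2}.}
The plan is to mimic the proof of Lemma \ref{lem:Str1}, starting from an algebraic identity valid for arbitrary smooth functions $\bm{u}=\bm{u}(s,t)$ and $\tau=\tau(s,t)$ and then specializing to the truncated Taylor polynomials built from the initial values. First I would write down the identity
\[
\dt^2\bm{x}\cdot\bm{u} - \bigl(\tau'+\bm{g}\cdot\bm{u}\bigr)'
= \bigl(\dt^2\bm{x}-(\tau\bm{u})'-\bm{g}\bigr)\cdot\bm{u}
 + (\tau\bm{u})'\cdot\bm{u}-\tau'|\bm{u}|^2-\tau\bm{u}'\cdot\bm{u}
 + (\text{correction terms involving }|\bm{u}|^2-1),
\]
where $\bm{x}=-\int_s^1\bm{u}\,\mathrm{d}\sigma$ plays the role of the antiderivative; the first term on the right is the defect in the equation of motion, and after using $(\tau\bm{u})'\cdot\bm{u}=\tau'|\bm{u}|^2+\tau\bm{u}'\cdot\bm{u}$ the middle group collapses, leaving only terms proportional to $|\bm{u}|^2-1$ and its $s$-derivatives, exactly parallel to the identity used in Lemma \ref{lem:Str1}. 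The key point is that $\dt^2\bm{x}\cdot\bm{u}-(\tau'+\bm{g}\cdot\bm{u})'$, and hence after one $s$-integration the quantity $\dt^2\bm{x}\cdot\bm{u}-(\tau'+\bm{g}\cdot\bm{u})$ up to a function of $t$, is governed entirely by the constraint defect.

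Next I would substitute $\bm{u}=\sum_{k=0}^{j+2}\frac{t^k}{k!}\bm{u}_k(s)$ and $\tau=\sum_{k=0}^{j}\frac{t^k}{k!}\tau_k(s)$, differentiate the resulting identity $j$ times in $t$, set $t=0$, and invoke \eqref{TPBVPj} and \eqref{EqUj} to kill the equation-of-motion defect term. This produces an identity expressing $\sum_{j_1+j_2=j}\frac{j!}{j_1!j_2!}\bm{x}_{j_1+2}\cdot\bm{u}_{j_2}-(\tau_j'+\bm{g}\cdot\bm{u}_j)$ — more precisely its $s$-derivative, so one integrates once using the boundary data $\tau_j'(1)=-\bm{g}\cdot\bm{u}_j(1)$ and $\bm{x}_{j_1+2}(1)$ handled via the already-established Lemma \ref{lem:Str1} at lower orders — as a linear combination (with coefficients built from $\tau_{j_1},\tau_{j_1}',\tau_{j_1}''$ and their derivatives) of the quantities $\sum_{k_1+k_2=l}\frac{l!}{k_1!k_2!}\bm{u}_{k_1}\cdot\bm{u}_{k_2}-\delta_{l0}$ for $l\le j$. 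By Lemma \ref{lem:Str1} each of these vanishes, so an induction on $j$ closes the argument.

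The main obstacle I anticipate is bookkeeping rather than anything conceptual: one must track carefully which index ranges are legitimate (the formula for $\bm{x}_{j+2}$ requires $\tau_{j_0}$ only for $j_0\le j$, which is fine, but the definition of $h_j$ in \eqref{TPBVPj} restricts $j_0\le j-1$, so the substitution step must respect that and absorb the $j_0=j$ contribution correctly), and one must be sure the single $s$-integration needed to pass from the derivative identity to the stated pointwise identity is legitimate, i.e. that the constant of integration in $s$ is pinned down by the boundary conditions at $s=1$. Handling the boundary term $\bm{x}_{j_1+2}(1)$ requires knowing the compatibility-type cancellations at orders $<j+2$, which is exactly why the induction must be set up so that Lemma \ref{lem:Str1} (and, if needed, the statement of Lemma \ref{lem:Str2} itself at lower order) is available; care is needed so the induction is not circular. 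Once the defect terms are organized this way, the remaining computation is the same routine Leibniz-rule expansion already carried out in the proof of Lemma \ref{lem:Str1}, so I would not spell it out in full.
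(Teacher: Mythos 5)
There is a genuine gap, and it originates in the master identity you start from. The quantity $\ddot{\bm{x}}\cdot\bm{u}-\tau'-\bm{g}\cdot\bm{u}$ is controlled by the constraint defect \emph{pointwise}: the correct identity is
\[
\ddot{\bm{x}}\cdot\bm{x}'-\tau'-\bm{g}\cdot\bm{x}'
=\bigl(\ddot{\bm{x}}-(\tau\bm{x}')'-\bm{g}\bigr)\cdot\bm{x}'
+\tau'\bigl(|\bm{x}'|^2-1\bigr)+\tfrac{\tau}{2}\bigl(|\bm{x}'|^2-1\bigr)',
\]
with no extra $s$-derivative on $\tau'+\bm{g}\cdot\bm{u}$. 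Your version, with the mismatched prime on $\bigl(\tau'+\bm{g}\cdot\bm{u}\bigr)'$, is not an identity (and integrating it in $s$ would in any case not return $\ddot{\bm{x}}\cdot\bm{u}-(\tau'+\bm{g}\cdot\bm{u})$, since the first term is not in divergence form). This error then forces the structural part of your plan that actually fails: the $s$-integration and the pinning of the integration constant at $s=1$ using the boundary values $\bm{x}_{k+2}(1)$. Lemma \ref{lem:Str2} assumes only the constraints $|\bm{u}_0|=1$ and $\bm{u}_0\cdot\bm{u}_1=0$; the compatibility conditions $\bm{x}_{k+2}(1)=\bm{0}$ are \emph{not} among its hypotheses, and in the very application of this lemma (Section \ref{sect:AppID2}) they fail for the data being corrected. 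Evaluating the integrated identity at $s=1$ would thus require either the compatibility conditions you do not have or the statement of the lemma itself at $s=1$ — i.e.\ the circularity you were worried about is real, not just a bookkeeping risk.

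Once the identity is corrected, your overall strategy does work and no integration, boundary data, or induction on the statement of Lemma \ref{lem:Str2} is needed: substitute the Taylor polynomials, differentiate $j$ times in $t$ at $t=0$, kill the equation defect by \eqref{EqUj}, and kill the terms involving $|\bm{u}|^2-1$ and its $s$-derivative by Lemma \ref{lem:Str1}. The paper argues even more directly, without any time-dependent master identity: it inserts \eqref{EqUj} into $\sum_{j_1+j_2=j}\frac{j!}{j_1!j_2!}\bm{x}_{j_1+2}\cdot\bm{u}_{j_2}$, expands $(\tau_{j_0}\bm{u}_{j_1})'\cdot\bm{u}_{j_2}=\tau_{j_0}'\bm{u}_{j_1}\cdot\bm{u}_{j_2}+\tau_{j_0}\bm{u}_{j_1}'\cdot\bm{u}_{j_2}$, symmetrizes the second group into $\tfrac12\bigl(\sum_{k_1+k_2=j_1}\frac{j_1!}{k_1!k_2!}\bm{u}_{k_1}\cdot\bm{u}_{k_2}\bigr)'$, and applies Lemma \ref{lem:Str1} and its $s$-derivative; the whole proof is a few lines of Leibniz bookkeeping. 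I recommend you adopt either the corrected pointwise identity or this direct substitution; as written, your proposal does not go through.
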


\begin{remark}\label{re:Str2}
If $\bm{x}_j$, $\bm{u}_j$, and $\tau_j$ are defined by $\bm{x}_j=(\dt^j\bm{x})|_{t=0}$, $\tau_j=(\dt^j\tau)|_{t=0}$, and $\bm{u}_j=\bm{x}_j'$ with 
a solution $(\bm{x},\tau)$ of \eqref{Eq}, then we can easily obtain the identity of the lemma as follows. 
Taking an inner product of the first equation in \eqref{Eq} with $\bm{u}$ and using the second equation in \eqref{Eq}, 
we obtain $\ddot{\bm{x}}\cdot\bm{u}=\tau'+\bm{g}\cdot\bm{u}$. 
Differentiating this $j$-times with respect to $t$ and then putting $t=0$, we get the identity of the lemma. 
\end{remark}

\begin{proof}[Proof of Lemma \ref{lem:Str2}]
It follows from \eqref{EqUj} and Lemma \ref{lem:Str1} that 
\begin{align*}
\sum_{j_1+j_2=j}\frac{j!}{j_1!j_2!}\bm{x}_{j_1+2}\cdot\bm{u}_{j_2}
&= \sum_{j_1+j_2=j}\frac{j!}{j_1!j_2!}\Biggl( \sum_{k_0+k_1=j_1}\frac{j_1!}{k_0!k_1!}(\tau_{k_0}\bm{u}_{k_1})' + \delta_{j_10}\bm{g} \Biggr)\cdot\bm{u}_{j_2} \\
&= \sum_{j_0+j_1+j_2=j}\frac{j!}{j_0!j_1!j_2!}(\tau_{j_0}\bm{u}_{j_1})'\cdot\bm{u}_{j_2} + \bm{g}\cdot\bm{u}_j \\
&= \sum_{j_0+j_1=j}\frac{j!}{j_0!j_1!}\Biggl\{ \tau_{j_0}'\sum_{k_1+k_2=j_1}\frac{j_1!}{k_1!k_2!}\bm{u}_{k_1}\cdot\bm{u}_{k_2} \\
&\makebox[5em]{}
 + \tau_{j_0}\frac12\Biggl(\sum_{k_1+k_2=j_1}\frac{j_1!}{k_1!k_2!}\bm{u}_{k_1}\cdot\bm{u}_{k_2}\Biggr)' \Biggr\} + \bm{g}\cdot\bm{u}_j \\
&= \sum_{j_0+j_1=j}\frac{j!}{j_0!j_1!}\tau_{j_0}'\delta_{j_10} + \bm{g}\cdot\bm{u}_j \\
&= \tau_j'+\bm{g}\cdot\bm{u}_j,
\end{align*}
which is the desired identity. 
\end{proof}

We go back to consider the compatibility condition at order $j+2$ with $j\geq0$, that is, $\bm{x}_{j+2}(1)=\bm{0}$. 
It follows from Lemma \ref{lem:Str2} that $\sum_{k=0}^j\binom{j}{k}\bm{x}_{k+2}(1)\cdot\bm{u}_{j-k}(1)=0$ so that 
$\bm{x}_{j+2}(1)\cdot\bm{u}_0(1)=-\sum_{k=0}^{j-1}\binom{j}{k}\bm{x}_{k+2}(1)\cdot\bm{u}_{j-k}(1)$. 
Therefore, we have 
\[
\bm{x}_{j+2}(1)=P(\bm{u}_0(1))\bm{x}_{j+2}(1)-\sum_{k=0}^{j-1}\binom{j}{k}(\bm{x}_{k+2}(1)\cdot\bm{u}_{j-k}(1))\bm{u}_0(1).
\]
We recall that we are constructing the initial data $(\bm{u}_0,\bm{u}_1)$ in the form \eqref{defmodu}, so that we have 
\[
\begin{cases}
 \ds^{2j+1}\bm{u}_0 = \frac{1}{|\bm{v}_0|}P(\bm{v}_0)\ds^{2j+1}\bm{v}_0 + [\ds^{2j},D\bm{F}(\bm{v}_0)]\bm{v}_0', \\
 \ds^{2j+1}\bm{u}_1 = \frac{1}{|\bm{v}_0|}P(\bm{v}_0)\ds^{2j+1}\bm{v}_1 + [\ds^{2j},D\bm{F}(\bm{v}_0)]\bm{v}_1'
  + \ds^{2j}(D^2\bm{F}(\bm{v}_0)[\bm{v}_1,\bm{v}_0']).
\end{cases}
\]
Therefore, by Lemma \ref{lem:ExpX} we get 
\[
\begin{cases}
 \bm{x}_{2j+2}(1) = P(\bm{v}_0(1))\Bigl\{ \frac{\tau_0(1)^{j+1}}{|\bm{v}_0(1)|}(\ds^{2j+1}\bm{v}_0)(1) 
  + \tau_0(1)^{j+1}P(\bm{v}_0(1))( [\ds^{2j},D\bm{F}(\bm{v}_0)]\bm{v}_0' )(1) \\
 \phantom{\bm{x}_{2j+2}(1) = }
  + \mathcal{P}_{2j+2}((\ds^{\leq 2j}\bm{u}_0)(1),(\ds^{\leq 2j-1}\bm{u}_1)(1),\tau_0(1),\ldots,\tau_{2j}(1)) \Bigr\} \\
 \phantom{\bm{x}_{2j+2}(1) = }
  - \sum_{k=0}^{2j-1}\binom{2j}{k}(\bm{x}_{k+2}(1)\cdot\bm{u}_{2j-k}(1))\bm{u}_0(1), \\
 \bm{x}_{2j+3}(1) = P(\bm{v}_0(1))\Bigl\{ \frac{\tau_0(1)^{j+1}}{|\bm{v}_0(1)|}(\ds^{2j+1}\bm{v}_1)(1) \\
 \phantom{\bm{x}_{2j+2}(1) = }
  + \tau_0(1)^{j+1}P(\bm{v}_0(1))( [\ds^{2j},D\bm{F}(\bm{v}_0)]\bm{v}_1' + \ds^{2j}(D^2\bm{F}(\bm{v}_0)[\bm{v}_1,\bm{v}_0']) )(1) \\
 \phantom{\bm{x}_{2j+2}(1) = }
  + \mathcal{P}_{2j+3}((\ds^{\leq 2j+1}\bm{u}_0)(1),(\ds^{\leq 2j}\bm{u}_1)(1),\tau_0(1),\ldots,\tau_{2j+1}(1)) \Bigr\} \\
 \phantom{\bm{x}_{2j+2}(1) = }
  - \sum_{k=0}^{2j}\binom{2j+1}{k}(\bm{x}_{k+2}(1)\cdot\bm{u}_{2j+1-k}(1))\bm{u}_0(1).
\end{cases}
\]
In view of these expressions, we define $\bm{X}_{j+2}(\bm{v}_0,\bm{v}_1)$ by 
\[
\begin{cases}
 \bm{X}_{2j+2}(\bm{v}_0,\bm{v}_1) = \frac{\tau_0(1)^{j+1}}{|\bm{v}_0(1)|}(\ds^{2j+1}\bm{v}_0)(1) 
  + \tau_0(1)^{j+1}P(\bm{v}_0(1))( [\ds^{2j},D\bm{F}(\bm{v}_0)]\bm{v}_0' )(1) \\
 \phantom{\bm{x}_{2j+2}(1) = }
  + \mathcal{P}_{2j+2}((\ds^{\leq 2j}\bm{u}_0)(1),(\ds^{\leq 2j-1}\bm{u}_1)(1),\tau_0(1),\ldots,\tau_{2j}(1)), \\
 \bm{X}_{2j+3}(\bm{v}_0,\bm{v}_1) = \frac{\tau_0(1)^{j+1}}{|\bm{v}_0(1)|}(\ds^{2j+1}\bm{v}_1)(1) \\
 \phantom{\bm{x}_{2j+2}(1) = }
  + \tau_0(1)^{j+1}P(\bm{v}_0(1))( [\ds^{2j},D\bm{F}(\bm{v}_0)]\bm{v}_1' + \ds^{2j}(D^2\bm{F}(\bm{v}_0)[\bm{v}_1,\bm{v}_0']) )(1) \\
 \phantom{\bm{x}_{2j+2}(1) = }
  + \mathcal{P}_{2j+3}((\ds^{\leq 2j+1}\bm{u}_0)(1),(\ds^{\leq 2j}\bm{u}_1)(1),\tau_0(1),\ldots,\tau_{2j+1}(1))
\end{cases}
\]
rather than $\bm{X}_{j+2}(\bm{v}_0,\bm{v}_1)=\bm{x}_{j+2}(1)$ for $j=0,1,2,\ldots$. 
We note that if $\bm{v}_0$ and $\bm{v}_1$ satisfy the constraints $|\bm{v}_0(s)|=1$ and $\bm{v}_0(s)\cdot\bm{v}_1(s)=0$, 
then these two definitions coincide. 
Moreover, if $\bm{v}_0$ and $\bm{v}_1$ are constructed so that $\bm{X}_{k+2}(\bm{v}_0,\bm{v}_1)=\bm{0}$ for $k=0,1,\ldots,j$, 
then for the initial data $(\bm{u}_0,\bm{u}_1)$ given by \eqref{defmodu} we have $\bm{x}_{k+2}(1)=\bm{0}$ for $k=0,1,\ldots,j$.

Now, we have resolved the drawback caused by the degeneracy of the map $\bm{F}$, namely, if we use these modified functionals 
$\bm{X}_{j+2}$ for $j=0,1,2,\ldots$, then the calculations given in the previous section still work to construct desired approximate initial data. 
The proof of Proposition \ref{prop:AppID2} is complete. 
\hfill$\Box$

\section{Remark on the a priori estimate}\label{sect:APE}
In this last section, we sketch a proof of Proposition \ref{prop:APE} on the improvement of the a priori estimate of the solutions. 
The proof is carried out by the induction on $m$. 
The case $m=4$ was actually shown in \cite[Theorem 2.1]{IguchiTakayama2024}, so that there exists a positive time $T=T(M_*,c_0)$ such that 
the solution satisfies the stability condition \eqref{SC} and 
$\opnorm{ \bm{x}(t) }_4 + \opnorm{ \tau'(t) }_{3,*,\epsilon} \lesssim 1$ for $0\leq t\leq T$ and $\epsilon>0$. 
Therefore, it is sufficient to show that $\opnorm{ \bm{x}(t) }_m + \opnorm{ \tau'(t) }_{m-1,*} \lesssim 1$ holds for the same time interval 
$0\leq t\leq T$ in the case $m\geq5$. 
In the following, we focus on the case $m=5$, because the case $m\geq6$ can be handled similarly and more easily.

We note that the estimates in the case $m=4$ imply $|\tau(s,t)|+|\dot{\tau}(s,t)|\lesssim s$ and $|\ddot{\tau}(s,t)|\lesssim s^{1-\epsilon}$ for $\epsilon>0$. 
By the embedding $\|s^\epsilon u\|_{L^\infty}\lesssim \|u\|_{X^1}$, we have also $\|s^\epsilon\dot{\bm{x}}'(t)\|_{L^\infty}\lesssim 1$ for $\epsilon>0$, 
so that $\|\dot{\bm{x}}'(t)\|_{L^p} \lesssim 1$ for $p\in[1,\infty)$. 
We will use these estimates in the following without any comments. 
We introduce an energy function $E(t)$ by 
\[
E(t) = \|\dt^4\bm{x}(t)\|_{X^1}^2 + \|\dt^3\bm{x}(t)\|_{X^2}^2. 
\]

\begin{lemma}\label{lem:EstXTau_bis}
It holds that $\opnorm{ \bm{x}(t) }_{5,*}+\opnorm{ \tau'(t) }_{3,*} \lesssim 1+E(t)^\frac12$. 
\end{lemma}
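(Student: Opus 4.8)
\textbf{Proof proposal for Lemma \ref{lem:EstXTau_bis}.}

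The plan is to bootstrap from the already-established case $m=4$, in which the solution satisfies $\opnorm{\bm{x}(t)}_4 + \opnorm{\tau'(t)}_{3,*,\epsilon} \lesssim 1$, all the way up to the claimed control of the fifth-order norm $\opnorm{\bm{x}(t)}_{5,*} = (\sum_{j=0}^4\|\dt^j\bm{x}(t)\|_{X^{5-j}}^2)^{1/2}$ and of $\opnorm{\tau'(t)}_{3,*}$, at the cost of the energy functional $E(t)=\|\dt^4\bm{x}(t)\|_{X^1}^2+\|\dt^3\bm{x}(t)\|_{X^2}^2$. The key observation is that $E(t)$ already encodes the \emph{top-order} pieces of $\opnorm{\bm{x}(t)}_{5,*}$ that are not available from the $m=4$ bound, namely $\|\dt^4\bm{x}\|_{X^1}$ and $\|\dt^3\bm{x}\|_{X^2}$; the remaining pieces $\|\bm{x}\|_{X^5}$, $\|\dot{\bm{x}}\|_{X^4}$, $\|\ddot{\bm{x}}\|_{X^3}$ are to be recovered elliptically from the equation, using Lemma \ref{lem:reg} applied to $(\tau\bm{x}')'=\ddot{\bm{x}}-\bm{g}$, to its time-differentiated versions, and to the two-point boundary value problem \eqref{BVP} governing $\tau$.

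First I would estimate $\opnorm{\tau'(t)}_{3,*}$, i.e. $\|\tau'\|_{X^3}+\|\dot\tau'\|_{X^2}+\|\ddot\tau'\|_{X^1}$. Via the identity $\|\tau'\|_{X^{j+1}}^2=\|\tau'\|_{L^2}^2+\|\tau''\|_{Y^j}^2$ it suffices to bound $\tau''$, $\dot\tau''$, $\ddot\tau''$ in $Y^2$, $Y^1$, $Y^0$ respectively. Differentiating \eqref{BVP} and using $\tau''=|\bm{x}''|^2\tau-|\dot{\bm{x}}'|^2$, $\dot\tau''=2\tau(\bm{x}''\cdot\dot{\bm{x}}'')+|\bm{x}''|^2\dot\tau-2\dot{\bm{x}}'\cdot\ddot{\bm{x}}'$, $\ddot\tau''=\dots$, I apply Lemmas \ref{lem:CalIneq1}, \ref{lem:CalIneq2} (and Lemma \ref{lem:CalIneqY3} for the weakest terms): the worst contributions are controlled by products like $\|\tau'\|_{L^\infty\cap X^1}\|\bm{x}\|_{X^4}\|\bm{x}\|_{X^4}$, $\|\ddot{\bm{x}}\|_{X^{1\vee\cdots}}\|\dddot{\bm{x}}\|_{X^{\cdots}}$, which are bounded by $C(1+E(t)^{1/2})$ after invoking the $m=4$ estimates and the embeddings $\|s^\epsilon u\|_{L^\infty}\lesssim\|u\|_{X^1}$, $\|\dot{\bm{x}}'\|_{L^p}\lesssim1$. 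Next, with $\tau'\in X^{j\vee2}$ now in hand, I apply Lemma \ref{lem:reg} to $(\tau\bm{x}')'=\ddot{\bm{x}}-\bm{g}$ with $j=3$ to get $\|\bm{x}\|_{X^5}\lesssim\|\bm{x}\|_{L^2}+C(\|\tau'\|_{X^3})\|\ddot{\bm{x}}-\bm{g}\|_{X^3}$, to $(\tau\dot{\bm{x}}')'=\dddot{\bm{x}}-(\dot\tau\bm{x}')'$ with $j=2$ to get $\|\dot{\bm{x}}\|_{X^4}$, and to $(\tau\ddot{\bm{x}}')'=\dt^4\bm{x}-(\ddot\tau\bm{x}')'-2(\dot\tau\dot{\bm{x}}')'$ with $j=1$ to get $\|\ddot{\bm{x}}\|_{X^3}$; each right-hand side, via Lemmas \ref{lem:EstAu}, \ref{lem:tame1}, is $\lesssim 1+E(t)^{1/2}$ since it involves $\dt^4\bm{x}$ only through $X^1$ and $\dt^3\bm{x}$ only through $X^2$, exactly the quantities $E(t)$ measures.

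Combining these two families of estimates gives $\opnorm{\bm{x}(t)}_{5,*}+\opnorm{\tau'(t)}_{3,*}\lesssim 1+E(t)^{1/2}$, which is the assertion. The main obstacle I anticipate is bookkeeping the degenerate weights so that the genuinely top-order time derivatives never appear in a norm stronger than $E(t)$ permits — in particular that $\dt^4\bm{x}$ occurs only inside $\|\cdot\|_{X^1}$ (equivalently $\|s^{1/2}\dt^4\bm{x}'\|_{L^2}$-type weights) and $\dt^3\bm{x}$ only inside $\|\cdot\|_{X^2}$, never one order higher; this forces one to choose the index $j$ in Lemma \ref{lem:reg} exactly as above and to use the sharper (min-type) calculus inequalities in Lemmas \ref{lem:EstAu}, \ref{lem:CalIneq1}, \ref{lem:CalIneq2}, \ref{lem:CalIneqY3} rather than their crude forms. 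Handling the boundary terms $\tau'(1)=-\bm{g}\cdot\bm{x}'(1)$ and its $t$-derivatives, bounded by traces $|\dt^j\bm{x}'(1)|\lesssim\|\dt^j\bm{x}\|_{X^2}$, is routine once the weight accounting is set up correctly.
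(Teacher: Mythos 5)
There is a real gap at the first step of your plan, and it propagates through the rest. You propose to bound $\opnorm{\tau'(t)}_{3,*}$ directly from $\tau''=|\bm{x}''|^2\tau-|\dot{\bm{x}}'|^2$ and its time derivatives using Lemmas \ref{lem:CalIneq1}--\ref{lem:CalIneqY3} together with only the $m=4$ a priori bound. But the top-order terms there are not controlled by that input: by Lemma \ref{lem:CalIneq1}, $\||\dot{\bm{x}}'|^2\|_{Y^2}\lesssim\|\dot{\bm{x}}\|_{X^4}\|\dot{\bm{x}}\|_{X^3}$ and $\|\dot{\bm{x}}'\cdot\ddot{\bm{x}}'\|_{Y^1}\lesssim\min\{\|\dot{\bm{x}}\|_{X^3}\|\ddot{\bm{x}}\|_{X^3},\|\dot{\bm{x}}\|_{X^4}\|\ddot{\bm{x}}\|_{X^2}\}$, and the quantities $\|\dot{\bm{x}}\|_{X^4}$, $\|\ddot{\bm{x}}\|_{X^3}$ are exactly components of the norm $\opnorm{\bm{x}}_{5,*}$ you are trying to prove; they are \emph{not} contained in $E(t)$, which only carries $\|\dt^3\bm{x}\|_{X^2}$ and $\|\dt^4\bm{x}\|_{X^1}$. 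The $m=4$ estimate gives $\tau'$ only in the $\epsilon$-weighted spaces $X_\epsilon^k$ and $\dot{\bm{x}}'$ only in $L^p$ with $p<\infty$ (you invoke precisely $\|\dot{\bm{x}}'\|_{L^p}\lesssim1$), and with these weaker inputs one can only reproduce the $\epsilon$-weighted bound $\opnorm{\tau'}_{3,*,\epsilon}$, not $\opnorm{\tau'}_{3,*}$. The same obstruction then blocks your applications of Lemma \ref{lem:reg}: e.g.\ the $j=1$ step needs $\|\ddot{\tau}'\|_{X^1}$ and $\|\dot{\tau}'\|_{X^2}$, which are again part of the not-yet-established $\opnorm{\tau'}_{3,*}$, so the scheme as ordered is circular.

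The missing idea, which is the heart of the paper's proof, is to exploit the equation in integrated form: integrating $\ddot{\bm{x}}-(\tau\bm{x}')'=\bm{g}$ from $0$ to $s$ and using $\tau|_{s=0}=0$ gives $\bm{x}'=\mu^{-1}(\mathscr{M}\ddot{\bm{x}}-\bm{g})$ with $\mu=\mathscr{M}\tau'\geq c_0$ by the stability condition. Differentiating in $t$ yields the improved pointwise bound $\|\dot{\bm{x}}'(t)\|_{L^\infty}\lesssim1+\|\dt^3\bm{x}(t)\|_{L^\infty}\lesssim1+E(t)^{\frac12}$, and it is this $L^\infty$ control of $\dot{\bm{x}}'$ (unavailable from $\|\dot{\bm{x}}\|_{X^3}$) that lets one remove the $\epsilon$-weights and run the boundary-value-problem estimates for $\tau,\dot{\tau},\ddot{\tau}$ to get $\opnorm{\tau'}_{3,*}\lesssim1+E^{\frac12}$; only afterwards does one recover $\opnorm{\bm{x}}_{5,*}\lesssim1+\opnorm{\tau'}_{3,*}$ by inverting the degenerate operator (the paper follows the calculations of \cite[Lemma 8.2 and Section 10.1]{IguchiTakayama2024} with the tame estimates of Lemmas \ref{lem:tame1}--\ref{lem:tame2}; your elliptic-recovery step via Lemma \ref{lem:reg} is the same idea and would work at that stage). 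Without the identity $\bm{x}'=\mu^{-1}(\mathscr{M}\ddot{\bm{x}}-\bm{g})$, or some substitute producing $\|\dot{\bm{x}}'\|_{L^\infty}\lesssim1+E^{\frac12}$, your first step cannot close, so the proposal as written does not prove the lemma.
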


\begin{proof}
Integrating the hyperbolic equations for $\bm{x}$ in \eqref{Eq} with respect to $s$ over $[0,s]$ and using the boundary condition $\tau|_{s=0}=0$ in \eqref{BC}, 
we obtain $\tau\bm{x}'=\int_0^s(\ddot{\bm{x}}-\bm{g})\mathrm{d}\sigma$, so that 
\[
\bm{x}'=\mu^{-1}(\mathscr{M}\ddot{\bm{x}}-\bm{g}),
\]
where $\mathscr{M}$ is the averaging operator defined by \eqref{AvOp} and $\mu=\mathscr{M}\tau'$. 
By this expression, we obtain $\|\dot{\bm{x}}'(t)\|_{L^\infty}\lesssim 1+ \|\dt^3\bm{x}(t)\|_{L^\infty} \lesssim 1+E(t)^\frac12$. 
Therefore, by following exactly the calculations in the proof of \cite[Lemma 8.2]{IguchiTakayama2024}, 
we get $\opnorm{ \tau'(t) }_{3,*} \lesssim 1+E(t)^\frac12$. 
To evaluate $\opnorm{ \bm{x}(t) }_{5,*}$ we follow the calculations in \cite[Section 10.1]{IguchiTakayama2024}, 
but this time we also use the tame estimates in Lemmas \ref{lem:tame1}--\ref{lem:tame2} to evaluate the nonlinear terms. 
Then, we obtain $\opnorm{ \bm{x}(t) }_{5,*} \lesssim 1+\opnorm{ \tau'(t) }_{3,*}$. 
\end{proof}

\begin{lemma}\label{lem:EstX_special_bis}
For any $\epsilon>0$, it holds that $\|s^\epsilon\dt^3\tau'(t)\|_{L^\infty} \lesssim 1+E(t)^\frac12$. 
Particularly, $\|\dt^3\tau'(t)\|_{L^p} \lesssim 1+E(t)^\frac12$ holds for $1\leq p<\infty$. 
\end{lemma}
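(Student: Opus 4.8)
\textbf{Proof proposal for Lemma \ref{lem:EstX_special_bis}.}

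The plan is to differentiate the two-point boundary value problem \eqref{BVP2} three times in time, evaluate $t=0$ aside and apply the bound of Lemma \ref{lem:EstSolBVP3} in the weighted $L^\infty$ setting to the resulting equation for $\dt^3\tau$, and then pass from the $L^\infty$ estimate on $s^\epsilon\dt^3\tau'$ to the $L^p$ estimate via the trivial bound $\|u\|_{L^p}\lesssim\|s^\epsilon u\|_{L^\infty}$ whenever $\epsilon p<1$ (choosing $\epsilon$ small depending on $p$). Concretely, writing $\rho=\dt^3\tau$, differentiating \eqref{BVP2} gives
\begin{equation*}
-\rho''+|\bm{x}''|^2\rho = 2\dt^3(\dot{\bm{x}}'\cdot\dot{\bm{x}}') - \dt^3\bigl((|\bm{x}''|^2-|\bm{x}''|^2)\tau\bigr) + \text{lower-order commutator terms},
\end{equation*}
more precisely $-\rho''+|\bm{x}''|^2\rho = \tilde{h}$ with $\rho(0,t)=0$, $\rho'(1,t)=-\bm{g}\cdot\dt^3\bm{x}'(1,t)$, where $\tilde h$ is a finite sum of terms of the schematic form $\dt^{a}\bm{x}'\cdot\dt^{b}\dot{\bm{x}}'$ with $a+b\le 4$ and $\dt^{a}\bm{x}''\cdot\dt^{b}\bm{x}''\,\dt^c\tau$ with $a+b+c=3$, $c\le 2$. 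By Lemma \ref{lem:EstSolBVP3} applied with $p=\infty$, $\alpha=\epsilon$, we get $\|s^\epsilon\rho'\|_{L^\infty}\lesssim |\bm{g}\cdot\dt^3\bm{x}'(1,t)| + \|s^{\epsilon}\tilde h\|_{L^1}$ (note $\epsilon+\tfrac1\infty=\epsilon\le1$), once we check $\|\bm{x}''\|_{Y^0}\lesssim\|\bm{x}\|_{X^3}\lesssim 1$ from the $m=4$ estimates so the coefficient hypothesis of that lemma holds with a uniform $M$.

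The bulk of the work is then to estimate the boundary term and $\|s^{\epsilon}\tilde h\|_{L^1}$. For the boundary term, the trace inequality $|\dt^3\bm{x}'(1,t)|\lesssim\|\dt^3\bm{x}'\|_{X^2}\lesssim\|\dt^3\bm{x}\|_{X^3}$ combined with Lemma \ref{lem:EstXTau_bis} (which already controls $\opnorm{\bm{x}(t)}_{5,*}$, hence $\|\dt^3\bm{x}\|_{X^3}$ by $\opnorm{\cdot}_{5,*}=\opnorm{\cdot}_{5,4}$) gives $\lesssim 1+E(t)^{1/2}$; alternatively one uses $|\dt^3\bm{x}'(1,t)|\lesssim\|\dt^3\bm{x}''\|_{L^2}+\|\dt^3\bm{x}'\|_{L^2}\lesssim E(t)^{1/2}+1$ directly from the definition of $E(t)$ and $\|\dt^3\bm{x}\|_{X^2}\simeq\|\dt^3\bm{x}\|_{L^2}+\|\dt^3\bm{x}'\|_{Y^1}$. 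For the terms in $\tilde h$ of the form $\dt^a\bm{x}'\cdot\dt^b\dot{\bm{x}}'$ with the worst case $a=0,b=3$ (i.e. $\bm{x}'\cdot\dt^4\bm{x}'$) and $a=1,b=2$ (i.e. $\dot{\bm{x}}'\cdot\dt^3\bm{x}'$): for the first, $\|s^\epsilon\bm{x}'\cdot\dt^4\bm{x}'\|_{L^1}\le\|\bm{x}'\|_{L^\infty}\|s^\epsilon\dt^4\bm{x}'\|_{L^1}\lesssim\|s^{\frac12+\epsilon}\dt^4\bm{x}'\|_{L^2}\cdot\|s^{-\frac12}\|_{L^2}\lesssim\|\dt^4\bm{x}\|_{X^1}\lesssim E(t)^{1/2}$ using $\|\bm{x}'\|_{L^\infty}\lesssim\|\bm{x}\|_{X^4}\lesssim1$; for the second, $\|s^\epsilon\dot{\bm{x}}'\cdot\dt^3\bm{x}'\|_{L^1}\lesssim\|s^\epsilon\dot{\bm{x}}'\|_{L^\infty}\|\dt^3\bm{x}'\|_{L^1}\lesssim\|\dt^3\bm{x}'\|_{L^2}\lesssim 1+E(t)^{1/2}$ using $\|s^\epsilon\dot{\bm{x}}'\|_{L^\infty}\lesssim1$ recalled in the paragraph before the lemma. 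For the terms $\dt^a\bm{x}''\cdot\dt^b\bm{x}''\,\dt^c\tau$ with $c\le2$, one exploits $|\dt^c\tau(s,t)|\lesssim s$ for $c\le1$ and $|\dt^2\tau(s,t)|\lesssim s^{1-\epsilon'}$ (recalled at the start of this section), writing e.g. $\|s^\epsilon\,\dt^2\tau\,\dt^3\bm{x}''\cdot\bm{x}''\|_{L^1}\lesssim\|s^{1-\epsilon'+\epsilon}\dt^3\bm{x}''\cdot\bm{x}''\|_{L^1}\lesssim\|s^{1}\dt^3\bm{x}''\|_{L^2}\|\bm{x}''\|_{L^2}\lesssim\|\dt^3\bm{x}\|_{X^2}\|\bm{x}\|_{X^2}\lesssim 1+E(t)^{1/2}$ for $\epsilon<\epsilon'$, and the remaining such terms (with higher $c$ compensated by lower derivatives on $\bm{x}''$) are handled the same way, always keeping the highest-order factor in an $E(t)^{1/2}$-controlled norm and the rest in $\lesssim1$ norms. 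The commutator terms coming from $|\bm{x}''|^2\dt^3\tau$ versus $\dt^3(|\bm{x}''|^2\tau)$ are lower order and subsumed into the above.

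Collecting, $\|s^\epsilon\rho'\|_{L^\infty}\lesssim 1+E(t)^{1/2}$, which is the first assertion since $\rho'=\dt^3\tau'$. For the $L^p$ statement, given $p\in[1,\infty)$ choose $\epsilon\in(0,\tfrac1p)$; then $\|\dt^3\tau'\|_{L^p}^p=\int_0^1 s^{-\epsilon p}\,(s^\epsilon|\dt^3\tau'|)^p\,\mathrm ds\le\|s^\epsilon\dt^3\tau'\|_{L^\infty}^p\int_0^1 s^{-\epsilon p}\,\mathrm ds\lesssim_p(1+E(t)^{1/2})^p$, giving $\|\dt^3\tau'\|_{L^p}\lesssim 1+E(t)^{1/2}$. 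The main obstacle is bookkeeping the $s$-weights in $\tilde h$: several terms are borderline integrable near $s=0$, and one must use the sharp $s$-decay of $\dt^c\tau$ ($c\le1$: order $s$; $c=2$: order $s^{1-\epsilon'}$) together with the half-power weights built into the $X^k$ norms of the top time-derivatives of $\bm{x}$ to keep every contribution finite; choosing $\epsilon$ strictly below the available $\epsilon'$ in the $\dt^2\tau$ bound is exactly what makes the estimate close.
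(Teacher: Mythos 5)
Your overall strategy is the same as the paper's: differentiate \eqref{BVP2} three times in $t$, apply Lemma \ref{lem:EstSolBVP3} with the weight $s^\epsilon$ to the resulting two-point problem for $\dt^3\tau$, bound the boundary datum through the trace estimate $|\dt^3\bm{x}'(1,t)|\lesssim\|\dt^3\bm{x}(t)\|_{X^2}\leq E(t)^{1/2}$ (your second variant; note that $\opnorm{\cdot}_{5,*}=\opnorm{\cdot}_{5,4}$ controls $\|\dt^3\bm{x}\|_{X^2}$, not $\|\dt^3\bm{x}\|_{X^3}$, so your first variant as stated is incorrect), estimate $\|s^\epsilon h_3\|_{L^1}$ using the $m=4$ bounds and the $s$-decay of $\tau,\dot\tau,\ddot\tau$, and finally pass to $L^p$ via $\int_0^1 s^{-\epsilon p}\,\mathrm{d}s<\infty$. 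This is the paper's route, and the $L^p$ step and the commutator terms are fine.

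There is, however, a concrete defect precisely at the one delicate point. Expanding $\dt^3(|\dot{\bm{x}}'|^2)$, the top-order term is $\dot{\bm{x}}'\cdot\dt^4\bm{x}'$, not $\bm{x}'\cdot\dt^4\bm{x}'$: you cannot put the low factor in $L^\infty$ without a weight, since only $\|s^{\epsilon}\dot{\bm{x}}'\|_{L^\infty}\lesssim1$ is available, whereas your estimate uses $\|\bm{x}'\|_{L^\infty}\lesssim1$. Moreover, your Hölder split $\|s^\epsilon\dt^4\bm{x}'\|_{L^1}\leq\|s^{\frac12+\epsilon}\dt^4\bm{x}'\|_{L^2}\,\|s^{-\frac12}\|_{L^2}$ fails outright because $\|s^{-\frac12}\|_{L^2}^2=\int_0^1 s^{-1}\,\mathrm{d}s=\infty$; the $\epsilon$ must be spent on the singular weight rather than on the $\dt^4\bm{x}'$ factor. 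Both issues are repaired at once by the three-factor split used in the paper, $\|s^\epsilon\,\dot{\bm{x}}'\cdot\dt^4\bm{x}'\|_{L^1}\leq\|s^{-\frac12(1-\epsilon)}\|_{L^2}\,\|s^{\frac{\epsilon}{2}}\dot{\bm{x}}'\|_{L^\infty}\,\|s^{\frac12}\dt^4\bm{x}'\|_{L^2}\lesssim\|\dt^4\bm{x}\|_{X^1}\leq E(t)^{\frac12}$, which only uses tools you already invoke elsewhere. A further minor slip: $\|\bm{x}''\|_{L^2}$ is not bounded by $\|\bm{x}\|_{X^2}$ (nor by $\|\bm{x}\|_{X^3}$); one needs $\|\bm{x}\|_{X^4}\lesssim1$, which the $m=4$ a priori estimate supplies, so the conclusion there survives. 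With these corrections your argument coincides with the paper's proof, which bounds the remaining terms of $h_3$ by $1+\opnorm{\bm{x}(t)}_{5,*}$ and concludes with Lemma \ref{lem:EstXTau_bis}.
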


\begin{proof}
Differentiating \eqref{BVP2} $3$-times with respect to $t$ we obtain 
\[
\begin{cases}
 -(\dt^3\tau)''+|\bm{x}''|^2\dt^3\tau = h_3 &\mbox{in}\quad (0,1)\times(0,T), \\
 \dt^3\tau=0 &\mbox{on}\quad \{s=0\}\times(0,T), \\
 \dt^3\tau'= -\bm{g}\cdot\dt^3\bm{x}' &\mbox{on}\quad \{s=1\}\times(0,T),
\end{cases}
\]
where $h_3=\dt^3(|\dot{\bm{x}}'|^2) - [\dt^3,|\bm{x}''|^2]\tau$. 
Therefore, by Lemma \ref{lem:EstSolBVP3} we get 
\[
\|s^\epsilon\dt^3\tau'(t)\|_{L^\infty} \lesssim |\dt^3\bm{x}'(1,t)|+\|s^\epsilon h_3(t)\|_{L^1}
\]
for $0<\epsilon\leq1$. 
By the Sobolev embedding theorem, the first term in the right-hand side can be easily evaluated by $\|\dt^3\bm{x}(t)\|_{X^2}$. 
The most subtle term in $h_3$ is $\dot{\bm{x}}'\cdot\dt^3\dot{\bm{x}}'$, which can be evaluated as 
\begin{align*}
\|s^\epsilon(\dot{\bm{x}}'\cdot\dt^3\dot{\bm{x}}')\|_{L^1} 
&\leq \|s^{-\frac12(1-\epsilon)}\|_{L^2}\|s^\frac{\epsilon}{2}\dot{\bm{x}}'\|_{L^\infty}\|s^\frac12\dt^4\bm{x}'\|_{L^2} \\
&\lesssim \|\dt^4\bm{x}\|_{X^1}.
\end{align*}
The other terms in $h_3$ can be easily handled without the weight $s^\epsilon$ and we obtain 
$\|s^\epsilon h_3(t)\|_{L^1}\lesssim 1+\opnorm{ \bm{x}(t) }_{5,*}$. 
These estimates and Lemma \ref{lem:EstXTau_bis} give the desired one. 
\end{proof}

We proceed to evaluate the energy function $E(t)$. 
Differentiating \eqref{Eq}, \eqref{BC}, and \eqref{HP2} $3$-times with respect to $t$, we see that $\bm{y}=\dt^3\bm{x}$ and $\nu=\dt^3\tau$ satisfy 
\[
\begin{cases}
 \ddot{\bm{y}}-(\tau\bm{y}')'-(\nu\bm{x}')' = \bm{f} &\mbox{in}\quad (0,1)\times(0,T), \\
 \bm{x}'\cdot\bm{y}'=f &\mbox{in}\quad (0,1)\times(0,T), \\
 \bm{y}=\bm{0} &\mbox{on}\quad \{s=1\}\times(0,T),
\end{cases}
\]
and 
\[
\begin{cases}
 -\nu''+|\bm{x}''|^2\nu = 2\dot{\bm{x}}'\cdot\dot{\bm{y}}' - 2(\bm{x}''\cdot\bm{y}'')\tau + h &\mbox{in}\quad (0,1)\times(0,T), \\
 \nu = 0 &\mbox{on}\quad \{s=0\}\times(0,T), \\
 \nu' = -\bm{g}\cdot\bm{y}' &\mbox{on}\quad \{s=1\}\times(0,T),
\end{cases}
\]
where $\bm{f} = \bigl( [ \dt^3; \tau,\bm{x}'] \bigr)'$, $f = -\frac12[\dt^3; \bm{x}',\bm{x}']$, and 
$h = [\dt^3;\dot{\bm{x}}',\dot{\bm{x}}']-\tau[\dt^3;\bm{x}'',\bm{x}''] - [\dt^3;\tau,|\bm{x}''|^2]$. 
Here, $[P;\bm{u},\bm{v}] = P(\bm{u}\cdot\bm{v})-\bm{u}\cdot(P\bm{v})-(P\bm{u})\cdot\bm{v}$ denotes the symmetric commutator. 
We recall that $E(t)=\|\dot{\bm{y}}(t)\|_{X^1}^2 + \|\bm{y}(t)\|_{X^2}^2$. 
Therefore, by \cite[Proposition 6.2]{IguchiTakayama2024}, we obtain 
\[
E(t) \lesssim E(0) + S_1(0) + \int_0^t S_2(t')\mathrm{d}t', 
\]
where 
\[
\begin{cases}
 S_1(t) = \|\bm{f}\|_{L^2}^2 + \|sh\|_{L^1}^2,\\
 S_2(t) = \|\dot{\bm{f}}\|_{L^2}^2 + \|s^{\frac12-\epsilon}\dot{f}\|_{L^2}^2 + |\dot{f}|_{s=1}|^2
 + \|s\dot{h}\|_{L^1}^2 + \|s^{\frac12+\epsilon}h\|_{L^2}^2
\end{cases}
\]
with $0<\epsilon<\frac12$. 
By the analysis in \cite[Section 9]{IguchiTakayama2024}, 
under the assumption $\|\bm{x}_0^\mathrm{in}\|_{X^5}+\|\bm{x}_1^\mathrm{in}\|_{X^4} \lesssim 1$ we have $\opnorm{ \bm{x}(0) }_5\lesssim 1$ 
so that we obtain easily $E(0) + S_1(0) \lesssim 1$. 
The most subtle term in the evaluation of $S_2(t)$ is $((\dt^3\tau)\dot{\bm{x}}')'$ in $\dot{\bm{f}}$. 
Since $\dt^3\tau=s\mathscr{M}(\dt^3\tau')$, this term can be evaluated as 
\begin{align*}
\|((\dt^3\tau)\dot{\bm{x}}')'\|_{L^2}
&\leq \|\dt^3\tau'\|_{L^4}\|\dot{\bm{x}}'\|_{L^4} + \|s^\frac12\mathscr{M}(\dt^3\tau')\|_{L^\infty}\|s^\frac12\dot{\bm{x}}''\|_{L^2} \\
&\lesssim \|\dt^3\tau'\|_{L^4} + \|s^\frac12\dt^3\tau'\|_{L^\infty} \\
&\lesssim 1+E(t)^\frac12,
\end{align*}
where we used Lemmas \ref{lem:AvOp} and \ref{lem:EstX_special_bis}. 
The other terms in $S_2(t)$ can be easily handled and we obtain $S_2(t) \lesssim 1+E(t)$. 
Thus, we get $E(t) \lesssim 1+\int_0^tE(t')\mathrm{d}t'$ so that $E(t)\lesssim 1$, 
which together with Lemma \ref{lem:EstXTau_bis} implies $\opnorm{ \bm{x}(t) }_{5,*}+\opnorm{ \tau'(t) }_{3,*} \lesssim 1$ for $0\leq t\leq T$.

Finally, it is sufficient to use the first equation in \eqref{Eq} to evaluate $\|\dt^5\bm{x}(t)\|_{L^2}$, 
and the estimate for $\opnorm{ \tau'(t) }_{4,*}$ follows directly from \cite[Lemma 8.1]{IguchiTakayama2024}. 
As a result, we obtain $\opnorm{\bm{x}(t)}_5+\opnorm{\tau'(t)}_{4,*} \lesssim 1$ for $0\leq t\leq T$. 
\hfill$\Box$


\bigskip
Tatsuo Iguchi \par
{\sc Department of Mathematics} \par
{\sc Faculty of Science and Technology, Keio University} \par
{\sc 3-14-1 Hiyoshi, Kohoku-ku, Yokohama, 223-8522, Japan} \par
E-mail: \texttt{iguchi@math.keio.ac.jp}

\bigskip
Masahiro Takayama \par
{\sc Department of Mathematics} \par
{\sc Faculty of Science and Technology, Keio University} \par
{\sc 3-14-1 Hiyoshi, Kohoku-ku, Yokohama, 223-8522, Japan} \par
E-mail: \texttt{masahiro@math.keio.ac.jp}

\end{document}